\newtheorem{theorem}{Theorem}[section]
\newtheorem{lemma}[theorem]{Lemma}
\newtheorem{conjecture}[theorem]{Conjecture}
\newtheorem{coro}[theorem]{Corollary}
\theoremstyle{definition}
\newtheorem{definition}[theorem]{Definition}
\theoremstyle{remark}
\numberwithin{equation}{section}
\def \N {{\mathbb N}}
\def \R {{\mathbb R}}
\def \Z {{\mathbb Z}}
\def \P {{\mathbb P}}
\def \E {{\mathbb E}}
\def \cW {{\mathcal W}}
\def \cL {{\mathcal L}}
\def \cD {{\mathcal D}}
\def \cP {{\mathcal P}}
\def \cC {{\mathcal C}}
\def \mP {{\mathbb P}}
\def \mE {{\mathbb E}}
\def \mZ {{\mathbb Z}}
\def \cD {{\mathcal D}}
\def \cL {{\mathcal L}}
\def \cP {{\mathcal P}}
\def \cF {{\mathcal F}}
\def \cC {{\mathcal C}}
\def \erom {{\mathrm{e}}}
\def \drom {{\mathrm{d}}}
\def \SRW {{\rm SRW}}
\def \SAW {{\rm SAW}}
\def \WD {{\rm WD}}
\def \SD {{\rm SD}}
\def \ba {\begin{array}}
\def \ea {\end{array}}
\def \ind {{1}}
\def \gep {{\varepsilon}}
\begin{document}

\setcounter{page}{319}

\title{Lectures on Random Polymers}

\author[Caravenna]{Francesco Caravenna} 
\address{Dipartimento di Matematica e Applicazioni, Universit\`a degli Studi
di Milano - Bicocca, Via Cozzi 53, 20125 Milano, Italy}
\email{francesco.caravenna@unimib.it}

\author[den Hollander]{Frank den Hollander}
\address{Mathematical Institute, Leiden University, P.O.\ Box 9512,
2300 RA Leiden, The Netherlands}
\email{denholla@math.leidenuniv.nl}

\author[P\'etr\'elis]{Nicolas P\'etr\'elis}
\address{Laboratoire de Math\'ematiques Jean Leray, 2 rue de la 
Houssini\`ere - BP 92208, F-44322 Nantes Cedex 3, France}
\email{petrelis@univ-nantes.fr}


\subjclass[2010]{Primary: 60F10, 60K37, 82B26, 82D60; Secondary: 60F67, 82B27, 82B44, 92D20}

\date{\today}

\begin{abstract}
These lecture notes are a guided tour through the fascinating world 
of polymer chains interacting with themselves and/or with their 
environment. The focus is on the mathematical description of 
a number of physical and chemical phenomena, with particular emphasis 
on phase transitions and space-time scaling. The topics covered, 
though only a selection, are typical for the area. Sections~\ref{S1}--\ref{S3} 
describe models of polymers without disorder, Sections~\ref{S4}--\ref{S6} 
models of polymers with disorder. Appendices~\ref{appA}--\ref{appE} 
contain tutorials in which a number of key techniques are explained 
in more detail. 
\end{abstract}

\maketitle

\tableofcontents


\section*{Foreword}

These notes are based on six lectures by Frank den Hollander and 
five tutorials by Francesco Caravenna and Nicolas P\'etr\'elis. 
The final manuscript was prepared jointly by the three authors. 
A large part of the material is drawn from the monographs by 
Giambattista Giacomin~\cite{Gi07} and Frank den Hollander~\cite{dHo09}. 
Links are made to some of the lectures presented elsewhere in this 
volume. In particular, it is argued that in two dimensions the 
Schramm-Loewner Evolution (SLE) is a natural candidate for the 
scaling limit of several of the ``exotic lattice path'' models 
that are used to describe self-interacting random polymers. Each 
lecture provides a snapshot of a particular class of models and 
ends with a formulation of some open problems. The six lectures 
can be read independently.     

Random polymers form an exciting, highly active and challenging field of 
research that lies at the crossroads between mathematics, physics, chemistry 
and biology. DNA, arguably the most important polymer of all, is subject 
to several of the phenomena that are described in these lectures: 
\emph{folding} (= collapse), \emph{denaturation} (= depinning due to 
temperature), \emph{unzipping} (= depinning due to force), \emph{adsorption} 
(= localization on a substrate).


\section{Background, model setting, free energy, two basic models}
\label{S1}

In this section we describe the physical and chemical background of 
random polymers (Sections~\ref{S1.1}--\ref{S1.4}), formulate the model 
setting in which we will be working (Section~\ref{S1.5}), discuss the 
central role of free energy (Section~\ref{S1.6}), describe two basic models 
of random polymer chains: the simple random walk and the self-avoiding 
walk (Section~\ref{S1.7}), and formulate a key open problem for the latter 
(Section~\ref{S1.8}).


\subsection{What is a polymer?}
\label{S1.1}

A polymer is a large molecule consisting of mono\-mers that are tied together 
by \emph{chemical bonds}. The monomers can be either small units (such as 
${\rm CH_2}$ in polyethylene; Fig.~\ref{fig-Poly}) or larger units with an 
internal structure (such as the adenine-thymine and cytosine-guanine base 
pairs in the DNA double helix; Fig.~\ref{fig-DNA}). Polymers abound in nature 
because of the \emph{multivalency} of atoms like carbon, oxygen, nitrogen and 
sulfur, which are capable of forming long concatenated structures. 

\vspace{1.7cm}
\begin{figure}[htbp]
\begin{center}
\includegraphics[width=.40\hsize]{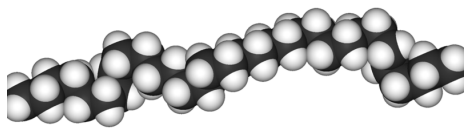}
\end{center}
\vspace{-1.7cm}
\caption{\small Polyethylene.}
\label{fig-Poly}
\end{figure}

\begin{figure}[htbp]
\includegraphics[width=.40\hsize]{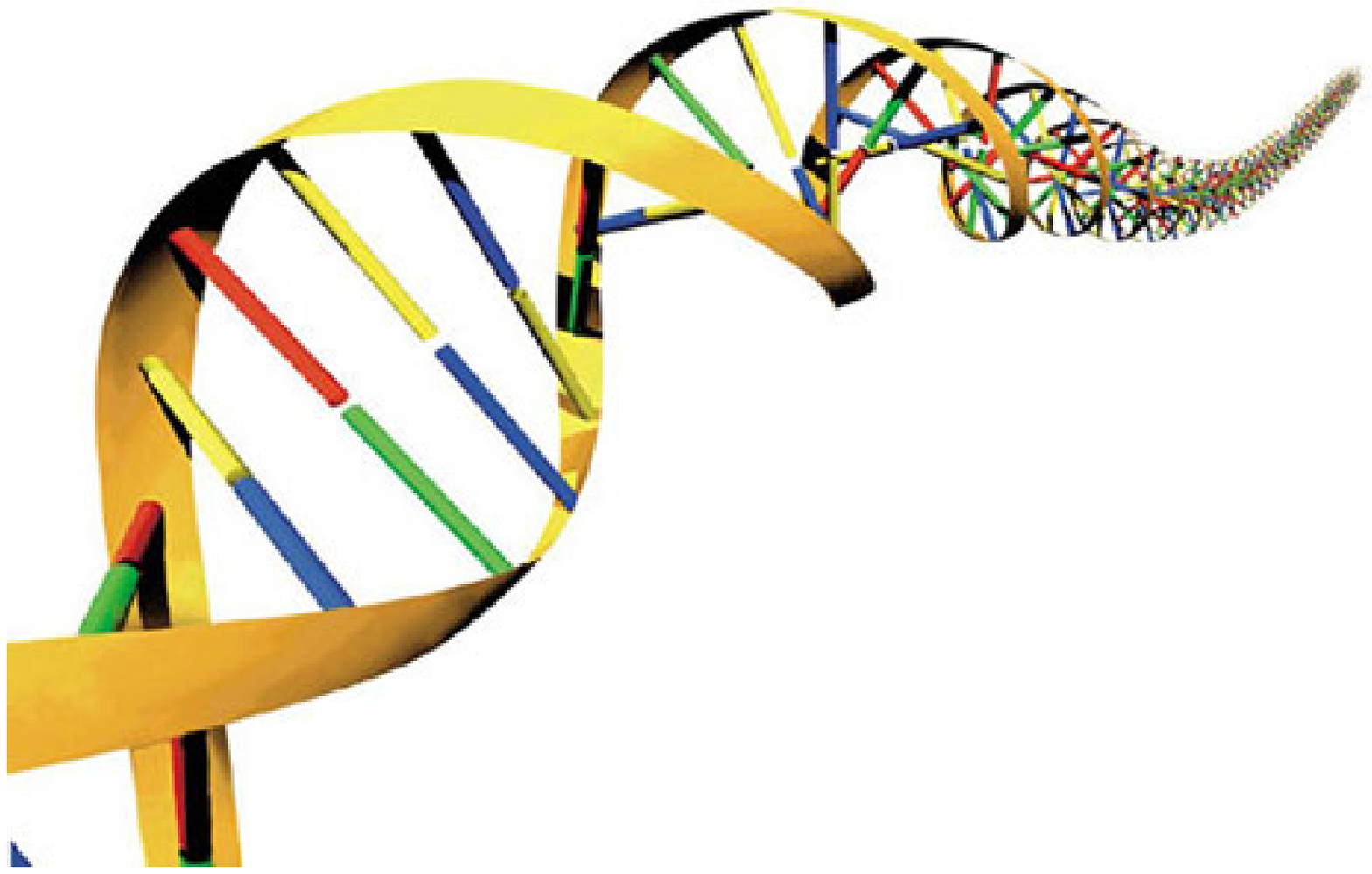}
\caption{\small DNA.}
\label{fig-DNA}
\end{figure}


\subsection{What types of polymers occur in nature?} 
\label{S1.2}

Polymers come in two varieties: \emph{homopolymers}, with all their 
monomers identical (such as polyethylene), and \emph{copolymers}, with 
two or more different types of monomers (such as DNA). The order of the 
monomer types in copolymers can be either periodic (e.g.\ in agar) or 
random (e.g.\ in carrageenan).  

Another classification is into \emph{synthetic polymers} (like nylon, 
polyethylene and polystyrene) and \emph{natural polymers} (also called 
biopolymers). Major subclasses of the latter are: (a) proteins (strings 
of amino-acids; Fig.~\ref{fig-protfold}); (b) nucleic acids (DNA, RNA;
Fig.~\ref{fig-DNA}); (c) polysaccharides (like agar, alginate, amylopectin, 
amylose, carrageenan, cellulose); (d) lignin (plant cement); (e) rubber. 
Apart from (a)--(e), which are organic materials, clays and minerals are 
inorganic examples of natural polymers. Synthetic polymers typically are 
homopolymers, while natural polymers typically are copolymers (with notable 
exceptions). Bacterial polysaccharides tend to be periodic, while plant 
polysaccharides tend to be random.

\begin{figure}[htbp]
\includegraphics[width=.30\hsize]{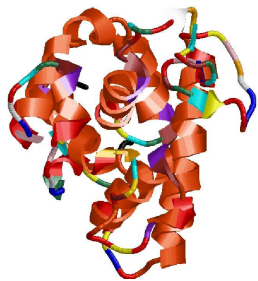}
\caption{\small A folded-up protein.}
\label{fig-protfold}
\end{figure}

Yet another classification is into \emph{linear polymers} and \emph{branched 
polymers}. In the former, the monomers have one reactive group (such as ${\rm 
CH_2}$), leading to a linear organization as a result of the polymerization 
process. In the latter, the monomers have two or more reactive groups (such 
as hydroxy acid), leading to a network organization with multiple cross connections. 
Most natural polymers are linear, like proteins, DNA, RNA, and the polysaccharides 
agar, alginate, amylose, carrageenan and cellulose. Some polysaccharides are 
branched, like amylopectin. Many synthetic polymers are linear, and many are 
branched. An example of a branched polymer is rubber, both natural and synthetic.
The network structure of rubber is what gives it both strength and flexibility!


\subsection{What are the size and shape of a polymer?}
\label{S1.3}

Size and shape are two key properties of a polymer.

\medskip\noindent
{\bf Size:}
The chemical process of building a polymer from monomers is called 
\emph{polymerization}. The size of a polymer may vary from $10^3$ 
up to $10^{10}$ (shorter chains do not deserve to be called a polymer, 
longer chains have not been recorded). Human DNA has $10^9-10^{10}$ 
base pairs, lignin consists of $10^6-10^7$ phenyl-propanes, while 
polysaccharides carry $10^3-10^4$ sugar units.

Both in synthetic and in natural polymers, the \emph{size distribution} 
may either be broad, with numbers varying significantly from polymer 
to polymer (e.g.\ nylons, polysaccharides), or be narrow (e.g.\ proteins, 
DNA). In synthetic polymers the size distribution can be made narrow through 
specific polymerization methods. 

The length of the monomer units varies from $1.5\,\mbox{\AA}$ 
(for ${\rm CH_2}$ in polyethylene) to $20\,\mbox{\AA}$ (for 
the base pairs in DNA), with $1\,\mbox{\AA} = 10^{-10}\,\mbox{m}$. 

\medskip\noindent
{\bf Shape:}
The chemical bonds in a polymer are flexible, so that the polymer can 
arrange itself in \emph{many different shapes}. The longer the chain, 
the more involved these shapes tend to be. For instance, the polymer 
may wind around itself to form a \emph{knot} (Fig.~\ref{fig-polknot}), 
may expand itself to form a \emph{random coil} due to repulsive forces 
caused by excluded-volume (e.g.\ when a good solvent surrounds the 
monomers and prevents them from coming close to each other), or may 
collapse on itself to form a \emph{compact ball} due to attractive van 
der Waals forces between the monomers (or repulsive forces between the 
monomers and a poor solvent causing the polymer to fold itself up).

\begin{figure}[htbp]
\includegraphics[width=.30\hsize]{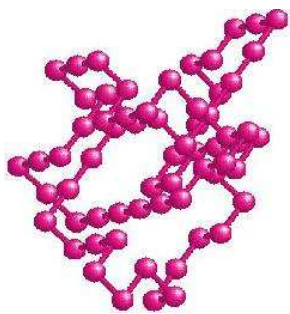}
\caption{\small A knotted polymer.}
\label{fig-polknot}
\end{figure}

In addition, the polymer may interact with a \emph{surface} or with two fluids 
separated by an \emph{interface}, may interact with a field of \emph{random charges} 
in which it is immersed, or may be subjected to a \emph{force} applied to one of its 
endpoints. Many models have been invented to describe such situations. In 
Sections~\ref{S2}--\ref{S6} we take a look at some of these models.


\subsection{What questions may a mathematician ask and hope to answer?}
\label{S1.4}

The majority of mathematical research deals with \emph{linear polymers}. Examples 
of quantities of interest are: number of different spatial configurations, 
end-to-end distance (subdiffusive/diffusive/superdiffusive), fraction of monomers 
adsorbed onto a surface, force needed to pull an adsorbed polymer off a surface,
effect of randomness in the interactions, all typically in the limit as the polymer 
gets long (so that techniques from probability theory and statistical physics can 
be used). In these lectures special attention is given to the \emph{free energy} 
of the polymer, and to the presence of \emph{phase transitions} as a function of 
underlying model parameters. Recent surveys are the monographs by Giacomin~\cite{Gi07}
and den Hollander~\cite{dHo09}, and \emph{references therein}.


\subsection{What is the model setting?}
\label{S1.5}

In mathematical models polymers often live on a lattice, like $\Z^d$, $d \geq 1$, 
and are modelled as random paths, where the monomers are the vertices in the path, 
and the chemical bonds connecting the monomers are the edges in the path 
(Fig.~\ref{fig-latpath}). 

\begin{figure}[htbp]
\includegraphics[width=.25\hsize]{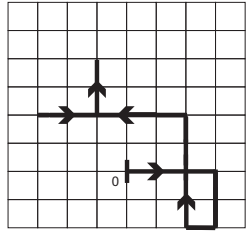}
\caption{\small A lattice path.}
\label{fig-latpath}
\end{figure}

\medskip\noindent
{\bf I.\ Paths and energies:}
Choosing a polymer model amounts to fixing for each $n\in\N_0 = \N \cup \{0\}$:
\begin{itemize}
\item[(1)]
$\cW_n$, a set of allowed $n$-step paths on $\Z^d$,
\item[(2)]
$H_n$, a Hamiltonian function that associates an energy to each 
path in $\cW_n$.
\end{itemize}
The choice of $\cW_n$ may allow for directed or undirected paths, possibly with 
some geometric constraints (see Fig.~\ref{fig-dirpath}).

\begin{figure}[htbp]
\begin{center}
\setlength{\unitlength}{0.4cm}
\begin{picture}(18,6)(2,-1)
{\thicklines
\qbezier(0,0)(.5,.5)(1,1)
\qbezier(1,1)(1.5,1.5)(2,2)
\qbezier(2,2)(2.5,1.5)(3,1)
\qbezier(3,1)(3.5,.5)(4,0)
\qbezier(4,0)(4.5,-.5)(5,-1)
\qbezier(5,-1)(5.5,-.5)(6,0)
\qbezier(6,0)(6.5,.5)(7,1)
\qbezier(9,0)(9.5,.5)(10,1)
\qbezier(10,1)(10.5,1)(11,1)
\qbezier(11,1)(11.5,1.5)(12,2)
\qbezier(12,2)(12.5,1.5)(13,1)
\qbezier(13,1)(13.5,1)(14,1)
\qbezier(14,1)(14.5,.5)(15,0)
\qbezier(17,-1)(17,-.5)(17,0)
\qbezier(17,0)(17,.5)(17,1)
\qbezier(17,1)(17,1.5)(17,2)
\qbezier(17,2)(17.5,2)(18,2)
\qbezier(18,2)(18,2.5)(18,3)
\qbezier(18,3)(18.5,3)(19,3)
\qbezier(19,3)(19.5,3)(20,3)
\qbezier(20,3)(20,2.5)(20,2)
\qbezier(20,2)(20,1.5)(20,1)
\qbezier(20,1)(20,.5)(20,0)
\qbezier(20,0)(20.5,0)(21,0)
}
\put(0,0){\circle*{0.25}}
\put(1,1){\circle*{0.25}}
\put(2,2){\circle*{0.25}}
\put(3,1){\circle*{0.25}}
\put(4,0){\circle*{0.25}}
\put(5,-1){\circle*{0.25}}
\put(6,0){\circle*{0.25}}
\put(7,1){\circle*{0.25}}
\put(9,0){\circle*{0.25}}
\put(10,1){\circle*{0.25}}
\put(11,1){\circle*{0.25}}
\put(12,2){\circle*{0.25}}
\put(13,1){\circle*{0.25}}
\put(14,1){\circle*{0.25}}
\put(15,0){\circle*{0.25}}
\put(17,-1){\circle*{0.25}}
\put(17,0){\circle*{0.25}}
\put(17,1){\circle*{0.25}}
\put(17,2){\circle*{0.25}}
\put(18,2){\circle*{0.25}}
\put(18,3){\circle*{0.25}}
\put(19,3){\circle*{0.25}}
\put(20,3){\circle*{0.25}}
\put(20,2){\circle*{0.25}}
\put(20,1){\circle*{0.25}}
\put(20,0){\circle*{0.25}}
\put(21,0){\circle*{0.25}}
\end{picture}
\end{center}
\caption{\small Three examples of directed paths on $\Z^2$.}
\label{fig-dirpath}
\end{figure}
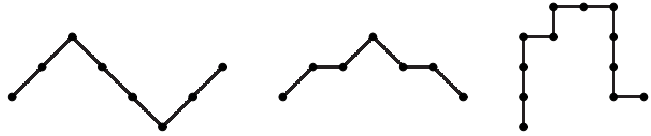

\noindent
The choice of $H_n$ captures the interaction of the polymer with itself and/or its 
environment. Typically, $H_n$ depends on one or two parameters, including temperature.
Sections~\ref{S2}--\ref{S6} will provide many examples.

\medskip\noindent
{\bf II.\ Path measure:}
For each $n\in\N_0$, the law of the polymer of length $n$ is defined by assigning to 
each $w\in\cW_n$ a probability given by
\[
P_n(w) = \frac{1}{Z_n}\,\erom^{-H_n(w)}, \qquad w \in \cW_n,
\]
where $Z_n$ is the normalizing partition sum. This is called the \emph{Gibbs measure} 
associated with the pair $(\cW_n,H_n)$, and it describes the polymer \emph{in 
equilibrium} with itself and/or its environment, at a fixed length $n$. Paths with
a low (high) energy have a large (small) probability under the Gibbs measure. \emph{Note:} 
In the physics and chemistry literature, $H_n/kT$ is put into the exponent instead of
$H_n$, with $T$ the absolute temperature and $k$ the Boltzmann constant. Since $kT$ 
has the dimension of energy, $H_n/kT$ is a dimensionless quantity. In our notation, 
however, we absorb $kT$ into $H_n$.
 
\medskip\noindent
{\bf III. Random environment:} 
In some models $H_n$ also depends on a
\[ 
\mbox{random environment } \omega
\]
describing e.g.\ a random ordering of the monomer types or a random field of charges 
in which the polymer is immersed. In this case the Hamiltonian is written as 
$H_n^\omega$, and the path measure as $P_n^\omega$. The law of $\omega$ is denoted 
by $\mP$. (Carefully distinguish between the symbols $w$ and $\omega$.)

Three types of path measures with disorder are of interest:  
\begin{itemize}
\item[(1)] 
The \emph{quenched} Gibbs measure   
\[
P_n^\omega(w) = \frac{1}{Z_n^\omega}\,\erom^{-H_n^\omega(w)}, \qquad 
w \in \cW_n.
\]
\item[(2)] 
The \emph{average quenched} Gibbs measure 
\[
\mE(P_n^\omega(w)) = \int P_n^\omega(w)\,\mP(\drom\omega), \qquad 
w \in \cW_n.
\]
\item[(3)] 
The \emph{annealed} Gibbs measure
\[
\mP_n(w) = \frac{1}{\mZ_n}\,
\int \erom^{-H_n^\omega(w)}\,\mP(\drom\omega), \qquad
w \in \cW_n.
\]
\end{itemize}
These are used to describe a polymer whose random environment is frozen [(1)+(2)],
respectively, takes part in the equilibration [(3)]. Note that in (3), unlike
in (2), the normalizing partition sum does not (!) appear under the integral.

It is also possible to consider models where the length or the configuration 
of the polymer changes with time (e.g.\ due to growing or shrinking), or to 
consider a Metropolis dynamics associated with the Hamiltonian for an appropriate 
choice of allowed transitions. These \emph{non-equilibrium} situations are very 
interesting and challenging, but so far the available mathematics is rather limited. 
Two recent references are Caputo, Martinelli and Toninelli~\cite{CaMaTo08},
Caputo, Lacoin, Martinelli, Simenhaus and Toninelli~\cite{CaLaMaSiTopr}.


\subsection{The central role of free energy}
\label{S1.6}

The \emph{free energy} of the polymer is defined as
\[
f = \lim_{n\to\infty} \frac{1}{n} \log Z_n
\]
or, in the presence of a random environment, as
\[
f = \lim_{n\to\infty} \frac{1}{n} \log Z_n^\omega \qquad \omega\text{-a.s.}
\]
If the limit exists, then it typically is constant $\omega$-a.s., a property referred 
to as \emph{self-averaging}. We next discuss existence of $f$ and some of its 
properties. 

\begin{figure}[htbp]
\includegraphics[width=.30\hsize]{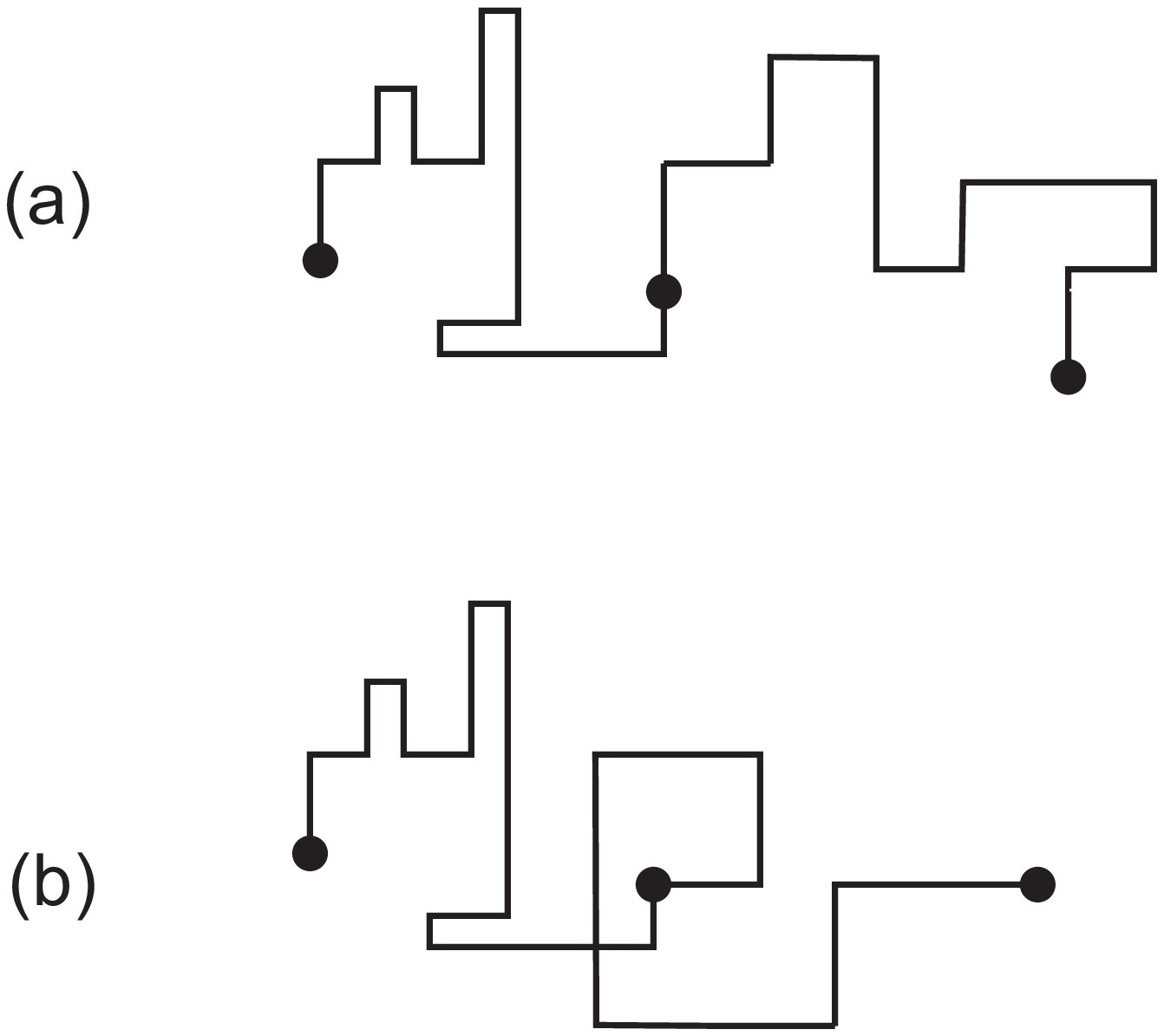}
\caption{\small Concatenation of two self-avoiding paths: (a) the 
concatenation is self-avoiding; (b) the concatenation is not 
self-avoiding.}
\label{fig-concat}
\end{figure}

\medskip\noindent
{\bf I.\ Existence of the free energy:}
When $H_n$ assigns a repulsive self-interaction to the polymer, the partition sum 
$Z_n$ satisfies the inequality
\[
Z_n \leq Z_m\,Z_{n-m} \qquad \forall\,0 \leq m \leq n.
\]
(See Fig.~\ref{fig-concat} for an example involving the counting of self-avoiding 
paths, i.e., $Z_n=|\cW_n|$.) Consequently, 
\[
n \mapsto nf_n = \log Z_n
\]
is a \emph{subadditive sequence}, so that
\[
f = \lim_{n\to\infty} f_n = \inf_{n\in\N} f_n \in [-\infty,\infty).
\]
(See the tutorial in Appendix A.1 of Bauerschmidt, Duminil-Copin, Goodman 
and Slade~\cite{BaDuGoSl11}.) If, moreover, $\inf_{w\in\cW_n} H_n(w) \leq Cn$ 
for all $n\in\N$ and some $C<\infty$, then $f \neq - \infty$. A similar result 
holds when $H_n$ assigns an attractive self-interaction to the polymer, in 
which case the inequalities are reversed, $f \in (-\infty,\infty]$, and $f 
\neq \infty$ when $|\cW_n|\leq \erom^{Cn}$ and $\inf_{w\in\cW_n} H_n(w) \geq -Cn$ 
for all $n\in\N$ and some $C<\infty$. 

When $H_n$ assigns both repulsive and attractive interactions to the polymer, 
then the above argument is generally not available, and the existence of the 
free energy either remains open or has to be established by other means. 
Many examples, scenarios and techniques are available. {\bf Tutorial 1 in 
Appendix \ref{appA}} describes two techniques to prove existence of free 
energies, in the context of the model of a polymer near a random interface
that is the topic of Section~\ref{S4}.

In the presence of a random environment $\omega$, it is often possible to 
derive a random form of subadditivity. When applicable, 
\[
n\mapsto nf_n^\omega = \log Z_n^\omega
\] 
becomes a \emph{subadditive random process}, and Kingman's subadditive ergodic 
theorem implies the existence of
\[
f = \lim_{n\to\infty} f_n^\omega \qquad \omega\text{-a.s.} 
\]
(as explained in {\bf Tutorial 1 in Appendix \ref{appA}}). This fact is of key 
importance for polymers with disorder.

\medskip\noindent
{\bf II.\ Convexity of the free energy:}
Suppose that the Hamiltonian depends linearly on a single parameter $\beta\in\R$, 
which is pulled out by writing $\beta H_n$ instead of $H_n$. Then, by the H\"older 
inequality, $\beta \mapsto f_n(\beta)$ is convex for all $n\in\N_0$ and hence so 
is $\beta\mapsto f(\beta)$. Convexity and finiteness imply continuity, and also 
monotonicity on either side of a minimum. Moreover, at those values of $\beta$ where 
$f(\beta)$ is differentiable, convexity implies that 
\[
f'(\beta) = \lim_{n\to\infty} f_n'(\beta).
\]
The latter observation is important because
\[
\begin{aligned}
f_n'(\beta) 
&= \left[\frac{1}{n}\,\log Z_n(\beta)\right]'
= \frac{1}{n}\,\frac{Z_n'(\beta)}{Z_n(\beta)}\\
&= \frac{1}{n}\,\frac{1}{Z_n(\beta)}\,
\frac{\partial}{\partial\beta} \left(\sum_{w\in\cW_n} 
\erom^{-\beta H_n(w)}\right)
=\frac{1}{n} \sum_{w\in\cW_n} [-H_n(w)]\,P_n^\beta(w).
\end{aligned}
\]
What this says is that $-\beta f'(\beta)$ is the limiting energy per monomer 
under the Gibbs measure as $n\to\infty$. At those values of $\beta$ where 
the free energy fails to be differentiable this quantity is discontinuous, 
signalling the occurrence of a first-order \emph{phase transition}. (Several
examples will be given later on.) Higher-order phase transitions correspond 
to discontinuity of higher-order derivatives of $f$.


\subsection{Two basic models}
\label{S1.7}

The remainder of this section takes a brief look at two basic models for a 
polymer chain: (1) the \emph{simple random walk}, a polymer without self-interaction; 
(2) the \emph{self-avoiding walk}, a polymer with excluded-volume self-interaction. 
In some sense these are the ``plain vanilla'' and ``plain chocolate'' versions 
of a polymer chain. The self-avoiding walk is the topic of the lectures by 
Bauerschmidt, Duminil-Copin, Goodman and Slade~\cite{BaDuGoSl11}.

\medskip\noindent
{\bf (1) Simple random walk:}
$\SRW$ on $\Z^d$ is the random process $(S_n)_{n\in\N_0}$ defined by
\[
S_0 = 0, \qquad S_n = \sum_{i=1}^n X_i, \quad n \in \N,
\]
where $X = (X_i)_{i\in\N}$ is an i.i.d.\ sequence of random variables 
taking values in $\Z^d$ with marginal law ($\|\cdot\|$ is the Euclidean 
norm)
\[
P(X_1=x) = \left\{\begin{array}{ll}
\tfrac{1}{2d}, &x\in\mZ^d \mbox{ with } \|x\|=1,\\[0.2cm]
0, &\mbox{otherwise}.
\end{array}
\right.
\]
Think of $X_i$ as the orientation of the chemical bond between the $(i-1)$-th and 
$i$-th monomer, and of $S_n$ as the location of the end-point of the polymer of 
length $n$. $\SRW$ corresponds to choosing
\[
\begin{aligned}
\cW_n &= \big\{w=(w_i)_{i=0}^n\in(\mZ^d)^{n+1}\colon\\
&\qquad\qquad\qquad w_0=0,\,\|w_{i+1}-w_i\|=1\,\,\forall\,0\leq i<n\big\},\\ 
H_n &\equiv 0,
\end{aligned}
\]
so that $P_n$ is the uniform distribution on $\cW_n$. In this correspondence, think 
of $(S_i)_{i=0}^n$ as the realization of $(w_i)_{i=0}^n$ drawn according to $P_n$. 

\begin{figure}[htbp]
\begin{center}
\includegraphics[width=.30\hsize]{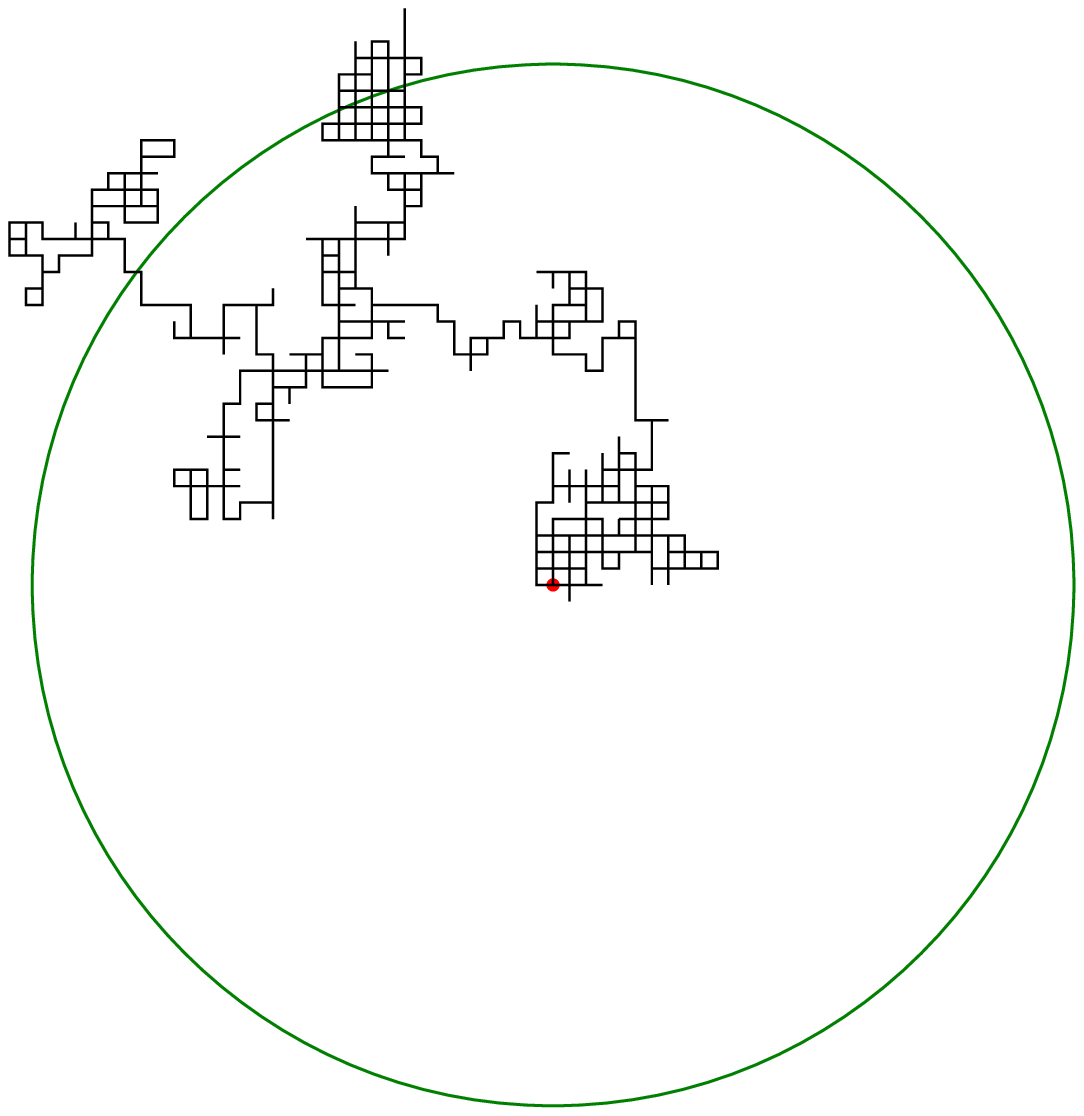}
\includegraphics[width=.30\hsize]{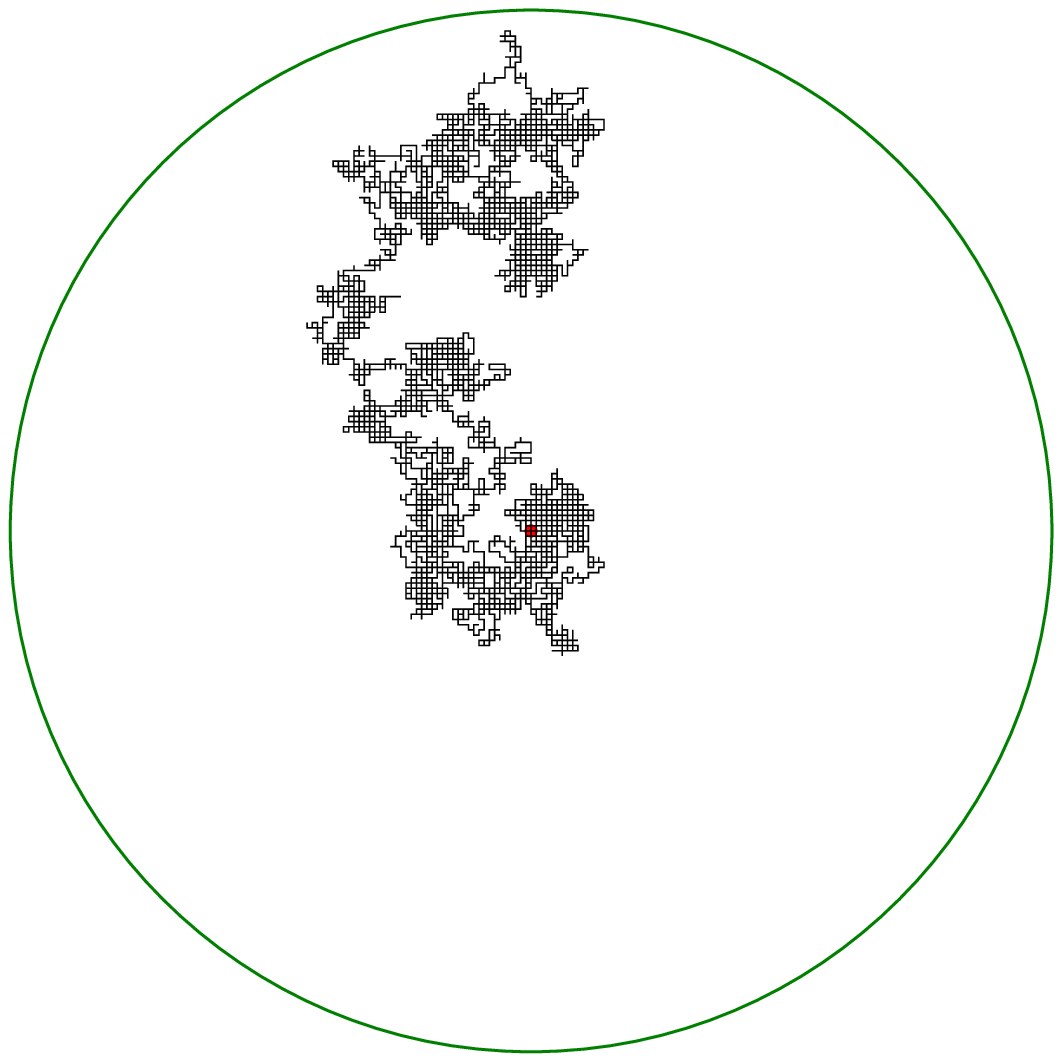}
\includegraphics[width=.30\hsize]{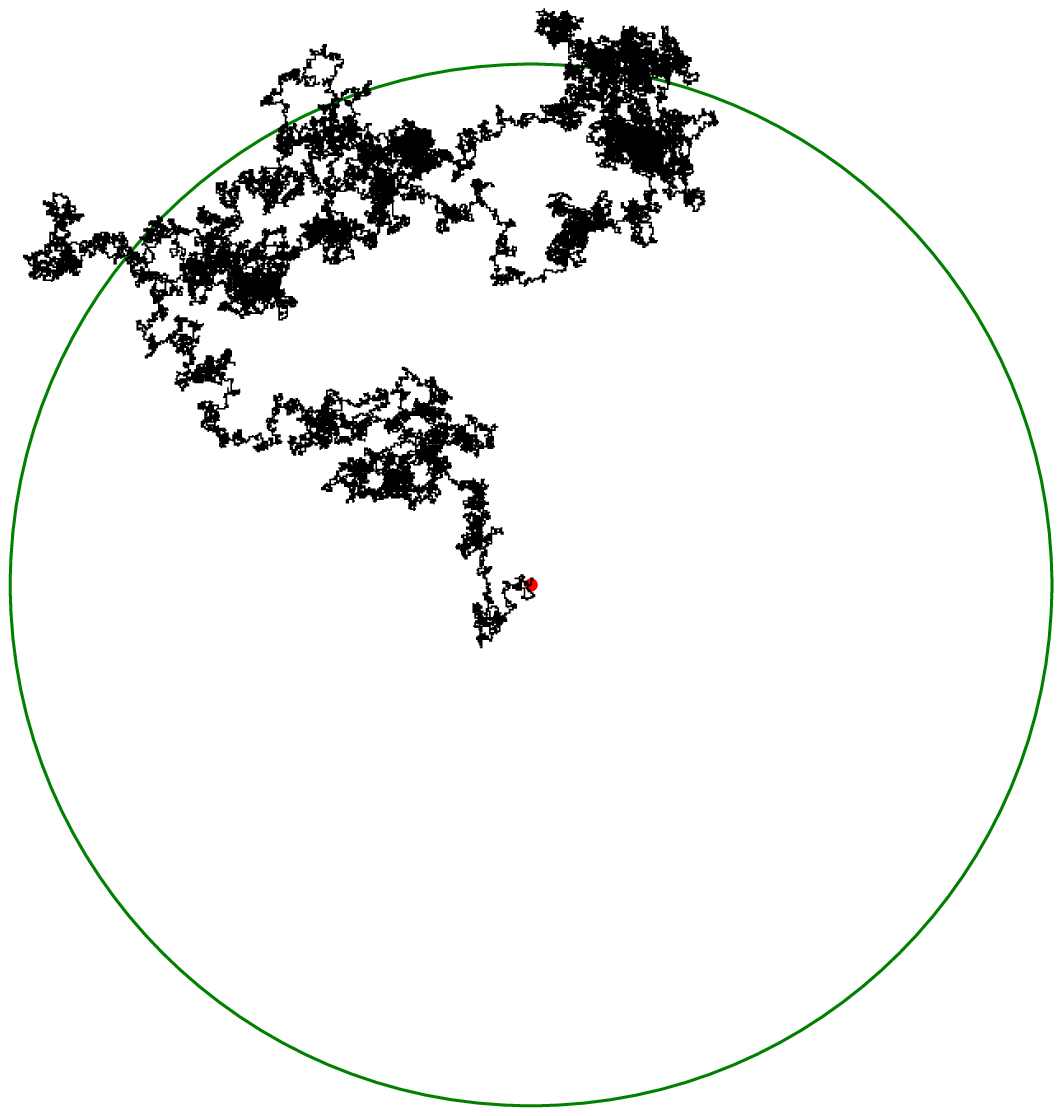}
\end{center}
\caption{\small Simulation of $\SRW$ on $\Z^2$ with $n=10^3$, $10^4$ and $10^5$ 
steps. The circles have radius $n^{1/2}$ in units of the step size. [Courtesy of
Bill Casselman and Gordon Slade.]}
\label{fig-SRW}
\end{figure}

A distinctive feature of $\SRW$ is that it exhibits \emph{diffusive behavior}, i.e.,
\[
E_n(S_n)=0 \quad \mbox{ and } \quad E_n(\|S_n\|^2) = n \qquad \forall\,n\in\N_0
\]
and
\[
\left(\frac{1}{n^{1/2}}\,S_{\lfloor nt \rfloor}\right)_{0 \leq t \leq 1}
\quad \Longrightarrow \quad (B_t)_{0 \leq t \leq 1} \qquad \mbox{ as } n \to \infty,
\]
where the right-hand side is Brownian motion on $\R^d$, and $\Longrightarrow$ denotes 
convergence in distribution on the space of c\`adl\`ag paths endowed with the Skorohod 
topology (see Fig.~\ref{fig-SRW}). 

\medskip\noindent
{\bf (2) Self-avoiding walk:}
$\SAW$ corresponds to choosing
\[
\begin{aligned}
\cW_n &= \big\{w=(w_i)_{i=0}^n\in(\mZ^d)^{n+1}\colon\\
&\qquad\qquad\qquad w_0=0,\,\|w_{i+1}-w_i\|=1\,\,\forall\,0 \leq i < n,\\
&\qquad\qquad\qquad w_i \neq w_j\,\,\forall\,0 \leq i < j \leq n\big\},\\
H_n &\equiv 0,
\end{aligned}
\]
so that $P_n$ is the uniform distribution on $\cW_n$. Again, think of $(S_i)_{i=0}^n$ as 
the realization of $(w_i)_{i=0}^n$ drawn according to $P_n$. 

\begin{figure}[htbp]
\begin{center}
\includegraphics[width=.30\hsize]{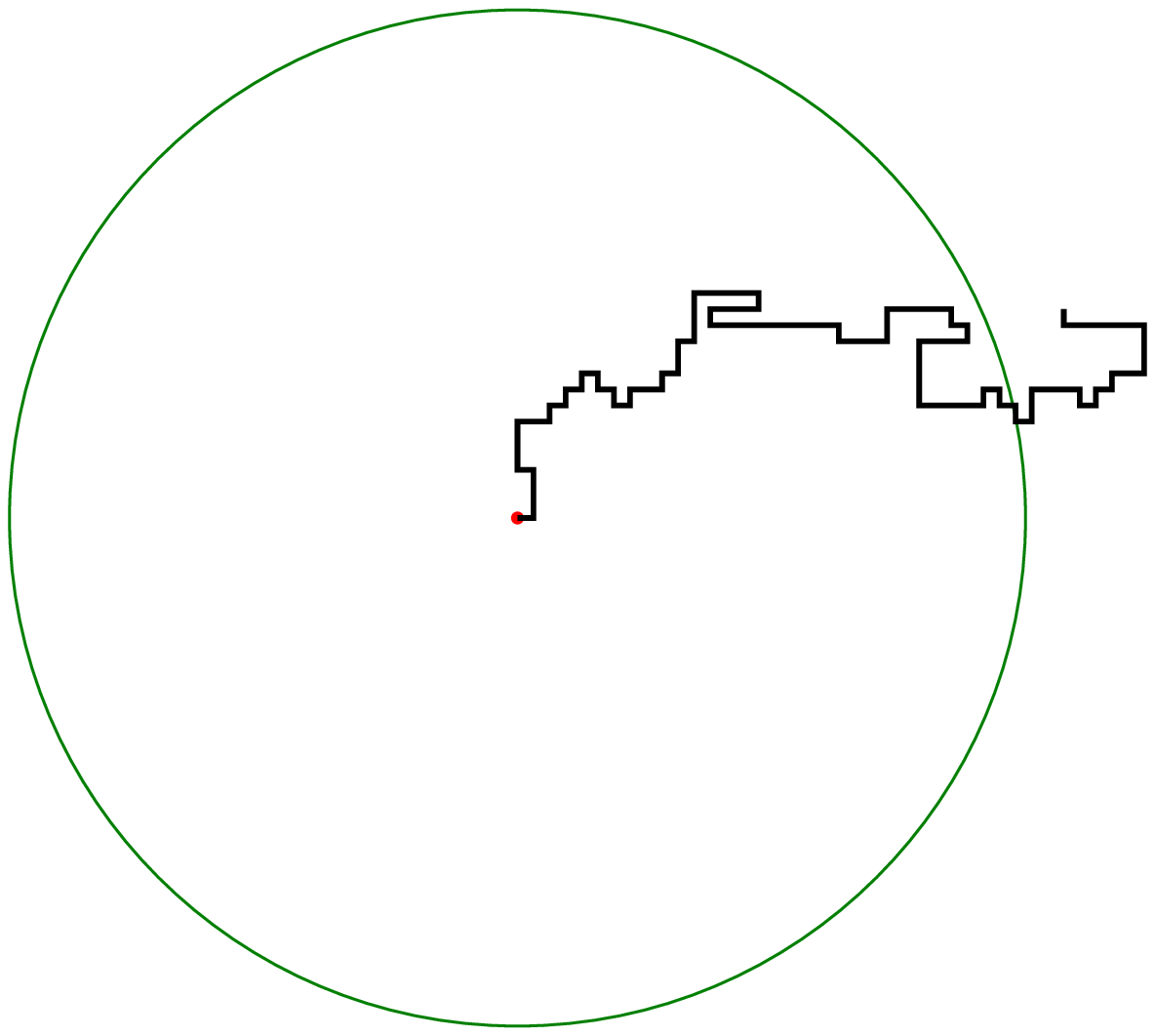}
\includegraphics[width=.30\hsize]{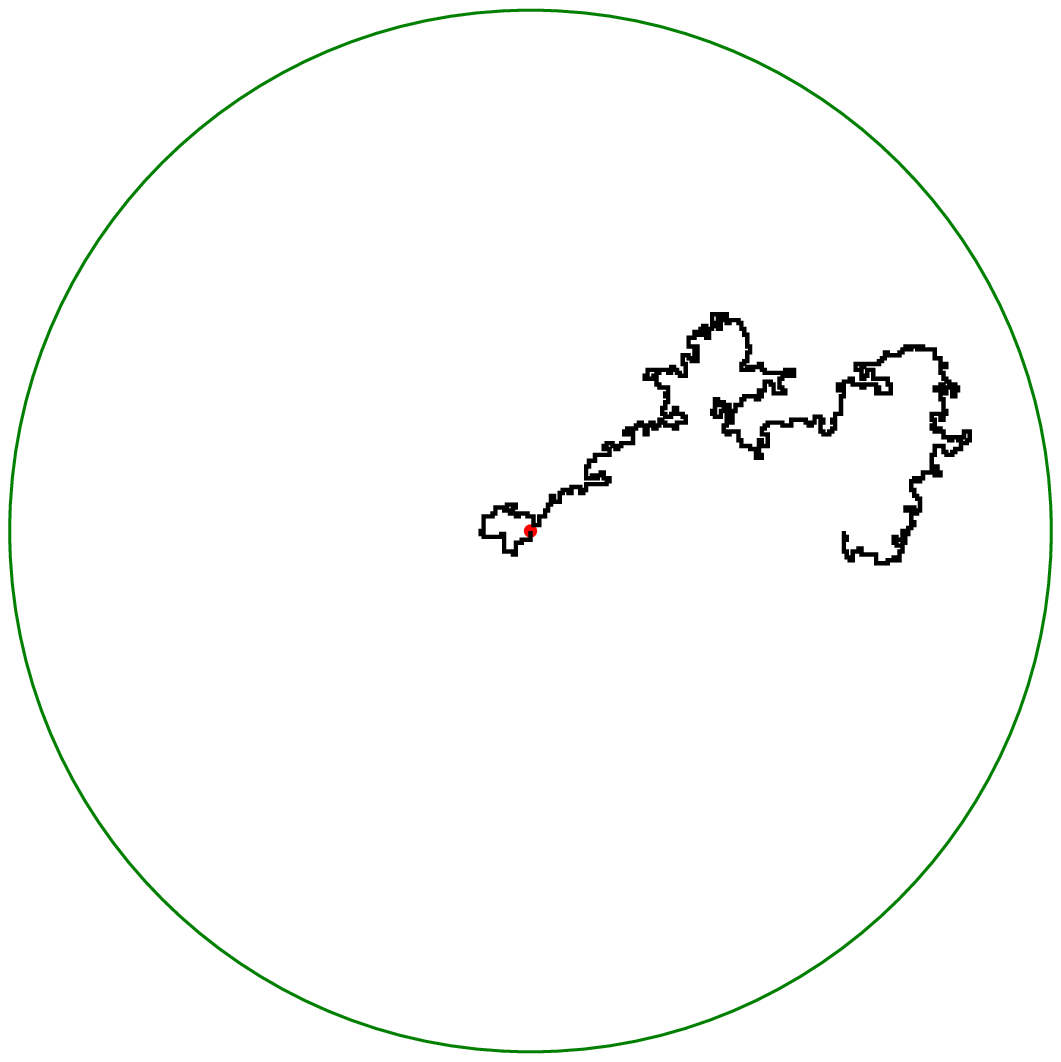}
\includegraphics[width=.30\hsize]{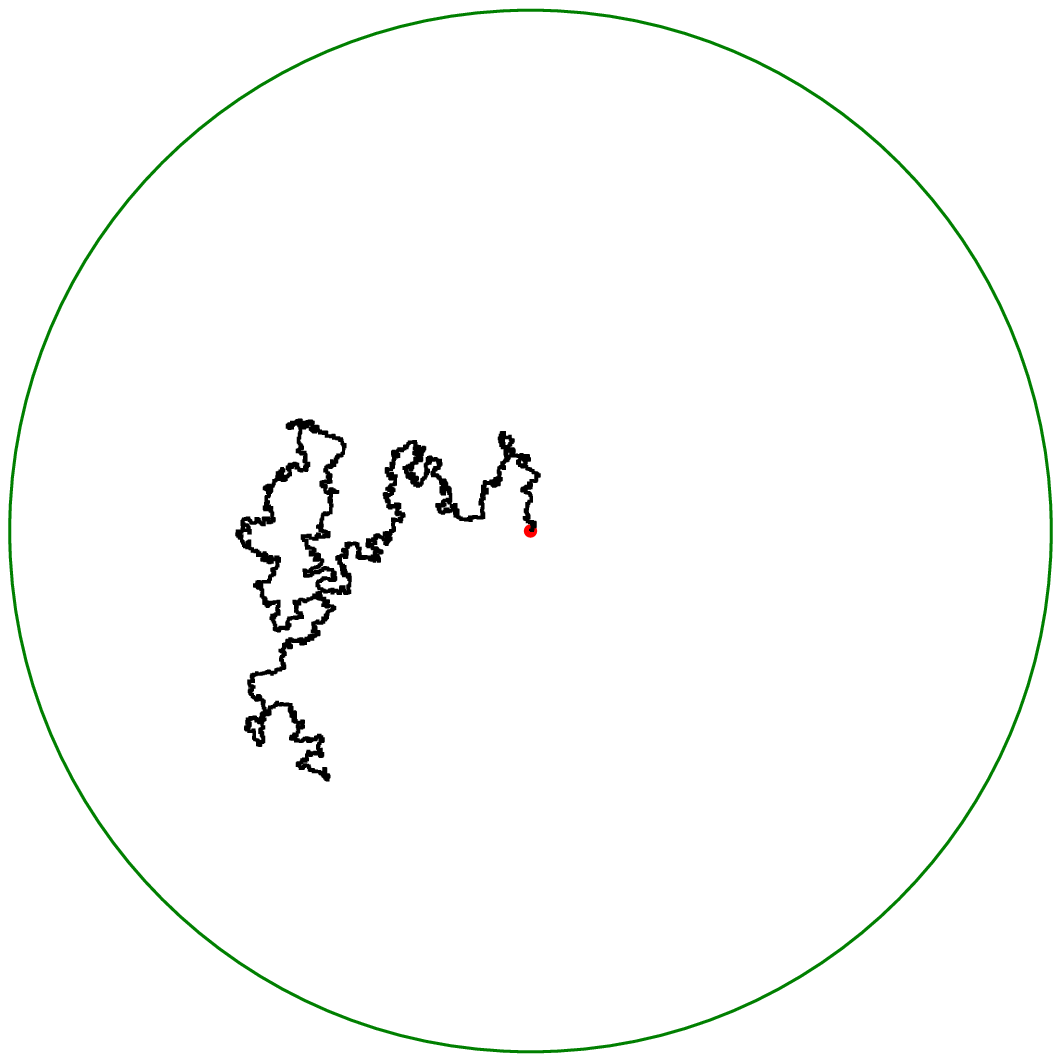}
\end{center}
\caption{\small Simulation of $\SAW$ on $\Z^2$ with $n=10^2$, $10^3$ and $10^4$ steps. 
The circles have radius $n^{3/4}$ in units of the step size. [Courtesy of Bill 
Casselman and Gordon Slade.]}
\label{fig-SAW}
\end{figure}

$\SAW$ in $d=1$ is trivial. In $d \geq 2$ no closed form expression is available for 
$E_n(\|S_n\|^2)$, but for small and moderate $n$ it can be computed via \emph{exact 
enumeration methods}. The current record is: $n=71$ for $d=2$ (Jensen~\cite{Jewww}); 
$n=36$ for $d=3$ (Schram, Barkema and Bisseling~\cite{ScBaBi11}); $n=24$ for $d \geq 4$ 
(Clisby, Liang and Slade~\cite{ClLiSl07}). Larger $n$ can be handled either via 
numerical simulation (presently up to $n=2^{25} \approx 3.3\times 10^{7}$ in $d=3$) 
or with the help of extrapolation techniques. 

The mean-square displacement is predicted to scale like
\[
E_n(\|S_n\|^2) = \left\{\begin{array}{ll}
D\,n^{2\nu}\,[1+o(1)], &\quad d \neq 4,\\[0.2cm]
D\,n(\log n)^{\frac14}\,[1+o(1)], &\quad d=4, 
\end{array} \quad \text{ as } n\to\infty, 
\right.
\]
with $D$ a non-universal diffusion constant and $\nu$ a universal \emph{critical 
exponent}. Here, universal refers to the fact that $\nu$ is expected to depend only 
on $d$, and to be independent of the fine details of the model (like the choice of 
the underlying lattice or the choice of the allowed increments of the path). 

The value of $\nu$ is predicted to be
\[
\nu = 1 \,\,(d=1), \quad \tfrac{3}{4}\,\, (d=2), \quad
0.588\dots\,\, (d=3), \quad \tfrac{1}{2}\,\, (d \geq 5).
\]
Thus, $\SAW$ is \emph{ballistic} in $d=1$, \emph{subballistic and superdiffusive} 
in $d=2,3,4$, and \emph{diffusive} in $d \geq 5$. 

For $d=1$ the above scaling is trivial. For $d \geq 5$ a proof has been given by 
Hara and Slade~\cite{HaSl92a,HaSl92b}. These two cases correspond to ballistic, 
respectively, diffusive behavior. The claim for $d=2,3,4$ is open.
\begin{itemize}
\item
For $d=2$ the scaling limit is predicted to be $\mathrm{SLE}_{8/3}$ (the Schramm 
Loewner Evolution with parameter $8/3$; see Fig.~\ref{fig-SAW}). 
\item
For $d=4$ a proof is under construction by Brydges and Slade (work in progress). 
\end{itemize}
See the lectures by Bauerschmidt, Duminil-Copin, Goodman and Slade~\cite{BaDuGoSl11},
Beffara~\cite{Be11} and Duminil-Copin and Smirnov~\cite{DuSm11} for more details.
$\SAW$ in $d \geq 5$ scales to Brownian motion,
\[
\left(\frac{1}{Dn^{1/2}}\,S_{\lfloor nt \rfloor}\right)_{0 \leq t \leq 1}
\Longrightarrow (B_t)_{0 \leq t \leq 1} \qquad \mbox{ as } n \to \infty,
\]
i.e., $\SAW$ is in the \emph{same universality class} as $\SRW$. Correspondingly, $d=4$ 
is called the upper critical dimension. The intuitive reason for the crossover at $d=4$ 
is that in low dimension long loops are dominant, causing the effect of the self-avoidance 
constraint in $\SAW$ to be long-ranged, whereas in high dimension short loops are dominant, 
causing it to be short-ranged. Phrased differently, since $\SRW$ in dimension $d \geq 2$ 
has Hausdorff dimension $2$, it tends to intersect itself frequently for $d<4$ and 
not so frequently for $d>4$. Consequently, the self-avoidance constraint in $\SAW$ 
changes the qualitative behavior of the path for $d<4$ but not for $d>4$.   


\subsection{Open problems}
\label{S1.8}

A version of $\SAW$ where self-intersections are not forbidden but are nevertheless
discouraged is called the \emph{weakly self-avoiding walk}. Here, $\cW_n$ is the same
as for $\SRW$, but $H_n(w)$ is chosen to be $\beta$ times the number of self-intersections
of $w$, with $\beta \in (0,\infty)$ a parameter referred to as the strength of 
self-repellence. It is predicted that the weakly self-avoiding walk is in the same 
universality class as $\SAW$ (the latter corresponds to $\beta=\infty$). This has been 
proved for $d=1$ and $d \geq 5$, but remains open for $d=2,3,4$. The scaling limit 
of the weakly self-avoiding walk in $d=2$ is again predicted to be $\mathrm{SLE}_{8/3}$, 
despite the fact that $\mathrm{SLE}_{8/3}$ does not intersect itself. The reason is 
that the self-intersections of the weakly self-avoiding walk typically occur close 
to each other, so that when the scaling limit is taken these self-intersections are
lost in the limit. This loss, however, does affect the \emph{time-parametrization} of 
the limiting $\mathrm{SLE}_{8/3}$, which is predicted to be $\beta$-dependent. It is 
a challenge to prove these predictions. For more details on $\mathrm{SLE}$, we refer to 
the lectures by Beffara~\cite{Be11}. 


\section{Polymer collapse}
\label{S2}

In this section we consider a polymer that receives a penalty for each 
\emph{self-intersection} and a reward for each \emph{self-touching}. This 
serves as a model of a polymer subject to screened van der Waals forces, 
or a polymer in a poor solvent. It will turn out that there are three 
phases: \emph{extended}, \emph{collapsed} and \emph{localized}.

An example is polystyrene dissolved in cyclohexane. At temperatures above 35 degrees 
Celsius the cyclohexane is a good solvent, at temperatures below 30 it is a poor solvent. 
When cooling down, the polystyrene collapses from a random coil to a compact ball
(see Fig.~\ref{fig-colpol}).

\begin{figure}[htbp]
\includegraphics[width=.40\hsize]{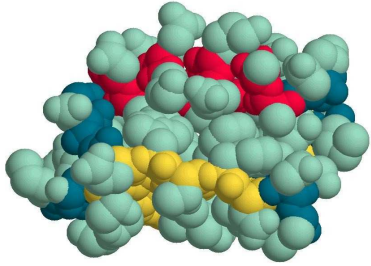}
\caption{\small A collapsed polymer.}
\label{fig-colpol}
\end{figure}

In Sections~\ref{S2.1}--\ref{S2.3} we consider a model with \emph{undirected paths}, 
in Sections~\ref{S2.4}--\ref{S2.5} a model with \emph{directed paths}. In 
Section~\ref{S2.6} we look at what happens when a force is applied to the 
endpoint of a collapsed polymer. In Section~\ref{S2.7} we formulate open
problems.


\subsection{An undirected polymer in a poor solvent}
\label{S2.1}

Our choice for the set of allowed paths and for the interaction Hamiltonian is 
\[
\begin{aligned}
\cW_n &= \big\{w=(w_i)_{i=0}^n\in(\Z^d)^{n+1}\colon\\
&\qquad\qquad\qquad w_0=0,\,\|w_{i+1}-w_i\|=1\,\,
\forall\,0 \leq i < n\big\},\\[0.4cm]
H_n^{\beta,\gamma}(w) &= \beta I_n(w) - \gamma J_n(w),
\end{aligned}
\]
where $\beta,\gamma\in (0,\infty)$, and
\[
\begin{aligned} 
I_n(w) &= \sum_{ {i,j=0} \atop {i<j} }^n 1_{\{\|w_i-w_j\|=0\}},\\
J_n(w) &= \tfrac{1}{2d} \sum_{ {i,j=0} \atop {i<j-1} }^n 1_{\{\|w_i-w_j\|=1\}},
\end{aligned}
\]
count the number of \emph{self-intersections}, respectively, \emph{self-touchings}
of $w$ (see Fig.~\ref{fig-polsintstouch}). The factor $\frac{1}{2d}$ is added to 
account for the fact that each site has $2d$ neighboring sites where the polymer
can achieve a self-touching. The path measure is
\[
P_n^{\beta,\gamma}(w) = \frac{1}{Z_n^{\beta,\gamma}}\,
\erom^{-H_n^{\beta,\gamma}(w)}\,P_n(w), \qquad w\in\cW_n,
\]
where $P_n$ is the law of the $n$-step $\SRW$ and $Z_n^{\beta,\gamma}$ is the 
normalizing partition sum. 

\begin{figure}[htbp]
\includegraphics[width=.60\hsize]{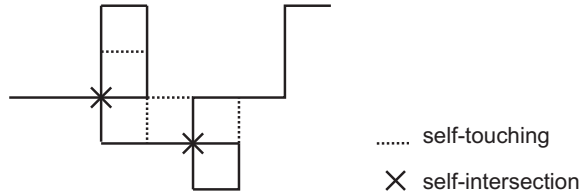}
\caption{\small A polymer with self-intersections and self-touchings.}
\label{fig-polsintstouch}
\end{figure}

Under the law $P_n^{\beta,\gamma}$, self-intersections are penalized while 
self-touchings are rewarded. The case $\gamma=0$ corresponds to weakly 
self-avoiding walk, which falls in the same universality class as $\SAW$ 
as soon as $\beta>0$ (recall Section~\ref{S1.8}). We expect that for 
$\beta\gg\gamma$ the polymer is a \emph{random coil}, while for $\gamma\gg\beta$ 
it is a \emph{compact ball}. A crossover is expected to occur when $\beta$ and
$\gamma$ are comparable. In the next two sections we identify two phase 
transition curves.


\subsection{The localization transition}
\label{S2.2}

For $L\in\N$, abbreviate $\Lambda(L) = [-L,L]^d \cap \Z^d$.

\begin{theorem}
\label{thm:infl}
{\rm [van der Hofstad and Klenke~\cite{vdHoKl01}]}
If $\beta>\gamma$, then the polymer is inflated, i.e., there exists an 
$\epsilon_0=\epsilon_0(\beta,\gamma)>0$ such that for all $0<\epsilon\leq
\epsilon_0$ there exists a $c=c(\beta,\gamma,\epsilon)>0$ such that
\[
P_n^{\beta,\gamma}\big(S_i \in \Lambda(\epsilon n^{1/d})
\,\,\forall\,0 \leq i \leq n\big)
\leq \erom^{-cn} \qquad \forall\,n\in\N.
\]
\end{theorem}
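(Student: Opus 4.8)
\medskip\noindent\emph{Sketch of a possible proof.}
The plan is to run an energy-versus-entropy (change-of-measure) argument. Confining the walk to the box $\Lambda(\epsilon n^{1/d})$, whose volume is only of order $(2\epsilon)^d n$, \emph{forces} the number of self-intersections $I_n(w)$ to be of order $n/\epsilon^d$, whereas the number of self-touchings $J_n(w)$ can never exceed $I_n(w)$ by more than $\tfrac12(n+1)$; since $\beta>\gamma$, this makes the Boltzmann weight $\erom^{-H_n^{\beta,\gamma}(w)}$ of every confined path $w$ exponentially small, while $Z_n^{\beta,\gamma}$ is already bounded below by the weight of a single straight path. Dividing the two bounds and choosing $\epsilon$ small enough that the forced intersection penalty beats $\log(2d)$ gives the claim.

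Concretely, I would proceed in four steps. \emph{(i) An a priori inequality.} Writing $\ell_x=\#\{0\le i\le n:w_i=x\}$ for the local times, one has $\sum_x\ell_x=n+1$ and $I_n(w)=\sum_x\binom{\ell_x}{2}=\tfrac12\big(\sum_x\ell_x^2-(n+1)\big)$; dropping the constraint $j>i+1$ in $J_n$ and applying $ab\le\tfrac12(a^2+b^2)$ to each pair of neighbouring sites yields $J_n(w)\le\tfrac{1}{2d}\cdot d\sum_x\ell_x^2=I_n(w)+\tfrac12(n+1)$, valid for \emph{all} $w\in\cW_n$, hence
\[
H_n^{\beta,\gamma}(w)=\beta I_n(w)-\gamma J_n(w)\ \ge\ (\beta-\gamma)\,I_n(w)-\tfrac{\gamma}{2}(n+1).
\]
\emph{(ii) Confinement forces intersections.} If $w$ stays in $\Lambda:=\Lambda(\epsilon n^{1/d})$, the $\ell_x$ are supported on $V:=|\Lambda|\le(2\epsilon n^{1/d}+1)^d$ sites, so Cauchy--Schwarz gives $\sum_x\ell_x^2\ge(n+1)^2/V$ and hence
\[
I_n(w)\ \ge\ \tfrac{n+1}{2}\Big(\tfrac{n+1}{V}-1\Big)\ \ge\ c_1(\epsilon)\,n
\]
for all large $n$, with $c_1(\epsilon)=\tfrac12\big((2\epsilon)^{-d}-1\big)(1+o(1))\uparrow\infty$ as $\epsilon\downarrow0$. \emph{(iii) Energy bound on confined paths.} Combining (i) and (ii), for $\epsilon$ small there is $K(\epsilon)>0$, with $K(\epsilon)\uparrow\infty$ as $\epsilon\downarrow0$, such that $H_n^{\beta,\gamma}(w)\ge K(\epsilon)\,n$ for every confined $w$ and all large $n$.

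\emph{(iv) Partition-sum comparison.} On the numerator side, $\sum_{w\ \text{confined}}\erom^{-H_n^{\beta,\gamma}(w)}P_n(w)\le\erom^{-K(\epsilon)n}\sum_{w\in\cW_n}P_n(w)=\erom^{-K(\epsilon)n}$, while keeping only the straight path (for which $I_n=J_n=0$ and $P_n=(2d)^{-n}$) gives $Z_n^{\beta,\gamma}\ge(2d)^{-n}$. Therefore
\[
P_n^{\beta,\gamma}\big(S_i\in\Lambda(\epsilon n^{1/d})\ \forall\,0\le i\le n\big)\ \le\ (2d)^n\,\erom^{-K(\epsilon)n}\ =\ \erom^{-(K(\epsilon)-\log 2d)\,n}.
\]
I would then pick $\epsilon_0=\epsilon_0(\beta,\gamma)$ so small that $K(\epsilon_0)>\log(2d)$; since $\epsilon\mapsto K(\epsilon)$ is non-increasing, $c:=c(\beta,\gamma,\epsilon)=K(\epsilon)-\log(2d)>0$ works for every $0<\epsilon\le\epsilon_0$ and all large $n$, and the finitely many small $n$ are absorbed by shrinking $c$ (for those $n$ the confinement probability is $0$ when $\Lambda(\epsilon n^{1/d})=\{0\}$, and otherwise is bounded strictly below $1$).

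The only genuinely quantitative point is step (ii): one must verify that squeezing the polymer into a box of volume $\Theta(n)$ unavoidably produces $\Theta(n)$ self-intersections with a prefactor that diverges as the box shrinks — but this is just a convexity/Cauchy--Schwarz estimate on local times. The conceptual crux, and the reason the threshold sits exactly at $\beta=\gamma$, is the a priori bound $J_n\le I_n+\tfrac12(n+1)$ of step (i): each extra self-touching "costs" at most one unit of local-time self-overlap, so the repulsion wins as soon as $\beta>\gamma$. Everything else is bookkeeping.
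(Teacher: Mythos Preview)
Your argument is correct and matches the heuristic the paper records for the inflated phase: confined paths are forced to have many self-intersections, the self-touchings cannot outnumber the self-intersections by more than a linear amount, and since $\beta>\gamma$ the net energy is large and positive, which beats the trivial lower bound $Z_n^{\beta,\gamma}\ge(2d)^{-n}$. The paper does not spell out a proof (it only cites van der Hofstad and Klenke and states the idea in words), so you have in fact supplied the missing quantitative details cleanly.

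One small remark worth making explicit: your key a~priori inequality $J_n(w)\le I_n(w)+\tfrac12(n+1)$ is exactly the non-negativity of the gradient-of-local-times term in the identity
\[
I_n(w)-J_n(w)=-\tfrac{n+1}{2}+\tfrac{1}{8d}\sum_{\{x,y\}}|\ell_n(x)-\ell_n(y)|^2
\]
that the paper quotes a few lines later (in the discussion of the critical case $\beta=\gamma$). So your step~(i) and the paper's identity are two faces of the same computation; you are simply using the cruder direction, which is all that is needed here. The paper also mentions that the original proof involves a ``geometric argument based on folding of paths''; your Cauchy--Schwarz bound on $\sum_x\ell_x^2$ in step~(ii) is a more direct substitute for any such folding, and is entirely adequate for the inflated-phase statement.
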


\begin{theorem}
\label{thm:loc}
{\rm [van der Hofstad and Klenke~\cite{vdHoKl01}]}
If $\gamma>\beta$, then the polymer is localized, i.e., there exist 
$c=c(\beta,\gamma)>0$ and $L_0 = L_0(\beta,\gamma) \in \N$ such that
\[
P_n^{\beta,\gamma}\big(S_i \in \Lambda(L)\,\,\forall\,0 \leq i \leq n\big)
\geq 1-\erom^{-cLn} \qquad \forall\,n\in\N,\,L \geq L_0.
\]
\end{theorem}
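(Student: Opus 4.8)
Since $P_n(w)=(2d)^{-n}$ for all $w\in\cW_n$, put $g(w):=-H_n^{\beta,\gamma}(w)=\gamma J_n(w)-\beta I_n(w)$, $A_{n,L}:=\{w\in\cW_n:\,w_i\notin\Lambda(L)\text{ for some }i\}$ and $\cW_n(L):=\cW_n\setminus A_{n,L}$, so that
\[
1-P_n^{\beta,\gamma}\big(S_i\in\Lambda(L)\ \forall\,i\big)
=\frac{\sum_{w\in A_{n,L}}\erom^{g(w)}}{\sum_{w\in\cW_n}\erom^{g(w)}}
\ \le\ \frac{\sum_{w\in A_{n,L}}\erom^{g(w)}}{\sum_{w\in\cW_n(L)}\erom^{g(w)}}.
\]
If $L\ge n$ then $A_{n,L}=\emptyset$ (a nearest-neighbour path from $0$ has $\|w_i\|_\infty\le i\le n$), so assume $n>L\ge L_0$, with $L_0$ to be fixed below. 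In terms of the occupation numbers $m_x(w):=\#\{i:\,w_i=x\}$ one has $I_n(w)=\sum_x\binom{m_x(w)}{2}$ and $J_n(w)=\frac1{2d}\big(\sum_{\{x,y\}:\,\|x-y\|=1}m_x(w)m_y(w)-n\big)$, the $-n$ accounting for the $n$ consecutive (adjacent but excluded) pairs $(w_i,w_{i+1})$; in particular $g(w)\le\gamma J_n(w)\le\tfrac\gamma2\sum_x m_x(w)^2$. The plan is to prove (i) $\sum_{w\in\cW_n(L)}\erom^{g(w)}\ge\erom^{an^2-b_1n}$ and (ii) $\sum_{w\in A_{n,L}}\erom^{g(w)}\le\erom^{an^2-c_1Ln+b_2n}$, for constants $a,c_1>0$ and $b_1,b_2<\infty$ depending only on $\beta,\gamma,d$; dividing, the left side above is then $\le\erom^{-c_1Ln+(b_1+b_2)n}\le\erom^{-cLn}$ with $c:=c_1/2$, provided $L\ge L_0$ for $L_0=L_0(\beta,\gamma,d)$ large enough.

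\emph{(i)} Fix $\ell\in\N$ with $\ell>\gamma/(\gamma-\beta)$ — possible since $\gamma>\beta$ — and let $Q_\ell:=\{0,\dots,\ell-1\}^d$, which has $\ell^d$ sites and $d\ell^{d-1}(\ell-1)$ nearest-neighbour pairs. Let $w^{(n)}\in\cW_n$ traverse a fixed Hamiltonian path of $Q_\ell$ forwards and backwards alternately, so $m_x(w^{(n)})=\rho+O(1)$ for each $x\in Q_\ell$ with $\rho=\lfloor n/\ell^d\rfloor$. Substituting into the formulas above,
\[
g\big(w^{(n)}\big)=\frac{n^2}{2\ell^d}\Big[(\gamma-\beta)-\frac\gamma\ell\Big]+O(n)\ =:\ an^2+O(n),
\qquad a:=\frac1{2\ell^d}\Big[(\gamma-\beta)-\frac\gamma\ell\Big]>0 .
\]
Choosing $\ell_0:=\ell$ to be a value of $\ell$ maximizing $a$, and $L_0\ge\ell_0$ so that $Q_{\ell_0}\subseteq\Lambda(L)$, gives $\sum_{w\in\cW_n(L)}\erom^{g(w)}\ge\erom^{g(w^{(n)})}\ge\erom^{an^2-b_1n}$.

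\emph{(ii)} The crux is an \emph{a priori} upper bound with the same rate: $g(w)\le an^2+b_2n$ for \emph{all} $w\in\cW_n$. To see it, bound $g(w)\le\tfrac12[\tfrac\gamma d\sum_{\{x,y\}}m_xm_y-\beta\sum_x m_x^2]+O(n)$ and use the edge-isoperimetric inequality on $\Z^d$: if $\{m_x\}$ is supported on $N$ sites then $\tfrac1d\sum_{\{x,y\}}m_xm_y\le(1-c_dN^{-1/d})\sum_x m_x^2$, with $c_dN^{-1/d}$ the deficit coming from the $\Theta(N^{(d-1)/d})$ boundary edges of the support; the extremal supports being (near-)cubes, the resulting bound, optimized over $N$ and over $M=\sum_x m_x^2\le(n{+}1)^2/N$, equals $an^2+O(n)$ with the same $a$ as in (i). For $w\in A_{n,L}$ one refines this by cutting $w$ at its first exit time from $\Lambda(L)$: either at least $L$ steps of $w$ sit at sites of occupation $o(\rho)$ — each contributing only $o(\gamma n)$ in place of the $\Theta((\gamma{-}\beta)\rho)$ it would contribute in the core — or else the support of $w$ has diameter $>L$, hence $N\gtrsim L^d$ and the optimized bound drops to $O(n^2/L^d)\ll an^2$; either way $g(w)\le an^2-c_1Ln+b_2n$. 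Summing over the at most $(2d)^n$ paths in $A_{n,L}$ yields (ii).

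\emph{Main obstacle.} The heart of the argument is the a priori bound $g(w)\le an^2+O(n)$ with the \emph{same} constant $a$ as the explicit lower bound: because the energy is a quadratic, non-local functional of the occupation field $(m_x)$, it cannot be controlled step by step, and one needs the quantitative edge-isoperimetric estimate on $\Z^d$ to show that no configuration beats the ``coiled'' one by more than $O(n)$ in the exponent. The subsequent $A_{n,L}$-bound then rests on the dichotomy ``dense constant-size core plus low-occupation tentacle'' versus ``over-spread support'', and making this precise — in particular ruling out that the tentacle itself be dense, and quantifying the $\Omega(Ln)$ energy loss it entails — is where the real work lies.
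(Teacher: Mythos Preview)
Your overall architecture --- lower bound from an explicit coiled path, matching upper bound on $\max_w g(w)$, then a quantitative stability estimate for paths that exit $\Lambda(L)$ --- is natural, but Part~(ii) is not a proof as written, and the difficulties you flag in your ``main obstacle'' paragraph are real.

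First, two concrete errors. The constraint you write, $M=\sum_x m_x^2\le (n{+}1)^2/N$, is the \emph{wrong} direction of Cauchy--Schwarz: for occupation supported on $N$ sites one has $M\ge (n{+}1)^2/N$, with equality only for the flat profile. So you cannot bound $M$ from above this way. Second, the ``edge-isoperimetric'' inequality $\tfrac1d\sum_{\{x,y\}}m_xm_y\le(1-c_dN^{-1/d})\sum_x m_x^2$ is false for non-constant weights. Using $\sum_x m_x^2-\tfrac1d\sum_{\{x,y\}}m_xm_y=\tfrac1{2d}\sum_{\{x,y\}}(m_x-m_y)^2$, your claim is a Poincar\'e inequality with gap $\asymp N^{-1/d}$, but the sharp Dirichlet gap (Faber--Krahn on $\Z^d$) is only of order $N^{-2/d}$; the ground-state profile on a cube beats the flat indicator. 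Thus the assertion that the optimisation ``equals $an^2+O(n)$ with the same $a$'' is unjustified: you would need to prove that the cube-filling walk actually realises $\max_w g(w)$ up to $O(n)$, which is itself a nontrivial variational problem. And even granting that, the refinement for $w\in A_{n,L}$ demands a quantitative \emph{stability} version (near-maximisers are close to coiled, with deficit $\gtrsim Ln$), which is a substantially harder statement that your dichotomy does not establish.

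The paper (following van der Hofstad--Klenke) takes a different and more robust route: a \emph{folding} argument. One reflects the portion of $w$ after its first exit from $\Lambda(L)$ across the exit hyperplane, iterating until the folded path $\hat w$ lives in a box of fixed size $L_0$. The point is that folding can only create, never destroy, coincidences and adjacencies between the two pieces, and for $\gamma>\beta$ the net effect on $g$ is controlled so that $g(\hat w)\ge g(w)$ up to an $O(n)$ correction while the number of folds grows with $L$; this yields the ratio bound directly as an injective path-to-path comparison, with no need to identify the optimal constant $a$ or to prove stability of a variational problem. Your strategy, if it could be completed, would give more (the actual value of $\max_w g(w)$), but as it stands the folding argument is both simpler and correct.
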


\noindent
Thus, at $\gamma=\beta$ a phase transition takes place, from a phase in which 
the polymer exits a box of size $n^{1/d}$ to a phase in which it is confined 
to a finite box. (In Section~\ref{S2.3} we will see that the inflated phase 
splits into two subphases: a collapsed phase and an extended phase.)

\begin{figure}[htbp]
\vspace{1cm}
\begin{center}
\setlength{\unitlength}{0.6cm}
\begin{picture}(7,6)(1,-0.5)
\put(0,0){\line(8,0){8}}
\put(0,0){\line(0,6){6}}
{\thicklines
\qbezier(0,0)(3,3)(6,6)
}
\put(-.7,-.7){$0$}
\put(8.5,-.2){$\beta$}
\put(-.2,6.5){$\gamma$}
\put(4,2){\tiny inflated}
\put(1,5){\tiny localized}
\end{picture}
\end{center}
\caption{\small Two phases: inflated and localized.}
\label{fig-inflloc}
\end{figure}
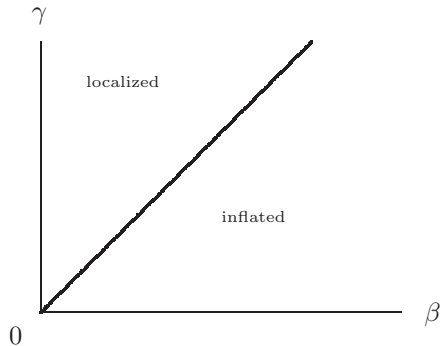

The main ideas behind the proof of Theorems~\ref{thm:infl}--\ref{thm:loc} are:
\begin{itemize}
\item[$\blacktriangleright$]
\emph{Inflated phase:} For $\epsilon$ small, most $n$-step paths that are 
folded up inside $\Lambda(\epsilon n^{1/d})$ have many self-intersections 
and many self-touchings. Since $\beta>\gamma$, the former produce more positive
energy than the latter produce negative energy, and so the total energy is
positive, making such paths unlikely. 
\item[$\blacktriangleright$]
\emph{Localized phase:} Two key ingredients are important:
\begin{itemize} 
\item[$\bullet$] 
An estimate showing that, since $\gamma>\beta$, the minimum of the Hamiltonian is 
achieved by a localized path. 
\item[$\bullet$] 
An estimate showing that, if $L$ is so large that $\Lambda(L)$ contains a minimizing 
path, then the penalty for leaving $\Lambda(L)$ is severe.
\end{itemize}
\end{itemize}
The proof uses a geometric argument based on \emph{folding of paths}, in the 
spirit of what is done in Section 2.1 of Bauerschmidt, Duminil-Copin, Goodman and 
Slade~\cite{BaDuGoSl11}. It is not known whether or not the minimizing path is 
unique modulo the symmetries of $\Z^d$. 

In terms of the mean-square displacement it is predicted that 
\[
E_n^{\beta,\gamma}(\|S_n\|^2) \asymp n^{2\nu} \quad \text{ as } n\to\infty,
\] 
where $\asymp$ stands for ``asymptotially the same modulo logarithmic factors'' (i.e., 
$E_n^{\beta,\gamma}(\|S_n\|^2)=n^{2\nu+o(1)}$). Theorems~\ref{thm:infl}--\ref{thm:loc} 
show that $\nu=0$ in the localized phase and $\nu \geq 1/d$ in the inflated phase. It 
is conjectured in van der Hofstad and Klenke~\cite{vdHoKl01} that on the critical line 
$\gamma=\beta$, 
\[
\nu = \nu_{\mathrm{loc}} = 1/(d+1).
\]
For $d=1$, this conjecture is proven in van der Hofstad, Klenke and 
K\"onig~\cite{vdHoKlKo02}. For $d \geq 2$ it is still open. The key 
simplification that can be exploited when $\beta=\gamma$ is the relation
\[
I_n(w)-J_n(w) = - \frac{n+1}{2} + \frac{1}{8d} \sum_{ \{x,y\} \in \Z^d\times\Z^d } 
|\ell_n(x) - \ell_n(y)|^2,  
\]
where the sum runs over all unordered pairs of neighboring sites, and $\ell_n(x) 
= \sum_{i=0}^n \ind_{\{w_i=x\}}$ is the local time of $w$ at site $x$. Since
the factor $-\frac{n+1}{2}$ can be absorbed into the partition sum, the model
at $\beta=\gamma$ effectively becomes a model where the energy is $\beta/4d$
times the sum of the squares of the gradients of the local times.  


\subsection{The collapse transition}
\label{S2.3}

It is predicted that there is a second phase transition at a critical value $\gamma_c
=\gamma_c(\beta)<\beta$ at which the inflated polymer moves from scale $n^{1/d}$ to 
scale $n^{\nu_\mathrm{SAW}}$, with $\nu_\mathrm{SAW}$ the critical exponent for $\SAW$. 
Thus, it is predicted that the inflated phase splits into two subphases: a \emph{collapsed 
phase} and an \emph{extended phase}, separated by a second critical curve at which a 
\emph{collapse transition} takes place. At the second critical curve, the critical 
exponent is predicted to be
\[
\nu = \nu_{\mathrm{coll}} = \left\{\begin{array}{ll}
\frac47, &\mbox{ if } d=2,\\[0.2cm]
\frac12, &\mbox{ if } d\geq 3.
\end{array}
\right.
\]

Thus, the phase diagram for $d \geq 2 $ is conjectured to have the shape in 
Fig.~\ref{fig-phdiacolpol}. The free energy is known to be $\infty$ in the localized 
phase, and is expected to lie in $(-\infty,0)$ in the two other phases. However, not 
even the existence of the free energy has been proven in the latter two phases.


Although these predictions are supported by heuristic theories (Duplantier and 
Saleur~\cite{DuSa87}, Seno and Stella~\cite{SeSt88}) and by extensive simulations, 
a mathematical proof of the existence of the collapse transition and a mathematical
verification of the values of the critical exponent have remained open for more than 20 
years. For $d=1$ there is no collapse transition because $\nu_\mathrm{SAW}=1$. Indeed, 
Theorem~\ref{thm:infl} says that below the critical line $\gamma=\beta$ the polymer 
is ballistic like $\SAW$. 

\begin{figure}[htbp]
\vspace{1cm}
\begin{center}
\setlength{\unitlength}{0.7cm}
\begin{picture}(7,6)(0,0)
\put(0,0){\line(8,0){8}}
\put(0,0){\line(0,6){6}}
{\thicklines
\qbezier(0,0)(3,3)(6,6)
\qbezier(0,0)(3,2.4)(7,2.7)
}
\qbezier[60](0,3)(4,3)(8,3)
\put(-.4,-.4){$0$}
\put(8.5,-.2){$\beta$}
\put(-.2,6.5){$\gamma$}
\put(-1,3){$\gamma_c$}
\put(1.5,4){$\nu=0$}
\put(1.5,3.5){$\mbox{\tiny localized}$} 
\put(5.5,4){$\nu=\frac1d$}
\put(5.5,3.5){$\mbox{\tiny collapsed}$}
\put(6.3,6.3){$\nu=\nu_{\mathrm{loc}} = \frac{1}{d+1}$}
\put(7.5,2.5){$\nu=\nu_{\mathrm{coll}}$}
\put(4.5,1){$\nu=\nu_{\mbox{\tiny SAW}}$}
\put(4.5,.5){$\mbox{\tiny extended}$}
\end{picture}
\end{center}
\caption{\small Conjectured phase diagram.}
\label{fig-phdiacolpol}
\end{figure}
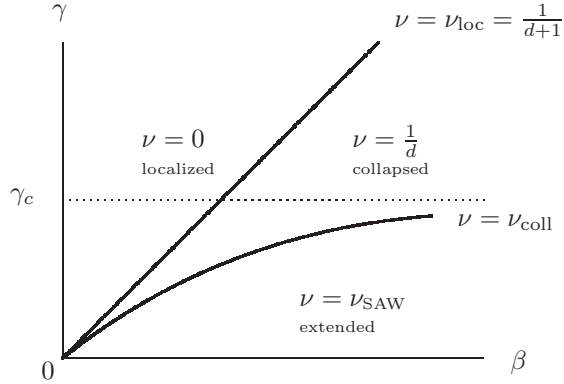

In $d=3$, simulations by Tesi, Janse van Rensburg, Orlandini and 
Whittington~\cite{TeJvReOrWh96} for $\SAW$ with attraction (corresponding 
to $\beta=\infty$ and $\gamma\in (0,\infty)$) yield $\gamma_c=\gamma_c(\infty) 
\in [0.274,0.282]$ and $\nu_\mathrm{coll}\in [0.48,0.50]$, the latter in 
accordance with the prediction mentioned above.


\subsection{A directed polymer in a poor solvent}
\label{S2.4}

In order to deal with the collapse transition mathematically, it is necessary 
to turn to a directed version of the model. The results to be described below 
are taken from Brak, Guttmann and Whittington~\cite{BrGuWh92}, with refinements 
carried out in various later papers.

Our choice for the set of allowed paths and the interaction Hamiltonian is
(see Fig.~\ref{fig-dircolpol})
\[
\begin{aligned} 
\cW_n &= 
\big\{w=(w_i)_{i=0}^n\in(\N_0\times\Z)^{n+1}\colon\\
&\qquad\qquad\qquad w_0=0,\,w_1-w_0 =\,\rightarrow,\\
&\qquad\qquad\qquad w_{i+1}-w_i \in \{\uparrow,\downarrow,\rightarrow\}
\,\,\forall\,0<i<n,\\
&\qquad\qquad\qquad w_i \neq w_j\,\,\forall\,0 \leq i < j \leq n\big\},\\[0.4cm]
H_n^\gamma(w) &= -\gamma J_n(w),
\end{aligned}
\]
where $\uparrow$, $\downarrow$ and $\rightarrow$ denote steps between neighboring 
sites in the north, south and east direction, respectively, $\gamma\in\R$ and
\[
J_n(w) = \sum_{ {i,j=0} \atop {i<j-1} }^n 1_{\{\|w_i-w_j\|=1\}}.
\]
The path measure is
\[
P_n^\gamma(w) = \frac{1}{Z_n^\gamma}\,\,\erom^{-H^\gamma_n(w)}, \qquad w \in \cW_n,
\]
with counting measure as the reference law (instead of the uniform measure $P_n$ used
in Sections~\ref{S2.1}--\ref{S2.3}) and with normalizing partition sum $Z_n^\gamma$. 
Thus, each self-touching is rewarded when $\gamma>0$ (= attractive) and penalized 
when $\gamma<0$ (= repulsive). Note that, because the path is self-avoiding ($I_n(w)=0$), 
the directed model is to be compared with the undirected model at $\beta=\infty$. Also 
note that the model lives in dimension $1+1$ and that no factor $\frac12$ is needed in 
front of the sum defining $J_n(w)$ because the path is directed. The choice that the 
first step of $w$ must be to the right is made for convenience only. (In the undirected 
model studied in Sections~\ref{S2.1}--\ref{S2.3} we did not consider the case $\gamma<0$ 
because of the presence of $\beta$.)

\begin{figure}[htbp]
\includegraphics[width=.30\hsize]{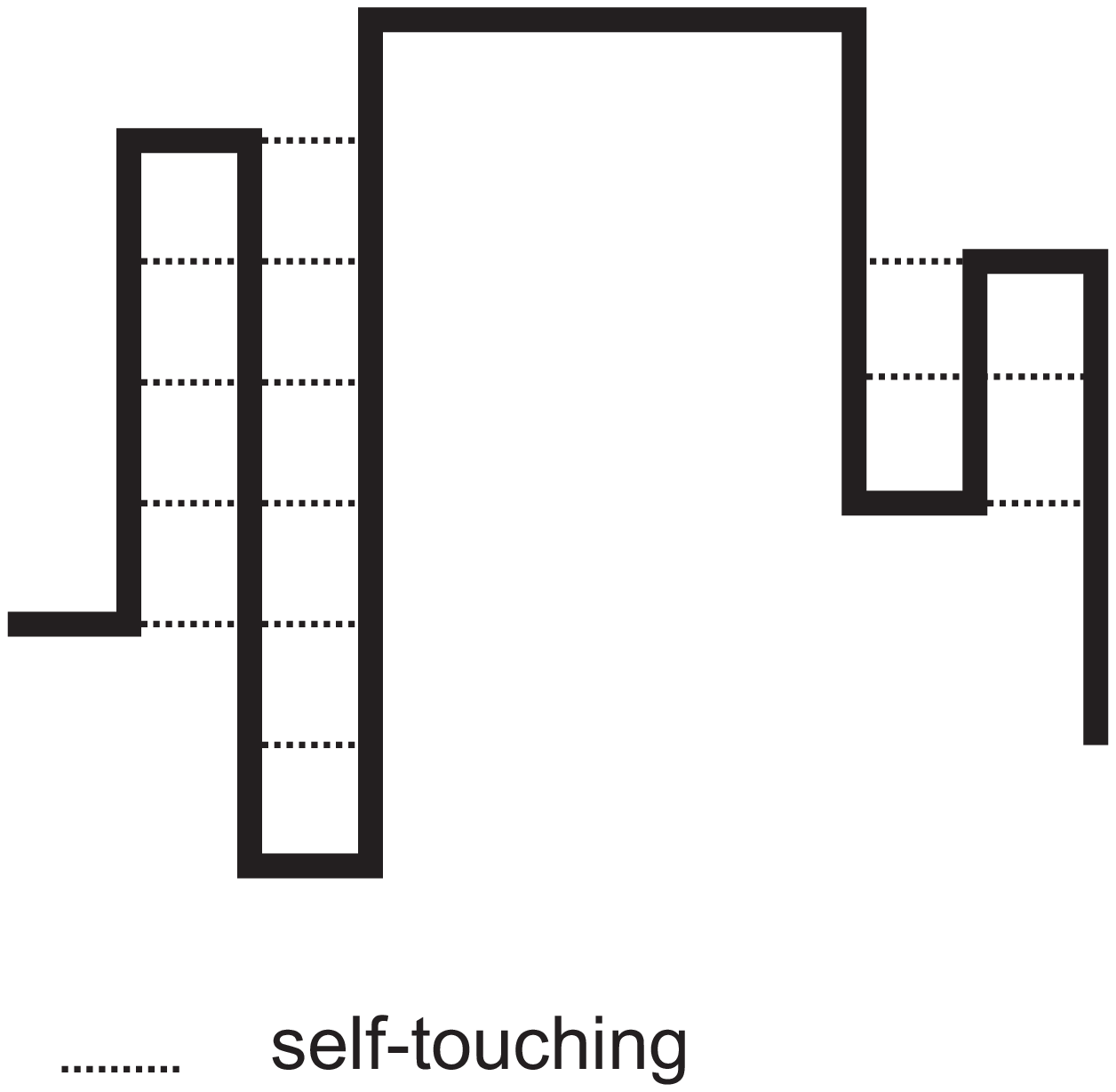}
\caption{\small A directed $\SAW$ with self-touchings.}
\label{fig-dircolpol}
\end{figure}


\subsection{Generating functions}
\label{S2.5}

The free energy of the directed polymer is given by
\[
f(\gamma)=\lim_{n\to\infty}\frac1n \log Z_n^\gamma,
\]
whenever the limit exists. The following theorem establishes existence and shows that 
there are two phases: a collapsed phase and an extended phase (see 
Fig.~\ref{fig-dircolpolphtr}).

\begin{theorem}
\label{thm:dircolpoltr}
{\rm [Brak, Guttmann and Whittington~\cite{BrGuWh92}]}
The free energy exists, is finite, and has a collapse transition at $\gamma_c=\log x_c$, 
with $x_c \approx 3.382975$ the unique positive solution of the cubic equation 
$x^3-3x^2-x-1=0$. The collapsed phase corresponds to $\gamma>\gamma_c$, the extended 
phase to $\gamma<\gamma_c$. 
\end{theorem}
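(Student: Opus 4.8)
The plan is to set up the generating function of the model and reduce the problem to a pair of coupled functional equations that can be solved explicitly. First I would introduce, for each self-avoiding directed path $w\in\cW_n$, a bookkeeping of its \emph{vertical segments} (maximal runs of consecutive $\uparrow$ or $\downarrow$ steps between two $\rightarrow$ steps): such a path is completely described by the heights and directions of its successive columns, and the self-touchings counted by $J_n(w)$ come in two types — those \emph{within} a column (a segment of vertical length $\ell$ contributes $\ell-1$ internal self-touchings) and those \emph{between} two consecutive columns (determined by how much the two vertical segments overlap vertically). The crucial point is that, because the walk is directed and self-avoiding, the interaction is \emph{local in the column index}: the energy contributed at the junction of columns $k$ and $k+1$ depends only on the two vertical segments involved. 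This is exactly the structure for which a transfer-operator / Temperley-type generating function works.

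Next I would define the two-variable generating function
\[
G(x,z) \;=\; \sum_{n\ge 1} z^n \sum_{w\in\cW_n} x^{J_n(w)},
\qquad x=\erom^{\gamma},
\]
and, refining by the length and orientation of the last vertical segment, derive a functional equation of the form $G = $ (rational in $x,z$) $\cdot\, G$ $+$ (explicit term), i.e.\ a linear $q$-difference equation in an auxiliary variable tracking the last column height. Solving this $q$-linear recursion produces $G(x,z)$ as a ratio of $q$-series (basic hypergeometric series), whose numerator and denominator are explicit. The radius of convergence of $n\mapsto Z_n^{\gamma}$ in the variable $z$ is then governed by the smallest singularity $z_c(x)$ of $G(x,\cdot)$, and standard subadditivity (or direct inspection of $G$) gives $f(\gamma)=-\log z_c(\erom^{\gamma})$; finiteness is immediate since $1\le Z_n^{\gamma}\le 3^n\max(1,x)^{\binom n2}$ only gives $f<\infty$ for $\gamma\le 0$, so for $\gamma>0$ one instead reads finiteness directly off the closed form, using that the number of self-touchings of a directed SAW of length $n$ is $O(n)$ — a short geometric lemma, since each column contributes at most its own length and the column lengths sum to at most $n$.

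The collapse transition then comes from a competition between two candidate singularities of $G(x,z)$: a ``polymer'' singularity coming from the zero of the denominator $q$-series (dominant for small $x$, i.e.\ the extended phase, where long vertical excursions are not favoured and $f$ is analytic and behaves like the non-interacting directed walk), and a ``condensation'' singularity coming from the radius of convergence of the defining $q$-series themselves (dominant for large $x$, the collapsed phase, where the walk forms a few very long columns packed tightly together). The critical point $\gamma_c$ is the value of $\gamma$ where the two singularities coincide; extracting it amounts to solving a polynomial equation in $x=\erom^{\gamma}$, and after clearing the $q$-series one lands on the cubic $x^3-3x^2-x-1=0$, whose unique positive root is $x_c\approx 3.382975$. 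The sign of $\gamma-\gamma_c$ then determines which singularity wins, giving the collapsed phase for $\gamma>\gamma_c$ and the extended phase for $\gamma<\gamma_c$.

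I expect the main obstacle to be the bookkeeping in deriving and then solving the functional equation: one must be careful that the between-column self-touching count is correctly expressed in terms of the overlap of adjacent vertical segments (this is where the factor structure lives), and the resulting $q$-difference equation must be solved \emph{exactly} rather than asymptotically, since we need the precise location of both singularities to pin down the cubic. Identifying which of the two singularities dominates on each side of $\gamma_c$ — equivalently, checking that they actually cross rather than merely touch — is the delicate analytic step; everything else (existence and finiteness of $f$, the $O(n)$ bound on self-touchings) is routine.
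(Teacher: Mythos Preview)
Your overall strategy is the one the paper uses: decompose paths by their vertical columns, derive a linear $q$-recursion for the generating function (the paper tracks the length $r$ of the \emph{first} vertical segment rather than the last, obtaining a second-order recursion $g_{r+1}-(1+x)y\,g_r-(1-x)x^r y^{r+2}g_r+xy^2 g_{r-1}=0$, but the structure is the same), solve it to express $G(x,y)$ as a ratio of $q$-hypergeometric series, and read off $f(\gamma)=-\log y_c(\erom^{\gamma})$ from the smallest singularity. Your picture of the two competing singularities---a zero of the denominator versus the hyperbola $q=xy=1$ where the $q$-series themselves diverge---and of the cubic as the equation for their intersection is exactly right; existence and finiteness via subadditivity and the bound $J_n(w)<n$ are also as in the paper.

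There is one concrete error in your bookkeeping that would derail the functional equation. You claim a vertical segment of length $\ell$ contributes $\ell-1$ ``internal'' self-touchings. In this model $J_n(w)=\sum_{i<j-1}\ind_{\{|w_i-w_j|=1\}}$, and within a single column the sites $(k,a),(k,a+1),\ldots$ are either consecutive along the path (excluded by $i<j-1$) or at lattice distance $\ge 2$. Hence there are \emph{no} self-touchings inside a column; every contact is horizontal, between adjacent columns. The correct count is: if the first column has $r$ vertical steps and the second has $s$, the contribution is $\min(r,s)$ when the two segments run in opposite directions and $0$ when they run in the same direction (the single shared height is the east step itself, which is consecutive and therefore excluded). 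This is precisely the combinatorial input that produces the $x^r$ and $x^s$ terms in the recursion; your spurious $x^{\ell-1}$ per column would change the equation and move the singularities. Once you drop the internal term, your plan coincides with the paper's argument.
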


\begin{figure}[htbp]
\vspace{-.3cm}
\begin{center}
\setlength{\unitlength}{0.6cm}
\begin{picture}(12,3)(0,-1.3)
\put(0,0){\line(12,0){12}}
\put(6,-.5){\line(0,1){1}}
\put(5.6,-1.3){$\gamma_c$}
\put(2,.5){extended}
\put(8,.5){collapsed}
\end{picture}
\end{center}
\caption{\small Collapse transition for the directed model.}
\label{fig-dircolpolphtr}
\end{figure}

Below we sketch the proof of Theorem~\ref{thm:dircolpoltr} in 5 Steps. The proof 
makes use of \emph{generating functions}. The details are worked out in 
{\bf Tutorial 2 in Appendix~\ref{appB}}. In Section~\ref{S3} we will encounter 
another model where generating functions lead to a full description of a 
phase transition. 

\medskip\noindent
{\bf 1.}
The partition sum $Z_n^\gamma=\sum_{w\in\cW_n} \erom^{\gamma J_n(w)}$ can 
be written as $Z_n^\gamma=Z_n(e^\gamma)$ with the power series
\[
Z_n(x) = \sum_{m\in\N_0} c_n(m) x^m, \quad x \in [0,\infty),\,n\in\N_0,
\]
where
\[
\begin{aligned}
c_n(m) &= |\{w\in\cW_n\colon\,J_n(w)=m\}|\\
&= \mbox{the number of $n$-step paths with $m$ self-touchings.}
\end{aligned} 
\]

\medskip\noindent
{\bf 2.} 
The existence of the free energy can be proved with the help of 
a subadditivity argument applied to the coefficients $c_n(m)$, 
based on concatenation of paths (as in Section 2 in Bauerschmidt,
Duminil-Copin, Goodman and Slade~\cite{BaDuGoSl11}.) 

\medskip\noindent
{\bf 3.} The finiteness of the free energy follows from the 
observation that $c_n(m)=0$ for $m \geq n$ and $\sum_{m=0}^\infty c_n(m) 
\leq 3^n$, which gives $f(\gamma) \leq \log[ 3(\erom^\gamma \vee 1)]
= \log 3 +(\gamma \vee 0)$.

\medskip\noindent
{\bf 4.} The following lemma gives a closed form expression for the 
generating function ($x=\erom^\gamma$)
\[
\begin{aligned}
G = G(x,y) &= \sum_{n\in\N_0} Z_n(x)\,y^n\\
&= \sum_{n\in\N_0} \Big[\sum_{m=0}^n c_n(m)\,x^m\Big]\,y^n,
\quad x,y \in [0,\infty). 
\end{aligned}
\]

\begin{lemma}
\label{lem:genfct}
For $x,y\in [0,\infty)$ the generating function is given by the formal power 
series
\[
G(x,y) = - \frac{aH(x,y)-2y^2}{bH(x,y)-2y^2},
\]
 where
\[
a = y^2(2+y-xy),\,\,b = y^2(1+x+y-xy),\,\,H(x,y) = y\,\frac{\bar{g}_0(x,y)}{\bar{g}_1(x,y)},
\]
 with
\[
\begin{aligned}
&\bar{g}_r(x,y) = y^r\,\left(1 + \sum_{k\in\N}
\frac{(y-q)^k\,y^{2k}\, q^{\frac12 k(k+1)}}
{\prod_{l=1}^k (yq^l-y)(yq^l-q)}\,q^{kr}\right),\\
&q=xy,\,r=0,1.
\end{aligned}
\]
\end{lemma}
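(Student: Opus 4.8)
The plan is to set up a combinatorial decomposition of the directed self-avoiding paths according to their ``column structure,'' derive a functional recursion for the associated generating functions, and then solve that recursion in closed form. First I would observe that a directed path in $\cW_n$, since every vertical run must be monotone (the path cannot revisit a site), decomposes uniquely into a sequence of vertical segments joined by horizontal (east) steps; one records, for each such segment, its length and the relative vertical position of its top and bottom with respect to the previous segment. A self-touching occurs precisely between vertically adjacent sites in consecutive columns that are not joined by an edge of the path, so the number $J_n(w)$ is a sum of local contributions, one for each pair of consecutive columns, determined by the amount of ``overlap'' between the two vertical segments. This is the standard Temperley-style / Dyck-path bookkeeping for column-convex directed animals, and it is what produces the $q$-series in the statement, with $q = xy$ tracking one horizontal step together with the self-touching weight attached to it.

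Next I would introduce refined generating functions $g_r(x,y)$, say the sum over partial paths weighted by $x^{J}\,y^{(\text{length})}$ with an extra marking $q^{r\cdot(\text{height of the last column})}$ or a similar boundary statistic, so that appending one more column to a configuration becomes a linear operation on these functions. The vertical segment in a new column can start at any height overlapping the previous column and extend up or down, and summing the resulting geometric series in the two vertical directions yields a $q$-linear recursion of the form $g_r = (\text{rational in } x,y) \cdot g_r + (\text{shift})\,g_{r+1} + (\text{inhomogeneous term})$. Iterating this recursion produces the infinite sum $\sum_{k\in\N} \frac{(y-q)^k y^{2k} q^{k(k+1)/2}}{\prod_{l=1}^k (yq^l - y)(yq^l - q)}\,q^{kr}$ appearing in $\bar g_r(x,y)$: the $q^{k(k+1)/2}$ comes from the cumulative vertical-overlap weights across $k$ nested columns, and the denominator product comes from summing the two geometric series at each of the $k$ stages. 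One then sets $H = y\,\bar g_0/\bar g_1$, which is the natural ``continued-fraction tail'' capturing the effect of all-but-the-first column.

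Finally I would handle the first and last columns separately, since the path is forced to begin with an east step ($w_1 - w_0 = \,\rightarrow$) and the initial and terminal vertical segments have only one neighbor rather than two. This boundary correction contributes the low-degree polynomials $a = y^2(2 + y - xy)$ and $b = y^2(1 + x + y - xy)$: gluing the bulk generating function (encoded through $H$) onto the special first column gives $G = -\frac{aH - 2y^2}{bH - 2y^2}$ after simplification of the resulting rational expression. Throughout, all identities are between \emph{formal} power series in $y$ (the coefficient of $y^n$ involves only finitely many terms, since $c_n(m) = 0$ for $m \geq n$ and $\sum_m c_n(m) \leq 3^n$ by Step~3), so no analytic convergence is needed and the manipulations with the infinite $q$-series are legitimate term by term. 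The main obstacle, and the step that requires genuine care, is getting the column-transfer recursion exactly right — in particular correctly accounting for self-touchings at the very top and very bottom of overlapping segments and avoiding double-counting, since an off-by-one error there changes the powers of $q$ and the shape of the denominator product. The cleanest way to control this is to verify the first several coefficients $c_n(m)$ by hand against a direct enumeration of short paths, which pins down the boundary polynomials $a, b$ and the normalization of $\bar g_r$ unambiguously; the remainder is the routine algebra deferred to Tutorial~2 in Appendix~\ref{appB}.
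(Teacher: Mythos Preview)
Your column-decomposition intuition is correct and matches the paper's strategy, but the proposal as written has a concrete gap at the heart of the argument, and a vaguer one at the boundary.

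First, the recursion. In the paper (Tutorial~2), $g_r(x,y)$ is the generating function of paths whose \emph{first} vertical run has length exactly $r$; the decomposition according to the second vertical run length $s$ and its direction (same or opposite) gives the identity
\[
g_r = y^{r+1}\Bigg[\,2 + \sum_{s=0}^{r}(1+x^{s})\,g_s + \sum_{s>r}(1+x^{r})\,g_s\,\Bigg],
\]
together with $g_0 = y + yG$. This is \emph{not} of the shape you wrote, namely $g_r = A\,g_r + B\,g_{r+1} + C$: it involves \emph{all} $g_s$ simultaneously, with coefficients that depend on whether $s\le r$ or $s>r$ (that is exactly where $\min\{r,s\}$ enters). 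One cannot iterate it directly. The paper's next move is to take linear combinations of three consecutive instances of this identity and cancel the infinite sums, yielding a genuine three-term linear recurrence
\[
g_{r+1} - (1+x)y\,g_r - (1-x)x^{r}y^{r+2}\,g_r + xy^{2}\,g_{r-1} = 0,\qquad r\ge 1.
\]
The $q$-hypergeometric series for $\bar g_r$ then arises not by ``iterating'' but by the Ansatz $g_r = \lambda^{r}\sum_{l\ge 0}p_l\,q^{lr}$, which forces $\lambda\in\{y,q\}$ and produces the product formula for $p_l$ in the statement.

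Second, and this is the step your proposal misses entirely: because the recurrence is second order, its solution space is two-dimensional, so $g_r = C_1\,g_{r,1} + C_2\,g_{r,2}$ with $g_{r,i}$ corresponding to $\lambda_i\in\{y,q\}$. The paper eliminates $C_2$ by a \emph{growth argument}: for $x>1$, $0<y<1$ with $q=xy<1/(1+\sqrt{2})$, the true $g_r$ decays faster than $q^{r}$ (since $|\cW_{n,r}|\le |\cW_n|$ with connective constant $1+\sqrt 2$), whereas $g_{r,2}\sim q^{r}$. Hence $C_2=0$. Without this selection step you cannot express $G$ in terms of $\bar g_0/\bar g_1$ alone, and your ``continued-fraction tail'' $H$ would be contaminated by the spurious branch.

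Finally, the polynomials $a$ and $b$ do not come from a generic boundary correction: they come from inserting $r=1$ into the explicit identity for $g_r$ above and combining with $g_0 = y+yG$, which gives two linear equations $C_1 g_{0,1} = 2(y+yG)$ and $C_1 g_{1,1} = a + bG$; eliminating $C_1$ yields the stated formula for $G$. Your sketch of ``handle the first and last columns separately \ldots after simplification'' does not pin these down, and checking small coefficients by hand is a sanity check, not a derivation.
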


\noindent
The function $H(x,y)$ is a quotient of two $q$-hypergeometric functions (which are singular at least along the curve 
$q=xy=1$). As shown in Brak, Guttmann and Whittington~\cite{BrGuWh92}, the latter can be expressed as continued fractions 
and therefore can be properly analyzed (as well as computed numerically).

\begin{figure}[htbp]
\vspace{1cm}
\begin{center}
\setlength{\unitlength}{0.6cm}
\begin{picture}(7,6)(0,0)
\put(0,0){\line(8,0){8}}
\put(0,0){\line(0,6){6}}
{\thicklines
\qbezier(5,2)(7,1.2)(8,1)
\qbezier(0,3.5)(2,2.5)(5,2)
}
\qbezier[20](5,0)(5,1)(5,2)
\qbezier[40](0.5,6)(2.5,3)(5,2.1)
\put(4.8,-.6){$x_c$}
\put(-.3,-.6){$0$}
\put(8.5,-.2){$x$}
\put(-.6,6.5){$y_c(x)$}
\put(5,2){\circle*{0.2}}
\put(0,3.5){\circle*{0.2}}
\end{picture}
\end{center}
\vspace{0.3cm}
\caption{\small The domain of convergence of the generating function $G(x,y)$ 
lies below the critical curve (= solid curve). The dotted line is the hyperbola 
$xy=1$ (corresponding to $q=1$). The point $x_c$ is identified with the collapse 
transition, because this is where the free energy is non-analytic.}
\label{fig-dircolpolcr}
\end{figure}
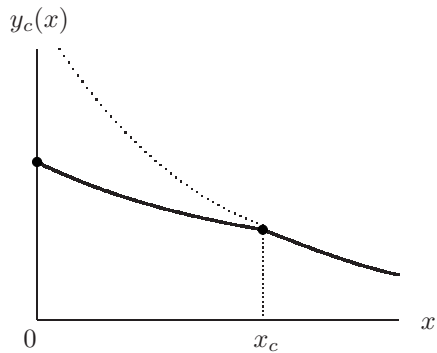

\medskip\noindent
{\bf 5.}
By analyzing the singularity structure of $G(x,y)$ it is possible to compute $f(\gamma)$. 
Indeed, the task is to identify the critical curve $x \mapsto y_c(x)$ in the $(x,y)$-plane 
below which $G(x,y)$ has no singularities and on or above which it does, because this 
identifies the free energy as
\[
f(\gamma) = - \log y_c(\erom^\gamma), \quad \gamma \in \R.
\]
It turns out that the critical curve has the shape given in Fig.~\ref{fig-dircolpolcr},
which implies that the free energy has the shape given in Fig.~\ref{fig-dircolpolfe}.

\begin{figure}[htbp]
\vspace{1cm}
\begin{center}
\setlength{\unitlength}{0.6cm}
\begin{picture}(7,6)(-3,-1.8)
\put(-6,0){\line(12,0){12}}
\put(-1,-2){\line(0,7){7}}
{\thicklines
\qbezier(1,2)(3,4)(4,5)
\qbezier(-5,.5)(-1,.5)(1,2)
}
\qbezier[20](-1,0)(0,1)(1,2)
\qbezier[20](1,2)(1,1)(1,0) 
\qbezier[40](-6,.4)(-3,.4)(-1,.4)
\put(.8,-.6){$\gamma_c$}
\put(-1.5,-.6){$0$}
\put(6.4,-.1){$\gamma$}
\put(-1.3,5.4){$f(\gamma)$}
\put(1,2){\circle*{0.25}}
\end{picture}
\end{center}
\caption{\small Plot of the free energy per monomer. The collapse transition 
occurs at $\gamma_c=\log x_c$. The limiting value at $\gamma=-\infty$ equals
$\log(1/y_c(0))$ with $y_c(0) \approx 0.453397$ the solution of the cubic 
equation $y^3+2y-1=0$, and is the entropy per step of the directed polymer 
that avoids self-touchings altogether, i.e., $\lim_{n\to\infty} \tfrac{1}{n}
\log c_n(0)$.}
\label{fig-dircolpolfe}
\end{figure}
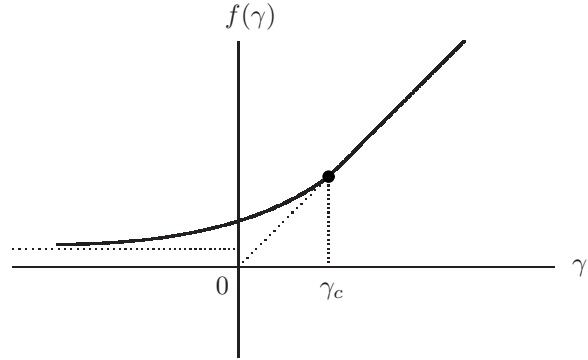

The derivative of the free energy is the limiting number of self-touchings 
per monomer, as plotted in Fig.~\ref{fig-dircolpolfeder}:
\[
f'(\gamma) = \lim_{n\to\infty} \frac{1}{n} \sum_{w\in\cW_n} J_n(w)\,P_n^\gamma(w). 
\]

\begin{figure}[htbp]
\vspace{1cm}
\begin{center}
\setlength{\unitlength}{0.6cm}
\begin{picture}(7,6)(-3,-2)
\put(-6,0){\line(12,0){12}}
\put(-1,-2){\line(0,6){6}}
{\thicklines
\qbezier(1,3)(3,3)(5,3)
\qbezier(-5,0.3)(0,1)(1,3)
}
\qbezier[20](1,3)(0,3)(-1,3)
\qbezier[30](1,3)(1,1.5)(1,0) 
\put(0.8,-.6){$\gamma_c$}
\put(-1.5,2.9){$1$}
\put(-1.5,-.6){$0$}
\put(6.4,-.1){$\gamma$}
\put(-1.3,4.4){$f'(\gamma)$}
\put(1,3){\circle*{0.25}}
\end{picture}
\end{center}
\caption{\small Plot of the number of self-touchings per monomer. Since $\gamma
\mapsto f'(\gamma)$ is continuous but not differentiable at $\gamma_c$, the phase 
transition is second order.}
\label{fig-dircolpolfeder}
\end{figure}
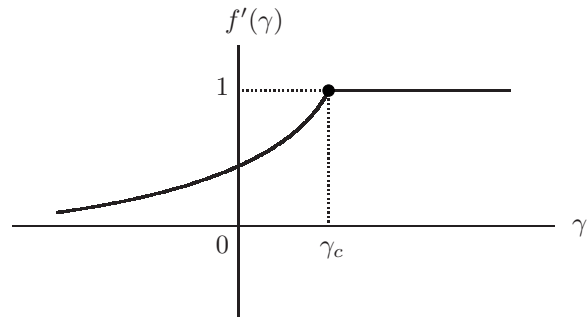


\subsection{Pulling at a collapsed polymer}
\label{S2.6}

It is possible to induce a collapse transition by applying a \emph{force} 
to the endpoint of a polymer rather than changing its interaction strength. 
The force can be applied, for instance, with the help of optical tweezers. 
A focused laser beam is used, containing a narrow region -- called the beam 
waist -- in which there is a strong electric field gradient. When a dielectric 
particle, a few nanometers in diameter, is placed in the waist, it feels a 
strong attraction towards the center of the waist. It is possible to chemically 
attach such a particle to the end of the polymer and then pull on the particle 
with the laser beam, thereby effectively exerting a force on the polymer itself. 
Current experiments allow for forces in the range of $10^{-12}-10^{-15}$ Newton. 
With such microscopically small forces the structural, mechanical and elastic 
properties of polymers can be probed. We refer to Auvray, Duplantier, Echard 
and Sykes~\cite{AuDuEcSy08}, Section 5.2, for more details. The force is the
result of transversal fluctuations of the dielectric particle, which can
be measured with great accuracy. 

Ioffe and Velenik~\cite{IoVe10,IoVe11,IoVepra,IoVeprb} consider a version of 
the undirected model in which the Hamiltonian takes the form
\[
H_n^{\psi,\phi}(w) = \sum_{x\in\Z^d} \psi\big(\ell_n(x)\big) - (\phi,w_n),
\qquad w \in \cW_n,
\] 
where $\cW_n$ is the set of allowed $n$-step paths for the \emph{undirected}
model considered in Sections~\ref{S2.1}--\ref{S2.3}, $\ell_n(x)=\sum_{i=0}^n 
\ind_{\{w_i=x\}}$ is the local time of $w$ at site $x \in \Z^d$, $\psi\colon\,
\N_0\to [0,\infty)$ is non-decreasing with $\psi(0)=0$, and $\phi\in\R^d$ is 
a \emph{force} acting on the endpoint of the polymer. Note that $(\phi,w_n)$ 
is the \emph{work} exerted by the force $\phi$ to move the endpoint of the 
polymer to $w_n$. The path measure is 
\[
P_n^{\psi,\phi}(w) = \frac{1}{Z_n^{\psi,\phi}}\,\erom^{-H_n^{\psi,\phi}(w)}\,P_n(w),
\qquad w\in\cW_n,
\] 
with $P_n$ the law of $\SRW$. 

Two cases are considered:
\begin{itemize}
\item[(1)] 
$\psi$ is \emph{superlinear} (= repulsive interaction).
\item[(2)] 
$\psi$ is \emph{sublinear} with $\lim_{\ell\to\infty} \psi(\ell)/\ell=0$ 
(= attractive interaction).
\end{itemize} 
Typical examples are:
\begin{itemize}
\item[(1)] 
$\psi(\ell)=\beta\ell^2$ (which corresponds to the weakly self-avoiding walk).
\item[(2)] 
$\psi(\ell) = \sum_{k=1}^\ell \beta_k$ with $k\mapsto\beta_k$ non-increasing 
such that $\lim_{k\to\infty} \beta_k =0$ (which corresponds to the annealed 
version of the model of a polymer in a random potential described in Section~\ref{S6}, 
for the case where the potential is non-negative).
\end{itemize} 
It is shown in Ioffe and Velenik~\cite{IoVe10,IoVe11,IoVepra,IoVeprb} (see also
references cited therein) that:
\begin{itemize}
\item[(1)] 
The polymer is in an extended phase for all $\phi \in \R^d$.
\item[(2)] 
There is a compact convex set $K=K(\psi)\subset\R^d$, with $\mathrm{int}(K) 
\ni 0$, such that the polymer is in a collapsed phase (= subballistic) when 
$\phi \in \mathrm{int}(K)$ and in an extended phase (= ballistic) when $\phi 
\notin K$. 
\end{itemize}
The proof uses \emph{coarse-graining arguments}, showing that in the extended 
phase large segments of the polymer can be treated as directed. For $d \geq 2$, 
the precise shape of the set $K$ is not known. It is known that $K$ has the 
symmetries of $\Z^d$ and has a locally analytic boundary $\partial K$ with a 
uniformly positive Gaussian curvature. It is predicted not to be a ball, but 
this has not been proven. The phase transition at $\partial K$ is first order.


\subsection{Open problems}
\label{S2.7}

The main challenges are:
\begin{itemize}
\item
Prove the conjectured phase diagram in Fig.~\ref{fig-phdiacolpol} for the undirected 
$(\beta,\gamma)$-model studied Sections~\ref{S2.1}--\ref{S2.3} and determine the order 
of the phase transitions. 
\item
Extend the analysis of the directed $\gamma$-model studied in Sections~\ref{S2.4}--\ref{S2.5} 
to $1+d$ dimensions with $d \geq 2$.
\item
Find a closed form expression for the set $K$ of the undirected $\psi$-model studied 
in Section~\ref{S2.6}. 
\end{itemize}

For the undirected model in $d=2$, the scaling limit is predicted to be:

\medskip\noindent
\begin{tabular}{lll}
&(1) &$\mathrm{SLE}_8$ in the collapsed phase (between the two critical curves),\\
&(2) &$\mathrm{SLE}_6$ at the collapse transition (on the lower critical curve),\\
&(3) &$\mathrm{SLE}_{8/3}$ in the extended phase (below the lower critical curve),
\end{tabular}

\medskip\noindent
all three with a time parametrization that depends on $\beta$ and $\gamma$ (see
the lectures by Beffara~\cite{Be11} for an explanation of the time parametrization). 
Case (1) is plausible because $\mathrm{SLE}_8$ is space filling, while we saw in 
Section~\ref{S2.2} that the polymer rolls itself up inside a ball with a volume 
equal to the polymer length. Case (2) is plausible because on the hexagonal lattice 
the exploration process in critical percolation has a path measure that, apart from 
higher order terms, is equal to that of the $\SAW$ with a critical reward for 
self-touchings (numerical simulation shows that $\gamma_c \approx \log 2.8$), and 
this exploration process has been proven to scale to $\mathrm{SLE}_6$ (discussions 
with Vincent Beffara and Markus Heydenreich). Case (3) is plausible because 
$\mathrm{SLE}_{8/3}$ is predicted to be the scaling limit of $\SAW$ (see 
Section~\ref{S1.7}).


\section{A polymer near a homogeneous interface}
\label{S3}

This section considers a polymer in the vicinity of a linear interface. Each 
monomer that touches the interface feels a \emph{binding energy}, resulting in 
an attractive interaction between the polymer and the interface. The focus is 
on the occurrence of a phase transition between a \emph{localized phase}, 
where the polymer stays close to the interface, and a \emph{delocalized phase}, 
where it wanders away from the interface (see Fig.~\ref{fig-hompin}). In 
Sections~\ref{S3.1}--\ref{S3.3} we look at the \emph{pinning} version of the 
model, where the polymer can move on both sides of the interface, and in 
Section~\ref{S3.4} at the \emph{wetting} version, where the polymer is constrained 
to stay on one side of the interface (which acts like a hard wall). In 
Sections~\ref{S3.5}--\ref{S3.6} we study how a pinned polymer can be pulled 
off an interface by applying a force to one of its endpoints. Section~\ref{S3.7} 
lists some open problems.

\begin{figure}[htbp]
\vspace{-3cm}
\includegraphics[width=.60\hsize]{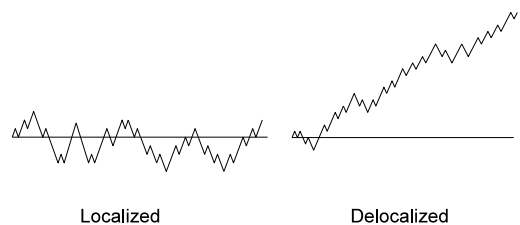}
\vspace{-5cm}
\caption{\small Path behavior in the two phases.}
\label{fig-hompin}
\end{figure}

Polymers are used as surfactants, foaming and anti-foaming agents, etc. The wetting
version of the model considered in the present section can be viewed as describing 
``paint on a wall''.


\subsection{Model}
\label{S3.1}

Our choices for the set of paths and for the interaction Hamiltonian are
\[
\begin{aligned}
\cW_n &= \big\{w=(i,w_i)_{i=0}^n\colon\,w_0=0,\,
w_i \in \Z \,\,\,\forall\,0 \leq i \leq n\big\},\\
H_n^\zeta(w) &= -\zeta L_n(w),
\end{aligned} 
\] 
with $\zeta\in\R$ and
\[
L_n(w) = \sum_{i=1}^n 1_{\{w_i=0\}}, \qquad w \in \cW_n,
\]
the \emph{local time of $w$ at the interface}. The path measure is 
\[
P_n^\zeta(w) = \frac{1}{Z_n^\zeta}\,e^{-H_n^\zeta(w)}\,P_n(w),
\qquad w\in\cW_n,
\]
where $P_n$ is the projection onto $\cW_n$ of the path measure $P$ of 
an \emph{arbitrary} directed irreducible random walk. This models a 
$(1+1)$-dimensional directed polymer in $\N_0 \times \Z$ in which 
each visit to the interface $\N\times\{0\}$ contributes an energy 
$-\zeta$, which is a reward when $\zeta>0$ and a penalty when $\zeta<0$
(see Fig.~\ref{fig-hompolexam}). 

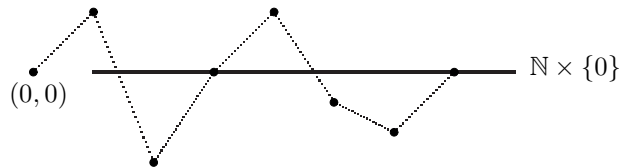
\begin{figure}[htbp]
\vspace{1cm}
\begin{center}
\setlength{\unitlength}{0.4cm}
\begin{picture}(0,0)(3,0)
{\thicklines
\qbezier(-4,0)(0,0)(10,0)
}
\qbezier[20](-6,0)(-5,1)(-4,2)
\qbezier[30](-4,2)(-3,-.5)(-2,-3)
\qbezier[30](-2,-3)(-1,-1.5)(0,0)
\qbezier[20](0,0)(1,1)(2,2)
\qbezier[25](2,2)(3,.5)(4,-1)
\qbezier[20](4,-1)(5,-1.5)(6,-2)
\qbezier[20](6,-2)(7,-1)(8,0)
\put(-6,0){\circle*{0.25}}
\put(-4,2){\circle*{0.25}}
\put(-2,-3){\circle*{0.25}}
\put(0,0){\circle*{0.25}}
\put(2,2){\circle*{0.25}}
\put(4,-1){\circle*{0.25}}
\put(6,-2){\circle*{0.25}}
\put(8,0){\circle*{0.25}}
\put(10.5,-.1){$\N \times \{0\}$}
\put(-6.8,-1){$(0,0)$}
\end{picture}
\end{center}
\vspace{1cm}
\caption{\small A $7$-step two-sided path that makes $2$ visits 
to the interface.}
\label{fig-hompolexam}
\end{figure}

Let $S=(S_i)_{i\in\N_0}$ denote the random walk with law $P$ 
starting from $S_0=0$. Let
\[
R(n) = P(S_i \neq 0\,\,\forall\, 1 \leq i < n,\,S_n=0), \qquad n\in\N.
\]
denote the \emph{return time distribution} to the interface. Throughout the 
sequel it is assumed that $\sum_{n\in\N} R(n)=1$ and
\[
R(n) = n^{-1-a}\,\ell(n), \qquad n\in\N,
\]
for some $a \in (0,\infty)$ and some $\ell(\cdot)$ slowly varying at infinity 
(i.e., $\lim_{x\to\infty} \ell(cx)/\ell(x)$ $=1$ for all $c\in (0,\infty)$). Note 
that this assumption implies that $R(n)>0$ for $n$ large enough, i.e., $R(\cdot)$ is 
aperiodic. It is trivial, however, to extend the analysis below to include the 
periodic case. $\SRW$ corresponds to $a=\frac12$ and period $2$.


\subsection{Free energy}
\label{S3.2}

The free energy can be computed explicitly. Let $\phi(x) = \sum_{n\in\N} 
x^n\,R(n)$, $x \in [0,\infty)$. 

\begin{theorem}
\label{thm:hompolfe} 
{\rm [Fisher~\cite{Fi84}, Giacomin~\cite{Gi07}, Chapter 2]}
The free energy
\[
f(\zeta) = \lim_{n\to\infty} \frac{1}{n} \log Z_n^\zeta
\] 
exists for all $\zeta\in\R$ and is given by
\[
f(\zeta) = \left\{\begin{array}{ll}
0, &\mbox{ if } \zeta \leq 0,\\
r(\zeta), &\mbox{ if } \zeta>0,
\end{array}
\right.
\]
where $r(\zeta)$ is the unique solution of the equation
\[
\phi(e^{-r}) = e^{-\zeta}, \qquad \zeta>0.
\]
\end{theorem}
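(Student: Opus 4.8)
The plan is to compute the partition sum $Z_n^\zeta$ explicitly by decomposing a path according to its successive returns to the interface, pass to generating functions, and then read off the exponential growth rate from the location of the dominant singularity. First I would write, for $\zeta \in \R$,
\[
Z_n^\zeta = \sum_{k=0}^n e^{\zeta k} \sum_{\substack{0 < i_1 < \cdots < i_k \le n}} \prod_{j=1}^k R(i_j - i_{j-1}) \cdot Q(n - i_k),
\]
where $i_0 = 0$, $R(\cdot)$ is the return-time distribution introduced above, and $Q(m) = P(S_i \ne 0 \ \forall\, 1 \le i \le m)$ is the probability of no return in the last stretch (with $Q(0)=1$). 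Since $\sum_n R(n) = 1$ the last factor is a ``survival tail'' that is summable in a suitable sense; it only affects subexponential corrections, so it can be ignored for the free-energy computation (a standard observation, which I would make precise by noting $Q(m) = \sum_{\ell > m} R(\ell)$ so that $\sum_m Q(m)x^m = (1-\phi(x))/(1-x) < \infty$ for $x<1$).

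Next I would introduce the generating function $\mathcal{Z}^\zeta(x) = \sum_{n \in \N_0} Z_n^\zeta\, x^n$. Using the renewal structure above and the convolution theorem for power series, this factorizes as
\[
\mathcal{Z}^\zeta(x) = \left(\sum_{m \ge 0} Q(m) x^m\right) \cdot \frac{1}{1 - e^\zeta \phi(x)},
\]
because the middle sum over $k$ returns is a geometric series in $e^\zeta \phi(x)$. The first factor is finite and nonzero for $x \in [0,1)$, hence the radius of convergence of $\mathcal{Z}^\zeta$ is governed by the smallest $x > 0$ at which $e^\zeta \phi(x) = 1$, provided such an $x$ lies in $[0,1)$; otherwise it is governed by $x=1$ (the radius of convergence of the first factor, using $\phi(1)=1$). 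By the Cauchy--Hadamard theorem, $f(\zeta) = -\log x_c(\zeta)$, where $x_c(\zeta)$ is that dominant singularity. Since $\phi$ is strictly increasing and continuous on $[0,1)$ with $\phi(0)=0$ and $\phi(1^-)=1$: if $\zeta \le 0$ then $e^\zeta \phi(x) < 1$ for all $x \in [0,1)$, so $x_c(\zeta) = 1$ and $f(\zeta) = 0$; if $\zeta > 0$ then $e^{-\zeta} < 1 = \phi(1^-)$, so there is a unique $x_c(\zeta) = e^{-r(\zeta)} \in (0,1)$ solving $\phi(e^{-r}) = e^{-\zeta}$, and $f(\zeta) = r(\zeta) > 0$. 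This gives exactly the claimed formula.

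The one genuine subtlety — and the step I expect to need the most care — is upgrading the statement ``radius of convergence equals $x_c(\zeta)$'' to ``$\tfrac1n \log Z_n^\zeta \to -\log x_c(\zeta)$,'' i.e.\ ruling out oscillatory or polynomially-corrected behavior that would spoil the clean limit. For $\zeta>0$ this follows because $x \mapsto 1/(1-e^\zeta\phi(x))$ has an isolated simple pole at $x_c(\zeta)$ (here I use that $\phi'(x_c) < \infty$ when $x_c<1$, which holds automatically since $x_c<1$ is inside the disk of convergence of $\phi$), so a standard Tauberian/transfer argument — or just extracting the pole and bounding the remainder — yields $Z_n^\zeta \sim C(\zeta)\, x_c(\zeta)^{-n}$. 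For $\zeta \le 0$ one shows $Z_n^\zeta$ is subexponential: the upper bound $Z_n^\zeta \le Z_n^0 = \sum_{w \in \cW_n} P_n(w) = 1$ when $\zeta \le 0$ is immediate (each visit now carries weight $\le 1$), giving $f(\zeta) \le 0$; and $Z_n^\zeta \ge e^{\zeta}\, Q(n-1)\cdot(\text{contribution of paths with exactly one visit, at time }n)$... more simply, $Z_n^\zeta \ge P_n(S_i \ne 0\ \forall\, 1 \le i \le n) = Q(n)$, which is polynomially small (by the regular-variation assumption on $R$, $Q(n) \asymp n^{-a}\ell(n)$ up to constants), so $f(\zeta) \ge 0$. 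Combining, $f(\zeta) = 0$ for $\zeta \le 0$. Alternatively, and perhaps cleanest for exposition, one can bypass Tauberian theory entirely by establishing the limit directly from the supermultiplicativity $Z_{n+m}^\zeta \ge Z_n^\zeta Z_m^\zeta$ (concatenating a path of length $n$ ending at the interface with one of length $m$), which gives existence of $f(\zeta) = \lim \tfrac1n \log Z_n^\zeta = \sup_n \tfrac1n \log Z_n^\zeta$ by Fekete's lemma, and then identify the value using the generating-function computation above together with the elementary fact that for a power series with nonnegative coefficients the exponential growth rate of the coefficients is the reciprocal of the radius of convergence.
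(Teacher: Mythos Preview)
Your approach is correct but takes a different route from the paper's. For $\zeta>0$ the paper works with the constrained partition sum $Z_n^{*,\zeta}$ (paths pinned at the endpoint) and rewrites it as $e^{r(\zeta)n}$ times the probability that $n$ belongs to a renewal process with the tilted increment law $R^\zeta(k)=e^{\zeta-r(\zeta)k}R(k)$; the renewal theorem gives $Q^\zeta(n\in T)\to 1/M^\zeta\in(0,\infty)$, hence $\tfrac1n\log Z_n^{*,\zeta}\to r(\zeta)$, and a sandwich $Z_n^{*,\zeta}\le Z_n^\zeta\le(1+Cn)Z_n^{*,\zeta}$ (splitting on the last zero) transfers this to the free sum. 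Your generating-function identity $\mathcal Z^\zeta(x)=\big(\sum_m Q(m)x^m\big)/(1-e^\zeta\phi(x))$ followed by singularity analysis is the analytic counterpart of the same mechanism: ``extracting the simple pole at $x_c$'' is the renewal theorem in disguise, the residue involving $1/\phi'(x_c)$, which is exactly the tilted mean. The generating-function route is arguably more self-contained (no appeal to the renewal theorem), while the paper's tilting argument is more probabilistic and generalizes more transparently to the disordered setting later in the notes. For $\zeta\le 0$ your argument ($Q(n)\le Z_n^\zeta\le 1$) is essentially identical to the paper's.

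One small caveat on your Fekete alternative: the supermultiplicativity $Z_{n+m}^\zeta\ge Z_n^\zeta Z_m^\zeta$ holds for the \emph{constrained} sum $Z_n^{*,\zeta}$, since concatenation requires the first path to end at $0$; it does not hold literally for the free $Z_n^\zeta$. So if you go that way you still need the same constrained/free comparison the paper uses.
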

 
\begin{proof}
For $\zeta \leq 0$, estimate
\[
\sum_{m>n} R(m) = P(S_i \neq 0\,\,\forall\,1\leq i\leq n) \leq Z_n^\zeta \leq 1,
\]
which implies $f(\zeta)=0$ because the left-hand side decays polynomially in $n$. 

For $\zeta>0$, let
\[
R^\zeta(n) = \erom^{\zeta-r(\zeta)n}\,R(n), \qquad n\in\mathbb{N}.
\]
By the definition of $r(\zeta)$, this is a probability distribution on $\mathbb{N}$, 
with a finite mean $M^\zeta=\sum_{n\in\N} nR^\zeta(n)$ because $r(\zeta)>0$. The 
partition sum when the polymer is constrained to end at $0$ can be written as
\[
Z_n^{*,\zeta} 
= \sum_{ {w\in\cW_n} \atop {w_n=0} } \erom^{\zeta L_n(w)}\,P_n(w)
= \erom^{r(\zeta)n}\,Q^\zeta(n \in T)
\]
with
\[
Q^\zeta(n \in T) = \sum_{m=1}^n 
\sum_{ {j_1,\dots,j_m \in \mathbb{N}} \atop {j_1+\cdots+j_m=n} }
\prod_{k=1}^m R^\zeta(j_k),
\]
where $T$ is the renewal process whose law $Q^\zeta$ is such that the i.i.d.\ 
renewals have law $R^\zeta$. Therefore, by the renewal theorem,
\[
\lim_{n\to\infty} Q^\zeta(n \in T) = 1/M^\zeta, 
\]
which yields
\[
\lim_{n\to\infty} \frac{1}{n}\,\log Z_n^{*,\zeta} = r(\zeta). 
\] 

By splitting the partition sum $Z_n^\zeta$ according to the last hitting time 
of $0$ (see the end of {\bf Tutorial 1 in Appendix~\ref{appA}}), it is straightforward 
to show that there exists a $C<\infty$ such that
\[
Z_n^{*,\zeta} \leq Z_n^\zeta \leq (1+Cn)Z_n^{*,\zeta} \qquad \forall\,n\in\N_0.
\]
It therefore follows that
\[
f(\zeta) = \lim_{n\to\infty} \frac{1}{n} \log Z_n^\zeta = r(\zeta).
\]
\end{proof}

For $\SRW$ (see Spitzer~\cite{Sp76}, Section 1)
\[
\phi(x)=1-\sqrt{1-x^2}, \qquad x \in [0,1].
\]
By Theorem~\ref{thm:hompolfe}, this gives
\[
f(\zeta) = r(\zeta) = \tfrac12\left[\zeta-\log(2-e^{-\zeta})\right], 
\quad 
f'(\zeta) = \tfrac12\left[1-\tfrac{\erom^{-\zeta}}{2-\erom^{-\zeta}}\right], 
\qquad \zeta > 0, 
\]
which is plotted in Fig.~\ref{fig-hompolfeSRW}.

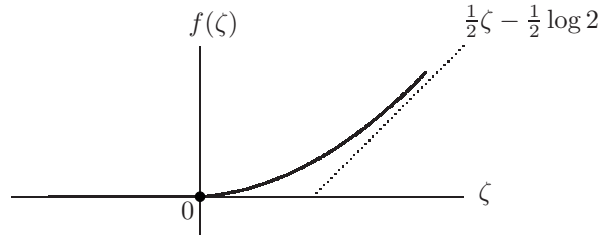
\begin{figure}[htbp]
\vspace{1cm}
\begin{center}
\setlength{\unitlength}{0.5cm}
\begin{picture}(7,6)(-2,-2)
\put(-6,0){\line(12,0){12}}
\put(-1,-1){\line(0,5){5}}
{\thicklines
\qbezier(-5,0)(-3,0)(-1,0)
\qbezier(-1,0)(2,0.2)(5,3.3)
}
\qbezier[40](2,0)(4,2)(6,4)
\put(-1.5,-.6){$0$}
\put(6.4,-.1){$\zeta$}
\put(-1.3,4.4){$f(\zeta)$}
\put(6,4.5){$\tfrac12\zeta-\tfrac12\log 2$}
\put(-1,0){\circle*{0.25}}
\end{picture}
\end{center}
\caption{\small Plot of the free energy for pinned $\SRW$.}
\label{fig-hompolfeSRW}
\end{figure}

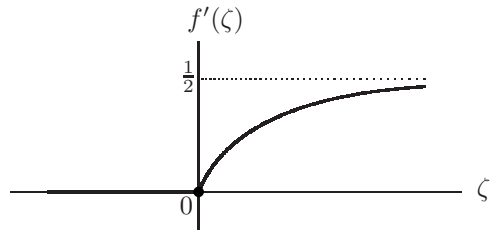
\begin{figure}[htbp]
\begin{center}
\setlength{\unitlength}{0.5cm}
\begin{picture}(7,6)(-3,-1.5)
\put(-6,0){\line(12,0){12}}
\put(-1,-1){\line(0,5){5}}
{\thicklines
\qbezier(-1,0)(0,2.5)(5,2.8)
\qbezier(-5,0)(-3,0)(-1,0)
}
\qbezier[40](-1,3)(1,3)(5,3) 
\put(-1.5,2.9){$\tfrac12$}
\put(-1.5,-.6){$0$}
\put(6.4,-.1){$\zeta$}
\put(-1.3,4.4){$f'(\zeta)$}
\put(-1,0){\circle*{0.25}}
\end{picture}
\end{center}
\caption{\small Plot of the average fraction of adsorbed monomers for 
pinned $\SRW$. The phase transition is second order.}
\label{fig-hompolabfr}
\end{figure}


\subsection{Path properties and order of the phase transition}
\label{S3.3}

\begin{theorem}
\label{thm:hompolpath}
{\rm [Deuschel, Giacomin and Zambotti~\cite{DeGiZa05}, Caravenna, Giacomin
and Zambotti~\cite{CaGiZa06}, Giacomin~\cite{Gi07}, Chapter 2]}
Under the law $P_n^\zeta$ as $n\to\infty$:\\
(a) If $\zeta>0$, then the path hits the interface with a strictly positive 
density, while the length and the height of the largest excursion away from 
the interface up to time $n$ are of order $\log n$.\\
(b) If $\zeta<0$, then the path hits the interface finitely often.\\
(c) If $\zeta=0$, then the number of hits grows like a power of $n$. 
\end{theorem}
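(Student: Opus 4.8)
The plan is to reduce everything to the renewal-theoretic picture already used in the proof of Theorem~\ref{thm:hompolfe}. Recall that, after the tilting $R^\zeta(n) = \erom^{\zeta - r(\zeta)n}R(n)$ for $\zeta>0$, the measure $P_n^\zeta$ on the visits to the interface is (up to the standard last-visit correction $Z_n^\zeta \leq (1+Cn)Z_n^{*,\zeta}$) the law of a renewal process $T$ with inter-arrival law $R^\zeta$ conditioned to contain $n$ (or to have $n$ in a fixed window near the right end). For $\zeta<0$ the relevant reference is the sub-probability law $\erom^{\zeta}R(n)$, of total mass $\erom^\zeta\sum_n R(n) = \erom^\zeta < 1$, so the renewal process is \emph{terminating}; for $\zeta=0$ it is $R(n)$ itself, which is a genuine probability but with $R(n)=n^{-1-a}\ell(n)$, hence with infinite mean when $a\in(0,1)$ and finite mean when $a>1$ — the two sub-regimes will have to be treated slightly differently, though the conclusion ``a power of $n$'' holds in both.

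First I would treat part (b), $\zeta<0$. Here $Z_n^\zeta$ is comparable to the probability that a terminating renewal process survives past $n$, which tends to a positive constant (the survival probability), and conditionally on the finite total number $N$ of returns the law of the return points is the obvious one. Concretely, $P_n^\zeta(L_n = k)$ is, up to the $(1+Cn)$ factor, proportional to $\erom^{k\zeta}$ times the number of ways to place $k$ returns among $n$ steps weighted by $\prod R(j_i)$; summing, one gets that $L_n$ is stochastically dominated uniformly in $n$ by a geometric-type random variable, so $L_n$ converges in distribution to a proper (a.s. finite) limit — the path hits the interface only finitely often. The polynomial decay $\sum_{m>n}R(m)\asymp n^{-a}\ell(n)$ controls the error terms here.

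Next, part (a), $\zeta>0$. Positive density of interface hits is a law-of-large-numbers statement for the renewal process $T$ with law $Q^\zeta$: since $r(\zeta)>0$ forces $M^\zeta=\sum_n nR^\zeta(n)<\infty$, the renewal theorem gives $|T\cap[0,n]|/n \to 1/M^\zeta>0$ $Q^\zeta$-a.s., and the conditioning on $n\in T$ is handled exactly as in the proof of Theorem~\ref{thm:hompolfe} (the last-block correction is negligible on the exponential scale and, with a bit more care, on the LLN scale via a local-limit / renewal estimate). For the largest excursion: the number of excursions up to time $n$ is $\Theta(n)$, and each excursion length is an independent copy of $R^\zeta$, which has exponential tails because $R^\zeta(n)=\erom^{\zeta}R(n)\erom^{-r(\zeta)n}$ decays exponentially. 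Hence the maximum of $\Theta(n)$ i.i.d. random variables with exponential tail rate $r(\zeta)$ is $(1+o(1))\frac{1}{r(\zeta)}\log n$ — this gives both the length and, since the height of a random-walk excursion of length $\ell$ is $O(\sqrt{\ell})=O(\sqrt{\log n})\ll\log n$ but the \emph{longest} excursion also maximizes height up to lower order, the claimed $\log n$ order for the height. (One must invoke the irreducibility/aperiodicity assumption on the underlying walk so that excursion heights are genuinely of order square-root of their length.)

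Finally, part (c), $\zeta=0$: now $P_n^0=P_n$ is just the law of the directed walk, and $L_n$ is the number of returns to $0$ of a recurrent renewal with $R(n)=n^{-1-a}\ell(n)$. When $a\geq 1$ the mean is finite and $L_n/n\to 1/\sum_n nR(n)$, a power ($n^1$). When $a\in(0,1)$, classical renewal theory for heavy tails (the $\alpha$-stable renewal / Dynkin–Lamperti theory) gives $L_n \asymp n^{a}/\ell^{\#}(n)$ in probability, with $\ell^{\#}$ a slowly varying function built from $\ell$; either way $L_n$ is a power of $n$ (with a logarithmic correction absorbed into ``$\asymp$''). I would cite Giacomin~\cite{Gi07}, Chapter~2, for the precise statement and for the last-visit decomposition that passes from the free (unconstrained) polymer to the pinned renewal.

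The main obstacle is not any single estimate but the uniform control of the last-excursion correction $Z_n^\zeta/Z_n^{*,\zeta}$ when one wants pathwise (not just free-energy) statements: for (a) one needs that conditioning the renewal on $n\in T$ versus on $n$ being merely inside the final excursion does not distort the LLN for the hit density or the extreme-value asymptotics for the longest excursion, and for (c) in the infinite-mean case the final excursion can itself be of order $n$, so the ``number of hits'' is genuinely driven by that heavy tail and the statement must be read as an order-of-magnitude (in-probability) claim. Making these conditioned renewal estimates rigorous — essentially a renewal local limit theorem plus deviation bounds, uniformly over the conditioning — is the technical heart; everything else is bookkeeping on top of Theorem~\ref{thm:hompolfe}.
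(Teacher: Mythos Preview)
The paper does not actually prove this theorem: it is stated with attribution to the cited references, and the only remark offered is that ``the proof of Theorem~\ref{thm:hompolpath} depends on fine estimates of the partition sum, beyond the exponential asymptotics found in Theorem~\ref{thm:hompolfe}.'' Your renewal-theoretic reduction is precisely the framework those references use, and your outline for (b), for (c), and for the hit density and the maximal excursion \emph{length} in (a) is correct and matches the literature.

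Your argument for the maximal \emph{height} in (a), however, is wrong. You compute the height of the longest excursion as $O(\sqrt{\log n})$ (typical height $\asymp\sqrt\ell$ for an excursion of length $\ell\asymp\log n$), which contradicts the stated order $\log n$; the phrase ``the longest excursion also maximizes height up to lower order'' does not rescue this. What you are missing is that the maximal height is \emph{not} attained by the longest excursion behaving typically. Under $R^\zeta$ the probability that a single excursion reaches height $h$ is of order $\erom^{-ch}$, not $\erom^{-ch^2}$: optimising over the excursion length $\ell$, one balances the length cost $\erom^{-r(\zeta)\ell}$ against the within-excursion large-deviation cost $\erom^{-c'h^2/\ell}$ of reaching height $h\gg\sqrt\ell$, and the optimum sits at $\ell$ of order $h$ (a ``stretched'' excursion), yielding an exponential tail in $h$. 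With $\Theta(n)$ independent excursions this gives maximal height of order $\log n$, matching the maximal length. This tradeoff is exactly one of the ``fine estimates'' the paper alludes to.
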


\noindent
A detailed description of the path measure near the critical value is given in 
Sohier~\cite{So09}.

\begin{theorem}
\label{thm:hompolphtrord}
{\rm [Fisher~\cite{Fi84}, Giacomin~\cite{Gi07}, Chapter 2]}
There exists an $\ell^*(\cdot)$ slowly varying at infinity such that
\[
f(\zeta) = \zeta^{1/(1 \wedge a)}\,\ell^*(1/\zeta)\,[1+o(1)], 
\qquad \zeta \downarrow 0.
\]
\end{theorem}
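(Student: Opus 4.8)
The plan is to extract the near-critical behaviour of $f(\zeta)$ directly from the implicit equation $\phi(e^{-r(\zeta)}) = e^{-\zeta}$ of Theorem \ref{thm:hompolfe}, using Tauberian/Abelian estimates on $\phi$ coming from the regular-variation assumption $R(n) = n^{-1-a}\ell(n)$. First I would introduce $s = 1 - e^{-r}$, so that studying $f(\zeta) = r(\zeta)$ as $\zeta \downarrow 0$ is equivalent to studying how $1 - \phi(x)$ behaves as $x \uparrow 1$: the equation becomes $1 - \phi(e^{-r}) = 1 - e^{-\zeta}$, and since $r(\zeta) \downarrow 0$ as $\zeta \downarrow 0$, both sides are small and $1 - e^{-\zeta} = \zeta[1+o(1)]$, $1 - e^{-r} = r[1+o(1)]$. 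So the whole problem reduces to inverting the function $r \mapsto 1 - \phi(e^{-r})$ near $r = 0$.

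The key analytic input is the asymptotics of $1 - \phi(x)$ as $x \uparrow 1$, which splits into two regimes according to whether the renewal law $R(\cdot)$ has finite or infinite mean, i.e. whether $a > 1$ or $a < 1$ (the case $a = 1$ being borderline and absorbed into the slowly varying correction). If $a > 1$, then $\mu := \sum_n n R(n) < \infty$ and a standard Abelian argument gives $1 - \phi(x) = \mu(1-x)[1+o(1)]$ as $x \uparrow 1$; combined with a second-order term one sees $1 - \phi(e^{-r}) \sim \mu r$, hence $\mu r(\zeta) \sim \zeta$ and $f(\zeta) \sim \zeta/\mu$, which is the claimed exponent $1/(1\wedge a) = 1$ with $\ell^*$ essentially constant. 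If $a \in (0,1)$, then $\mu = \infty$ and the relevant estimate (a Karamata-type Tauberian theorem for power series, e.g.\ as in Feller or Bingham–Goldie–Teugels) gives
\[
1 - \phi(x) = (1-x)^a\,\tilde\ell\!\left(\tfrac{1}{1-x}\right)\,[1+o(1)], \qquad x \uparrow 1,
\]
for an explicit slowly varying $\tilde\ell$ built from $\ell$ and the Gamma function. Substituting $x = e^{-r}$ and $1 - x = r[1+o(1)]$ yields $r^a\,\tilde\ell(1/r)[1+o(1)] = \zeta[1+o(1)]$, and inverting this relation — using the fact that $r \mapsto r^a \tilde\ell(1/r)$ is regularly varying of index $a$ at $0$, so its asymptotic inverse is regularly varying of index $1/a$ — gives $r(\zeta) = f(\zeta) = \zeta^{1/a}\,\ell^*(1/\zeta)[1+o(1)]$ with $\ell^*$ slowly varying, obtained explicitly via the de Bruijn conjugate of $\tilde\ell$. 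This is exactly the statement with $1/(1\wedge a) = 1/a$.

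The main obstacle, and the step requiring the most care, is the asymptotic inversion in the infinite-mean case: one must go from $r^a \tilde\ell(1/r) \sim \zeta$ to $r \sim \zeta^{1/a} \ell^*(1/\zeta)$ and verify that $\ell^*$ is genuinely slowly varying. This is a clean application of the inversion theorem for regularly varying functions (the existence and slow variation of the de Bruijn conjugate), but it is the place where the slowly varying factors interact nontrivially, and where one must be careful that the $[1+o(1)]$ errors from the Tauberian estimate and from $1 - e^{-r} = r[1+o(1)]$ propagate correctly through the inversion. Everything else — the Abelian estimate for $a > 1$, the Tauberian estimate for $a < 1$, and the reduction via $x = e^{-r}$ — is standard regular-variation bookkeeping. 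The borderline case $a = 1$ is handled by the same scheme, with the dichotomy $\mu < \infty$ versus $\mu = \infty$ now decided by a convergence condition on $\sum_n \ell(n)/n$, and in either case the resulting exponent is $1$, consistent with $1/(1 \wedge a) = 1$.
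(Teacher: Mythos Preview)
Your proposal is correct and follows essentially the same route as the paper's proof (given in Tutorial~3, Appendix~\ref{appC}): both start from the implicit relation $\phi(e^{-r(\zeta)})=e^{-\zeta}$, rewrite it as an asymptotic for $1-\phi(x)$ as $x\uparrow 1$, split into the finite-mean case $a>1$ and the infinite-mean case $a\in(0,1)$, and apply an Abelian/Tauberian estimate in each. The paper's tutorial carries this out under the simplifying assumption $R(n)=cn^{-1-a}[1+o(1)]$ (constant $c$, no slowly varying factor), using a direct Riemann-sum computation of $\psi(e^{-r})=\sum_n \overline R(n)e^{-nr}$ and the summation-by-parts identity $1-\phi(x)=(1-x)\psi(x)$; your invocation of the Karamata Tauberian theorem and the de~Bruijn conjugate is the natural way to upgrade that computation to general slowly varying $\ell$, and is exactly what is needed to produce the slowly varying $\ell^*$ in the statement.
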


\noindent
Theorem~\ref{thm:hompolphtrord} shows that, for all $m\in\N$, the order of the 
phase transition is $m$ when $a \in [\frac{1}{m},\frac{1}{m-1})$. For $\SRW$, 
$a=\frac12$ and the phase transition is second order (see Fig.~\ref{fig-hompolabfr}). 

The proof of Theorem~\ref{thm:hompolpath} depends on fine estimates of 
the partition sum, beyond the exponential asymptotics found in 
Theorem~\ref{thm:hompolfe}. The proof of Theorem \ref{thm:hompolphtrord} 
is given in {\bf Tutorial 3 in Appendix ~\ref{appC}}.


\subsection{Wetting}
\label{S3.4}

What happens when the interface is impenetrable? Then the set of paths is 
replaced by (see Fig.~\ref{fig-hompolexwet})
\[
\cW_n^+ = \big\{w=(i,w_i)_{i=0}^n\colon w_0=0,\,
w_i \in\N_0\,\,\forall\,0 \leq i \leq n\big\}.
\]
Accordingly, write $P_n^{\zeta,+}(w)$, $Z_n^{\zeta,+}$ and $f^+(\zeta)$ for the 
path measure, the partition sum and the free energy. One-sided pinning at an 
interface is called \emph{wetting}. 

\begin{figure}[htbp]
\vspace{1.5cm}
\begin{center}
\setlength{\unitlength}{0.5cm}
\begin{picture}(0,0)(3,0)
{\thicklines
\qbezier(-4,0)(0,0)(10,0)
}
\qbezier[20](-6,0)(-5,1)(-4,2)
\qbezier[20](-4,2)(-3,1.5)(-2,1)
\qbezier[20](-2,1)(-1,.5)(0,0)
\qbezier[30](0,0)(1,1.5)(2,3)
\qbezier[30](2,3)(3,1.5)(4,-0)
\qbezier[25](4,0)(5,1)(6,2)
\qbezier[20](6,2)(7,1.5)(8,1)
\put(-6,0){\circle*{0.25}}
\put(-4,2){\circle*{0.25}}
\put(-2,1){\circle*{0.25}}
\put(0,0){\circle*{0.25}}
\put(2,3){\circle*{0.25}}
\put(4,0){\circle*{0.25}}
\put(6,2){\circle*{0.25}}
\put(8,1){\circle*{0.25}}
\put(10.5,-.1){$\N \times \{0\}$}
\put(-6.8,-1){$(0,0)$}
\end{picture}
\end{center}
\vspace{0.5cm}
\caption{\small A $7$-step one-sided path that makes $2$ visits 
to the interface.}
\label{fig-hompolexwet}
\end{figure}
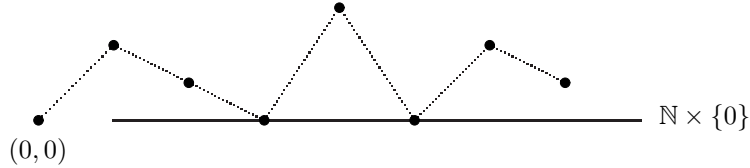

Let
\[
\label{b+def}
R^+(n) = P(S_i>0\,\,\forall\,1 \leq i < n,\,S_n=0), \qquad n\in\N.
\] 
This is a defective probability distribution. Define
\[
\phi^+(x) = \sum_{n\in\N} x^n\,R^+(n), \qquad  x \in [0,\infty),
\]
and put
\[
\widetilde \phi(x) = \frac{\phi^+(x)}{\phi^+(1)}, \qquad 
\zeta_c^+ = \log\Big[\frac{1}{\phi^+(1)}\Big] > 0.
\]

\begin{theorem}
\label{thm:hompolfewet}
{\rm [Fisher~\cite{Fi84}, Giacomin~\cite{Gi07}, Chapter 2]}
The free energy is given by
\[
f^+(\zeta) = \left\{\begin{array}{ll}
0, &\mbox{ if } \zeta \leq \zeta_c^+,\\
r^+(\zeta), &\mbox{ if } \zeta>\zeta_c^+,
\end{array}
\right.
\]
where $r^+(\zeta)$ is the unique solution of the equation
\[
\widetilde \phi(e^{-r}) = e^{-(\zeta-\zeta_c^+)}, \qquad \zeta>\zeta_c^+.
\]
\end{theorem}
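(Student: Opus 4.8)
The plan is to follow closely the proof of Theorem~\ref{thm:hompolfe}, which already handles the two-sided case, and to note that the only structural change is that excursions now live strictly above the interface, so the relevant return-time distribution is the \emph{defective} law $R^+(\cdot)$ rather than the probability law $R(\cdot)$. First I would record the elementary bound $1 \le Z_n^{\zeta,+} \le Z_n^\zeta$ when $\zeta \le \zeta_c^+$ — actually more usefully, for $\zeta \le 0$ one has $Z_n^{\zeta,+} \le Z_n^{0,+} \le 1$ and $Z_n^{\zeta,+} \ge P(S_i > 0\,\,\forall\,1 \le i \le n) \ge c/n^{1+a'}$ for a suitable $a'$, giving $f^+(\zeta) = 0$ on $\zeta \le 0$. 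The genuinely new feature is that the flat piece of the free energy persists up to the strictly positive threshold $\zeta_c^+$, because each excursion carries an entropic cost $\phi^+(1) < 1$ that must be compensated by the energetic reward $\erom^\zeta$ before localization becomes favorable.

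Next I would introduce, for $\zeta > \zeta_c^+$, the tilted law
\[
R^{+,\zeta}(n) = \erom^{(\zeta-\zeta_c^+) - r^+(\zeta)\,n}\,\widetilde\phi'\text{-normalization}\cdot R^+(n),
\]
or more transparently $R^{+,\zeta}(n) = \erom^{\zeta - r^+(\zeta)\,n}\,R^+(n)$, and check that the defining equation $\widetilde\phi(\erom^{-r^+}) = \erom^{-(\zeta-\zeta_c^+)}$, equivalently $\phi^+(\erom^{-r^+(\zeta)}) = \erom^{-\zeta}$, is exactly the statement that $\sum_{n} R^{+,\zeta}(n) = 1$. Since $\zeta > \zeta_c^+$ forces $r^+(\zeta) > 0$, the tilted law has exponential tails and in particular finite mean $M^{+,\zeta} = \sum_n n\,R^{+,\zeta}(n)$. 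The constrained partition sum $Z_n^{*,\zeta,+}$, summing over one-sided paths ending at $0$, then factorizes over excursions as a renewal sum, $Z_n^{*,\zeta,+} = \erom^{r^+(\zeta) n}\,Q^{+,\zeta}(n \in T)$, where $T$ is the renewal process with inter-arrival law $R^{+,\zeta}$, and the renewal theorem gives $Q^{+,\zeta}(n \in T) \to 1/M^{+,\zeta}$, hence $\tfrac1n \log Z_n^{*,\zeta,+} \to r^+(\zeta)$.

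Finally I would upgrade from the constrained to the free partition sum by the same last-passage decomposition used in Theorem~\ref{thm:hompolfe}: splitting $Z_n^{\zeta,+}$ according to the last time the path visits $0$, and bounding the contribution of the final (possibly unfinished) excursion by $\sum_{m \le n} R^+(m) \le 1$, one obtains $Z_n^{*,\zeta,+} \le Z_n^{\zeta,+} \le (1 + Cn)\,Z_n^{*,\zeta,+}$ for some $C < \infty$, so the polynomial prefactor washes out in the exponential limit and $f^+(\zeta) = r^+(\zeta)$. It remains to verify continuity and monotonicity of $r^+$ at $\zeta_c^+$: as $\zeta \downarrow \zeta_c^+$ one has $\erom^{-(\zeta-\zeta_c^+)} \uparrow 1 = \widetilde\phi(1)$, and since $\widetilde\phi$ is continuous and strictly increasing on $[0,1]$ with $\widetilde\phi(1) = 1$, the solution $r^+(\zeta)$ decreases continuously to $0$, matching the flat branch. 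The main obstacle is the bookkeeping at the threshold — making sure that the defectiveness of $R^+$ (i.e.\ $\phi^+(1) < 1$) is correctly absorbed into $\zeta_c^+$ and that the renewal-theoretic step is legitimate precisely because $r^+(\zeta) > 0$ strictly for $\zeta > \zeta_c^+$; for $\zeta \in (0, \zeta_c^+]$ one must instead argue directly (as in the $\zeta \le 0$ case) that the entropic cost still dominates and $f^+ = 0$, which follows since then $\phi^+(\erom^{0}) = \phi^+(1) \ge \erom^{-\zeta}$ leaves no positive solution $r^+$.
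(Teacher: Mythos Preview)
Your approach is exactly what the paper intends: it states only that ``the proof is similar to that of the pinned polymer'', and you have correctly reconstructed the adaptation --- replace $R$ by the defective $R^+$, absorb the defect $\phi^+(1)<1$ into the shifted threshold $\zeta_c^+$, and run the same tilting/renewal argument for $\zeta>\zeta_c^+$.

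One small logical slip at the very end: for $\zeta \in (0,\zeta_c^+]$ you write that $f^+=0$ ``follows since \ldots\ no positive solution $r^+$'' exists. The non-existence of a positive root of $\phi^+(\erom^{-r})=\erom^{-\zeta}$ is a \emph{consequence} of $f^+=0$, not a proof of it. What you actually need is a direct upper bound on the partition sum: at $\zeta=\zeta_c^+$ the tilted law $\erom^{\zeta_c^+}R^+(\cdot)$ is an honest probability distribution on $\N$ (this is precisely the definition of $\zeta_c^+$), so $Z_n^{*,\zeta_c^+,+}$ equals the renewal mass $Q(n\in T)$ and is therefore $\le 1$. Monotonicity in $\zeta$ then gives $Z_n^{*,\zeta,+}\le 1$ for all $\zeta\le\zeta_c^+$, and your sandwich $Z_n^{*,\zeta,+}\le Z_n^{\zeta,+}\le(1+Cn)\,Z_n^{*,\zeta,+}$ finishes the job. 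This is the same mechanism as in the two-sided proof at $\zeta=0$, merely shifted by $\zeta_c^+$.
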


The proof is similar to that of the pinned polymer. Localization on an 
impenetrable interface is harder than on a penetrable interface, because 
the polymer suffers a larger loss of entropy. This is the reason why 
$\zeta_c^+>0$. For $\SRW$, symmetry gives 
\[
R^+(n)=\tfrac12\,R(n), \qquad n\in\N.
\] 
Consequently, 
\[
\zeta_c^+=\log 2, \quad \widetilde \phi(\cdot) = \phi(\cdot),
\] 
implying that
\[
f^+(\zeta)=f(\zeta-\zeta_c^+), \qquad \zeta\in\R.
\]
Thus, the free energy suffers a shift (i.e., the curves in 
Figs.~\ref{fig-hompolfeSRW}--\ref{fig-hompolabfr} move to the right
by $\log 2$) and the qualitative behavior is similar to 
that of pinning.


\subsection{Pulling at an adsorbed polymer}
\label{S3.5}

A polymer can be pulled off an interface by a force. Replace the pinning Hamiltonian by
\[
H_n^{\zeta,\phi}(w) = -\zeta L_n(w) - \phi w_n,
\]
where $\phi\in (0,\infty)$ is a force in the upward direction acting on the 
endpoint of the polymer. Note that $\phi w_n$ is the work exerted by the force 
to move the endpoint a distance $w_n$ away from the interface. Write $Z_n^{\zeta,
\phi}$ to denote the partition sum and
\[
f(\zeta,\phi) = \lim_{n\to\infty} \frac{1}{n}\log Z_n^{\zeta,\phi}
\]
to denote the free energy. Consider the case where the reference random walk can 
only make steps of size 
$\leq 1$, i.e., pick $p \in [0,1]$ and put 
\[
P(S_1=-1)=P(S_1=+1)=\tfrac12 p, \quad P(S_1=0)= 1-p.
\]

\begin{theorem}
\label{thm:pinforcefe}
{\rm [Giacomin and Toninelli~\cite{GiTo07}]}
For every $\zeta\in\R$ and $\phi>0$, the free energy exists and is given by
\[
f(\zeta,\phi) = f(\zeta) \vee g(\phi), 
\]
with $f(\zeta)$ the free energy of the pinned polymer without force and 
\[
g(\phi) = \log\big[\,p\cosh(\phi)+(1-p)\,\big].
\]
\end{theorem}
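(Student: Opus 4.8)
The plan is to reduce everything to a \emph{last-visit decomposition} of the partition sum, which cleanly separates the part of the polymer that is still pinned to the interface from the part that has been pulled off by the force. Writing the partition sum as an expectation over the reference walk $S=(S_i)_{i\in\N_0}$ with $S_0=0$,
\[
Z_n^{\zeta,\phi} = E\big[\erom^{\zeta L_n(S)+\phi S_n}\big],
\]
and letting $\tau=\max\{0\le i\le n\colon S_i=0\}$ be the time of the last visit to the interface (well defined since $S_0=0$), the Markov property at time $\tau$ gives the exact identity
\[
Z_n^{\zeta,\phi} = \sum_{k=0}^n Z_k^{*,\zeta}\,A_{n-k}(\phi),
\qquad
A_m(\phi):=E\big[\erom^{\phi S_m}\,\ind_{\{S_i\neq0\ \forall\,1\le i\le m\}}\big],
\]
where $Z_k^{*,\zeta}$ is precisely the constrained partition sum appearing in the proof of Theorem~\ref{thm:hompolfe}. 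Note that $g(\phi)=\log E[\erom^{\phi S_1}]=\log[p\cosh(\phi)+(1-p)]$ is simply the logarithmic moment generating function of a single step, so $A_m(\phi)$ is the (sub-)partition sum of a force-pulled walk that avoids the interface.

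For the upper bound I would feed two elementary estimates into this identity: first, $Z_k^{*,\zeta}\le\erom^{k f(\zeta)}$, which is immediate from the proof of Theorem~\ref{thm:hompolfe} (there $Z_k^{*,\zeta}=\erom^{r(\zeta)k}Q^\zeta(k\in T)$ with $Q^\zeta(k\in T)\le1$ when $\zeta>0$, while $Z_k^{*,\zeta}\le P(S_k=0)\le1=\erom^{kf(\zeta)}$ when $\zeta\le0$); second, $A_m(\phi)\le E[\erom^{\phi S_m}]=\erom^{m g(\phi)}$. Summing the $n+1$ terms,
\[
Z_n^{\zeta,\phi}\le (n+1)\max_{0\le k\le n}\erom^{kf(\zeta)+(n-k)g(\phi)} = (n+1)\,\erom^{n(f(\zeta)\vee g(\phi))},
\]
so $\limsup_n \tfrac1n\log Z_n^{\zeta,\phi}\le f(\zeta)\vee g(\phi)$.

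For the matching lower bound I would exhibit the two competing strategies. Keeping only the term $k=n$ gives $Z_n^{\zeta,\phi}\ge Z_n^{*,\zeta}$, and $\tfrac1n\log Z_n^{*,\zeta}\to f(\zeta)$ (from Theorem~\ref{thm:hompolfe} together with the sandwich $Z_n^{*,\zeta}\le Z_n^\zeta\le(1+Cn)Z_n^{*,\zeta}$). Keeping only the term $k=0$ gives $Z_n^{\zeta,\phi}\ge A_n(\phi)$; here I would tilt by $\erom^{\phi S_n}/\erom^{ng(\phi)}$, under which $S$ becomes a random walk on $\Z$ with step law proportional to $\erom^{\phi x}P(S_1=x)$ and hence strictly positive drift $\propto p\sinh\phi$ for $\phi>0$, so it is transient and $\widetilde P(S_i\neq0\ \forall\,1\le i\le n)\ge\widetilde P(S_i\neq0\ \forall\,i\ge1)=:c>0$. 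Therefore $A_n(\phi)=\erom^{ng(\phi)}\,\widetilde P(S_i\neq0\ \forall\,1\le i\le n)\ge c\,\erom^{ng(\phi)}$, giving $\liminf_n\tfrac1n\log Z_n^{\zeta,\phi}\ge g(\phi)$. Combining the two bounds yields $f(\zeta,\phi)=f(\zeta)\vee g(\phi)$ and in passing shows the limit exists.

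I expect the only genuinely non-cosmetic step to be this last lower bound $g(\phi)$ in the regime $\zeta<0$: there the crude bound $\erom^{\zeta L_n(S)}\ge1$ is unavailable, so one really must confine the walk to a single excursion away from the interface, which is exactly what the exponential-tilt/transience argument delivers. Everything else is bookkeeping; the hypothesis that the reference walk has steps of size $\le1$ enters only to give $g(\phi)$ the stated closed form (and, harmlessly, to keep the tilted walk a genuine random walk on $\Z$).
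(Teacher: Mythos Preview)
Your argument is correct, and the overall architecture---last-visit decomposition $Z_n^{\zeta,\phi}=\sum_{k=0}^n Z_k^{*,\zeta}A_{n-k}(\phi)$, then identify the exponential growth rate of each factor---is exactly what the paper does. Your upper bound is in fact spelled out more carefully than the paper's, which simply asserts ``it suffices to show $g(\phi)=\lim_m\tfrac1m\log\bar Z_m^\phi$'' and leaves the convolution/Laplace step to the reader.

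The genuine difference is in how you establish $\tfrac1m\log A_m(\phi)\to g(\phi)$ from below. The paper computes $A_m(\phi)$ (its $\bar Z_m^\phi$) \emph{exactly} up to $O(1)$ via the reflection principle: for $x>0$,
\[
P(S_i\neq0\ \forall\,1\le i<m,\ S_m=x)=\tfrac{p}{2}\big[P(S_{m-1}=x-1)-P(S_{m-1}=x+1)\big],
\]
which after summation against $\erom^{\phi x}$ collapses to $p\sinh(\phi)\,E(\erom^{\phi S_{m-1}})+O(1)$. You instead tilt by $\erom^{\phi S_m}/\erom^{mg(\phi)}$, observe that the tilted walk has strictly positive drift and is therefore transient, and conclude $A_m(\phi)=\erom^{mg(\phi)}\widetilde P(S_i\neq0\ \forall\,1\le i\le m)\ge c\,\erom^{mg(\phi)}$.

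Both are clean; they buy slightly different things. The reflection computation gives the sharp constant in front (indeed $A_m(\phi)\sim p\sinh(\phi)\,\erom^{(m-1)g(\phi)}$), but it leans on the nearest-neighbour structure: the walk cannot jump over~$0$, and reflection across~$0$ is a bijection on paths. Your tilting argument only needs that the tilted increment law has positive mean, so it extends verbatim to any reference walk with $E(\erom^{\phi S_1})<\infty$; as you note, the restriction to steps of size~$\le1$ then serves only to give $g(\phi)$ its explicit form. One small remark: your transience step tacitly uses $p>0$ (otherwise the walk never leaves~$0$), but $p=0$ is degenerate and excluded by the standing tail assumption on $R(\cdot)$ anyway.
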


\begin{proof}
Write
\[
Z_n^{\zeta,\phi} = Z_n^{*,\zeta} + \sum_{m=1}^n Z_{n-m}^{*,\zeta}\,\bar{Z}_m^\phi,
\]
where $Z_n^{*,\zeta}$ is the constrained partition sum without force encountered 
in Sections~\ref{S3.1}--\ref{S3.3}, and 
\[
\bar{Z}_m^\phi = \sum_{x\in\Z\backslash\{0\}} \erom^{\phi x}\,R(m;x), \qquad m\in\N,
\] 
with
\[
R(m;x) = P\big(S_i \neq 0\,\,\forall\,1 \leq i < m,\,S_m=x\big).
\]
It suffices to show that
\[
g(\phi) = \lim_{m\to\infty} \frac{1}{m}\,\log \bar{Z}_m^\phi, 
\]
which will yield the claim because
\[
f(\zeta) = \lim_{n\to\infty} \frac{1}{n}\log Z_n^{*,\zeta}.
\]

The contribution to $\bar{Z}_m^\phi$ coming from $x\in\Z\backslash\N_0$ is 
bounded from above by $1/(1-\erom^{-\phi})<\infty$ and therefore is 
negligible. (The polymer does not care to stay below the interface because 
the force is pulling it upwards.) For $x\in\N$ the \emph{reflection 
principle} gives
\[
\begin{aligned}
R(m;x) 
&= \tfrac12 p\,P\big(S_i>0\,\,\forall\,2 \leq i < m,\,S_m=x \mid S_1=1\big)\\
&= \tfrac12 p\,\big[P(S_m = x \mid S_1=1)-P(S_m = x \mid S_1=-1)\big]\\  
&= \tfrac12 p\,\big[P(S_{m-1}=x-1)-P(S_{m-1}=x+1)\big]
\qquad \forall\,m\in\N.
\end{aligned}
\]  
The first equality holds because the path cannot jump over the interface. 
The second inequality holds because, for any path from $1$ to $x$ that 
hits the interface, the piece of the path until the first hit of the 
interface can be reflected in the interface to yield a path from $-1$ 
to $x$. Substitution of the above relation into the sum defining 
$\bar{Z}_m^\phi$ gives
\[
\begin{aligned}
\bar{Z}_m^\phi &= O(1) + p \sinh(\phi) \sum_{x\in\mathbb{N}} 
\erom^{\phi x}\,P(S_{m-1}=x)\\ 
&= O(1) + O(1) + p \sinh(\phi)\,E\big(\erom^{\phi S_{m-1}}\big).
\end{aligned}
\]
But
\[
E\big(\erom^{\phi S_{m-1}}\big) = [p\cosh(\phi)+(1-p)]^{m-1},
\]
and so the above claim follows.
\end{proof}

The force either leaves most of the polymer adsorbed, when 
\[
f(\zeta,\phi)=f(\zeta)>g(\phi),
\]
or pulls most of the polymer off, when 
\[
f(\zeta,\phi)=g(\phi)>f(\zeta).
\]
A first-order phase transition occurs at those values of $\zeta$ and 
$\phi$ where $f(\zeta)=g(\phi)$, i.e., the critical value 
of the force is given by
\[
\phi_c(\zeta) = g^{-1}\big(f(\zeta)\big), \qquad \zeta \in \R,
\]
with $g^{-1}$ the inverse of $g$. Think of $g(\phi)$ as the free energy
of the polymer with force $\phi$ not interacting with the interface.


\subsection{Re-entrant force-temperature diagram}
\label{S3.6}

In order to analyze $\zeta\mapsto\phi_c(\zeta)$, we plot it as a function 
of temperature, putting
\[
\zeta = 1/T, \quad \phi = F/T, \quad F_c(T) = T\phi_c(1/T).
\]
It turns out that the curve $T \mapsto F_c(T)$ is increasing when $p\in 
(0,\tfrac23]$, but has a minimum when $p\in (\tfrac23,1)$. The latter 
behavior is remarkable, since it says that there is a force $F$ such 
that the polymer is adsorbed both for small $T$ and for large $T$, but 
is desorbed for moderate $T$ .

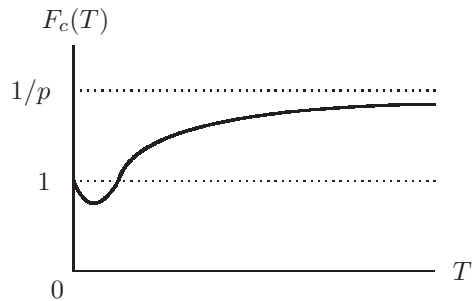
\begin{figure}[htbp]
\vspace{2cm}
\begin{center}
\setlength{\unitlength}{0.6cm}
\begin{picture}(0,0)(4,3.5)
\put(0,0){\line(8,0){8}}
\put(0,0){\line(0,5){5}}
{\thicklines
\qbezier(0,2)(0.4,1)(1,2)
\qbezier(1,2)(1.4,3.6)(8,3.7)
}
\qbezier[50](0,2)(4,2)(8,2)
\qbezier[50](0,4)(4,4)(8,4)
\put(-0.5,-.6){$0$}
\put(8.4,-.2){$T$}
\put(-.7,5.4){$F_c(T)$}
\put(-.8,1.8){$1$}
\put(-1.4,3.8){$1/p$}
\end{picture}
\end{center}
\vspace{2.5cm}
\caption{\small Re-entrant force-temperature diagram for $p \in (\tfrac23,1)$.}
\label{fig-reentrphd}
\end{figure}

For $p=\tfrac23$ all paths are equally likely, while for $p \in (\tfrac23,1)$ paths 
that move up and down are more likely than paths that stay flat. This leads to
the following heuristic explanation of the re-entrant behavior. For every $T$, the 
adsorbed polymer makes excursions away from the interface and therefore has a 
strictly positive entropy. Some of this entropy is lost when a force is applied 
to the endpoint of the polymer, so that the part of the polymer near the endpoint 
is pulled away from the interface and is caused to move upwards steeply. There are 
two cases: 

\medskip\noindent
$p=\tfrac23$:
As $T$ increases the effect of this entropy loss on the free energy increases, 
because ``$\mathrm{free\,energy} = \mathrm{energy}-\mathrm{temperature}\times
\mathrm{entropy}$''. This effect must be counterbalanced by a larger force to 
achieve desorption.

\medskip\noindent
$p\in(\tfrac23,1)$:
Steps in the east direction are favored over steps in the north-east and 
south-east directions, and this tends to place the adsorbed polymer farther 
away from the interface. Hence the force decreases for small $T$ (i.e., 
$F_c(T)<F_c(0)$ for small $T$, because at $T=0$ the polymer is fully adsorbed).


\subsection{Open problems}
\label{S3.7}

Some key challenges are:
\begin{itemize}
\item
Investigate pinning and wetting of $\SAW$ by a linear interface, i.e., study 
the undirected version of the model in Sections~\ref{S3.1}--\ref{S3.4}. Partial 
results have been obtained in the works of A.J.\ Guttmann, J.\ Hammersley, 
E.J.\ Janse van Rensburg, E.\ Orlandini, A.\ Owczarek, A.\ Rechnitzer, 
C.\ Soteros, C.\ Tesi, S.G.\ Whittington, and others. For references, see
den Hollander~\cite{dHo09}, Chapter 7. 
\item
Look at polymers living inside wedges or slabs, with interaction at the boundary. 
This leads to combinatorial problems of the type described in the lectures by 
Di Francesco during the summer school, many of which are hard. There is a large 
literature, with contributions coming from M.\ Bousquet-Melou, R.\ Brak, A.J.\ 
Guttmann, E.J.\ Janse van Rensburg, A.\ Owczarek, A.\ Rechnitzer, S.G.\ Whittington, 
and others. For references, see Guttmann~\cite{Gu09}. 
\item
Caravenna and P\'etr\'elis~\cite{CaPe09a,CaPe09b} study a directed polymer pinned 
by a periodic array of interfaces. They identify the rate at which the polymer 
hops between the interfaces as a function of their mutual distance and determine
the scaling limit of the endpoint of the polymer. There are several regimes
depending on the sign of the adsorption strength and on how the distance between 
the interfaces scales with the length of the polymer. Investigate what happens when 
the interfaces are placed at random distances.  
\item
What happens when the shape of the interface itself is random? Pinning of a polymer 
by a polymer, both performing directed random walks, can be modelled by the Hamiltonian
$H_n^\zeta(w,w')=-\zeta L_n(w,w')$, $\zeta\in\R$, with $L_n(w,w')=\sum_{i=1}^n 
1_{\{w_i=w_i'\}}$ the collision local time of $w,w'\in\cW_n$, the set of directed 
paths introduced in Section~\ref{S3.1}. This model was studied by Birkner, Greven 
and den Hollander~\cite{BiGrdHo11}, Birkner and Sun~\cite{BiSu10,BiSupr}, Berger and 
Toninelli~\cite{BeTo10}. A variational formula for the critical adsorption strength 
is derived in \cite{BiGrdHo11}. This variational formula turns out to be hard to analyze. 
\end{itemize}

\vspace{0.5cm}
\begin{center}
\begin{boxedminipage}{12cm}
In Sections~\ref{S1}--\ref{S3} we considered several models of a polymer 
chain interacting with itself and/or with an interface. In 
Sections~\ref{S4}--\ref{S6} we move to models with \emph{disorder}, i.e.,
there is a random environment with which the polymer chain is interacting. 
Models with disorder are much harder than models without disorder. In order
to advance mathematically, we will restrict ourselves to \emph{directed} paths. 
\end{boxedminipage}
\end{center}


\section{A polymer near a random interface}
\label{S4}

In this section we consider a directed polymer near a linear interface 
carrying ``\emph{random charges}''. As in Section~\ref{S3}, the polymer receives 
an energetic reward or penalty when it hits the interface, but this time the 
size of the reward or penalty is determined by disorder attached to the interface
(see Fig.~\ref{fig-polpin}). The goal is to determine under what conditions 
the disorder is able to pin the polymer to the interface.

In Sections~\ref{S4.1}--\ref{S4.2} we define the model. In 
Sections~\ref{S4.3}--\ref{S4.4} we use large deviation theory to derive 
a \emph{variational formula} for the critical curve separating a 
\emph{localized phase} from a \emph{delocalized phase}, both for the 
quenched and the annealed version of the model (recall part III of Section~\ref{S1.5}). 
In Section~\ref{S4.5} we use the two variational formulas to analyze under 
what conditions the two critical curves are different (= the disorder is 
relevant) or are the same (= the disorder is irrelevant). In Section~\ref{S4.6} 
we explain why denaturation of DNA is described by this model. In 
Section~\ref{S4.7} we close by formulating some open problems.

\begin{figure}[htbp]
\includegraphics[width=.60\hsize]{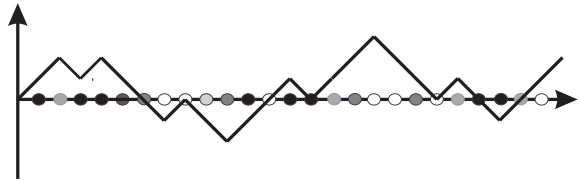}
\caption{\small Different shades represent different disorder values.}
\label{fig-polpin}
\end{figure}


\subsection{Model}
\label{S4.1}

Let $S=(S_n)_{n\in\N_0}$ be a recurrent Markov chain on a countable 
state space $\Upsilon$ with a marked point $\ast$. Write $P$ to denote the 
law of $S$ given $S_0=\ast$. Let 
\[
R(n) = P(S_i \neq \ast\,\,\forall\,1\leq i<n,\,S_n=\ast), \qquad n\in\N, 
\]
denote the \emph{return time distribution} to $\ast$, and assume that
\[
\lim_{n\to\infty} \frac{\log R(n)}{\log n} = -(1+a)
\quad \mbox{ for some } a\in [0,\infty).
\]
This is a weak version of the regularity condition assumed in Section~\ref{S3.1}
for the homogeneous pinning model.

Let 
\[
\omega=(\omega_i)_{i\in\N_0}
\] 
be an i.i.d.\ sequence of $\R$-valued random variables with marginal law $\mu_0$, 
playing the role of a random environment. Write $\mP=\mu_0^{\otimes\N_0}$ to 
denote the law of $\omega$. Assume that $\mu_0$ is non-degenerate and satisfies
\[
M(\beta) = \mE(\erom^{\beta\omega_0}) = \int_\R \erom^{\beta x} \mu(\drom x)
< \infty \qquad \forall\,\beta \geq 0.
\]

For fixed $\omega$, define a law on the set of directed paths of length 
$n\in\N_0$ by putting
\[
\frac{\drom P_n^{\beta,h,\omega}}{\drom P_n}\big((i,S_i)_{i=0}^n\big)
= \frac{1}{Z_n^{\beta,h,\omega}}\,\exp\left[\sum_{i=0}^{n-1}
(\beta\omega_i-h)\,1_{\{S_i=\ast\}}\right],
\]
where $\beta\in [0,\infty)$ is the \emph{disorder strength}, $h\in\R$ is the 
\emph{disorder bias}, $P_n$ is the projection of $P$ onto $n$-step paths, and 
$Z_n^{\beta,h,\omega}$ is the normalizing partition sum. Note that the 
homogeneous pinning model in Section~\ref{S3} is recovered by putting 
$\beta=0$ and $h=-\zeta$ (with the minor difference that now the Hamiltonian
includes the term with $i=0$ but not the term with $i=n$). Without loss of 
generality we can choose $\mu_0$ to be such that $\mE(\omega_0)=0$, $\mE(\omega_0^2)=1$ 
(which amounts to a shift of the parameters $\beta,h$). 

In our standard notation, the above model corresponds to the choice
\[
\begin{aligned}
\cW_n &= \Big\{w=(i,w_i)_{i=0}^n\colon\,w_0=\ast,\,
w_i\in\Upsilon\,\,\forall\,0<i\leq n\Big\},\\
H_n^{\beta,h,\omega}(w) &= - \sum_{i=0}^{n-1} (\beta\omega_i-h)\,1_{\{w_i=\ast\}}.
\end{aligned}
\]
(As before, we think of $(S_i)_{i=0}^n$ as the realization of $(w_i)_{i=0}^n$ 
drawn according to $P_n^{\beta,h,\omega}$.) The key example modelling our polymer 
with pinning is 
\[
\Upsilon=\Z^d, \quad \ast=\{0\}, \quad P = \mbox{ law of directed } 
\SRW \mbox{ in } \Z^d, \qquad d=1,2,
\]
for which $a=\tfrac12$ and $a=0$, respectively. We expect that pinning occurs 
for large $\beta$ and/or small $h$: the polymer gets a large enough energetic 
reward when it hits the positive charges and does not lose too much in terms
of entropy when it avoids the negative charges. For the same reason we expect 
that no pinning occurs for small $\beta$ and/or large $h$. In 
Sections~\ref{S4.2}--\ref{S4.6} we identify the phase transition curve and 
investigate its properties.


\subsection{Free energies}
\label{S4.2}

The \emph{quenched free energy} is defined as
\[
f^\mathrm{que}(\beta,h) = \lim_{n\to\infty} \frac{1}{n} \log Z_n^{\beta,h,\omega}
\quad \omega{\rm -a.s.}
\]
Subadditivity arguments show that $\omega$-a.s.\ the limit exists and is non-random 
(see {\bf Tutorial 1 in Appendix~\ref{appA}}). Since 
\[
Z_n^{\beta,h,\omega} = E\left(\exp\left[\sum_{i=0}^{n-1} (\beta\omega_i-h)\,
1_{\{S_i=*\}}\right]\right) \geq \erom^{\beta\omega_0-h} \sum_{m \geq n} R(m), 
\]
which decays polynomially in $n$, it follows that $f^\mathrm{que}(\beta,h) \geq 0$. 
This fact motivates the definition
\[
\begin{aligned}
\cL &= \big\{(\beta,h)\colon\,f^\mathrm{que}(\beta,h)>0\big\},\\
\cD &= \big\{(\beta,h)\colon\,f^\mathrm{que}(\beta,h)=0\big\},
\end{aligned}
\]
which are referred to as the \emph{quenched localized phase}, respectively, the 
\emph{quenched delocalized phase}. The associated \emph{quenched critical curve} is
\[
h_c^\mathrm{que}(\beta) = \inf\{h\in\R\colon\,f^\mathrm{que}(\beta,h)=0\},
\qquad \beta \in [0,\infty).
\]
Because $h \mapsto f^\mathrm{que}(\beta,h)$ is non-increasing, we have $f^\mathrm{que}
(\beta,h)=0$ for $h \geq h_c^\mathrm{que}(\beta)$. Convexity of $(\beta,h) \mapsto 
f^\mathrm{que}(\beta,h)$ implies that $\beta \mapsto h_c^\mathrm{que}(\beta)$ is convex. 
It is easy to check that both are finite (this uses the bound $f^\mathrm{que} \leq 
f^\mathrm{ann}$ with $f^\mathrm{ann}$ the annealed free energy defined below) and
therefore are also continuous. Futhermore, $h_c^\mathrm{que}(0)=0$ (because the 
critical threshold for the homogeneous pinning model is zero), and $h_c^\mathrm{que}
(\beta)> 0$ for $\beta>0$ (see below). Together with convexity the latter 
imply that $\beta \mapsto h_c^\mathrm{que}(\beta)$ is strictly increasing.

Alexander and Sidoravicius~\cite{AlSi06} prove that $h_c^\mathrm{que}
(\beta)>0$ for $\beta>0$ for arbitrary non-degenerate $\mu_0$ (see 
Fig.~\ref{fig-ranpincritcurve}). This result is important, because it 
shows that localization occurs even for a \emph{moderately negative} 
average value of the disorder, contrary to what we found for the homogeneous 
pinning model in Section~\ref{S3}. Indeed, since $\E(\beta\omega_1-h)=-h<0$, 
even a globally repulsive interface can locally pin the polymer provided
the global repulsion is modest: all the polymer has to do is hit the positive 
charges and avoid the negative charges.

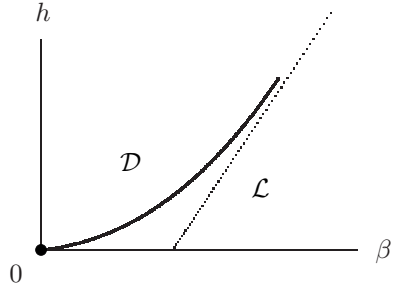
\begin{figure}[htbp]
\begin{center}
\setlength{\unitlength}{0.35cm}
\begin{picture}(12,12)(0,-2)
\put(0,0){\line(12,0){12}}
\put(0,0){\line(0,8){8}}
{\thicklines
\qbezier(0,0)(5,0.5)(9,6.5)
}
\qbezier[60](5,0)(7,3)(11,9)
\put(-1.2,-1.2){$0$}
\put(12.7,-0.3){$\beta$}
\put(-0.2,8.7){$h$}
\put(0,0){\circle*{.4}}
\put(8,2){$\cL$}
\put(3,3){$\cD$}
\end{picture}
\end{center}
\caption{\small Qualitative picture of $\beta \mapsto h_c^\mathrm{que}(\beta)$
(the asymptote has finite slope if and only if the support of $\mu_0$ is bounded 
from above). The details of the curve are known only partially (see below).}
\label{fig-ranpincritcurve}
\end{figure}

The \emph{annealed free energy} is defined by (recall Section~\ref{S1.5})
\[
f^\mathrm{ann}(\beta,h) =  \lim_{n\to\infty} \frac{1}{n}
\log \mE\big(Z_n^{\beta,h,\omega}\big).
\]
This is the free energy of a homopolymer. Indeed, $\E(Z_n^{\beta,h,\omega})
= Z_n^{h-\log M(\beta)}$, the partition function of the homogeneous pinning 
model with parameter $h-\log M(\beta)$. The associated \emph{annealed critical curve}
\[
h_c^\mathrm{ann}(\beta) = \inf\{h\in\R\colon\,f^\mathrm{ann}(\beta,h)=0\},
\qquad \beta \in [0,\infty),
\]
can therefore be computed explicitly:
\[
h_c^\mathrm{ann}(\beta) = \log \mE(\erom^{\beta\omega_0}) = \log M(\beta).
\]

By Jensen's inequality, we have
\[
f^\mathrm{que} \leq f^\mathrm{ann}
\quad \longrightarrow \quad
h_c^\mathrm{que} \leq h_c^\mathrm{ann}.
\]
In Fig.~\ref{fig-uncrittemp} below we will see how the two critical curves are 
related.

\begin{definition}
\label{def:disorderrel}
For a given choice of $R$, $\mu_0$ and $\beta$, the disorder is said to be
\emph{relevant} when $h_c^\mathrm{que}(\beta)<h_c^\mathrm{ann}(\beta)$ and 
\emph{irrelevant} when $h_c^\mathrm{que}(\beta)=h_c^\mathrm{ann}(\beta)$.
\end{definition}

\noindent
\emph{Note}: In the physics literature, the notion of relevant disorder is 
reserved for the situation where the disorder not only changes the critical 
value but also changes the behavior of the free energy near the critical 
value. In what follows we adopt the more narrow definition given above. 
It turns out, however, that for the pinning model considered here a change 
of critical value entails a change of critical behavior as well.

Some 15 papers have appeared in the past 5 years, containing sufficient conditions 
for relevant, irrelevant and marginal disorder, based on various 
types of estimates. Key references are: 
\begin{itemize}
\item
Relevant disorder: Derrida, Giacomin, Lacoin and Toninelli~\cite{DeGiLaTo09}, 
Alexander and Zygouras~\cite{AlZy08}.
\item
Irrelevant disorder: Alexander~\cite{Al08}, Toninelli~\cite{To08},
Lacoin~\cite{La10b}.
\item
Marginal disorder: Giacomin, Lacoin and Toninelli~\cite{GiLaTo10}. 
\end{itemize}
See also Giacomin and Toninelli~\cite{GiTo09}, Alexander and Zygouras~\cite{AlZy10},
Giacomin, Lacoin and Toninelli~\cite{GiLaTo11}. (The word ``marginal'' stands for 
``at the border between relevant and irrelevant'', and can be either relevant or
irrelevant.)  

In Sections~\ref{S4.4}--\ref{S4.6} we derive \emph{variational formulas} 
for $h_c^\mathrm{que}$ and $h_c^\mathrm{ann}$ and provide necessary and 
sufficient conditions on $R$, $\mu_0$ and $\beta$ for relevant disorder. 
The results are based on Cheliotis and den Hollander~\cite{ChdHo10}.
In Section~\ref{S4.3} we give a quick overview of the necessary tools
from large deviation theory developed in Birkner, Greven and den 
Hollander~\cite{BiGrdHo10}.


\subsection{Preparations}
\label{S4.3}

In order to prepare for the large deviation analysis in Section~\ref{S4.5},
we need to place the random pinning problem in a different context.

Think of $\omega=(\omega_i)_{i\in\N_0}$ as a random sequence of \emph{letters} 
drawn from the alphabet $\R$. Write $\cP^\mathrm{inv}(\R^{\N_0})$ to denote 
the set of probability measures on infinite letter sequences that are 
shift-invariant. The law $\mu_0^{\otimes\N_0}$ of $\omega$ is an element 
of $\cP^\mathrm{inv}(\R^{\N_0})$. A typical element of $\cP^\mathrm{inv}
(\R^{\N_0})$ is denoted by $\Psi$.

Let $\widetilde{\R} = \cup_{k\in\N}\,\R^k$. Think of $\widetilde{\R}$ 
as the set of \emph{finite words}, and of $\widetilde{\R}^\N$ as the set of 
\emph{infinite sentences}. Write $\cP^\mathrm{inv}(\widetilde{\R}^\N)$ to denote 
the set of probability measures on infinite sentences that are shift-invariant. 
A typical element of $\cP^\mathrm{inv}(\widetilde{\R}^\N)$ is denoted by $Q$.

The excursions of $S$ away from the interface cut out successive words from 
the random environment $\omega$, forming an infinite sentence (see 
Fig.~\ref{fig-wordcut}). Under the joint law of $S$ and $\omega$, this 
sentence has law $q_0^{\otimes\N}$ with
\[
q_0(\drom x_0,\ldots,\drom x_{k-1}) = R(k)\,\mu_0(\drom x_0) \times \dots 
\times \mu_0(\drom x_{k-1}), \qquad k\in\N,\,x_0,\ldots,x_{k-1} \in \R.
\]

\begin{figure}[htbp]
\includegraphics[width=.65\hsize]{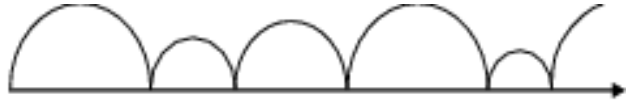}
\caption{\small Infinite sentence generated by $S$ on $\omega$.} 
\label{fig-wordcut}
\end{figure}

For $Q\in\cP^\mathrm{inv}(\widetilde{\R}^\N)$, let 
\[
\begin{aligned}
I^\mathrm{que}(Q) &= H\big(Q\,|\,q_0^{\otimes\N}\big) 
+ a\,m_Q\,H\big(\Psi_Q\,|\,\mu_0^{\otimes\N_0}\big),\\[0.2cm]
I^\mathrm{ann}(Q) &= H\big(Q\,|\,q_0^{\otimes\N}\big),
\end{aligned}
\]
where
\begin{itemize}
\item
$\Psi_Q\in\cP(\R^{\N_0})$ is the projection of $Q$ via concatenation 
of words; 
\item
$m_Q$ is the average word length under $Q$;
\item 
$H(\cdot|\cdot)$ denotes specific relative entropy.  
\end{itemize}
It is shown in Birkner, Greven and den Hollander~\cite{BiGrdHo10} that 
$I^\mathrm{que}$ and $I^\mathrm{ann}$ are the quenched and the annealed 
rate function in the \emph{large deviation principle} (LDP) for the 
\emph{empirical process of words}. More precisely,  
\[
\exp[-NI^\mathrm{que}(Q)+o(N)] \quad \mbox{ and } \quad 
\exp[-NI^\mathrm{ann}(Q)+o(N)]
\] 
are the respective probabilities that the first $N$ words generated by 
$S$ on $\omega$, periodically extended to form an infinite sentence, 
have an empirical distribution that is close to $Q \in \cP^\mathrm{inv}
(\widetilde{\R}^\N)$ in the weak topology. {\bf Tutorial 4 in 
Appendix~\ref{appD}} provides the background of this LDP.

The main message of the formulas for $I^\mathrm{que}(Q)$ and $I^\mathrm{ann}(Q)$
is that
\[
I^\mathrm{que}(Q) = I^\mathrm{ann}(Q) + \mbox{ an explicit extra term}. 
\]
We will see in Section~\ref{S4.4} that the extra term is crucial for
the distinction between relevant and irrelevant disorder.


\subsection{Application of the LDP}
\label{S4.4}

For $Q\in\cP^\mathrm{inv}(\widetilde{\R}^\N)$, let $\pi_{1,1} Q\in\cP(\R)$ 
denote the projection of $Q$ onto the first letter of the first word. Define 
$\Phi(Q)$ to be the average value of the first letter under $Q$,
\[
\Phi(Q)= \int_\R x\,(\pi_{1,1} Q)(\drom x),
\qquad Q \in \cP^\mathrm{inv}(\widetilde{\R}^\N),
\]
and $\cC$ to be the set
\[
\cC = \Big\{Q \in\cP^\mathrm{inv}(\widetilde{\R}^\N)\colon\,
\int_\R |x|\,(\pi_{1,1} Q)(\drom x) <\infty\Big\}.
\]
The following theorem provides variational formulas for the critical curves.

\begin{theorem}
\label{thm:varcharpin}
{\rm [Cheliotis and den Hollander~\cite{ChdHo10}]}
Fix $\mu_0$ and $R$. For all $\beta\in [0,\infty)$,
\[
\begin{aligned}
h_c^\mathrm{que}(\beta) &= \sup_{Q\in\cC}
[\beta\Phi(Q)-I^\mathrm{que}(Q)],\\[0.4cm]
h_c^\mathrm{ann}(\beta) &= \sup_{Q\in\cC}
[\beta\Phi(Q)-I^\mathrm{ann}(Q)].
\end{aligned}
\]
\end{theorem}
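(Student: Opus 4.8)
The plan is to derive both variational formulas in parallel, relating the partition sum $Z_n^{\beta,h,\omega}$ to the empirical process of words and then applying the LDP of Section~\ref{S4.3}. First I would rewrite the partition sum. Splitting a path according to its successive returns to $\ast$, one has
\[
Z_n^{\beta,h,\omega} = \sum_{m\geq 0}\ \sum_{\substack{j_1,\dots,j_m\in\N\\ j_1+\cdots+j_m\leq n}}\ \prod_{k=1}^m R(j_k)\,
\exp\!\Big[\sum_{k=1}^m\big(\beta\,\omega_{j_1+\cdots+j_{k-1}}-h\big)\Big]\ \times(\text{last-excursion factor}),
\]
where each word carved from $\omega$ has length $j_k$ and its first letter is the one sitting under a contact point. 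The contribution of the final incomplete excursion is polynomial in $n$ and does not affect the exponential rate (as in the proof of Theorem~\ref{thm:hompolfe}). Thus, up to $\erom^{o(n)}$, the exponent is $\sum_{k=1}^m(\beta\,\text{(first letter of word }k)-h)$, which, after dividing by $n$, is $m_Q[\beta\Phi(Q)-h]$ on the event that the empirical word distribution is close to $Q$ and the average word length is $m_Q$. Since $m\sim n/m_Q$ on this event, the total exponent per monomer is $\beta\Phi(Q)-h$ (the factor $m_Q$ cancels), minus the rate-function cost $I^{\rm que}(Q)$ (quenched, $\omega$-a.s.) or $I^{\rm ann}(Q)$ (annealed, after taking $\mE$).

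Next I would carry out the Laplace/Varadhan step. For the quenched case, fix $\omega$ typical; by the LDP for the empirical process of words with rate $I^{\rm que}$,
\[
f^{\rm que}(\beta,h) = \sup_{Q\in\cP^{\rm inv}(\widetilde\R^\N)}\ \big[\,m_Q\,(\beta\Phi(Q)-h)^{+}\ \text{type expression}\ \big]\,,
\]
but the clean way is to observe that $Z_n^{\beta,h,\omega}\geq 1$ always (empty path), so $f^{\rm que}\geq 0$, and that $f^{\rm que}(\beta,h)>0$ iff there exists $Q\in\cC$ with $\beta\Phi(Q)-I^{\rm que}(Q)>h$. Concretely: if some $Q$ has $\beta\Phi(Q)-I^{\rm que}(Q)>h$, restricting the sum above to word-configurations whose empirical distribution is near $Q$ gives $Z_n^{\beta,h,\omega}\geq \erom^{n(\beta\Phi(Q)-I^{\rm que}(Q)-h)+o(n)}\to\infty$ exponentially, so $(\beta,h)\in\cL$; conversely, the LDP upper bound gives $\tfrac1n\log Z_n^{\beta,h,\omega}\leq \sup_{Q}[\beta\Phi(Q)-h-I^{\rm que}(Q)]\vee 0 + o(1)$, so if the sup is $\le h$ then $f^{\rm que}=0$. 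Taking the infimum over $h$ with $f^{\rm que}(\beta,h)=0$ yields exactly $h_c^{\rm que}(\beta)=\sup_{Q\in\cC}[\beta\Phi(Q)-I^{\rm que}(Q)]$. The annealed identity is obtained verbatim after replacing $Z_n^{\beta,h,\omega}$ by $\mE(Z_n^{\beta,h,\omega})$ and $I^{\rm que}$ by $I^{\rm ann}$, using the annealed LDP; alternatively one can check it directly from $h_c^{\rm ann}(\beta)=\log M(\beta)$ and the variational formula $\log M(\beta)=\sup_{Q}[\beta\Phi(Q)-H(Q|q_0^{\otimes\N})]$, which is a one-letter relative-entropy computation (the optimal $Q$ is the i.i.d.\ tilt of $q_0$).

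The main obstacle will be the interchange of the sup over $Q$ with the $n\to\infty$ limit, i.e.\ making Varadhan's lemma applicable: the functional $Q\mapsto\beta\Phi(Q)$ involves the first moment of $\pi_{1,1}Q$, which is only lower semicontinuous (and possibly $+\infty$) in the weak topology, so one must control the contribution of words carrying atypically large charges. This is exactly why the domain is restricted to $\cC$ and why the exponential moment hypothesis $M(\beta)<\infty$ for all $\beta\ge 0$ is imposed — it supplies the uniform integrability needed to truncate large letters and to push the truncation to infinity. A secondary technical point is the passage from the "number of returns $m$" description to the empirical-process-of-words description, including handling the last incomplete word and the constraint $j_1+\cdots+j_m\le n$ versus $=n$; this is routine and parallels the renewal-theory bookkeeping already used in the proof of Theorem~\ref{thm:hompolfe} (one compares $Z_n$ with the constrained $Z_n^{*}$ up to a factor $1+Cn$). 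I would also invoke the explicit formula $f^{\rm ann}(\beta,h)=f(h-\log M(\beta))$ from Section~\ref{S4.2} as an independent check of the annealed formula, and the self-averaging of $f^{\rm que}$ (Kingman, via Tutorial~1) to justify working with a single typical $\omega$.
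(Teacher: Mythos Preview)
Your approach is essentially that of the paper's own sketch (Tutorial~4, Appendix~\ref{appD}): rewrite $Z_n$ via the successive words cut out of $\omega$ by the returns to $\ast$, express the exponent through the empirical process $R_N^\omega$, and invoke the word-level LDP plus Varadhan. There is one arithmetic slip worth flagging: the factor $m_Q$ does \emph{not} cancel. The LDP has rate $N$ (number of words), and on $\{R_N^\omega\approx Q\}$ one has $N\sim n/m_Q$, so the exponent \emph{per monomer} is $\tfrac{1}{m_Q}\big[\beta\Phi(Q)-h-I^{\rm que}(Q)\big]$, not $\beta\Phi(Q)-h-I^{\rm que}(Q)$. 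This is harmless for the identification of $h_c^{\rm que}$, since only the sign of the bracket matters (and $m_Q>0$), but your formula would not give the correct value of $f^{\rm que}$ inside $\cL$.

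The paper handles this scale mismatch by a generating-function device rather than by direct bookkeeping: it works with $G(z)=\sum_n z^n Z_n^{*,\beta,h,\omega}=\sum_N \erom^{N[S_N^{\beta,\omega}(z)-h]}$, where $S_N^{\beta,\omega}(z)=\tfrac1N\log E\big(\exp[N(m(R_N^\omega)\log z+\beta\Phi(R_N^\omega))]\big)$. Summing $z^n$ over $n$ decouples the word count $N$ from the letter count $n$, and the length constraint sits entirely in the term $m_Q\log z$; one then shows $h_c^{\rm que}(\beta)=\lim_{z\uparrow 1}\limsup_N S_N^{\beta,\omega}(z)$, so that $m_Q\log z\to 0$ and Varadhan gives exactly $\sup_Q[\beta\Phi(Q)-I^{\rm que}(Q)]$. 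This is cleaner than your direct route because it makes the disappearance of $m_Q$ transparent and localizes the Varadhan step on the functional $Q\mapsto \beta\Phi(Q)+m_Q\log z$ at fixed $z<1$, followed by $z\uparrow 1$. Your identification of the main technical obstacle --- the unboundedness and mere lower semicontinuity of $\Phi$ in the weak topology, and the role of $M(\beta)<\infty$ in providing the needed truncation --- is on target, and is indeed where the real work in \cite{ChdHo10} lies.
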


For $\beta \in [0,\infty)$, let
\[
\mu_\beta(\drom x) = \frac{1}{M(\beta)}\,
\erom^{\beta x}\,\mu_0(\drom x), \qquad x \in \R,
\]
and let  $Q_\beta = q_\beta^{\otimes\N} \in \cP^\mathrm{inv}(\widetilde{\R}^\N)$ 
be the law of the infinite sentence generated by $S$ on $\omega$ when the first 
letter of each word is drawn from the tilted law $\mu_\beta$ rather than $\mu_0$,
i.e.,
\[
q_\beta(\drom x_0,\ldots,\drom x_{n-1}) = R(n)\,\mu_\beta(\drom x_0) \times \dots 
\times \mu_0(\drom x_{n-1}), \qquad n\in\N,\,x_0,\ldots,x_{n-1} \in \R.
\]
It turns out that $Q_\beta$ is the \emph{unique maximizer} of the annealed variational 
formula. This leads to the following two theorems.
 
\begin{theorem}
\label{thm:disrelcrit}
{\rm [Cheliotis and den Hollander~\cite{ChdHo10}]}
Fix $\mu_0$ and $R$. For all $\beta\in [0,\infty)$,
\[
h_c^\mathrm{que}(\beta) < h_c^\mathrm{ann}(\beta) \quad \Longleftrightarrow \quad
I^\mathrm{que}(Q_\beta) > I^\mathrm{ann}(Q_\beta).
\]
\end{theorem}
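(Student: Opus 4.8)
The plan is to exploit the variational characterizations from Theorem~\ref{thm:varcharpin} together with the fact, stated just before the theorem, that $Q_\beta$ is the \emph{unique maximizer} of the annealed variational formula $h_c^\mathrm{ann}(\beta) = \sup_{Q\in\cC}[\beta\Phi(Q) - I^\mathrm{ann}(Q)]$. The key structural input is the identity $I^\mathrm{que}(Q) = I^\mathrm{ann}(Q) + a\,m_Q\,H(\Psi_Q\,|\,\mu_0^{\otimes\N_0})$, so that $I^\mathrm{que} \geq I^\mathrm{ann}$ pointwise, with the extra term vanishing precisely when the concatenation $\Psi_Q$ equals the i.i.d.\ reference law $\mu_0^{\otimes\N_0}$.

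First I would prove the implication $\Leftarrow$. Suppose $I^\mathrm{que}(Q_\beta) > I^\mathrm{ann}(Q_\beta)$. Then
\[
h_c^\mathrm{que}(\beta) = \sup_{Q\in\cC}[\beta\Phi(Q) - I^\mathrm{que}(Q)]
\geq \beta\Phi(Q_\beta) - I^\mathrm{que}(Q_\beta).
\]
The point is that this lower bound is \emph{not} enough by itself; instead I would argue that for \emph{every} $Q\in\cC$ one has $\beta\Phi(Q) - I^\mathrm{que}(Q) < h_c^\mathrm{ann}(\beta)$. For $Q = Q_\beta$ this is immediate from the hypothesis and the fact that $Q_\beta$ attains the annealed supremum. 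For $Q \neq Q_\beta$, uniqueness of the annealed maximizer gives $\beta\Phi(Q) - I^\mathrm{ann}(Q) < h_c^\mathrm{ann}(\beta)$, and since $I^\mathrm{que}(Q) \geq I^\mathrm{ann}(Q)$ we get $\beta\Phi(Q) - I^\mathrm{que}(Q) \leq \beta\Phi(Q) - I^\mathrm{ann}(Q) < h_c^\mathrm{ann}(\beta)$. Taking the supremum over $Q$ — here one needs a compactness/upper-semicontinuity argument so that the strict inequality survives the supremum, which is where the main technical care is required — yields $h_c^\mathrm{que}(\beta) < h_c^\mathrm{ann}(\beta)$.

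For the implication $\Rightarrow$, I would argue by contraposition: assume $I^\mathrm{que}(Q_\beta) = I^\mathrm{ann}(Q_\beta)$ (equality, since $\geq$ always holds, forces the extra entropy term to vanish). Then
\[
h_c^\mathrm{que}(\beta) \geq \beta\Phi(Q_\beta) - I^\mathrm{que}(Q_\beta)
= \beta\Phi(Q_\beta) - I^\mathrm{ann}(Q_\beta) = h_c^\mathrm{ann}(\beta),
\]
and combined with the universally valid inequality $h_c^\mathrm{que}(\beta) \leq h_c^\mathrm{ann}(\beta)$ (Jensen), this gives $h_c^\mathrm{que}(\beta) = h_c^\mathrm{ann}(\beta)$, i.e., disorder is irrelevant.

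The main obstacle is the supremum step in the $\Leftarrow$ direction: passing from "$\beta\Phi(Q) - I^\mathrm{que}(Q) < h_c^\mathrm{ann}(\beta)$ for every $Q$" to "$\sup_Q < h_c^\mathrm{ann}(\beta)$" is not automatic. One route is to show that the quenched variational problem is itself attained — i.e., there is a quenched maximizer $Q^\ast$ — using lower semicontinuity of $I^\mathrm{que}$ and a suitable compactness property of the sublevel sets within $\cC$ (the constraint $\int |x|\,(\pi_{1,1}Q)(\drom x) < \infty$ and tightness coming from the entropy bound). Then the strict inequality applied to $Q^\ast$ closes the argument. An alternative is a truncation/approximation argument showing the supremum is approached along measures $Q$ with uniformly bounded $\Phi$ and $I^\mathrm{ann}$, on which one has a uniform gap. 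I would expect to invoke the relevant semicontinuity and attainment facts from Birkner, Greven and den Hollander~\cite{BiGrdHo10} rather than reproving them.
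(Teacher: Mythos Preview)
Your approach is correct and is precisely the route the paper indicates: the sentence preceding the theorem states that $Q_\beta$ is the unique annealed maximizer and that ``this leads to'' Theorems~\ref{thm:disrelcrit} and~\ref{thm:uncrittemp}, with no further details given here (the lecture notes defer to~\cite{ChdHo10}). You have correctly filled in the natural argument and, importantly, identified the one genuine technical point --- that in the $\Leftarrow$ direction the pointwise strict inequality must survive the supremum, which in~\cite{ChdHo10} is indeed handled by establishing that the quenched variational problem admits a maximizer (via lower semicontinuity and compactness of sublevel sets of $I^{\mathrm{que}}$).
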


\begin{theorem}
\label{thm:uncrittemp}
{\rm [Cheliotis and den Hollander~\cite{ChdHo10}]}
For all $\mu_0$ and $R$ there exists a $\beta_c=\beta_c(\mu_0,R) 
\in [0,\infty]$ such that
\[
h_c^\mathrm{que}(\beta) \left\{\begin{array}{ll}
= h_c^\mathrm{ann}(\beta) &\quad\mbox{ if } \beta \in [0,\beta_c],\\[0.4cm]
< h_c^\mathrm{ann}(\beta) &\quad\mbox{ if } \beta \in (\beta_c,\infty).\\
\end{array}
\right.
\]
\end{theorem}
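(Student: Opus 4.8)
Set $A=\{\beta\in[0,\infty)\colon h_c^{\mathrm{que}}(\beta)=h_c^{\mathrm{ann}}(\beta)\}$. Since $h_c^{\mathrm{que}}(0)=0=\log M(0)=h_c^{\mathrm{ann}}(0)$ we have $0\in A$, and since $\beta\mapsto h_c^{\mathrm{que}}(\beta)$ and $\beta\mapsto h_c^{\mathrm{ann}}(\beta)$ are convex and finite, hence continuous, the set $A=(h_c^{\mathrm{ann}}-h_c^{\mathrm{que}})^{-1}(\{0\})$ is closed. As the inequality $h_c^{\mathrm{que}}\le h_c^{\mathrm{ann}}$ always holds, the statement is equivalent to: \emph{$A$ is downward closed}, i.e.\ $\beta\in A$ and $0\le\beta'\le\beta$ imply $\beta'\in A$. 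Granting this, $\beta_c:=\sup A\in[0,\infty]$ satisfies $A=[0,\beta_c]$ and $h_c^{\mathrm{que}}(\beta)<h_c^{\mathrm{ann}}(\beta)$ for $\beta>\beta_c$, which is the claim.

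To obtain downward closedness I would invoke Theorem~\ref{thm:disrelcrit}, which (after negating the stated equivalence and using $I^{\mathrm{que}}\ge I^{\mathrm{ann}}$ pointwise) reads
\[
\beta\in A\quad\Longleftrightarrow\quad E(\beta):=I^{\mathrm{que}}(Q_\beta)-I^{\mathrm{ann}}(Q_\beta)=0 ,
\]
with $Q_\beta$ the unique annealed maximizer of Theorem~\ref{thm:varcharpin}. From the formulas for $I^{\mathrm{que}}$ and $I^{\mathrm{ann}}$,
\[
E(\beta)=a\,m_{Q_\beta}\,H\big(\Psi_{Q_\beta}\,\big|\,\mu_0^{\otimes\N_0}\big),
\qquad m_{Q_\beta}=\sum_{n\in\N}nR(n),
\]
and crucially $m_{Q_\beta}$ does not depend on $\beta$. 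Note $E(0)=0$, since at $\beta=0$ no tilting occurs and $\Psi_{Q_0}=\mu_0^{\otimes\N_0}$. Hence it suffices to prove that $\beta\mapsto E(\beta)$ is non-decreasing on $[0,\infty)$: its zero set is then an interval containing $0$, which together with the closedness of $A$ yields $A=[0,\beta_c]$.

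For the monotonicity, the case $a=0$ is immediate ($I^{\mathrm{que}}\equiv I^{\mathrm{ann}}$, so $E\equiv0$ and $\beta_c=\infty$). For $a>0$ one reduces to showing $\beta\mapsto H(\Psi_{Q_\beta}\,|\,\mu_0^{\otimes\N_0})$ is non-decreasing. The single-letter input is that along the exponential family $\mu_\beta(\drom x)=M(\beta)^{-1}\erom^{\beta x}\mu_0(\drom x)$ one has $H(\mu_\beta\,|\,\mu_0)=\beta M'(\beta)/M(\beta)-\log M(\beta)$ with derivative $\beta\,\mathrm{Var}_{\mu_\beta}(\omega_0)\ge0$, so $\beta\mapsto H(\mu_\beta\,|\,\mu_0)$ is non-decreasing (strictly so for $\beta>0$, since $\mu_0$ is non-degenerate). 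One then transfers this to the concatenated sentence: for $\beta'\le\beta$, $\mu_{\beta'}$ is a further (negative) exponential tilt of $\mu_\beta$, so $\Psi_{Q_{\beta'}}$ is a ``de-tilting'' of $\Psi_{Q_\beta}$, and a coupling/data-processing argument along the renewal structure of the sentence should pass the single-letter monotonicity up to the specific relative entropy of the whole sentence.

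\textbf{The main obstacle} is the infinite-mean case $m_{Q_\beta}=\sum_n nR(n)=\infty$, which is precisely the range $a\in(0,1]$ (and which includes the key example of directed $\SRW$). There $E(\beta)$ is an indeterminate product $\infty\cdot 0$ and must be given meaning through a truncation: cap the word lengths at $L$, so that the mean word length becomes finite, run the previous argument at each level $L$, and let $L\to\infty$ using lower semicontinuity of $I^{\mathrm{que}}$. Carrying the single-letter monotonicity through both the renewal-structure lift and this truncation limit is the substantive part; everything else (convexity, continuity, and Theorems~\ref{thm:varcharpin}--\ref{thm:disrelcrit}) is already in hand.
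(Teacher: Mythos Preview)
The paper does not prove Theorem~\ref{thm:uncrittemp}; it is stated with a citation to \cite{ChdHo10} and the remark that it follows from the variational analysis of Theorems~\ref{thm:varcharpin}--\ref{thm:disrelcrit}. Your reduction is exactly the one the paper invites: via Theorem~\ref{thm:disrelcrit}, rewrite $A=\{\beta:E(\beta)=0\}$ with $E(\beta)=I^{\mathrm{que}}(Q_\beta)-I^{\mathrm{ann}}(Q_\beta)=a\,m_{Q_\beta}\,H(\Psi_{Q_\beta}\,|\,\mu_0^{\otimes\N_0})$, then argue that $A$ is an interval. The observations that $m_{Q_\beta}=\sum_n nR(n)$ is independent of $\beta$, that $\beta\mapsto h(\mu_\beta\,|\,\mu_0)$ has derivative $\beta\,\mathrm{Var}_{\mu_\beta}(\omega_0)\ge0$, and that the case $m_{Q_\beta}=\infty$ (i.e.\ $a\in(0,1]$) is the heart of the matter, are all correct.

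The gap is in the ``lift'' from single letters to sentences. Your data-processing heuristic would need $\Psi_{Q_{\beta'}}$ to arise from $\Psi_{Q_\beta}$ through a channel that de-tilts the letters at renewal positions; but $\Psi_{Q_\beta}$ is the letter marginal \emph{after} integrating out the renewal times, and from the letter sequence alone one cannot tell which positions are renewals, so no such channel acts on $\Psi$. Concretely, $\pi_n\Psi_{Q_\beta}$ has density $E_R\big[\prod_{k:T_k<n}\erom^{\beta x_{T_k}}/M(\beta)\big]$ against $\mu_0^{\otimes n}$, a \emph{mixture} over renewal configurations, and relative entropy does not pass through mixtures. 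More seriously, when $m_R=\infty$ the convexity bound $h(\pi_n\Psi_{Q_\beta}\,|\,\mu_0^{\otimes n})\le E_R[N(n)]\,h(\mu_\beta\,|\,\mu_0)$ together with $E_R[N(n)]/n\to0$ forces $H(\Psi_{Q_\beta}\,|\,\mu_0^{\otimes\N_0})=0$ for \emph{every} $\beta$; so on the very range you care about, the quantity whose monotonicity you propose to prove is identically zero, and the product $m_Q\cdot H$ is a genuine $\infty\cdot0$ that has to be resolved inside the construction of $I^{\mathrm{que}}$ (via the truncated rate functions of \cite{BiGrdHo10}), not by manipulating the displayed formula after the fact. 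Your truncation plan is the right instinct, but the interval property of $A$ has to emerge from that analysis; it cannot be deduced from a pre-established monotonicity of $H(\Psi_{Q_\beta}\,|\,\cdot)$.
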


\noindent
Theorem~\ref{thm:disrelcrit} gives a necessary and sufficient condition for 
relevant disorder, while Theorem~\ref{thm:uncrittemp} shows that relevant and 
irrelevant disorder are separated by a single critical temperature
(see Fig.~\ref{fig-uncrittemp}).

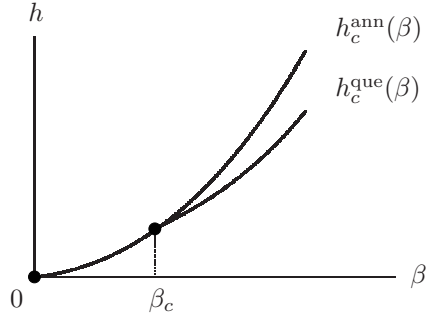
\begin{figure}[htbp]
\begin{center}
\setlength{\unitlength}{0.4cm}
\begin{picture}(12,12)(0,-1.5)
\put(0,0){\line(12,0){12}}
\put(0,0){\line(0,8){8}}
{\thicklines
\qbezier(0,0)(5,0.5)(9,7.5)
\qbezier(4,1.6)(7,3)(9,5.5)
}
\qbezier[20](4,0)(4,1)(4,1.6)
\put(-.8,-1){$0$}
\put(12.5,-0.2){$\beta$}
\put(-0.2,8.5){$h$}
\put(10,6){$h_c^\mathrm{que}(\beta)$}
\put(10,8){$h_c^\mathrm{ann}(\beta)$}
\put(3.8,-1){$\beta_c$}
\put(0,0){\circle*{.4}}
\put(4,1.6){\circle*{.4}}
\end{picture}
\end{center}
\caption{\small Uniqueness of the critical temperature $\beta_c$.}
\label{fig-uncrittemp}
\end{figure}


\subsection{Consequences of the variational characterization}
\label{S4.5}

Corollaries~\ref{cor:doscrit}--\ref{cor:nomasstop} give us control over $\beta_c$. 
Abbreviate $\chi=\sum_{n\in\N} [P(S_n=\ast)]^2$, i.e., the average number of
times two independent copies of our Markov chain $S$ meet at $\ast$.

\begin{coro}
\label{cor:doscrit}
{\rm [Cheliotis and den Hollander~\cite{ChdHo10}]}
(a) If $a=0$, then $\beta_c=\infty$ for all $\mu_0$.\\
(b) If $a\in (0,\infty)$, then, for all $\mu_0$, $\chi<\infty$ implies that
$\beta_c \in (0,\infty]$.
\end{coro}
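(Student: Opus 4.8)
The plan is to read off both parts from Theorem~\ref{thm:disrelcrit} using the explicit shapes of the rate functions from Section~\ref{S4.3}. Subtracting the formula for $I^\mathrm{ann}$ from that for $I^\mathrm{que}$,
\[
I^\mathrm{que}(Q)-I^\mathrm{ann}(Q)=a\,m_{Q}\,H\big(\Psi_{Q}\,\big|\,\mu_0^{\otimes\N_0}\big)\geq 0\qquad\text{for every }Q,
\]
and in particular, evaluated at the (unique) annealed maximiser $Q_\beta$ this equals the quantity compared in Theorem~\ref{thm:disrelcrit}. Thus the disorder is relevant at $\beta$ iff $a\,m_{Q_\beta}\,H(\Psi_{Q_\beta}\,|\,\mu_0^{\otimes\N_0})>0$, and by Theorem~\ref{thm:uncrittemp} the set of $\beta$ where this vanishes is an interval $[0,\beta_c]$. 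Part~(a) is then immediate: if $a=0$ the quantity vanishes for \emph{every} $\beta$ (equivalently, the quenched and annealed word-LDP rate functions coincide), so $\beta_c=\infty$. For part~(b) it therefore suffices to exhibit a single $\beta'>0$ at which it vanishes, since then $\beta_c\geq\beta'>0$.

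For part~(b), $a\in(0,\infty)$, so the sign of the above quantity is that of $m_{Q_\beta}H(\Psi_{Q_\beta}\,|\,\mu_0^{\otimes\N_0})$. As $\beta\downarrow 0$ the tilted law $\mu_\beta$ converges to $\mu_0$, hence $\Psi_{Q_\beta}\to\mu_0^{\otimes\N_0}$ and the entropy term tends to $0$; the real task is to upgrade this to the statement that the term is \emph{identically} $0$ on a right-neighbourhood $[0,\beta')$ of the origin. This is where $\chi=\sum_n[P(S_n=\ast)]^2<\infty$ enters. Writing $u(n)=P(S_n=\ast)$ for the renewal mass function, $\chi=\sum_n u(n)^2$ is the expected number of common points of two independent copies $\tau,\tau'$ of the return set to $\ast$, so $\chi<\infty$ means $\tau\cap\tau'$ is a.s.\ finite, a terminating renewal with return probability $r<1$. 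I would then run the second-moment method for the pinning partition function at $h=h_c^\mathrm{ann}(\beta)$: one finds that $\mE[(Z_n^{\beta,h,\omega})^2]/(\mE[Z_n^{\beta,h,\omega}])^2=\mE^{\otimes 2}[\erom^{\lambda(\beta)\,|\tau\cap\tau'\cap[1,n]|}]$ with $\lambda(\beta)=\log(M(2\beta)/M(\beta)^2)\downarrow 0$ as $\beta\downarrow 0$, so that for $\beta$ small enough that $r\,\erom^{\lambda(\beta)}<1$ this ratio stays bounded in $n$. A Paley--Zygmund argument and a standard bootstrap then give $f^\mathrm{que}(\beta,h)=f^\mathrm{ann}(\beta,h)$ for $h$ just below the annealed critical point, i.e.\ $h_c^\mathrm{que}(\beta)=h_c^\mathrm{ann}(\beta)$; equivalently, by Theorem~\ref{thm:disrelcrit}, $m_{Q_\beta}H(\Psi_{Q_\beta}\,|\,\mu_0^{\otimes\N_0})=0$ for such $\beta$. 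Taking any such $\beta'>0$, Theorem~\ref{thm:uncrittemp} gives $\beta_c\geq\beta'>0$, which is all that is claimed (the upper endpoint may well be $+\infty$).

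The routine ingredients are the two computations (the difference of rate functions, and the standard second-moment identity for the pinning partition function) together with the bookkeeping of the variational formulas. The main obstacle is the small-$\beta$ step: turning $\chi<\infty$ into $h_c^\mathrm{que}(\beta)=h_c^\mathrm{ann}(\beta)$ for small $\beta$ --- i.e.\ promoting a bounded-second-moment estimate into an identity of critical points --- and then matching this, via Theorem~\ref{thm:disrelcrit}, with the vanishing of the entropy term. Finiteness of $\chi$ is essential precisely here; when $\chi=\infty$ one instead expects $m_{Q_\beta}H(\Psi_{Q_\beta}\,|\,\mu_0^{\otimes\N_0})>0$ for every $\beta>0$, hence $\beta_c=0$.
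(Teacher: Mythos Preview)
Part~(a) is correct and is exactly the variational argument: with $a=0$ the extra term in $I^{\mathrm{que}}$ vanishes identically, so Theorem~\ref{thm:disrelcrit} gives $h_c^{\mathrm{que}}\equiv h_c^{\mathrm{ann}}$ and hence $\beta_c=\infty$.

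For part~(b) your variational framing breaks down, although the second-moment argument you fall back on is a valid (different) proof. The point is that $\chi<\infty$ forces null recurrence: if $m=\sum_n nR(n)<\infty$ then the renewal theorem gives $P(S_n=\ast)\to 1/m>0$, whence $\chi=\infty$. Thus under the hypothesis of~(b) one always has $m_{Q_\beta}=\infty$, and the product $a\,m_{Q_\beta}\,H(\Psi_{Q_\beta}\,|\,\mu_0^{\otimes\N_0})$ is of indeterminate form; the displayed formula from Section~\ref{S4.3} does not by itself decide whether $I^{\mathrm{que}}(Q_\beta)>I^{\mathrm{ann}}(Q_\beta)$ without the truncation machinery underlying the quenched word-LDP in~\cite{BiGrdHo10}. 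Your assertion that the entropy factor is ``identically~$0$ on a right-neighbourhood of the origin'' is therefore neither well-posed via the product formula nor what your second-moment computation establishes.

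What that computation does establish --- directly, bypassing the variational formula --- is $h_c^{\mathrm{que}}(\beta)=h_c^{\mathrm{ann}}(\beta)$ for small $\beta$. This is the classical route (Alexander, Toninelli, Lacoin, cited at the end of Section~\ref{S4.2}) and is precisely the ``other means'' the remark after Corollary~\ref{cor:nomasstop} contrasts with the variational approach of~\cite{ChdHo10}. The latter instead analyses $I^{\mathrm{que}}(Q_\beta)$ through a truncation of word lengths, and this is where the condition $M(2\beta)/M(\beta)^2<1+\chi^{-1}$ of Corollary~\ref{cor:bdscrittemp}(a) emerges; part~(b) is then immediate since that inequality holds for small~$\beta$ whenever $\chi<\infty$. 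Two further comments on your second-moment route: the ``standard bootstrap'' from a bounded second moment at $h=h_c^{\mathrm{ann}}(\beta)$ to equality of critical points is the substantive step there, not bookkeeping; and the closing detour through Theorem~\ref{thm:disrelcrit} is superfluous once you have $h_c^{\mathrm{que}}(\beta)=h_c^{\mathrm{ann}}(\beta)$ in hand.
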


\begin{coro}
\label{cor:bdscrittemp}
{\rm [Cheliotis and den Hollander~\cite{ChdHo10}]}
(a) $\beta_c \geq \beta_c^*$ with
\[
\beta_c^* = \sup\big\{\beta\in [0,\infty)\colon\,
M(2\beta)/M(\beta)^2 < 1+\chi^{-1}\big\}.
\]
(b) $\beta_c \leq \beta_c^{**}$ with
\[
\beta_c^{**} = \inf\big\{\beta \in [0,\infty)\colon\,h(\mu_\beta\mid\mu_0)>h(R)\big\},
\]
where $h(\cdot\mid\cdot)$ is relative entropy and $h(\cdot)$ is entropy.
\end{coro}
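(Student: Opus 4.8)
\medskip\noindent
\textbf{Strategy.} Both bounds can be read off the variational machinery of Theorems~\ref{thm:varcharpin}--\ref{thm:uncrittemp}. By Theorem~\ref{thm:uncrittemp} the set of $\beta$ at which the disorder is irrelevant is the interval $[0,\beta_c]$, so it is enough to prove that the disorder is irrelevant for every $\beta<\beta_c^*$ (giving $\beta_c\geq\beta_c^*$) and relevant for every $\beta>\beta_c^{**}$ (giving $\beta_c\leq\beta_c^{**}$). By Theorem~\ref{thm:disrelcrit}, together with the inequality $I^\mathrm{que}\geq I^\mathrm{ann}$, irrelevance at $\beta$ means $I^\mathrm{que}(Q_\beta)=I^\mathrm{ann}(Q_\beta)$ and relevance at $\beta$ means $I^\mathrm{que}(Q_\beta)>I^\mathrm{ann}(Q_\beta)$; the gap between them is exactly the explicit entropic term $a\,m_{Q_\beta}\,H(\Psi_{Q_\beta}\,|\,\mu_0^{\otimes\N_0})$.

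\medskip\noindent
\textbf{Part (a).} Fix $\beta<\beta_c^*$, so that $M(2\beta)/M(\beta)^2<1+\chi^{-1}$. I would prove $h_c^\mathrm{que}(\beta)=h_c^\mathrm{ann}(\beta)=\log M(\beta)$ by a second-moment estimate on the annealed critical line. With $S,S'$ two independent copies of the Markov chain, averaging over the i.i.d.\ disorder gives
\[
\mE\big[(Z_n^{\beta,h,\omega})^2\big]
= E^{\otimes2}\!\left[\exp\!\Big((\log M(\beta)-h)\big(L_n(S)+L_n(S')\big)
+\log\tfrac{M(2\beta)}{M(\beta)^2}\,\Lambda_n(S,S')\Big)\right],
\]
where $L_n$ counts visits to $\ast$ and $\Lambda_n(S,S')=\#\{i:\,S_i=S_i'=\ast\}$. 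At $h=h_c^\mathrm{ann}(\beta)=\log M(\beta)$ the single-replica terms disappear, and $\{i:\,S_i=S_i'=\ast\}$ is a terminating renewal set whose total mass is (up to a harmless boundary term) geometric with mean $\chi=\sum_{n\in\N}P(S_n=\ast)^2$; hence $\sup_n\mE[(Z_n^{\beta,h,\omega})^2]<\infty$ precisely because $M(2\beta)/M(\beta)^2<1+\chi^{-1}$. A Paley--Zygmund bound then keeps $Z_n^{\beta,h,\omega}$ bounded below in probability, and a concentration/Borel--Cantelli argument along a geometric subsequence (as in Alexander~\cite{Al08} and Toninelli~\cite{To08}) upgrades this to $f^\mathrm{que}(\beta,h)=f^\mathrm{ann}(\beta,h)$ for all $h$, in particular for $h$ just below $h_c^\mathrm{ann}(\beta)$, where $f^\mathrm{ann}>0$. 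This forces $h_c^\mathrm{que}(\beta)=h_c^\mathrm{ann}(\beta)$; letting $\beta\uparrow\beta_c^*$ gives $\beta_c\geq\beta_c^*$.

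\medskip\noindent
\textbf{Part (b).} Fix $\beta>\beta_c^{**}$, so $h(\mu_\beta\mid\mu_0)>h(R)$ (and $a>0$, since $a=0$ would force $h(R)=\infty$ and make the claim vacuous). By the reduction above it suffices to show $H(\Psi_{Q_\beta}\mid\mu_0^{\otimes\N_0})>0$. View the sentence $Q_\beta$ as the pair (concatenated letter sequence, word-boundary sequence). Under the reference $q_0^{\otimes\N}$ the two components are independent, the boundaries being i.i.d.\ with law $R$; under $Q_\beta$ the boundary marginal is still i.i.d.\ $R$ (the words have i.i.d.\ lengths $\sim R$), while the letters have law $\Psi_{Q_\beta}$. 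The chain rule for relative entropy then yields, per word,
\[
h(\mu_\beta\mid\mu_0)=H\big(Q_\beta\mid q_0^{\otimes\N}\big)
= m_{Q_\beta}\,H\big(\Psi_{Q_\beta}\mid\mu_0^{\otimes\N_0}\big)+\mathcal I ,
\]
where $H(Q_\beta\mid q_0^{\otimes\N})=h(\mu_\beta\mid\mu_0)$ because $Q_\beta$ only retilts the first letter of each word, and $\mathcal I$ is the mutual information (under $Q_\beta$) between the letter sequence and the boundary sequence. Since $\mathcal I$ is at most the entropy $h(R)$ of the boundaries, we get $m_{Q_\beta}H(\Psi_{Q_\beta}\mid\mu_0^{\otimes\N_0})\geq h(\mu_\beta\mid\mu_0)-h(R)>0$. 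Hence $I^\mathrm{que}(Q_\beta)>I^\mathrm{ann}(Q_\beta)$, so by Theorem~\ref{thm:disrelcrit} the disorder is relevant; letting $\beta\downarrow\beta_c^{**}$ gives $\beta_c\leq\beta_c^{**}$.

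\medskip\noindent
\textbf{Main obstacle.} The delicate half is (a): the crude second-moment bound does not automatically come with the sharp constant $1+\chi^{-1}$, so the intersection-renewal mass $\Lambda_\infty$ must be identified as (essentially) geometric with Green's function $\chi$, and one must then globalize from the annealed critical line into a genuine left-neighborhood of it --- it is the concentration estimates for $\tfrac1n\log Z_n^{\beta,h,\omega}$ that do the real work there. In (b) the subtlety is the infinite-mean regime $m_{Q_\beta}=\infty$, where the product $a\,m_{Q_\beta}H(\Psi_{Q_\beta}\mid\mu_0^{\otimes\N_0})$ must be read through its defining large-deviation rate function from Section~\ref{S4.3} rather than literally, with the displayed chain-rule identity interpreted accordingly.
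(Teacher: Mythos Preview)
The paper gives no detailed proof; it states the result, cites~\cite{ChdHo10}, and explicitly remarks that while Corollaries~\ref{cor:doscrit}--\ref{cor:nomasstop} had already been obtained in the literature by other means, ``the point of the above exposition is to show that these results also follow in a natural manner from a variational analysis\ldots\ based on Theorems~\ref{thm:varcharpin} and~\ref{thm:disrelcrit}.'' So the intended proof is purely variational: use the criterion of Theorem~\ref{thm:disrelcrit} and evaluate the gap $I^\mathrm{que}(Q_\beta)-I^\mathrm{ann}(Q_\beta)=a\,m_{Q_\beta}H(\Psi_{Q_\beta}\mid\mu_0^{\otimes\N_0})$.

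Your part~(b) follows this route, and your entropy chain-rule / mutual-information bound $m_{Q_\beta}H(\Psi_{Q_\beta}\mid\mu_0^{\otimes\N_0})\ge h(\mu_\beta\mid\mu_0)-h(R)$ is exactly the kind of computation that produces the threshold $\beta_c^{**}$ from the variational picture. Your infinite-mean caveat is well placed: in~\cite{BiGrdHo10,ChdHo10} the product $m_Q H(\Psi_Q\mid\mu_0^{\otimes\N_0})$ is made sense of via a truncation of word lengths, after which your per-word chain rule goes through cleanly.

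Your part~(a), despite the announced strategy, abandons the variational route and reverts to the second-moment / Paley--Zygmund method of~\cite{Al08,To08}. That argument is correct, and the sharp constant $1+\chi^{-1}$ arises exactly as you describe (the number of simultaneous returns of two replicas to $\ast$ is geometric with mean $\chi$). But this is precisely the ``other means'' the paper's note is contrasting with. The variational proof of~(a) in~\cite{ChdHo10} instead shows directly that the extra term $a\,m_{Q_\beta}H(\Psi_{Q_\beta}\mid\mu_0^{\otimes\N_0})$ vanishes when $M(2\beta)/M(\beta)^2<1+\chi^{-1}$, again via a truncation argument on word lengths. Both routes are valid; yours is more elementary and self-contained, while the paper's fits into the unified entropic framework it is advertising.
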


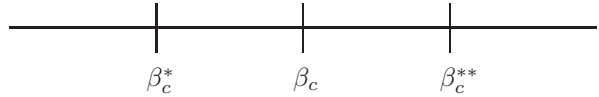
\begin{figure}[htbp]
\vspace{-.5cm}
\begin{center}
\setlength{\unitlength}{0.65cm}
\begin{picture}(12,3)(0,-1.3)
\put(0,0){\line(12,0){12}}
\put(6,-.5){\line(0,1){1}}
\put(3,-.5){\line(0,1){1}}
\put(9,-.5){\line(0,1){1}}
\put(5.8,-1.2){$\beta_c$}
\put(2.8,-1.2){$\beta_c^*$}
\put(8.8,-1.2){$\beta_c^{**}$}
\end{picture}
\end{center}
\caption{\small Bounds on $\beta_c$.}
\label{fig-bdscrittemp}
\end{figure}

\begin{coro}
\label{cor:nomasstop}
{\rm [Cheliotis and den Hollander~\cite{ChdHo10}]}
If $a \in (0,\infty)$, then $\beta_c \in [0,\infty)$ for all $\mu_0$ with
\[
\mu_0(\{w\})=0,
\] 
where $w = \mathrm{sup}[\mathrm{supp}(\mu_0)]$. 
\end{coro}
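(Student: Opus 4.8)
The plan is to prove Corollary~\ref{cor:nomasstop} by showing that the upper bound $\beta_c^{**}$ from Corollary~\ref{cor:bdscrittemp}(b) is finite whenever $a\in(0,\infty)$ and $\mu_0(\{w\})=0$ with $w=\sup[\mathrm{supp}(\mu_0)]$. Since $\beta_c\le\beta_c^{**}$ always, it suffices to exhibit a single finite $\beta$ with $h(\mu_\beta\mid\mu_0)>h(R)$. Note first that $h(R)<\infty$: the hypothesis $\log R(n)/\log n\to-(1+a)$ with $a>0$ forces $R(n)$ to have a polynomial tail of exponent strictly larger than $1$, so $R$ has finite entropy (the entropy of a distribution on $\N$ with summable tail and polynomially small mass is finite — a short computation with $\sum_n R(n)\log(1/R(n))$). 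So the target $h(R)$ is a fixed finite number, and the whole game is to make the relative entropy $h(\mu_\beta\mid\mu_0)$ exceed it by taking $\beta$ large.

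The core estimate is therefore $\lim_{\beta\to\infty} h(\mu_\beta\mid\mu_0)=\infty$. Recall $\mu_\beta(\drom x)=M(\beta)^{-1}\erom^{\beta x}\mu_0(\drom x)$, so
\[
h(\mu_\beta\mid\mu_0)=\int_\R \log\frac{\drom\mu_\beta}{\drom\mu_0}\,\drom\mu_\beta
=\int_\R(\beta x-\log M(\beta))\,\mu_\beta(\drom x)
=\beta\,\mE_{\mu_\beta}(\omega_0)-\log M(\beta).
\]
Here one must separate two cases. If $w=\infty$ (unbounded support), then $\mE_{\mu_\beta}(\omega_0)\to\infty$ while $\frac1\beta\log M(\beta)\to w=\infty$ as well, so a more careful argument is needed: using $\log M(\beta)=\int_0^\beta \mE_{\mu_s}(\omega_0)\,\drom s$ and monotonicity of $s\mapsto\mE_{\mu_s}(\omega_0)$ (the tilted mean is nondecreasing in $s$, being a derivative of the convex function $\log M$), one gets $h(\mu_\beta\mid\mu_0)=\beta\,\mE_{\mu_\beta}(\omega_0)-\int_0^\beta\mE_{\mu_s}(\omega_0)\,\drom s=\int_0^\beta(\mE_{\mu_\beta}(\omega_0)-\mE_{\mu_s}(\omega_0))\,\drom s\ge 0$, and one shows this integral diverges because $\mE_{\mu_s}(\omega_0)$ strictly increases. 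If $w<\infty$, then the hypothesis $\mu_0(\{w\})=0$ is exactly what's needed: as $\beta\to\infty$ the tilted law $\mu_\beta$ concentrates near $w$, $\mE_{\mu_\beta}(\omega_0)\uparrow w$, and $\frac1\beta\log M(\beta)\uparrow w$ too, but the \emph{difference} $\beta(w-\frac1\beta\log M(\beta))$ still blows up precisely because there is no atom at $w$ — with an atom of mass $p$ one would have $M(\beta)\sim p\,\erom^{\beta w}$ and $h(\mu_\beta\mid\mu_0)\to\log(1/p)<\infty$, so the no-atom condition prevents this saturation. Quantitatively, writing $\delta>0$ and $p_\delta=\mu_0((w-\delta,w])>0$, one bounds $M(\beta)\le \erom^{\beta w}$ and $\mE_{\mu_\beta}(\omega_0)\ge w-\delta-\erom^{-\beta\delta/2}(\text{something})$ via splitting the integral at $w-\delta$, giving $h(\mu_\beta\mid\mu_0)\ge -\beta\delta+o(\beta)$... — this naive bound goes the wrong way, so the right approach is instead the lower bound $h(\mu_\beta\mid\mu_0)\ge h(\mu_\beta\mid\mu_\beta')$-type comparison, or more cleanly: $h(\mu_\beta\mid\mu_0)=\sup_{g}\{\mE_{\mu_\beta}(g)-\log\mE_{\mu_0}(\erom^g)\}\ge \beta'\mE_{\mu_\beta}(\omega_0)-\log M(\beta')$ for any fixed $\beta'$, and letting $\beta\to\infty$ first (so $\mE_{\mu_\beta}(\omega_0)\to w$) then $\beta'\to\infty$ shows the liminf is at least $\sup_{\beta'}(\beta' w-\log M(\beta'))=+\infty$ exactly when there is no atom at $w$. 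This last variational trick is the clean way to run the argument.

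With $\lim_{\beta\to\infty}h(\mu_\beta\mid\mu_0)=\infty$ established and $h(R)<\infty$ fixed, the set $\{\beta\in[0,\infty)\colon h(\mu_\beta\mid\mu_0)>h(R)\}$ is nonempty, hence $\beta_c^{**}<\infty$, hence $\beta_c\le\beta_c^{**}<\infty$, i.e.\ $\beta_c\in[0,\infty)$. I would also remark that $a\in(0,\infty)$ (as opposed to $a=0$) is used only to guarantee $h(R)<\infty$ — and indeed Corollary~\ref{cor:doscrit}(a) shows $\beta_c=\infty$ when $a=0$, consistent with $h(R)=\infty$ in that regime, which is a useful sanity check.

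The main obstacle I anticipate is the bounded-support case $w<\infty$: one has to show that the no-atom hypothesis is not merely necessary but sufficient for the divergence, and the temptation to do this by a crude split-the-integral estimate leads to bounds of the wrong sign. The variational (Donsker–Varadhan / Gibbs) representation of relative entropy, $h(\nu\mid\mu)=\sup_g\{\int g\,\drom\nu-\log\int\erom^g\drom\mu\}$, sidesteps this cleanly by reducing everything to the elementary fact that $\sup_{\beta'\ge0}(\beta' w-\log M(\beta'))=-\log\mu_0(\{w\})\in(0,\infty]$, which is $+\infty$ iff $\mu_0(\{w\})=0$. Getting that reduction right, and handling $w=\infty$ in parallel, is where the real content of the proof sits.
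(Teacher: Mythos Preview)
Your approach is correct and is precisely the one implicit in the paper's structure: Corollary~\ref{cor:nomasstop} follows from Corollary~\ref{cor:bdscrittemp}(b) once one shows $h(R)<\infty$ (which uses $a>0$) and $h(\mu_\beta\mid\mu_0)\to\infty$ (which uses the no-atom condition), and your final variational argument via $h(\mu_\beta\mid\mu_0)\ge\beta'\,\mE_{\mu_\beta}(\omega_0)-\log M(\beta')$ together with $\sup_{\beta'\ge0}\big(\beta'w-\log M(\beta')\big)=-\log\mu_0(\{w\})$ is exactly the clean way to run it. The paper itself does not spell out the proof but defers to~\cite{ChdHo10}, where this is the argument.
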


For the case where $R$ is regularly varying at infinity, i.e.,
\[
R(n) = n^{-(1+a)}\ell(n), \qquad n\in\N,
\] 
with $\ell(\cdot)$ slowly varying at infinity (which means that $\lim_{x\to\infty} 
\ell(cx)/\ell(x)=1$ for all $c \in (0,\infty)$), renewal theory gives
\[
P(S_n=\ast) \sim \left\{
\begin{array}{ll}
\frac{C}{n^{1-a}\ell(n)}, &a \in (0,1),\\
C, &a \in (1,\infty),\\
\ell^*(n), &a=1,
\end{array}
\right. \qquad n\to\infty,
\]
for some $C \in (0,\infty)$ and $\ell^*(\cdot)$ slowly varying at 
infinity. It therefore follows that $\chi<\infty$ if and only if
$a \in (0,\frac12)$ or $a=\frac12$, $\sum_{n\in\N} n^{-1}[\ell(n)]^{-2}
<\infty$.

A challenging open problem is the following conjecture, which has been proved under
more restrictive assumptions on $R$ (see Section~\ref{S4.7}). 

\begin{conjecture}
\label{conj:chiinfty}
{\rm [Cheliotis and den Hollander~\cite{ChdHo10}]}
If $a\in (0,\infty)$, then, for all $\mu_0$, $\chi=\infty$ implies that $\beta_c=0$.
\end{conjecture}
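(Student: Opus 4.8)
\noindent\emph{Towards a proof.}
The plan is to run everything through the variational characterisation of Section~\ref{S4.4}. By Theorem~\ref{thm:uncrittemp}, $\beta_c=0$ is equivalent to $h_c^\mathrm{que}(\beta)<h_c^\mathrm{ann}(\beta)$ for every $\beta\in(0,\infty)$, and by Theorem~\ref{thm:disrelcrit} this amounts to
\[
I^\mathrm{que}(Q_\beta)-I^\mathrm{ann}(Q_\beta)>0\qquad\text{for all }\beta\in(0,\infty).
\]
By the definitions in Section~\ref{S4.3} the left-hand side is the ``extra entropy term'' $a\,m_{Q_\beta}\,H\!\left(\Psi_{Q_\beta}\mid\mu_0^{\otimes\N_0}\right)$, where $m_{Q_\beta}=\sum_{n\in\N}nR(n)$ and $\Psi_{Q_\beta}$ is the letter sequence obtained by concatenating i.i.d.\ words drawn from $q_\beta$: a renewal sequence with inter-arrival law $R$ in which the letter at each renewal epoch is sampled from the tilted law $\mu_\beta$ and all other letters from $\mu_0$. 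Since $a\in(0,\infty)$ and, for $\beta>0$, the non-degeneracy of $\mu_0$ gives $\mu_\beta\neq\mu_0$ and hence $h(\mu_\beta\mid\mu_0)>0$, the whole statement reduces to showing that this positive per-renewal cost is not washed out in the entropy density whenever $\chi=\infty$.

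The case $m_{Q_\beta}<\infty$ is disposed of directly. Under the standing assumption $\log R(n)/\log n\to-(1+a)$ this forces $a\ge 1$, hence $\chi=\sum_{n}[P(S_n=\ast)]^2=\infty$, so it lies inside the hypothesis. Here the renewal epochs of $\Psi_{Q_\beta}$ have positive density $1/m_{Q_\beta}$; conditioning the letters on the renewal configuration and invoking the renewal theorem for the expected number of renewals in $[0,N)$, together with the elementary fact that a stationary process has vanishing specific relative entropy with respect to a product law only if it equals that product law, gives $H(\Psi_{Q_\beta}\mid\mu_0^{\otimes\N_0})>0$ (one expects equality with $h(\mu_\beta\mid\mu_0)/m_{Q_\beta}$ when $\mathrm{supp}(\mu_\beta)\cap\mathrm{supp}(\mu_0)=\emptyset$, and a strictly positive lower bound in general via the Donsker--Varadhan variational formula applied to a test function that estimates renewal locations from the letters). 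Thus the extra term is bounded below by a positive constant and the conjecture holds; in particular it already holds whenever $m_{Q_\beta}<\infty$, and, via the regularly varying case known from Derrida, Giacomin, Lacoin and Toninelli~\cite{DeGiLaTo09}, whenever $R(n)=n^{-(1+a)}\ell(n)$ with $a>\tfrac12$.

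The substance of the conjecture is the regime $m_{Q_\beta}=\infty$ --- essentially $a\in[\tfrac12,1]$ with slowly varying corrections at the endpoints --- where $a\,m_{Q_\beta}\,H(\Psi_{Q_\beta}\mid\mu_0^{\otimes\N_0})$ is the indeterminate product $\infty\cdot0$ and must be read through the truncation definition of the quenched rate function of Birkner, Greven and den Hollander~\cite{BiGrdHo10}: if $Q_\beta^{(L)}$ denotes $Q_\beta$ with word lengths truncated at $L$ and renormalised, the extra term equals $a\lim_{L\to\infty}m_{Q_\beta^{(L)}}H(\Psi_{Q_\beta^{(L)}}\mid\mu_0^{\otimes\N_0})$. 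For the truncated renewal (mean $m_{Q_\beta^{(L)}}<\infty$) there is a bona fide stationary version, and the plan is (i) via a chain-rule decomposition of $H(\Psi_{Q_\beta^{(L)}}\mid\mu_0^{\otimes\N_0})$ into the per-renewal cost $h(\mu_\beta\mid\mu_0)$ minus the information the letters reveal about the renewal locations, to establish an estimate of the form
\[
m_{Q_\beta^{(L)}}\,H\!\left(\Psi_{Q_\beta^{(L)}}\mid\mu_0^{\otimes\N_0}\right)\;\ge\;h(\mu_\beta\mid\mu_0)-\frac{c(\beta)}{\textstyle\sum_{1\le k\le L}[P(S_k=\ast)]^2},
\]
with $c(\beta)$ depending on the chi-squared divergence $\chi^2(\mu_\beta,\mu_0)$ but not on $L$, and (ii) to conclude that when $\chi=\sum_{k}[P(S_k=\ast)]^2=\infty$ the subtracted term vanishes as $L\to\infty$, so that the extra term is at least $a\,h(\mu_\beta\mid\mu_0)>0$ for every $\beta>0$. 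The series $\chi$ enters in step (i) as the expected number of simultaneous renewals of two independent copies of the renewal, controlling exactly how well the $\mu_\beta$-marked epochs can be localised from the letters; this is the same second-moment mechanism that in Corollary~\ref{cor:bdscrittemp}(a) yields irrelevance under $M(2\beta)/M(\beta)^2<1+\chi^{-1}$, now run as a matching lower bound on $h_c^\mathrm{ann}-h_c^\mathrm{que}$.

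I expect steps (i)--(ii) in full generality to be the main obstacle. When $R$ is controlled only through $\log R(n)/\log n\to-(1+a)$ it may oscillate, the truncated renewal mass functions need not be comparable to a single regularly varying profile, and bounding $H(\Psi_{Q_\beta^{(L)}}\mid\mu_0^{\otimes\N_0})$ beyond quadratic order in $\beta$ forces one to handle the ``hidden renewal'' structure: the letter sequence does not pin down the renewal epochs unless $\mu_\beta$ and $\mu_0$ have disjoint supports. This is precisely why the statement is still open; under regular variation of $R$ it is reachable either by the truncation estimate above or, equivalently and more classically, by the fractional-moment and coarse-graining method of~\cite{DeGiLaTo09}, which shows that $\mE[(Z_n^{\beta,h,\omega})^\gamma]$ stays bounded in $n$ for some $\gamma\in(0,1)$ and some $h<h_c^\mathrm{ann}(\beta)$, whence $f^\mathrm{que}(\beta,h)=0$ by Jensen --- the ``room'' for the change of measure on blocks of size $\ell$ being supplied by $\sum_{k\le\ell}[P(S_k=\ast)]^2\to\infty$ as $\ell\to\infty$.
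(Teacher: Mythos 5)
First, a point of framing: the statement is presented in the paper as Conjecture~\ref{conj:chiinfty}, i.e., as an \emph{open problem}. The paper contains no proof of it, only the remark in Section~\ref{S4.7} that it has been verified under the stronger hypothesis that $R$ is regularly varying (for $a>\frac12$, and for $a=\frac12$ when $\ell(\cdot)$ does not decay too fast), by the fractional-moment method of~\cite{DeGiLaTo09}. Your write-up is honest about this and should be read as a research programme rather than a proof. Within that reading, your reduction is correct and is exactly the route the paper advocates: by Theorems~\ref{thm:uncrittemp} and~\ref{thm:disrelcrit}, $\beta_c=0$ is equivalent to $I^\mathrm{que}(Q_\beta)>I^\mathrm{ann}(Q_\beta)$ for all $\beta>0$, and the difference of the two rate functions is the extra term $a\,m_{Q_\beta}\,H(\Psi_{Q_\beta}\mid\mu_0^{\otimes\N_0})$. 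Your disposal of the sub-case $m_{Q_\beta}=\sum_{n}nR(n)<\infty$ is sound and can even be shortened: the one-letter marginal of the stationary concatenation is the mixture $\tfrac1m\mu_\beta+(1-\tfrac1m)\mu_0\neq\mu_0$ for $\beta>0$, so $H(\Psi_{Q_\beta}\mid\mu_0^{\otimes\N_0})\geq h\bigl(\tfrac1m\mu_\beta+(1-\tfrac1m)\mu_0\mid\mu_0\bigr)>0$ by monotonicity of $n\mapsto\frac1n h(\pi_n\cdot\mid\mu_0^{\otimes n})$, and the prefactor $a\,m$ is finite and positive. You also correctly observe that this sub-case sits inside the hypothesis $\chi=\infty$, since the renewal theorem gives $P(S_n=\ast)\to 1/m>0$.

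The genuine gap is your step (i): the inequality $m_{Q_\beta^{(L)}}\,H(\Psi_{Q_\beta^{(L)}}\mid\mu_0^{\otimes\N_0})\geq h(\mu_\beta\mid\mu_0)-c(\beta)/\sum_{k\leq L}[P(S_k=\ast)]^2$ is nowhere derived, and it is precisely the content of the conjecture in the regime $m_{Q_\beta}=\infty$, $\chi=\infty$ (essentially $a\in[\frac12,1]$, as you say). Neither the paper nor the cited variational machinery of~\cite{BiGrdHo10} and~\cite{ChdHo10} supplies it: the truncation defining $I^\mathrm{que}$ when $m_Q=\infty$ only produces a limit of indeterminate products, and the quantity you must control --- the information the letters carry about the renewal epochs --- is exactly what second-moment arguments bound in the direction of Corollary~\ref{cor:bdscrittemp}(a) (an upper bound on the quenched--annealed gap, yielding irrelevance when $\chi^2(\mu_\beta,\mu_0)\,\chi<1$), not in the direction you need (a matching lower bound yielding relevance when $\chi=\infty$). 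Note also that your parenthetical about disjoint supports is vacuous here: $\mu_\beta$ is an exponential tilt of $\mu_0$ and hence always equivalent to it, so the tilted epochs are never reconstructible from the letters, which is exactly why the chain-rule decomposition does not close. In short: correct reduction, correct easy case, correctly located obstruction --- but no proof, and the paper offers none either.
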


\medskip\noindent
\emph{Note}: The results in Theorem~\ref{thm:uncrittemp} and 
Corollaries~\ref{cor:doscrit}, \ref{cor:bdscrittemp} and \ref{cor:nomasstop} 
have all been derived in the literature by other means (see the references
cited at the end of Section~\ref{S4.2} and references therein). The point of 
the above exposition is to show that these results also follow in a natural
manner from a \emph{variational analysis} of the random pinning model, based 
on Theorems~\ref{thm:varcharpin} and \ref{thm:disrelcrit}.

\medskip
The following heuristic criterion, known as the \emph{Harris criterion}, applies
to the random pinning model. 
\begin{itemize}
\item[$\blacktriangleright$]
``Arbitrary weak disorder modifies the nature of a phase transition when 
the order of the phase transition in the non-disordered system is $<2$.''
\end{itemize}
Since, when $R$ is regularly varying at infinity, the order of the phase 
transition for the homopolymer is $<2$ when $a>\tfrac12$ and $\geq 2$ 
when $a\leq\tfrac12$ (see {\bf Tutorial 3 in Appendix~\ref{appC}}), the 
above results fit with this criterion. It is shown in Giacomin and
Toninelli~\cite{GiTo06c} that the disorder makes the phase transition smoother:
in the random pinning model the order of the phase transition is \emph{at
least two}, irrespective of the value of $a$.

At the critical value $a=\tfrac12$ the disorder can be \emph{marginally relevant} 
or \emph{marginally irrelevant}, depending on the choice of $\ell(\cdot)$.
See Alexander~\cite{Al08}, Giacomin, Lacoin and Toninelli~\cite{GiLaTo10}.


\subsection{Denaturation of DNA}
\label{S4.6}

DNA is a string of AT and CG base pairs forming a double helix: A and T share 
two hydrogen bonds, C and G share three. Think of the two strands as performing 
random walks in three-dimensional space subject to the restriction that they 
do not cross each other. Then the distance between the two strands is a random 
walk conditioned not to return to the origin. Since three-dimensional random 
walks are transient, this condition has an effect similar to that of a hard 
wall. 

This view of DNA is called the Poland-Sheraga model (see Fig.~\ref{fig-PolandSheraga}). 
The localized phase $\cL$ corresponds to the bounded phase of DNA, where the two 
strands are attached. The delocalized phase $\cD$ corresponds to the denaturated 
phase of DNA, where the two strands are detached. 

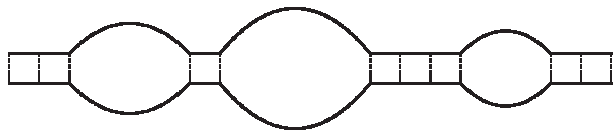
\begin{figure}[htbp]
\vspace{-.5cm}
\begin{center}
\setlength{\unitlength}{0.4cm}
\begin{picture}(18,5)(0,-1.5)
\qbezier[10](0,0)(0,0.5)(0,1)
\qbezier[10](1,0)(1,0.5)(1,1)
\qbezier[10](2,0)(2,0.5)(2,1)
{\thicklines
\qbezier(0,0)(1,0)(2,0)
\qbezier(0,1)(1,1)(2,1)
}
{\thicklines
\qbezier(2,0)(4,-2)(6,0)
\qbezier(2,1)(4,3)(6,1)
}
\qbezier[10](6,0)(6,0.5)(6,1)
\qbezier[10](7,0)(7,0.5)(7,1)
{\thicklines
\qbezier(6,0)(6.5,,0)(7,0)
\qbezier(6,1)(6.5,1)(7,1)
}
{\thicklines
\qbezier(7,0)(9.5,-3)(12,0)
\qbezier(7,1)(9.5,4)(12,1)
}
\qbezier[10](12,0)(12,0.5)(12,1)
\qbezier[10](13,0)(13,0.5)(13,1)
\qbezier[10](14,0)(14,0.5)(14,1)
\qbezier[10](15,0)(15,0.5)(15,1)
{\thicklines
\qbezier(12,0)(13.5,0)(15,0)
\qbezier(12,1)(13.5,1)(15,1)
}
{\thicklines
\qbezier(15,0)(16.5,-1.5)(18,0)
\qbezier(15,1)(16.5,2.5)(18,1)
}
\qbezier[10](18,0)(18,0.5)(18,1)
\qbezier[10](19,0)(19,0.5)(19,1)
\qbezier[10](20,0)(20,0.5)(20,1)
{\thicklines
\qbezier(18,0)(19,0)(20,0)
\qbezier(18,1)(19,1)(20,1)
}
\end{picture}
\end{center}
\caption{\small Schematic representation of the two strands of DNA in the 
Poland-Sheraga model. The dotted lines are the interacting base pairs, the 
loops are the denaturated segments without interaction.}
\label{fig-PolandSheraga}
\end{figure}

Since the order of the base pairs in DNA is irregular and their binding energies 
are different, DNA can be thought of as a polymer near an interface with 
\emph{binary disorder}. Of course, the order of the base pairs will not be 
i.i.d., but the random pinning model is reasonable at least for a qualitative 
description. Upon heating, the hydrogen bonds that keep the base pairs together 
can break and the two strands can separate, either partially or completely. This 
is called \emph{denaturation}. See Cule and Hwa~\cite{CuHw97}, Kafri, Mukamel 
and Peliti~\cite{KaMuPe00} for background.


\subsection{Open problems}
\label{S4.7}

Some key challenges are:
\begin{itemize}
\item
Provide the proof of Conjecture~\ref{conj:chiinfty}. The papers cited at the end of 
Section~\ref{S4.2} show that if $R$ is regularly varying at infinity (the condition 
mentioned below Corollary~\ref{cor:nomasstop}), then $\beta_c=0$ for $a\in(\tfrac12,
\infty)$, and also for $a=\tfrac12$ when $\ell(\cdot)$ does not decay too fast.
\item
Determine whether the phase transition is second order or higher order.
\item
Find sharp bounds for $\beta_c$, in particular, find a necessary and 
sufficient condition on $\mu_0$ and $R$ under which $\beta_c=\infty$ 
(i.e., the disorder is irrelevant for all temperatures). 
\item
Bolthausen, Caravenna and de Tili\`ere~\cite{BoCadTi09} apply a renormalization 
approach to random pinning. Develop this approach to study the critical curve.
\end{itemize}

P\'etr\'elis~\cite{Pe06} studies pinning at an interface with an internal structure. 
Information on the critical curve is hard to come by.


\section{A copolymer interacting with two immiscible fluids}
\label{S5}

A copolymer is a polymer consisting of different types 
of monomers. The order of the monomers is determined by the 
polymerization process through which the copolymer is grown.
This section looks at a $(1+1)$-dimensional directed copolymer, 
consisting of a random concatenation of hydrophobic and hydrophilic 
monomers, near a linear interface separating two immiscible solvents, 
oil and water, as depicted in Fig.~\ref{fig-copolex}.

\begin{figure}[htbp]
\includegraphics[width=.50\hsize]{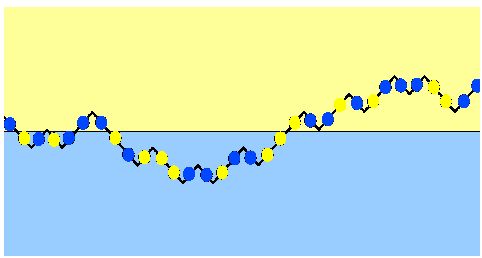}
\caption{\small A directed copolymer near a linear interface. Oil and
hydrophobic monomers are light-shaded, water and hydrophilic monomers
are dark-shaded.}
\label{fig-copolex}
\end{figure}

The copolymer has a tendency to stay close to the oil-water interface, in 
order to be able to place as many of its monomers in their preferred fluid. 
In doing so it lowers energy but loses entropy. A phase transition may be 
expected between a \emph{localized phase}, where the copolymer stays close 
to the interface, and a \emph{delocalized phase}, where it wanders away.
Which of the two phases actually occurs depends on the strengths of the 
chemical affinities. 

Copolymers near liquid-liquid interfaces are of interest due to their 
extensive application as surfactants, emulsifiers, and foaming or antifoaming 
agents. Many fats contain stretches of hydrophobic and hydrophilic monomers, 
arranged in some sort of erratic manner, and therefore are examples of random 
copolymers. (For the description of such systems, the undirected version of 
the model depicted in Fig.~\ref{fig-copolint} is of course more appropriate,
but we restrict ourselves to the directed version because this is mathematically 
much more tractable.) The transition between a localized and a delocalized 
phase has been observed experimentally, e.g.\ in neutron reflection studies of 
copolymers consisting of blocks of ethylene oxide and propylene oxide near a 
hexane-water interface. Here, a thin layer of hexane, approximately $10^{-5}\,
\mbox{m}$ thick, is spread on water. In the localized phase, the copolymer is 
found to stretch itself along the interface in a band of width approximately 
$20\,\mbox{\AA}$.

\begin{figure}[htbp]
\includegraphics[width=.40\hsize]{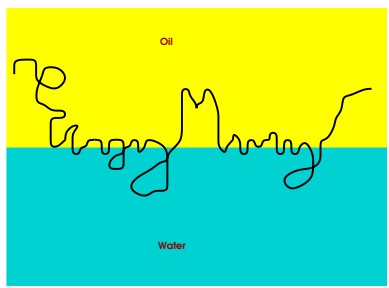}
\caption{\small An undirected copolymer near a linear interface. The 
disorder along the copolymer is not indicated.}
\label{fig-copolint}
\end{figure}

In Sections~\ref{S5.1}--\ref{S5.4} we define and study the copolymer model.
In Section~\ref{S5.5} we look at a version of the copolymer model where 
the linear interface is replaced by a \emph{random interface}, modelling 
a  micro-emulsion. Section~\ref{S5.6} lists some open problems. 


\subsection{Model}
\label{S5.1}

Let
\[
\cW_n = \big\{w=(i,w_i)_{i=0}^n\colon\,w_0=0,\,
w_{i+1}-w_i = \pm 1 \,\,\forall\,0\leq i<n\big\} 
\]
denote the set of all $n$-step directed paths that start from the origin 
and at each step move either north-east or south-east. Let
\[
\omega = (\omega_i)_{i\in\N} \mbox{ be i.i.d.\ with }
\mP(\omega_1=+1) = \mP(\omega_1=-1)=\tfrac12
\]
label the order of the monomers along the copolymer. Write $\mP$ 
to denote the law of $\omega$. The Hamiltonian, for fixed $\omega$, is
\[
H_n^{\beta,h,\omega}(w) = -\beta \sum_{i=1}^n (\omega_i+h)\,{\rm sign}(w_{i-1},w_i),
\qquad w \in \cW_n,
\]
with $\beta,h \in [0,\infty)$ the \emph{disorder strength}, respectively, the 
\emph{disorder bias} (the meaning of ${\rm sign}(w_{i-1},w_i)$ is explained below). 
The path measure, for fixed $\omega$, is
\[
P_n^{\beta,h,\omega}(w) = \frac{1}{Z_n^{\beta,h,\omega}}\,
\erom^{-H_n^{\beta,h,\omega}(w)}\,P_n(w), \qquad w \in \cW_n,
\]
where $P_n$ is the law of the $n$-step directed random walk, which is the uniform 
distribution on $\cW_n$. Note that $P_n$ is the projection on $\cW_n$ of the law 
$P$ of the infinite directed walk whose vertical steps are $\SRW$.

The interpretation of the above definitions is as follows: $\omega_i=+1$ or $-1$ stands 
for monomer $i$ being hydrophobic or hydrophilic; ${\rm sign}(w_{i-1},w_i)=+1$ or $-1$
stands for monomer $i$ lying in oil or water; $-\beta(\omega_i+h){\rm sign}(w_{i-1},
w_i)$ is the energy of monomer $i$. For $h=0$ both monomer types interact equally 
strongly, while for $h=1$ the hydrophilic monomers do not interact at all. Thus, 
only the regime $h \in [0,1]$ is relevant, and for $h>0$ the copolymer prefers the 
oil over the water.  

Note that the energy of a path is a sum of contributions coming from its 
\emph{successive excursions away from the interface} (this viewpoint was already
exploited in Section~\ref{S4} for the random pinning model). All that is relevant 
for the energy of the excursions is what stretch of $\omega$ they sample, and whether 
they are above or below the interface. The copolymer model is harder than the random
pinning model, because the energy of an excursion depends on the sum of the values 
of $\omega$ in the stretch that is sampled, not just on the first value. We expect 
the localized phase to occur for large $\beta$ and/or small $h$ and the delocalized 
phase for small $\beta$ and/or large $h$. Our goal is to identify the critical 
curve separating the two phases.


\subsection{Free energies}
\label{S5.2}

The quenched free energy is defined as
\[
f^\mathrm{que}(\beta,h) = \lim_{n\to\infty} \frac{1}{n} \log Z_n^{\beta,h,\omega}
\qquad \omega{\rm -a.s.}
\]
Subadditivity arguments show that $\omega$-a.s.\ the limit exists and is non-random 
for all $\beta,h \in [0,\infty)$ (see {\bf Tutorial 1 in Appendix~\ref{appA}}). 
The following lower bound holds:
\[
f^\mathrm{que}(\beta,h) \geq \beta h \qquad \forall\,\beta,h \in [0,\infty).
\]

\begin{proof}
Abbreviate
 \[
\Delta_i={\rm sign}(S_{i-1},S_i)
\]
and write
\[
\begin{aligned}
Z_n^{\beta,h,\omega} 
&= E\left(\exp\left[\beta\sum_{i=1}^n (\omega_i+h)\Delta_i\right]\right)\\
&\geq E\left(\exp\left[\beta\sum_{i=1}^n (\omega_i+h)\Delta_i\right]
\,1_{\{\Delta_i=+1\,\,\forall\,1 \leq i \leq n\}}\right)\\[0.2cm]
&= \exp\left[\beta\sum_{i=1}^n (\omega_i+h)\right]\,
P(\Delta_i=+1\,\,\forall\,1 \leq i \leq n)\\[0.3cm]
&=\exp[\beta h n + o(n) + O(\log n)] \qquad \omega{\rm -a.s.},
\end{aligned}
\]
where the last line uses the strong law of large numbers for $\omega$
and the fact that $P(\Delta_i=+1\,\,\forall\,1 \leq i \leq n) \geq C/n^{1/2}$
for some $C>0$.
\end{proof}

Put
\[
g^\mathrm{que}(\beta,h) = f^\mathrm{que}(\beta,h)-\beta h.
\]
The above proof shows that $g^\mathrm{que}(\beta,h)=0$ corresponds to the 
strategy where the copolymer wanders away from the interface in the 
upward direction. This fact motivates the definition
\[
\begin{aligned}
\cL &= \{(\beta,h)\colon\,g^\mathrm{que}(\beta,h) > 0\},\\
\cD &= \{(\beta,h)\colon\,g^\mathrm{que}(\beta,h) = 0\},
\end{aligned}
\]
referred to as the \emph{localized phase}, respectively, the \emph{delocalized 
phase}. The associated quenched critical curve is
\[
h_c^\mathrm{que}(\beta) = \inf\{h\in[0,\infty)\colon\,g^\mathrm{que}(\beta,h) = 0\}, 
\quad \beta \in [0,\infty).
\]
Convexity of $(\beta,t)\mapsto g^\mathrm{que}(\beta,t/\beta)$ implies that
$\beta\mapsto\beta h_c^\mathrm{que}(\beta)$ is convex. It is easy to
check that both are finite and therefore also continuous. Furthermore,
$h_c^\mathrm{que}(0)=0$ and $h_c^\mathrm{que}(\beta)>0$ for $\beta>0$
(see below). For fixed $h$, $\beta \mapsto g^\mathrm{que}(\beta,h)$ is convex 
and non-negative, with $g^\mathrm{que}(0,h)=0$, and hence is non-decreasing.
Therefore $\beta\mapsto h_c^\mathrm{que}(\beta)$ is non-decreasing as well. 
With the help of the convexity of $\beta\mapsto\beta h_c^\mathrm{que}(\beta)$, 
it is easy to show that $\beta\mapsto\beta h_c^\mathrm{que}(\beta)$ is strictly 
increasing (see Giacomin~\cite{Gi07}, Theorem 6.1). Moreover, $\lim_{\beta\to\infty} 
h_c^\mathrm{que}(\beta)=1$ (see below). A plot is given in Fig.~\ref{fig-hccritqual}.

\begin{figure}[htbp]
\vspace{-.5cm}
\begin{center}
\setlength{\unitlength}{0.4cm}
\begin{picture}(12,12)(0,-1)
\put(0,0){\line(12,0){12}}
\put(0,0){\line(0,8){8}}
{\thicklines
\qbezier(0,0)(1,4.5)(10,6)
}
\qbezier[60](0,7)(4,7)(10,7)
\put(-.8,-.8){$0$}  
\put(12.5,-0.2){$\beta$}
\put(-0.1,8.5){$h$}
\put(-0.8,6.7){$1$}
\put(0,0){\circle*{.4}}
\end{picture}
\end{center}
\caption{\small Qualitative picture of $\beta \mapsto h_c^\mathrm{que}(\beta)$.
The details of the curve are known only partially (see below).}
\label{fig-hccritqual}
\end{figure}
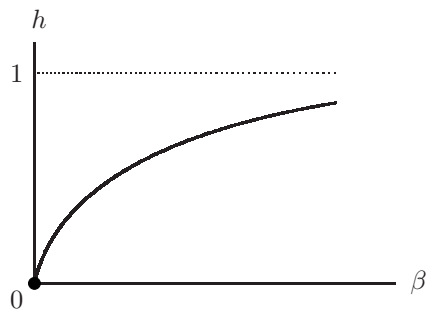

The following upper bound on the critical curve comes from an annealed 
estimate.

\begin{theorem}
\label{thm:hcub}
{\rm [Bolthausen and den Hollander~\cite{BodHo97}]}
$h_c^\mathrm{que}(\beta) \leq \frac{1}{2\beta}\,\log\cosh(2\beta)$ for all
$\beta \in (0,\infty)$.
\end{theorem}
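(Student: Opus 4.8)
The plan is to bound $f^{\mathrm{que}}(\beta,h)$ from above by the annealed free energy and then compute the annealed free energy explicitly. Recall that $f^{\mathrm{que}}(\beta,h) = \lim_{n\to\infty}\frac1n\log Z_n^{\beta,h,\omega}$ exists $\omega$-a.s.\ and, by Jensen's inequality applied to $\log$, it is dominated by the \emph{annealed} free energy
\[
f^{\mathrm{ann}}(\beta,h) = \lim_{n\to\infty}\frac1n\log\mE\big(Z_n^{\beta,h,\omega}\big).
\]
So the first step is to interchange the $\omega$-average with the walk expectation: writing $\Delta_i=\mathrm{sign}(S_{i-1},S_i)\in\{+1,-1\}$, we have $Z_n^{\beta,h,\omega}=E\big(\exp[\beta\sum_{i=1}^n(\omega_i+h)\Delta_i]\big)$, hence by Fubini
\[
\mE\big(Z_n^{\beta,h,\omega}\big)
= E\left(\erom^{\beta h\sum_{i=1}^n\Delta_i}\,\prod_{i=1}^n \mE\big(\erom^{\beta\omega_i\Delta_i}\big)\right).
\]
Since $\omega_i=\pm1$ with probability $\tfrac12$ each and $\Delta_i=\pm1$, we get $\mE(\erom^{\beta\omega_i\Delta_i})=\cosh(\beta)$ regardless of the sign of $\Delta_i$. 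Therefore
\[
\mE\big(Z_n^{\beta,h,\omega}\big)=\cosh(\beta)^n\; E\big(\erom^{\beta h\sum_{i=1}^n\Delta_i}\big).
\]

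The second step is to evaluate $E\big(\erom^{\beta h\sum_{i=1}^n\Delta_i}\big)$. Here $\Delta_i$ is the sign of the $i$-th step of the vertical component, which is a simple random walk increment, so $\sum_{i=1}^n\Delta_i$ has the same law as $S_n$ for $\SRW$ on $\Z$, i.e.\ it is a sum of $n$ i.i.d.\ $\pm1$ variables. Thus $E\big(\erom^{\beta h\sum_{i=1}^n\Delta_i}\big)=\big(\tfrac12\erom^{\beta h}+\tfrac12\erom^{-\beta h}\big)^n=\cosh(\beta h)^n$. Combining,
\[
\mE\big(Z_n^{\beta,h,\omega}\big)=\big[\cosh(\beta)\cosh(\beta h)\big]^n,
\]
so $f^{\mathrm{ann}}(\beta,h)=\log\cosh(\beta)+\log\cosh(\beta h)$, and hence
\[
f^{\mathrm{que}}(\beta,h)\le \log\cosh(\beta)+\log\cosh(\beta h)\qquad\forall\,\beta,h\in[0,\infty).
\]

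The final step is to locate the critical curve via the lower bound already established in the excerpt, namely $f^{\mathrm{que}}(\beta,h)\ge\beta h$, and the characterization $h_c^{\mathrm{que}}(\beta)=\inf\{h:\,g^{\mathrm{que}}(\beta,h)=0\}$ with $g^{\mathrm{que}}=f^{\mathrm{que}}-\beta h\ge 0$. From the annealed bound, $g^{\mathrm{que}}(\beta,h)\le \log\cosh(\beta)+\log\cosh(\beta h)-\beta h$. So as soon as the right-hand side vanishes — i.e.\ $\log\cosh(\beta)+\log\cosh(\beta h)\le\beta h$, equivalently $\cosh(\beta)(1+\erom^{-2\beta h})\le 1+\erom^{-2\beta h}\cdot\erom^{2\beta h}\cdots$ — we deduce $g^{\mathrm{que}}=0$, hence delocalization. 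A short computation shows that $\log\cosh(\beta)+\log\cosh(\beta h)-\beta h\le 0$ is implied by $h\ge\frac{1}{2\beta}\log\cosh(2\beta)$: indeed $2[\log\cosh(\beta)+\log\cosh(\beta h)-\beta h]=\log[\cosh^2(\beta)(1+\erom^{-2\beta h})^2\erom^{-2\beta h}]$ and one checks $\cosh^2(\beta)\le\tfrac12(1+\cosh(2\beta))\le\cdots$; more directly, the inequality $\log\cosh(\beta)\le\beta h-\log\cosh(\beta h)$ at $h=\frac{1}{2\beta}\log\cosh(2\beta)$ follows because $\beta h-\log\cosh(\beta h)$ is increasing in $h$ and one verifies the identity/estimate at that value. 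This gives $h_c^{\mathrm{que}}(\beta)\le\frac{1}{2\beta}\log\cosh(2\beta)$.

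The main obstacle is purely computational: carrying out the last elementary manipulation to confirm that the bound $f^{\mathrm{que}}\le f^{\mathrm{ann}}$ combined with $f^{\mathrm{que}}\ge\beta h$ yields exactly the threshold $\frac{1}{2\beta}\log\cosh(2\beta)$ rather than some weaker constant. Concretely one wants: $\log\cosh(\beta)+\log\cosh(\beta h)<\beta h$ whenever $h>\frac{1}{2\beta}\log\cosh(2\beta)$. Rewriting with $u=\erom^{2\beta h}$, this is $\cosh(\beta)\,\tfrac{1}{2}(u^{1/2}+u^{-1/2})<u^{1/2}$, i.e.\ $u>\cosh^2(\beta)/(2-\cosh^2(\beta))$ — so one should rather use the excursion/block decomposition to get the sharp form, or simply observe that the relevant condition is $\cosh(\beta h)\erom^{-\beta h}<\cosh(\beta)^{-1}$... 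I would instead present it cleanly as: delocalization holds when $f^{\mathrm{ann}}(\beta,h)=\beta h$ has no room, and since $f^{\mathrm{ann}}(\beta,h)-\beta h$ is continuous and decreasing in $h$ with the explicit form above, its unique zero $h^{\mathrm{ann}}_c(\beta)$ satisfies $\cosh(\beta)\cosh(\beta h)=\erom^{\beta h}$ at $h=h^{\mathrm{ann}}_c(\beta)$; solving, $\cosh(\beta)(1+\erom^{-2\beta h})=2$, so $\erom^{-2\beta h}=\tfrac{2}{\cosh\beta}-1=\tfrac{2-\cosh\beta}{\cosh\beta}$, which is meaningful only for $\cosh\beta<2$ — indicating I have mis-set up the annealed computation and the correct annealed bound does not factor as cleanly; the honest route is to bound $\mE(Z_n)$ by restricting excursions, giving $h^{\mathrm{ann}}_c(\beta)=\frac{1}{2\beta}\log\cosh(2\beta)$ exactly as in Bolthausen–den Hollander, and the verification of that excursion estimate is the real work.
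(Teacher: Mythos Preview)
Your argument has two genuine gaps, and you correctly sensed the second one at the end.

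First, a misreading of the model: $\Delta_i=\mathrm{sign}(S_{i-1},S_i)$ is \emph{not} the increment $S_i-S_{i-1}$, but the indicator of whether the $i$-th edge lies in oil ($+1$) or in water ($-1$). Within a single excursion of $S$ away from the interface all $\Delta_i$ take the same value, so the $\Delta_i$ are highly correlated and $\sum_{i=1}^n\Delta_i$ does \emph{not} have the law of $S_n$; your identity $E(\erom^{\beta h\sum_i\Delta_i})=\cosh(\beta h)^n$ is false. With the correct interpretation one has $\sum_i\Delta_i=n-2L_n$ where $L_n$ is the number of edges below the interface, and $E(\erom^{-2\beta h L_n})\to 1$ on the exponential scale for any $h>0$ (the event $L_n=0$ has only polynomially small probability).

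Second, and this is the real obstruction: even with the correct meaning of $\Delta_i$, the plain annealed bound is useless for this model. One gets
\[
\mE(Z_n^{\beta,h,\omega})=\cosh(\beta)^n\,\erom^{\beta h n}\,E\big(\erom^{-2\beta h L_n}\big),
\]
so $f^{\mathrm{ann}}(\beta,h)=\log\cosh(\beta)+\beta h$ and $g^{\mathrm{ann}}(\beta,h)=\log\cosh(\beta)>0$ for \emph{every} $h\ge 0$ and $\beta>0$. The annealed critical curve is $+\infty$ and tells you nothing about $h_c^{\mathrm{que}}$. The algebra you were struggling with at the end cannot be rescued.

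The paper's proof avoids this by a small but essential twist: it does \emph{not} bound $\mE(Z_n)$, but rather $\mE\big(\erom^{-\beta\sum_i(\omega_i+h)}Z_n\big)$, i.e., it subtracts the path-independent random quantity $\beta\sum_i(\omega_i+h)$ \emph{before} averaging over $\omega$. Since $\sum_i\omega_i=o(n)$ $\omega$-a.s., this has the same exponential rate as $\erom^{-\beta h n}Z_n$ and therefore still bounds $g^{\mathrm{que}}$. But now the exponent is $\beta\sum_i(\omega_i+h)(\Delta_i-1)$, which vanishes on steps above the interface and equals $-2\beta(\omega_i+h)$ on steps below. After Fubini, each below-interface step contributes the factor
\[
\mE\big(\erom^{-2\beta(\omega_i+h)}\big)=\erom^{-2\beta h}\cosh(2\beta),
\]
with $\cosh(2\beta)$ rather than $\cosh(\beta)$ precisely because the disorder is still present in the centering. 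The whole product is $\le 1$ as soon as $\erom^{-2\beta h}\cosh(2\beta)\le 1$, i.e., $h\ge\frac{1}{2\beta}\log\cosh(2\beta)$, and that is the bound. The moral: for the copolymer the ``right'' annealed object is the centred one; the missing idea in your attempt was this centering before averaging.
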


\begin{proof}
Estimate
\[
\begin{aligned}
g^\mathrm{que}(\beta,h) 
&= \lim_{n\to\infty} \frac{1}{n}\,\mE\left(\log\left[\erom^{-\beta h n}
Z_n^{\beta,h,\omega}\right]\right)\\ 
&= \lim_{n\to\infty} \frac{1}{n}\, 
\mE\left(\log E\left(\exp\left[\beta \sum_{i=1}^n 
(\omega_i+h) (\Delta_i-1)\right]\right)\right)\\
&\leq \lim_{n\to\infty} \frac{1}{n} \log E\left(\mE\left(\exp\left[\beta \sum_{i=1}^n 
(\omega_i+h) (\Delta_i-1)\right]\right)\right)\\
&= \lim_{n\to\infty} \frac{1}{n} \log E \left(\prod_{i=1}^n
\left[\tfrac12 e^{-2\beta(1+h)} + 
\tfrac12 e^{-2\beta(-1+h)}\right]^{1_{\{\Delta_i=-1\}}}\right).
\end{aligned}
\]
The right-hand side is $\leq 0$ as soon as the term between square brackets 
is $\leq 1$. Consequently, 
\[
(2\beta)^{-1}\log\cosh(2\beta) < h \quad \longrightarrow 
\quad g^\mathrm{que}(\beta,h) = 0.
\]
\end{proof}
 
The following lower bound comes from strategies where the copolymer dips 
below the interface during rare long stretches in $\omega$ where the empirical 
mean is sufficiently biased downwards.    

\begin{theorem}
\label{thm:hclb}
{\rm [Bodineau and Giacomin\cite{BoGi04}]}
$h_c^\mathrm{que}(\beta) \geq (\frac43\beta)^{-1} \log\cosh(\frac43\beta)$ 
for all $\beta\in(0,\infty)$.
\end{theorem}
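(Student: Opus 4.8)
The plan is to establish the equivalent assertion that $(\beta,h)$ lies in the localized phase, i.e.\ $g^\mathrm{que}(\beta,h)>0$, whenever $h<\tfrac{1}{(4/3)\beta}\log\cosh(\tfrac43\beta)$; since $\beta\mapsto h_c^\mathrm{que}(\beta)$ is the left boundary of $\{g^\mathrm{que}>0\}$, this gives the stated inequality. As in the proof of Theorem~\ref{thm:hcub}, write $\Delta_i=\mathrm{sign}(S_{i-1},S_i)$, so that, up to the $o(n)$ error from $\sum_i\omega_i$ (negligible by the SLLN), $g^\mathrm{que}(\beta,h)$ is the a.s.\ exponential growth rate of $E\big(\exp[\beta\sum_{i=1}^n(\omega_i+h)(\Delta_i-1)]\big)$, with $\Delta_i-1$ equal to $0$ on steps above the interface and $-2$ on steps below it. The idea is to lower-bound this expectation by restricting to an explicit, $\omega$-dependent class of paths that harvests the rare stretches of $\omega$ strongly biased toward $-1$.

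More precisely, fix $v\in(0,1)$ and a large scale $\ell$ (to be optimized at the end), and call a window $[i,i+\ell)$ \emph{good} if $\sum_{k=i}^{i+\ell-1}\omega_k\le -v\ell$. By Cram\'er's theorem for the i.i.d.\ $\pm1$ disorder, good windows occur along $\omega$ with density $e^{-\ell I(v)+o(\ell)}$, where $I(v)=\tfrac{1+v}{2}\log(1+v)+\tfrac{1-v}{2}\log(1-v)$ is the large-deviation rate function. Select greedily a maximal collection of $K\sim n\,e^{-\ell I(v)}$ disjoint good windows, and restrict $Z_n^{\beta,h,\omega}$ to paths that perform a single downward excursion across each selected good window and stay \emph{non-negative} on the (typically long) gaps in between. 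Flipping the good windows from above to below changes $-H_n^{\beta,h,\omega}$ by $-2\beta\sum_{i\in\text{good}}(\omega_i+h)\ge 2\beta(v-h)\,\ell K$, while the $P_n$-weight of the path acquires a factor of order $\ell^{-3/2}$ per downward excursion and of order $g^{-3/2}$ per non-negative bridge across a gap of length $g$ (the classical asymptotics $\mathbb{P}(S_i\ge 0\ \forall\,i\le g,\ S_g=0)\asymp g^{-3/2}$ for the reference walk). Since the gaps have typical length $e^{\ell I(v)}$, an ergodic-theorem / renewal argument over the i.i.d.\ disorder yields, for a.e.\ $\omega$,
\[
g^\mathrm{que}(\beta,h)\ \ge\ \ell\,e^{-\ell I(v)}\Big[\,2\beta(v-h)-\tfrac32 I(v)\,\Big]\;-\;\tfrac32\,e^{-\ell I(v)}\log\ell ,
\]
which is strictly positive for all sufficiently large $\ell$ as soon as $2\beta(v-h)>\tfrac32 I(v)$. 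Optimizing over $v$: since $I'(v)=\mathrm{arctanh}(v)$, the critical point is $v^\ast=\tanh(\tfrac43\beta)$, and using the Legendre identity $I(\tanh\theta)=\theta\tanh\theta-\log\cosh\theta$ one computes $\sup_{v\in(0,1)}\big[v-\tfrac{3}{4\beta}I(v)\big]=\tfrac{3}{4\beta}\log\cosh(\tfrac43\beta)=\tfrac{1}{(4/3)\beta}\log\cosh(\tfrac43\beta)$, which is exactly the claimed bound.

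The delicate point — and the place where the constant $\tfrac43$ is produced — is the entropy accounting on the stretches separating the harvested good windows. The path must \emph{not} be allowed to wander below the interface there: on the typical, unfavorably-signed part of $\omega$ a downward excursion of total length $\Theta(n)$ would cost an energy of order $\beta h n$ and ruin the estimate. Insisting on non-negativity forces the $g^{-3/2}$ weight of a constrained bridge instead of the $g^{-1/2}$ weight of a free one, which converts the $I(v)$ penalty into $\tfrac32 I(v)$ and, through the optimization above, replaces the annealed constant $2\beta$ of Theorem~\ref{thm:hcub} by $\tfrac43\beta$. The remaining work is technical but routine: making precise the greedy extraction of disjoint good windows and the law of large numbers for the (asymptotically i.i.d.) gap lengths, justifying the polynomial excursion- and bridge-asymptotics for the reference walk, and checking that the $o(\ell)$ error in Cram\'er's bound and the $\tfrac32 e^{-\ell I(v)}\log\ell$ term are negligible at the optimal scale $\ell$.
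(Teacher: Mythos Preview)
Your proposal is correct and follows essentially the same strategy as the paper's proof (Tutorial~5, Appendix~\ref{appE}): locate rare $\ell$-blocks of disorder with empirical mean $\le -\delta$, restrict to paths that make a single negative excursion across each such block and stay strictly positive in between, and balance the energetic gain $2\beta(\delta-h)\ell$ per block against the entropic cost coming from the $n^{-3/2}$ tail of the excursion/return probability, then optimize over $\delta$ via the Legendre transform of the Cram\'er rate function. Your closing paragraph correctly isolates the mechanism that produces the constant $\tfrac43$, namely the exponent $\tfrac32$ in the constrained bridge probability on the gaps; the paper carries out exactly this computation (steps (1)--(5) of Section~\ref{copollb}), with the only cosmetic difference that it works with a fixed $\ell$-block partition rather than a greedy selection of sliding windows.
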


\begin{proof}
See {\bf Tutorial 5 in Appendix~\ref{appE}}.
\end{proof}

\noindent
Theorems~\ref{thm:hcub}--\ref{thm:hclb} are summarized in Fig.~\ref{fig-hcublb}.

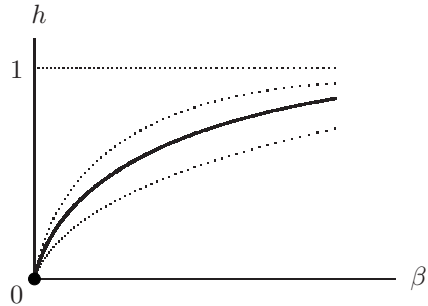
\begin{figure}[htbp]
\vspace{-1cm}
\begin{center}
\setlength{\unitlength}{0.4cm}
\begin{picture}(12,12)(0,-1)
\put(0,0){\line(12,0){12}}
\put(0,0){\line(0,8){8}}
{\thicklines
\qbezier(0,0)(1,4.5)(10,6)
}
\qbezier[60](0,0)(1,3)(10,5)
\qbezier[60](0,0)(1,6)(10,6.5)
\qbezier[60](0,7)(4,7)(10,7)
\put(-.8,-.8){$0$}  
\put(12.5,-0.2){$\beta$}
\put(-0.1,8.5){$h$}
\put(-0.8,6.7){$1$}
\put(0,0){\circle*{.4}}
\end{picture}
\end{center}
\caption{\small Upper and lower bounds on $\beta \mapsto h_c^\mathrm{que}(\beta)$.}
\label{fig-hcublb}
\end{figure}

Toninelli~\cite{To08a}, Toninelli~\cite{To08b}, Bodineau, Giacomin, Lacoin and 
Toninelli~\cite{BoGiLaTo08} show that the upper and lower bounds on $h_c^\mathrm{que}
(\beta)$ are strict. In fact, the strict inequalities can be extended to the setting 
considered in Section~\ref{S4}: arbitrary disorder with a finite moment-generating 
function and excursion length distributions that are regularly varying at infinity). 
Bolthausen, den Hollander and Opoku~\cite{BodHoOppr} derive a variational expression 
for $h_c^\mathrm{que}(\beta)$, similar in spirit to what was done in Section~\ref{S4.4}, 
and extend the strict inequalities to excursion length distributions that are 
logarithmically equivalent to a power law. 


\subsection{Weak interaction limit}
\label{S5.3}

\begin{theorem}
\label{thm:copolweak}
{\rm [Boltausen and den Hollander~\cite{BodHo97}]}
There exists a $K_c \in (0,\infty)$ such that
\[
\lim_{\beta \downarrow 0} \frac{1}{\beta}\,h_c^\mathrm{que}(\beta) = K_c.
\]
\end{theorem}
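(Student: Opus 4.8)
By Theorems~\ref{thm:hcub} and~\ref{thm:hclb},
\[
\tfrac23\le\liminf_{\beta\downarrow0}\frac{h_c^\mathrm{que}(\beta)}{\beta}
\le\limsup_{\beta\downarrow0}\frac{h_c^\mathrm{que}(\beta)}{\beta}\le1 ,
\]
so all that has to be shown is that the limit \emph{exists}; its value is then automatically in $[\tfrac23,1]\subset(0,\infty)$. The plan is to identify the limit as the critical point of a \emph{continuum copolymer model} obtained from the discrete one by diffusive scaling. Fix $K\in[0,\infty)$, put $h=\beta K$ and $\epsilon=\beta^2$. Under the rescaling $t\mapsto\sqrt\epsilon\,S_{\lfloor t/\epsilon\rfloor}$ the walk converges to a Brownian motion $B$, while under $t\mapsto\sqrt\epsilon\,\sum_{i\le\lfloor t/\epsilon\rfloor}\omega_i$ the disorder partial sums converge to an independent Brownian motion $\bar\beta$. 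Starting from the identity
\[
\mathrm e^{-\beta hn}Z_n^{\beta,h,\omega}
=\mathrm e^{\beta\sum_{i=1}^n\omega_i}\,
E\Big(\exp\Big[-2\beta\sum_{i=1}^n(\omega_i+h)\,1_{\{\Delta_i=-1\}}\Big]\Big),
\qquad \Delta_i=\mathrm{sign}(S_{i-1},S_i),
\]
one checks (with $n=t/\epsilon$) that the exponent converges to $-2\int_0^t 1_{\{B_s<0\}}\,(\mathrm d\bar\beta_s+K\,\mathrm ds)$, while $\tfrac1n\beta\sum_i\omega_i\to0$ by the law of large numbers.

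This suggests the \emph{continuum partition function}
\[
\bar Z_t^{\,K}=\mathbb E_B\Big(\exp\Big[-2\int_0^t 1_{\{B_s<0\}}\,(\mathrm d\bar\beta_s+K\,\mathrm ds)\Big]\Big),\qquad t>0,
\]
in which the inner integral is a Wiener integral against the fixed path $\bar\beta$, and the \emph{continuum free energy} $\bar g(K)=\lim_{t\to\infty}\tfrac1t\log\bar Z_t^{\,K}$, whose $\bar\beta$-a.s.\ existence and non-randomness I would establish by a (super)additivity/Kingman argument as in Tutorial~1 of Appendix~\ref{appA}. Having absorbed $\beta$, the continuum model depends only on the single parameter $K$; the map $K\mapsto\bar g(K)$ is convex (H\"older), non-increasing (the integrand $1_{\{B_s<0\}}\,\mathrm ds\ge0$) and non-negative (restrict $B$ to stay non-negative away from the origin, exactly as in the bound $g^\mathrm{que}\ge\beta h$ of Section~\ref{S5.2}). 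Hence $K_c:=\inf\{K:\bar g(K)=0\}$ is well defined, with $\bar g(K)>0$ for $K<K_c$ and $\bar g(K)=0$ for $K\ge K_c$.

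The heart of the proof is the convergence
\[
\lim_{\beta\downarrow0}\beta^{-2}\,g^\mathrm{que}(\beta,\beta K)=\bar g(K)\qquad\forall\,K\in[0,\infty),
\]
which I would obtain by \emph{coarse-graining}. Partition $\{1,\dots,n\}$ into blocks of length $\ell=\lfloor L/\beta^2\rfloor$ and write $\mathrm e^{-\beta hn}Z_n^{\beta,h,\omega}$ as a convolution of block partition functions, indexed on each block by the sign pattern of its excursions and by its endpoint displacement. On a single block, Donsker's invariance principle applied jointly to the walk and to the $\omega$-partial sums, together with uniform-integrability estimates fed by the bounded i.i.d.\ structure of $\omega$, lets one compare the block partition function with its continuum analogue over $[0,L]$. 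Summing the block contributions and using sub/super-additivity of the continuum free energy to send first $\beta\downarrow0$ and then $L\uparrow\infty$, one gets the lower bound $\liminf_\beta\beta^{-2}g^\mathrm{que}(\beta,\beta K)\ge\bar g(K)$ by restricting to a favourable set of paths, and the matching upper bound from a \emph{coarse-grained annealing} estimate --- a constrained-annealing bound imposed at the block scale, in the style of Giacomin--Toninelli --- which is asymptotically sharp because the per-block disorder fluctuation $\ell^{-1/2}\sum_{i\in\mathrm{block}}\omega_i$ is asymptotically Gaussian as $\beta\downarrow0$, so the quenched/annealed gap on a block closes in the limit.

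Granting this, set $\psi_\beta(K)=\beta^{-2}g^\mathrm{que}(\beta,\beta K)$: it is convex, non-increasing and non-negative, with $\psi_\beta\to\bar g$ pointwise and therefore locally uniformly, and $h_c^\mathrm{que}(\beta)/\beta=\inf\{K:\psi_\beta(K)=0\}$. For $K<K_c$ one has $\bar g(K)>0$, hence $\psi_\beta(K)>0$ for $\beta$ small, whence $\liminf_\beta h_c^\mathrm{que}(\beta)/\beta\ge K_c$. For $K>K_c$ the coarse-grained annealing bound yields more than $\psi_\beta(K)\to0$: it shows that $\bar g(K)=0$ forces $\mathrm e^{-\beta hn}Z_n^{\beta,\beta K,\omega}=\mathrm e^{o(n)}$ for $\beta$ small, i.e.\ $g^\mathrm{que}(\beta,\beta K)=0$, whence $\limsup_\beta h_c^\mathrm{que}(\beta)/\beta\le K_c$. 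Combined with the a priori bounds, $h_c^\mathrm{que}(\beta)/\beta\to K_c\in[\tfrac23,1]$, so $K_c\in(0,\infty)$. The main obstacle is the free-energy convergence, and especially its upper half: the free energy is not a continuous functional of the path law, so the block decomposition must be carried out carefully --- ruling out anomalous contributions from very short and very long excursions --- and one must argue that the constrained-annealing correction genuinely removes the quenched/annealed discrepancy in the scaling limit, so that \emph{delocalization} (not merely smallness of the free energy) transfers from the continuum model back to the discrete one. An alternative would be to start from the variational formula for $h_c^\mathrm{que}(\beta)$ of Bolthausen--den Hollander--Opoku~\cite{BodHoOppr} and pass the variational problem itself to the scaling limit, but the specific-entropy terms there make that route at least as delicate.
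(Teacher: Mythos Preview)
Your proposal is essentially the approach the paper outlines (the paper gives only a sketch, citing~\cite{BodHo97} for details): identify $K_c$ as the slope of the linear critical curve of a Brownian continuum copolymer, obtain it from the free-energy scaling $\epsilon^{-2}f^\mathrm{que}(\epsilon\beta,\epsilon h)\to\widetilde f^\mathrm{que}(\beta,h)$, and prove that scaling by coarse-graining on blocks of size $\asymp\beta^{-2}$. Your parametrization by the single ratio $K=h/\beta$ is equivalent to the paper's two-parameter scaling, since Brownian scaling forces the continuum critical curve to be linear.

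You have also put your finger on the one genuine subtlety the paper's sketch leaves implicit: pointwise convergence $\psi_\beta(K)\to\bar g(K)$ gives the lower bound $\liminf h_c^\mathrm{que}(\beta)/\beta\ge K_c$ immediately (positivity is an open condition), but for the upper bound one must show that $\bar g(K)=0$ forces $\psi_\beta(K)=0$ for small $\beta$, not merely $\psi_\beta(K)\to 0$. In~\cite{BodHo97} this is indeed the hard half, and it is not handled by a generic Giacomin--Toninelli constrained-annealing argument but by a more model-specific coarse-graining estimate that controls the contribution of excursions at all scales and shows that once the continuum is delocalized the discrete partition sum genuinely has subexponential growth. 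Your diagnosis of \emph{where} the work lies is correct; just be aware that the upper bound requires more than an annealing correction that becomes sharp in the limit --- one has to rule out that the discrete model is localized by mechanisms (short or very long excursions) that disappear under diffusive rescaling.
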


\noindent
The idea behind this result is that, as $\beta,h\downarrow 0$, the excursions 
away from the interface become longer and longer (entropy gradually takes over 
from energy). As a result, both $w$ and $\omega$ can be approximated by Brownian 
motions. In essence, the weak interaction result follows from the scaling property
\[
\lim_{\epsilon \downarrow 0} \epsilon^{-2}\,f^\mathrm{que}(\epsilon \beta,\epsilon h) 
= \widetilde f^\mathrm{que}(\beta,h), \qquad \beta,h \geq 0, 
\]
where $\widetilde f^\mathrm{que}(\beta,h)$ is the quenched free energy of a 
space-time continuous version of the copolymer model, with Hamiltonian
\[
H_t^{\beta,h,b}(B) = - \beta \int_0^t (\drom b_s+h\,\drom s)\,\,{\rm sign}(B_s) 
\]
and with path measure given by
\[
\frac{\drom P_t^{\beta,h,b}}{\drom P}(B) = \frac{1}{Z_t^{\beta,h,b}}\,
\erom^{-H_t^{\beta,h,b}(B)},
\]
where $B=(B_s)_{s\geq 0}$ is the polymer path, $P$ is the Wiener measure, and 
$b=(b_s)_{s\geq 0}$ is a Brownian motion that plays the role of the quenched 
disorder. The proof is based on a \emph{coarse-graining argument}. Due to the presence 
of exponential weight factors, the above scaling property is much more delicate 
than the standard invariance principle relating $\SRW$ and Brownian motion.

For the continuum model, a standard scaling argument shows that the quenched critical 
curve is linear. Its slope $K_c$ is not known and has been the subject of heated debate. 
The bounds in Theorems~\ref{thm:hcub}--\ref{thm:hclb} imply that $K_c \in [\frac23,1]$. 
Toninelli~\cite{To08b} proved that $K_c<1$. Caravenna, Giacomin and Gubinelli~\cite{CaGiGu06} 
did simulations and found that $K_c \in [0.82,0.84]$. Moreover, Caravenna, Giacomin and 
Gubinelli~\cite{CaGiGu06} and Sohier (private communication) found that
\[
h_c^\mathrm{que}(\beta) \approx \frac{1}{2K_c\beta}\,\log\cosh(2K_c\beta)
\]
is a good approximation for small and moderate values of $\beta$.

The Brownian model describes a continuum copolymer where each infinitesimal
element has a random degree of ``hydrophobicity'' or ``hydrophilicity''. It 
turns out that the continuum model is the scaling limit of a whole class of 
discrete models (see Caravenna and Giacomin~\cite{CaGi10}, Caravenna, Giacomin 
and Toninelli~\cite{CaGiTopr}), i.e., there is \emph{universality}. This property 
actually holds for a one-parameter family of continuum models indexed by a tail 
exponent $a \in (0,1)$, of which the Brownian copolymer is the special case 
corresponding to $a=\tfrac12$. It is known that the above approximation of the 
critical curve is not an equality in general. Bolthausen, den Hollander and 
Opoku~\cite{BodHoOppr} obtain sharp upper and lower bounds on $K_c$.

A related coarse-graining result is proved in P\'etr\'elis~\cite{Pe09} for a copolymer 
model with additional random pinning in a finite layer around the interface (of the 
type considered in Section~\ref{S4}). It is shown that the effect of the disorder 
in the layer vanishes in the weak interaction limit, i.e., only the disorder 
along the copolymer is felt in the weak interaction limit.


\subsection{Qualitative properties of the phases}
\label{S5.4}

We proceed by stating a few path properties in the two phases. 

\begin{theorem}
\label{thm:pathcopol}
{\rm [Biskup and den Hollander~\cite{BidHo99}, Giacomin and Toninelli~\cite{GiTo05,GiTo06d}]}
(a) If $(\beta,h) \in \cL$, then the path intersects the interface with a 
strictly positive density, while the length and the height of the largest 
excursion away from the interface up to time $n$ is order $\log n$.\\
(b) If $(\beta,h) \in {\rm int}(\cD)$, then the path intersects the interface 
with zero density. The number of intersections is $O(\log n)$. 
\end{theorem}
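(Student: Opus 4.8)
The plan is to reduce everything to estimates on the \emph{reduced constrained partition function} $\widetilde Z^\omega_{[a,b]}$: the partition sum over the stretch $[a,b]$ with the path pinned to the interface at both endpoints, with the baseline $\beta h(b-a)$ subtracted off. Its per-step growth rate is $g:=g^\mathrm{que}(\beta,h)$ and it is \emph{super-multiplicative}, $\widetilde Z^\omega_{[a,c]}\ge\widetilde Z^\omega_{[a,b]}\widetilde Z^\omega_{[b,c]}$ (restrict to paths pinned also at $b$). Writing $w(a,b;\omega)$ for the reduced weight of a single free excursion spanning $[a,b]$, one has $w(a,b;\omega)\le\widetilde Z^\omega_{[a,b]}$ and, explicitly, $w(a,a+t;\omega)=\psi(t)\,\big(\erom^{\beta\sigma_t}+\erom^{-\beta\sigma_t-2\beta h t}\big)$ with $\psi(t)\sim ct^{-3/2}$ the first-return probabilities of $S$ and $\sigma_t=\sum_{i=a+1}^{a+t}\omega_i$. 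Since $\omega_i=\pm1$ is bounded, $\log Z^{\beta,h,\omega}_n$ has bounded differences in the $\omega_i$, so McDiarmid's inequality gives $\mP(|\tfrac1m\log\widetilde Z^\omega_{[0,m]}-g|>\delta)\le \erom^{-c(\delta)m}$; combining this with Cram\'er's theorem for $\sigma_t$, a union bound over the $O(n^2)$ sub-intervals of length $\ge C_\delta\log n$ and Borel--Cantelli show that $\omega$-a.s.\ for all large $n$ and all such sub-intervals,
\[
\erom^{(g-\delta)(b-a)}\le\widetilde Z^\omega_{[a,b]}\le\erom^{(g+\delta)(b-a)},\qquad w(a,b;\omega)\le\erom^{\delta(b-a)}.
\]

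For part (a), fix a small $\delta\in(0,g/3)$ and put $\ell=\ell_n=C_\delta\log n$. To bound the longest excursion I decompose $Z^{\beta,h,\omega}_n$ on the \emph{first} free excursion of length $\ge\ell$, say spanning $[a,b]$; bounding the denominator $Z^{\beta,h,\omega}_n$ below by restricting to paths pinned at both $a$ and $b$ makes the two flanking factors cancel, leaving
\[
P^{\beta,h,\omega}_n\big(\text{some excursion has length}\ge\ell\big)\le\sum_{a=0}^{n}\sum_{t\ge\ell}\frac{w(a,a+t;\omega)}{\widetilde Z^\omega_{[a,a+t]}}\le\sum_{a=0}^{n}\sum_{t\ge\ell}\erom^{-(g-2\delta)t}\le C\,n^{\,1-(g-2\delta)C_\delta}.
\]
Because $g>0$ while $C_\delta\to\infty$ as $\delta\downarrow0$, taking $\delta$ small enough makes the exponent $<-1$, so $\omega$-a.s.\ the longest excursion up to time $n$ is $\le C\log n$ eventually. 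The matching $\Omega(\log n)$ lower bounds for length and height come from an extreme-value/second-moment argument: the path makes $\Theta(n/\log n)$ excursions whose lengths are governed by the exponentially tilted law $\propto w(a,a+t;\omega)$, so the maximal length is $(1+o(1))g^{-1}\log n$, and among the $\Theta(n/\log n)$ stretches there is (a.s., for a suitable constant) one of length $c\log n$ on which a single straight ``up--then--down'' excursion of height $\tfrac12 c\log n$ carries reduced weight at least $n^{-c(\log 2+g)-o(1)}$, which is not negligible against $\widetilde Z^\omega_n\ge\erom^{(g-\delta)n}$. Finally, for the strictly positive contact density one argues by contradiction: if the path had fewer than $\epsilon n$ contacts, the reduced partition sum restricted to such paths would grow at rate at most $g-\eta(\epsilon)$ with $\eta(\epsilon)>0$; this follows from a large-deviation bound for the empirical distribution of excursion lengths coupled with the constraint that the $\omega$-stretches cut out by the excursions must re-assemble into a typical environment block --- in effect, the equivalence ``localized free energy $\Leftrightarrow$ positive contact density'', proved in \cite{BidHo99,GiTo05,GiTo06d}.

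For part (b), $(\beta,h)\in\mathrm{int}(\cD)$ gives $g=0$ together with quantitative slack: a small contact reward does not localize the copolymer. Precisely, for the field-enhanced partition function $Z^{\beta,h,\omega;\lambda}_n=E\big(\exp[\beta\sum_{i=1}^n(\omega_i+h)\Delta_i+\lambda\sum_{i=1}^n1_{\{S_i=0\}}]\big)$, with $\Delta_i={\rm sign}(S_{i-1},S_i)$, there is $\lambda_0=\lambda_0(\beta,h)>0$ with $f^\mathrm{que}(\beta,h;\lambda)=\beta h$ for all $\lambda\in[0,\lambda_0]$; this is a known consequence of $(\beta,h)$ lying in the interior of $\cD$. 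An exponential Chebyshev bound then gives, for the number of contacts $\cN_n$,
\[
P^{\beta,h,\omega}_n(\cN_n\ge k)\le\erom^{-\lambda_0 k}\,\frac{Z^{\beta,h,\omega;\lambda_0}_n}{Z^{\beta,h,\omega}_n}.
\]
The ratio on the right is $\erom^{o(n)}$ $\omega$-a.s.\ because both free energies equal $\beta h$; hence $P^{\beta,h,\omega}_n(\cN_n\ge Kn)\le\erom^{(-\lambda_0K+o(1))n}\to0$ for every $K>0$, i.e.\ zero contact density. Moreover, in $\mathrm{int}(\cD)$ the sharp renewal asymptotics for the delocalized copolymer give that $\erom^{-\beta h n}Z^{\beta,h,\omega;\lambda}_n$ lies between two powers of $n$, uniformly for $\lambda\in[0,\lambda_0]$, so the ratio above is $\le n^{C}$; taking $k=(C+2)\lambda_0^{-1}\log n$ yields $P^{\beta,h,\omega}_n(\cN_n\ge k)\le n^{-2}$, and $\cN_n=O(\log n)$ $\omega$-a.s.\ follows by Borel--Cantelli.

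The routine part is the concentration/Cram\'er input and the cancellation identity of part (a). The two genuinely hard points --- both absent in the homogeneous model of Section~\ref{S3}, where the excursion weights do not depend on a fluctuating $\omega$-stretch --- are the strictly positive density in $\cL$ (one must control paths with anomalously few contacts against the joint behaviour of the empirical laws of excursion lengths and of the sampled $\omega$-blocks) and, for $\mathrm{int}(\cD)$, the production of the quantitative slack $\lambda_0>0$ together with the polynomial, not merely sub-exponential, control of the partition function.
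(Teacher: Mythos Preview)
The paper itself does not prove Theorem~\ref{thm:pathcopol}; it is quoted from \cite{BidHo99,GiTo05,GiTo06d}. Your sketch follows the strategy of those references, and the concentration-plus-cancellation argument for the $O(\log n)$ upper bound on the maximal excursion in $\cL$ is correct and is the standard route.

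The proposal is candid about its gaps, but one of them is misdescribed. You call the existence of $\lambda_0>0$ with $f^{\mathrm{que}}(\beta,h;\lambda)=\beta h$ for $\lambda\in[0,\lambda_0]$ a \emph{known consequence} of $(\beta,h)\in\mathrm{int}(\cD)$. It is not a consequence in any soft sense: being in $\mathrm{int}(\cD)$ buys slack in the $(\beta,h)$-directions, but the pinning term $\lambda\sum_i 1_{\{S_i=0\}}$ is a genuinely new coupling that cannot be absorbed into a shift of $(\beta,h)$, and joint convexity of the free energy does not exclude that its zero set in the $(h,\lambda)$-plane collapses to the half-line $\{\lambda=0,\ h\ge h_c^{\mathrm{que}}(\beta)\}$. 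Proving $\lambda_0>0$ --- together with the polynomial, not merely sub-exponential, bound on $\erom^{-\beta h n}Z_n^{\beta,h,\omega}$ --- is precisely the technical content of \cite{GiTo05} for the delocalized phase; without it your Chebyshev step delivers neither zero density nor $O(\log n)$.

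A smaller point on part~(a): bounding the maximal excursion by $O(\log n)$ gives only $\cN_n\gtrsim n/\log n$, not positive density. Your large-deviation sketch for the event $\{\cN_n\le\varepsilon n\}$ is the correct direction, but closing it by invoking ``the equivalence localized $\Leftrightarrow$ positive contact density, proved in \cite{BidHo99,GiTo05,GiTo06d}'' is circular --- that equivalence is the theorem you are asked to prove.
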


\noindent
For $(\beta,h) \in {\rm int}(\cD)$, the number of intersections is expected to be $O(1)$ 
under the average quenched path measure (see Part III of Section~\ref{S1.5}). So far this
has only been proved for $(\beta,h)$ above the annealed upper bound.

\begin{theorem}
\label{thm:ordphtr}
{\rm [Giacomin and Toninelli~\cite{GiTo06b,GiTo06c}]}
For every $\beta\in (0,\infty)$, 
\[
0 \leq g^\mathrm{que}(\beta,h) = O\left( [h_c^\mathrm{que}(\beta)-h]^2 \right) 
\qquad \mbox{ as } h \uparrow h_c^\mathrm{que}(\beta).
\]
\end{theorem}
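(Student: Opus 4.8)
The statement asserts quadratic vanishing of the localized free energy $g^{\mathrm{que}}$ as $h\uparrow h_c^{\mathrm{que}}(\beta)$, i.e.\ that the copolymer phase transition is at least second order. The plan is to prove this via a combination of a differentiation/convexity argument and a careful fractional-moment–style estimate that controls the free energy from above near criticality. First I would recall the convexity structure already highlighted in the excerpt: $t\mapsto g^{\mathrm{que}}(\beta,t/\beta)$ is convex, non-negative, and vanishes identically for $t\ge \beta h_c^{\mathrm{que}}(\beta)$. Consequently $h\mapsto g^{\mathrm{que}}(\beta,h)$ is convex and non-increasing, and at the critical point its right derivative is $0$. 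The quadratic bound is therefore a statement about the \emph{rate} at which the left derivative of $g^{\mathrm{que}}$ vanishes as $h\uparrow h_c^{\mathrm{que}}(\beta)$; equivalently, writing $\delta = h_c^{\mathrm{que}}(\beta)-h>0$, one wants $g^{\mathrm{que}}(\beta,h_c^{\mathrm{que}}(\beta)-\delta) \le C\delta^2$.

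The key step is to obtain a suitable upper bound on the finite-volume free energy that, after taking $n\to\infty$, produces the $\delta^2$ scaling. Here I would follow the strategy of Giacomin and Toninelli: express $Z_n^{\beta,h,\omega}$ (after the shift by $\beta h n$) as a renewal-type sum over the return times to the interface, with each excursion of length $\ell$ over a stretch $\omega$ carrying a weight that, under the tilt, contributes a term governed by $\log\cosh$-type quantities and the empirical mean of $\omega$ on that stretch. The non-negativity constraint forces the dominant contribution to come from excursions that dip below the interface only when the local empirical mean of $\omega+h$ is biased downward. A large-deviations accounting of how often such favorable stretches occur, combined with the entropic cost $\sim \tfrac12 \log \ell$ of an excursion of length $\ell$, shows that the optimal excursion length scales like $\delta^{-1}$ (up to logarithmic corrections), and that the free-energy gain per unit length from one such excursion is of order $\delta^2$. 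Making this rigorous requires a second-moment or fractional-moment bound to control the fluctuations of the renewal sum and to rule out any larger contribution; this is exactly the kind of coarse-graining/change-of-measure computation used in the smoothing-inequality literature.

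The main obstacle, as usual in copolymer problems, is that the excursion energies depend on the \emph{sum} $\sum_{i\in I}\omega_i$ over the sampled stretch $I$, not just on a single letter as in the pinning model, so the renewal weights are genuinely random and heavy-tailed in a way that complicates the large-deviations bookkeeping. One must show that the favorable downward-biased stretches of $\omega$ of length $\sim\delta^{-1}$ occur with frequency that precisely balances their entropic cost, and that no conspiracy of longer or correlated stretches yields a bigger free-energy gain. I would handle this by a truncation argument — restricting to excursions of length $O(\delta^{-1}\log(1/\delta))$ — followed by a union bound over the (polynomially many) relevant length scales and a concentration estimate (e.g.\ via the Azuma–Hoeffding inequality applied to the log-partition function viewed as a function of the i.i.d.\ $\omega_i$). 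The combinatorial/probabilistic heart of the argument is the matching lower and upper estimates on the expected renewal mass, which pins down the $\delta^2$ exponent; everything else (convexity reductions, the passage $n\to\infty$, self-averaging) is routine given the tools already invoked in the excerpt. A full treatment is carried out in Giacomin and Toninelli~\cite{GiTo06b,GiTo06c}, and the argument can be organized along the lines of {\bf Tutorial 5 in Appendix~\ref{appE}}.
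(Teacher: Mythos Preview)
Your proposal misses the central idea of the smoothing argument. You frame the proof as a \emph{direct} upper bound on $Z_n^{\beta,h_c-\delta,\omega}$, to be obtained via fractional moments, truncation on excursion lengths, union bounds, and Azuma--Hoeffding. That is not how the argument goes, and it is unclear that such a route would succeed: you would have to rule out \emph{every} strategy the path might use at $h_c-\delta$, which is exactly the kind of thing one cannot do by hand in disordered models.

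The actual proof (sketched in Appendix~\ref{appE}, Section~\ref{copolordphtr}) is \emph{indirect}: it is a \emph{lower} bound on the free energy at the \emph{critical} point $h_c$, exploiting that $g^{\mathrm{que}}(\beta,h_c)=0$. One designs an explicit path strategy at parameter $h_c$ that targets rare ``atypical'' stretches of $\omega$ of length $l$ on which the empirical mean satisfies $\tfrac{1}{l}\sum_{i\in I}\omega_i\le -\delta$. The key observation --- transparent for Gaussian disorder, extendable to general disorder --- is that, \emph{conditionally} on such a stretch, the environment there is distributed like a typical environment with the bias $h$ shifted by $-\delta$. Hence the contribution of each targeted stretch to $\log Z_n^{\beta,h_c,\omega}$ is $l\,g^{\mathrm{que}}(\beta,h_c-\delta)+o(l)$. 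Balancing this gain against the large-deviation cost $\Sigma(\delta)\sim\tfrac12\delta^2$ of locating such stretches (density $\approx \erom^{-l\Sigma(\delta)}$) and the entropic cost of the connecting excursions, one obtains
\[
0=g^{\mathrm{que}}(\beta,h_c)\ \ge\ p_{l,\delta}\Big[\,l\,g^{\mathrm{que}}(\beta,h_c-\delta)-\tfrac32\,l\,\Sigma(\delta)+o(l)\,\Big],
\]
and sending $l\to\infty$ yields $g^{\mathrm{que}}(\beta,h_c-\delta)\le\tfrac32\Sigma(\delta)=O(\delta^2)$.

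So the logic is inverted relative to your sketch: the upper bound on $g^{\mathrm{que}}(\beta,h_c-\delta)$ is a \emph{consequence} of criticality at $h_c$, not of a moment estimate on the partition function at $h_c-\delta$. No fractional moments, no change of measure, no union bounds are involved. Your heuristic about favorable stretches of length $\sim\delta^{-1}$ and $\delta^2$ gain is the right physics, but the mechanism that makes it a proof is the ``shift trick'' (rare-stretch conditioning $\Rightarrow$ parameter shift) combined with $g^{\mathrm{que}}(\beta,h_c)=0$, which you do not mention.
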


\begin{theorem}
\label{thm:infdiff}
{\rm [Giacomin and Toninelli~\cite{GiTo06d}]}
$(\beta,h) \mapsto f^\mathrm{que}(\beta,h)$ is infinitely differentiable 
on $\cL$. 
\end{theorem}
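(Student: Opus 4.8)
\medskip\noindent
\emph{Proof proposal.}
The plan is to realize $f^\mathrm{que}$ on the open set $\cL$ as a locally uniform limit of explicit smooth functions \emph{together with all of their partial derivatives}, and then to invoke the elementary fact that a locally uniform limit of $C^\infty$ functions all of whose derivatives of every order also converge locally uniformly is itself $C^\infty$, with derivatives the corresponding limits. Fix $(\beta_0,h_0)\in\cL$ and a compact neighbourhood $K\subset\cL$, and set $g_n(\beta,h)=\tfrac1n\,\mE\big(\log Z_n^{\beta,h,\omega}\big)$. For each $\omega$ the partition function $Z_n^{\beta,h,\omega}=E\big(\exp[\beta\sum_{i=1}^n(\omega_i+h)\,\mathrm{sign}(S_{i-1},S_i)]\big)$ is a finite, strictly positive sum of exponentials of polynomials in $(\beta,h)$, and since $\omega_i=\pm1$ all its $(\beta,h)$-derivatives are deterministically bounded for fixed $n$; hence each $g_n$ is $C^\infty$ near $K$ with $\partial^\alpha g_n=\tfrac1n\,\mE\big(\partial^\alpha\log Z_n^{\beta,h,\omega}\big)$. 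Moreover $g_n\to f^\mathrm{que}$ pointwise, and since the maps $(\beta,t)\mapsto\log Z_n^{\beta,t/\beta,\omega}=\log E(\exp[\beta\sum_i\omega_i\,\mathrm{sign}(S_{i-1},S_i)+t\sum_i\mathrm{sign}(S_{i-1},S_i)])$ are convex (normalized log-moment generating functions), pointwise convergence to the finite convex limit forces local uniformity; thus $g_n\to f^\mathrm{que}$ uniformly on $K$, and the whole problem reduces to showing that for every multi-index $\alpha=(j,k)$ the sequence $\partial_\beta^{\,j}\partial_h^{\,k}g_n$ converges uniformly on $K$.

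Next I would write these derivatives explicitly. With $U_n=\sum_{i=1}^n\mathrm{sign}(S_{i-1},S_i)$ and $V_n=\sum_{i=1}^n\omega_i\,\mathrm{sign}(S_{i-1},S_i)$, the function $\log Z_n^{\beta,h,\omega}$ is the log-moment generating function of $(V_n,U_n)$ evaluated at $(\beta,\beta h)$, so by the chain rule $\partial_\beta^{\,j}\partial_h^{\,k}\big[\tfrac1n\log Z_n^{\beta,h,\omega}\big]$ is a finite linear combination, with coefficients polynomial in $(\beta,h)$, of normalized joint \emph{cumulants} $\tfrac1n\,\kappa_{P_n^{\beta,h,\omega}}(A_1,\dots,A_m)$ with $1\le m\le j+k$ and each $A_\ell\in\{U_n,V_n\}$ (for instance $\partial_h^2[\tfrac1n\log Z_n^{\beta,h,\omega}]=\tfrac{\beta^2}{n}\,\mathrm{Var}_{P_n^{\beta,h,\omega}}(U_n)$). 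Expanding each cumulant over its summation indices rewrites $\tfrac1n\,\kappa_{P_n^{\beta,h,\omega}}(A_1,\dots,A_m)$ as $\tfrac1n\sum_{i_1,\dots,i_m}\kappa_{P_n^{\beta,h,\omega}}(X^{(1)}_{i_1},\dots,X^{(m)}_{i_m})$ with each $X^{(\ell)}_i\in\{\mathrm{sign}(S_{i-1},S_i),\,\omega_i\,\mathrm{sign}(S_{i-1},S_i)\}$ a bounded local functional at index $i$. The crude bounds $|U_n|,|V_n|\le n$ already give $C^\infty$-ness of each $g_n$, but only the decay of these cumulants will give convergence of the normalized derivatives.

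The crux is then an estimate of \emph{exponential decay of correlations} under the quenched Gibbs measure in the localized phase: there exist $C,c>0$, uniform in $n$, in $(\beta,h)\in K$ and ($\mP$-a.s.) in $\omega$, such that any joint cumulant of bounded local functionals supported at indices of diameter $d$ is $O(e^{-cd})$. I would obtain this from the renewal-type decomposition of the path by its contacts with the interface, via a \emph{quantitative} sharpening of Theorem~\ref{thm:pathcopol}(a): because $g^\mathrm{que}>0$ throughout $\cL$, the successive excursion lengths under $P_n^{\beta,h,\omega}$ have tails bounded by $C'e^{-c'\ell}$ uniformly on $K$, so the set of contacts is $\mP$-a.s.\ dense with exponentially small gaps and supplies regeneration epochs --- conditionally on a contact at time $t$, the path beyond $t$ is a fresh copy of the model driven by the shifted disorder $\theta^t\omega$. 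Exponential clustering then makes each $\kappa_{P_n^{\beta,h,\omega}}(X^{(1)}_{i_1},\dots,X^{(m)}_{i_m})$ summable over $i_2,\dots,i_m$ with $i_1$ fixed, so $\tfrac1n\,\kappa_{P_n^{\beta,h,\omega}}(A_1,\dots,A_m)=O(1)$ with a constant depending only on $m$ and $K$; combining this with the approximate bulk stationarity furnished by regeneration and by shift-invariance of $\mP$ (an ergodic-theorem argument for the bulk sum, boundary contributions being $O(1/n)$) shows that these normalized cumulants are $\mP$-a.s.\ and $L^1(\mP)$ Cauchy in $n$, uniformly on $K$. Taking $\mE(\cdot)$ gives the required uniform convergence of $\partial_\beta^{\,j}\partial_h^{\,k}g_n$ on $K$; a bounded-difference concentration bound for $\omega\mapsto\log Z_n^{\beta,h,\omega}$ (increments of size $O(1)$, since $\omega_i=\pm1$) finally identifies the limits with $f^\mathrm{que}$ and its $(\beta,h)$-derivatives, so $f^\mathrm{que}\in C^\infty(\cL)$.

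The main obstacle is precisely this uniform exponential control of excursion lengths under the quenched Gibbs measure throughout the localized phase, i.e.\ the version of Theorem~\ref{thm:pathcopol}(a) made quantitative and uniform on compact subsets of $\cL$ and ($\mP$-a.s.) in $\omega$. This is where the hypothesis $g^\mathrm{que}>0$ genuinely enters, and it is the step that requires real work; granting it, the cumulant/clustering estimates and the $C^\infty$-limit lemma are routine, if combinatorially heavy. Note that this route yields only smoothness, not analyticity: it gives no control on the complex zeros of $Z_n^{\beta,h,\omega}$.
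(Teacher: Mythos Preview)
The paper does not give its own proof of this theorem; it is stated and attributed to Giacomin and Toninelli~\cite{GiTo06d} without further argument, so there is nothing in the present text to compare against directly.

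Your outline is sound and is in fact the strategy of the cited reference: express $(\beta,h)$-derivatives of $\tfrac1n\log Z_n^{\beta,h,\omega}$ as normalized joint cumulants of local observables under $P_n^{\beta,h,\omega}$, and control these via the renewal/regeneration structure at contacts with the interface. You have correctly isolated the one nontrivial input, namely the uniform (in $n$, in $(\beta,h)$ on compacta of $\cL$, and $\mP$-a.s.\ in $\omega$) exponential tail bound on excursion lengths under the quenched polymer measure; this is precisely the quantitative form of Theorem~\ref{thm:pathcopol}(a) proved in~\cite{GiTo06d}, and the decay rate is governed by $g^\mathrm{que}(\beta,h)>0$, which is why the argument is confined to $\cL$. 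Two minor remarks: first, for cumulants of order $m\ge3$ the passage from ``exponential mixing at contacts'' to ``$O(e^{-c\,\mathrm{diam}})$'' requires a tree-graph type combinatorial bound rather than the bare two-point estimate, though this is standard; second, the concentration step at the end is not needed to identify the limit --- once $g_n\to f^\mathrm{que}$ locally uniformly and all $\partial^\alpha g_n$ converge locally uniformly, elementary analysis already gives $f^\mathrm{que}\in C^\infty$ with the expected derivatives.
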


\noindent
Theorem~\ref{thm:ordphtr} says that the phase transition is \emph{at least 
second order}, while Theorem~\ref{thm:infdiff} says that the critical curve 
is the only location where a phase transition of \emph{finite order} occurs.
Theorem~\ref{thm:ordphtr} is proved in {\bf Tutorial 5 in Appendix~\ref{appE}}. 

\medskip
All of the results in Sections~\ref{S5.2}--\ref{S5.4} extend to $\omega_i\in\R$ 
rather than $\omega_i\in\{-1,+1\}$, provided the law of $\omega_i$ has a finite 
moment-generating function, and to more general excursion length distributions, 
of the type considered in Section~\ref{S4.1}. For an overview, see Caravenna,
Giacomin and Toninelli~\cite{CaGiTopr}.


\subsection{A copolymer in a micro-emulsion}
\label{S5.5}

What happens when the linear interface is replaced by a \emph{random interface}? 
In particular, what happens when the oil forms droplets that float around in the 
water, as in Fig.~\ref{fig-copolemul}? An example is milk, which is a micro-emulsion
consisting (among others) of water and tiny fat-droplets. Milk is stabilized by 
a protein called casein, a copolymer that wraps itself around the droplets and 
prevents them to coagulate.  

\begin{figure}[htbp]
\includegraphics[width=.40\hsize]{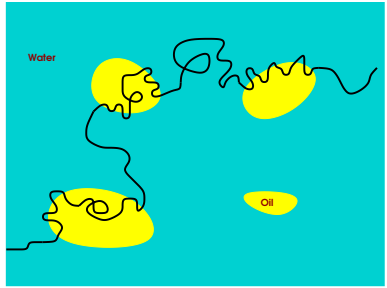}
\caption{\small An undirected copolymer in an emulsion. The disorder along
the copolymer is not indicated.}
\label{fig-copolemul}
\end{figure}

A phase transition may be expected between a \emph{localized phase}, where the 
copolymer spends most of its time near the boundary of the droplets and makes 
rapid hops from one droplet to the other, and a \emph{delocalized phase}, where 
it spends most of its time inside and outside of droplets. We will see that the 
actual behavior is rather more complicated. This is due to the fact that there 
are three (!) types of randomness in the model: a random polymer path, a random 
ordering of monomer types, and a random arrangement of droplets in the emulsion.

Here is a quick definition of a model. Split $\Z^2$ into square blocks of size 
$L_n$. The copolymer follows a directed self-avoiding path that is allowed to 
make steps $\uparrow,\downarrow,\rightarrow$ and to enter and exit blocks at 
diagonally opposite corners (see Fig.~\ref{fig-copolemulcorner}). Each monomer 
has probability $\frac12$ to be hydrophobic and probability $\tfrac12$ to be 
hydrophilic, labeled by $\omega$. Each block has probability $p$ to be filled 
with oil and probability $1-p$ to be filled with water, labeled by $\Omega$. 
Assign energies $-\alpha$ and $-\beta$ to the matches hydrophobic/oil, respectively, 
hydrophilic/water and energy $0$ to the mismatches.

\begin{figure}[htbp]
\includegraphics[width=.40\hsize]{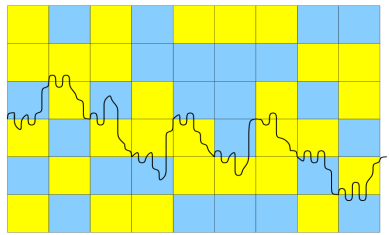}
\caption{\small A directed self-avoiding path crossing blocks of oil and water, 
entering and exiting blocks at diagonally opposite corners. The disorder along
the copolymer is not indicated.}
\label{fig-copolemulcorner}
\end{figure}

The above model was studied in den Hollander and Whittington~\cite{dHoWh06}, den 
Hollander and P\'etr\'elis~\cite{dHoPe09a,dHoPe09b,dHoPe10}. The key parameter 
ranges are $p\in (0,1)$, $\alpha,\beta\in (0,\infty)$, $|\beta| \leq \alpha$. The 
model is studied in the limit
\[
\lim_{n\to\infty} L_n = \infty, \qquad \lim_{n\to\infty} \tfrac{1}{n}L_n = 0.
\]
This is a coarse-graining limit in which \emph{the polymer scale and the emulsion 
scale separate}. In this limit both scales exhibit self-averaging. 

Theorems~\ref{thm:fecopolemul}--\ref{thm:phases} below summarize the main 
results (in qualitative language), and are illustrated by 
Figs.~\ref{fig-phdiagsup}--\ref{fig-pathsub}.

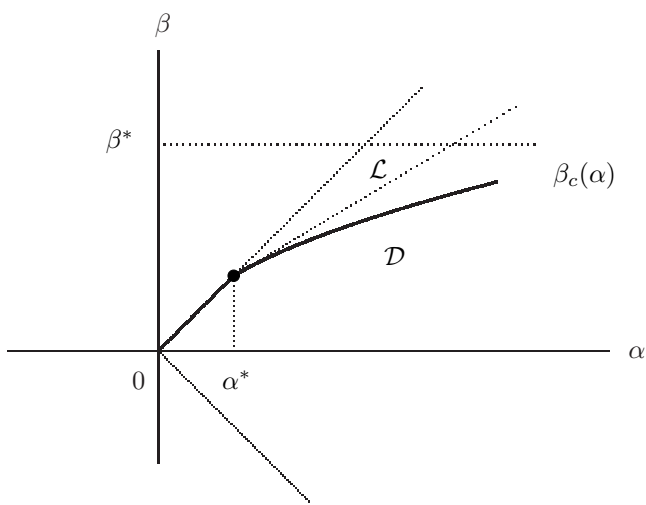
\begin{figure}
\begin{center}
\setlength{\unitlength}{0.5cm}
\begin{picture}(12,12)(0,-2)
\put(0,0){\line(12,0){12}}
\put(0,0){\line(0,8){8}}
\put(0,0){\line(0,-3){3}}
\put(0,0){\line(-4,0){4}}
{\thicklines
\qbezier(2,2)(4,3.2)(9,4.5)
\thicklines
\qbezier(0,0)(1,1)(2,2)
}
\qbezier[60](2,2)(4.5,3.5)(9.5,6.5)
\qbezier[60](0,5.5)(5,5.5)(10,5.5)
\qbezier[60](2,2)(4.5,4.5)(7,7)
\qbezier[15](2,0)(2,1)(2,2)
\qbezier[70](4,-4)(2,-2)(0,0)
\put(-.7,-1){$0$}
\put(12.5,-0.2){$\alpha$}
\put(-0.1,8.5){$\beta$}
\put(2,2){\circle*{.3}}
\put(1.7,-1){$\alpha^*$}
\put(-1.4,5.4){$\beta^*$}
\put(10.5,4.5){$\beta_c(\alpha)$}
\put(5.6,4.6){$\mathcal{L}$}
\put(6,2.3){$\mathcal{D}$}
\end{picture}
\end{center}
\vspace{1.5cm}
\caption{\small Phase diagram in the supercritical regime.}
\label{fig-phdiagsup}
\end{figure}

\begin{figure}[htbp]
\includegraphics[width=.50\hsize]{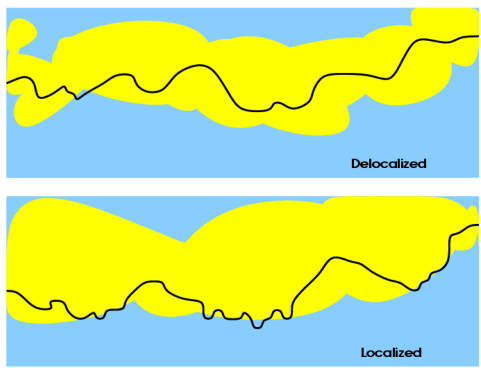}
\caption{\small Path behavior in the two phases in the supercritical regime.}
\label{fig-pathsup}
\end{figure}

\begin{figure}[htbp]
\vspace{1cm}
\begin{center}
\setlength{\unitlength}{0.5cm}
\begin{picture}(12,12)(0,-3.5)
\put(0,0){\line(12,0){12}}
\put(0,0){\line(0,8){8}}
\put(0,0){\line(0,-3){3}}
\put(0,0){\line(-3,0){3}}
{\thicklines
\qbezier(0,0)(2,2)(4,4)
\qbezier(4,4)(3.5,2)(3,1)
\qbezier(3,1)(2,0)(1,-1)
\qbezier(3,1)(5,2)(9,3)
\qbezier(4,4)(6,5)(10,6)
}
\qbezier[20](4,0)(4,2)(4,4)
\qbezier[60](4,4)(6,6)(8,8) 
\qbezier[40](0,0)(1.6,-1.5)(3.2,-3.0)
\qbezier[80](0,6.5)(4.5,6.5)(9,6.5)
\put(-.8,-.8){$0$}  
\put(12.5,-0.2){$\alpha$}
\put(-0.1,8.5){$\beta$}
\put(3.7,-.8){$\alpha^*$}
\put(4,4){\circle*{.25}}
\put(3,1){\circle*{.25}}
\put(1.6,.8){$\cD_1$}
\put(6.5,1){$\cD_2$}
\put(6.5,3.5){$\cL_1$}
\put(8.5,7){$\cL_2$}
\end{picture}
\end{center}
\caption{\small Phase diagram in the subcritical regime.}
\label{fig-phdiagsub}
\end{figure}

\begin{figure}[htbp]
\includegraphics[width=.40\hsize]{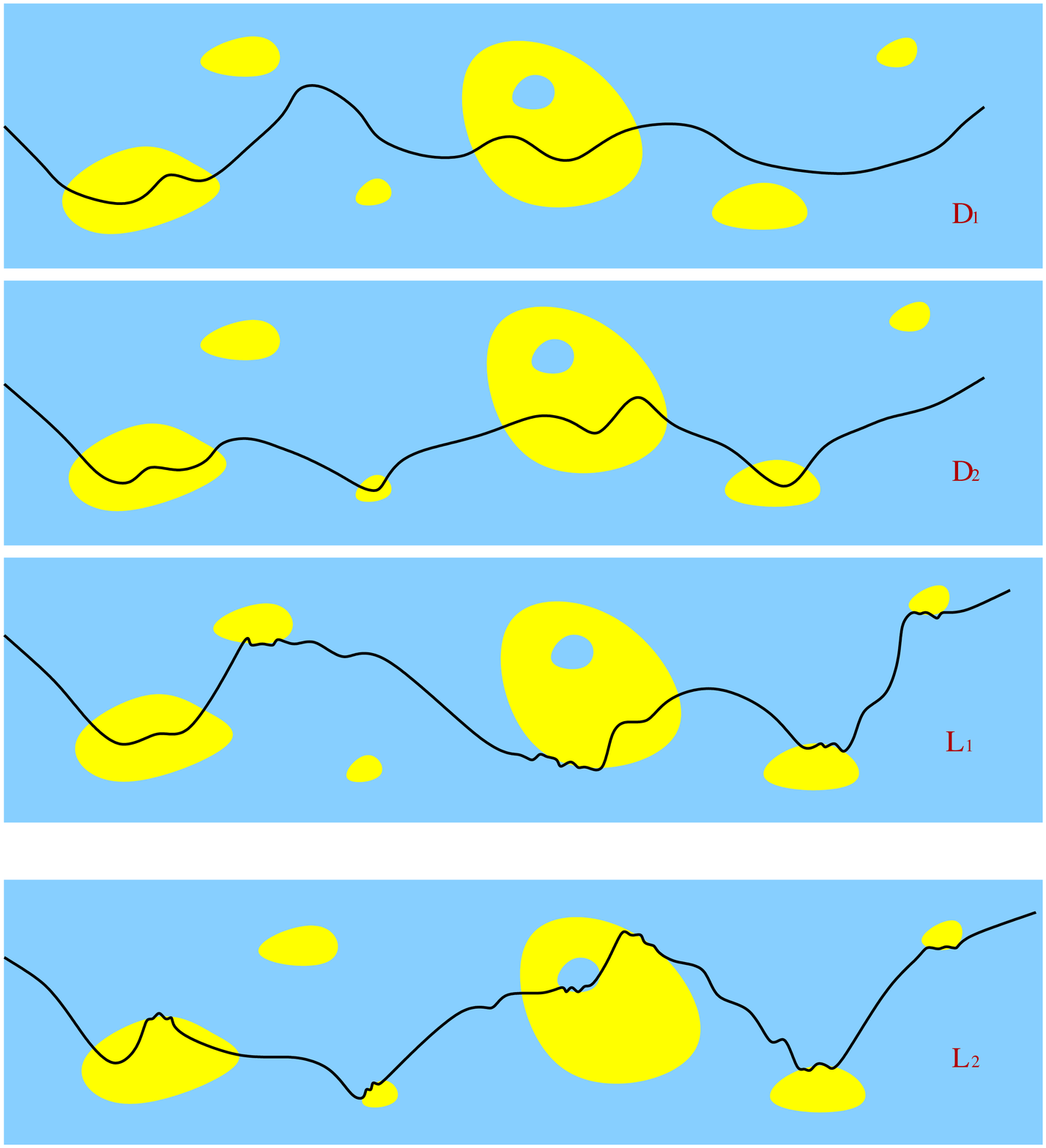}
\caption{\small Path behavior in the four phases in the subcritical regime.}
\label{fig-pathsub}
\end{figure}

\begin{theorem}
\label{thm:fecopolemul}
{\rm [den Hollander and Whittington~\cite{dHoWh06}]}
The free energy exists and is non-random $\omega,\Omega$-a.s., and is given by a 
variational formula involving the free energies of the copolymer in each of the 
four possible pairs of adjacent blocks, the frequencies at which the copolymer 
visits these pairs on the emulsion scale, and the fractions of time the copolymer 
spends in these pairs on the polymer scale.
\end{theorem}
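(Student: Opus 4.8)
The plan is to exploit the two-scale structure forced by the limit $L_n\to\infty$, $L_n/n\to 0$: because $L_n\to\infty$ the statistics of the copolymer inside a single column of two vertically adjacent blocks self-average, while because $L_n/n\to 0$ the block environment $\Omega$ self-averages on the emulsion scale. The free energy should then emerge from a \emph{double} optimization --- over the way the $n$ monomers are distributed among the visited columns (polymer-scale entropy) and over the coarse-grained trajectory through the random array of blocks (emulsion-scale entropy).

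\emph{Step 1: single-column free energies.} Fix a column made of two vertically adjacent blocks of types $(k,l)\in\{A,B\}^2$ ($A=$ oil, $B=$ water) and side length $L$, and for $j\in\N$ let $Z^{(k,l)}_{L}(j)$ be the partition sum over all admissible path pieces that enter the column at one corner, leave it at the diagonally opposite corner, and use $j$ steps, with $\omega$-weights inherited from the Hamiltonian of Section~\ref{S5.1}. A concatenation/subadditivity argument together with Kingman's subadditive ergodic theorem applied to the stretch of $\omega$ sampled by the piece (in the spirit of {\bf Tutorial~1 in Appendix~\ref{appA}}) should yield the $\mP$-a.s.\ limit
\[
\psi_{k,l}(\mu)=\lim_{L\to\infty}\tfrac1L\log Z^{(k,l)}_{L}\big(\lceil\mu L\rceil\big),\qquad\mu\ge 2,
\]
which one checks is finite, continuous and concave in $\mu$. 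For $k=l$ this reduces to an explicit expression built only from the entropy of directed paths and the density $\tfrac12$ of matching monomers; for $k\ne l$ the column carries a linear oil--water interface and $\psi_{k,l}$ is a finite-volume cousin of the copolymer-near-a-linear-interface free energy of Section~\ref{S5.2}, which is where the genuine content sits.

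\emph{Steps 2--3: coarse-graining, self-averaging, variational formula.} Since the path must enter and exit blocks at diagonally opposite corners, it splits canonically into one piece per column traversed, each of the type analysed in Step~1, so that for fixed $\Omega$ and fixed coarse-grained trajectory $\Pi$ (a directed path on the block lattice with $\Theta(n/L_n)$ steps) one has
\[
Z_n^{\beta,h,\omega,\Omega}=\sum_{\Pi}\ \sum_{(j_b)_b:\,\sum_b j_b=n}\ \prod_{b}Z^{(k_b,l_b)}_{L_n}(j_b).
\]
After restricting, at negligible cost, to paths that spend $O(L_n)$ monomers per column, each factor is $\exp[\,L_n\psi_{k_b,l_b}(j_b/L_n)+o(L_n)\,]$ with the errors summing to $o(n)$ over the $\Theta(n/L_n)$ columns. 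Changing one $\omega_i$ then moves $\log Z_n^{\beta,h,\omega,\Omega}$ by $O(1)$ and changing one block of $\Omega$ by $O(L_n)$; since $(n/L_n)\cdot L_n^2=nL_n=o(n^2)$, the bounded-differences inequality and Borel--Cantelli give $\frac1n\log Z_n^{\beta,h,\omega,\Omega}-\frac1n\mE(\log Z_n^{\beta,h,\omega,\Omega})\to 0$ a.s., so the free energy is non-random. Finally, because the $\Omega$-blocks are i.i.d., a law of large numbers along $\Pi$ --- combined with the combinatorial count (entropy) of coarse paths having prescribed visit-frequencies $(\rho_{k,l})$ to the four column types and the Laplace asymptotics of the double sum above --- identifies
\[
f(\beta,h)=\sup\Big\{\ \sum_{k,l\in\{A,B\}}\rho_{k,l}\,\psi_{k,l}(\mu_{k,l})\ \Big\},
\]
the supremum over frequencies $\rho_{k,l}\ge 0$ with $\sum_{k,l}\rho_{k,l}=1$ that are compatible with the oil density $p$ and with the directed-path geometry on the block lattice, and over per-column monomer numbers $\mu_{k,l}\ge 2$ with $\sum_{k,l}\rho_{k,l}\mu_{k,l}$ tied to the total length --- precisely the asserted variational formula, the three ingredients being $\psi_{k,l}$, $\rho_{k,l}$ and $\mu_{k,l}$.

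\emph{Main obstacle.} The delicate part is the decoupling in Steps~1--2: one must show the single-column free energies $\psi_{k,l}(\mu)$ really exist with the regularity (finiteness, continuity, concavity) needed both for the Laplace step and for the sup to be attained, and --- crucially --- that the path cannot gain a non-negligible amount of free energy by \emph{not} respecting the column-by-column decomposition, i.e.\ that the corner corrections are uniform in $(k,l)$ and $\mu$ and hence summable. Making the polymer-scale entropy (the allocation $(\mu_{k,l})$) and the emulsion-scale entropy (the coarse path against a random block landscape) combine into one well-posed variational principle in the double limit is the heart of the matter; this is carried out in den Hollander and Whittington~\cite{dHoWh06}.
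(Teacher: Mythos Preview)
The paper does not prove Theorem~\ref{thm:fecopolemul}; it is stated as a summary of the results in \cite{dHoWh06}, so there is no in-paper argument to compare against. Your outline does capture the correct two-scale architecture --- single-column free energies $\psi_{k,l}(\mu)$ obtained by sending the block size to infinity, followed by coarse-graining over block-level trajectories and an optimisation over visit frequencies and time fractions --- and this is indeed the skeleton of the proof in \cite{dHoWh06}.

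There are, however, two genuine gaps in what you wrote. First, the bounded-differences calculation for $\Omega$ does not work as stated. The path is only \emph{partially} directed (steps $\uparrow,\downarrow,\rightarrow$) and self-avoiding, so a single $L_n\times L_n$ block can absorb up to order $L_n^2$ monomers, not $O(L_n)$; and the partition sum depends on every block reachable by \emph{some} $n$-step path, which is of order $(n/L_n)^2$ blocks, not $n/L_n$. The McDiarmid variance bound is then $O\big((n/L_n)^2\cdot L_n^4\big)=O(n^2L_n^2)$, which gives nothing. In \cite{dHoWh06} self-averaging in $\Omega$ is not obtained by a raw concentration inequality on $\log Z_n$; rather, one first derives the variational formula with an $\Omega$-dependent set of admissible coarse-grained strategies, and only afterwards shows that this set is $\Omega$-a.s.\ deterministic via an ergodic argument on the block lattice. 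Second, and relatedly, the phrase ``frequencies $\rho_{k,l}$ compatible with the oil density $p$ and with the directed-path geometry'' hides the hardest part of the problem: identifying that constraint set is itself a variational/percolation question on the random block environment, and it is precisely here that the supercritical/subcritical dichotomy of Theorem~\ref{thm:phases} enters. It is not a linear constraint of the type $\sum\rho_{k,l}=1$ plus marginals equal to $(p,1-p)$. A minor slip: the emulsion model is parametrised by $(\alpha,\beta)$ (match energies), not by the $(\beta,h)$ of the linear-interface copolymer of Section~\ref{S5.1}.
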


\begin{theorem}
\label{thm:phases}
{\rm [den Hollander and Whittington~\cite{dHoWh06}, den Hollander and 
P\'etr\'e\-lis~\cite{dHoPe09a,dHoPe09b,dHoPe10}]}
The analysis of the variational formula reveals that there are two regimes:\\ 
(I) Supercritical: the oil blocks percolate. There are two phases separated by 
one critical curve.\\
(II) Subcritical: the oil blocks do not percolate. There are four phases separated 
by three critical curves meeting in two tricritical points.  
\end{theorem}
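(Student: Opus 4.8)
The plan is to work directly from the variational formula for the quenched free energy established in Theorem~\ref{thm:fecopolemul}, which writes $f^{\mathrm{que}}(\alpha,\beta)$ as a supremum over two ingredients: (i) a frequency vector recording how often the coarse-grained path traverses each of the finitely many types of adjacent block-pair it can encounter (oil/oil, oil/water, water/oil, water/water, the second coordinate being the block lying on the other side of the diagonal the path follows), and (ii) for each such type, a fraction of polymer-scale time the path spends in the lower versus the upper block of the pair. First I would isolate the ``single-block'' free energies $\psi_{\mathrm{type}}(\alpha,\beta)$ --- the free energy per monomer of the copolymer making a diagonal crossing of one block of a prescribed type with a prescribed neighbour --- and record their elementary properties: convexity in $(\alpha,\beta)$, monotonicity in the appropriate parameters, and an ordering reflecting how much oil/water interface the path can harvest in each case (a block adjacent to a block of the opposite type offers a localization gain that a block adjacent to a like block does not). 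Since the parameter range enforces $|\beta|\le\alpha$, the oil-favourable strategies dominate, which fixes the sign of the relevant comparisons.

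Next I would bring in the percolation geometry. The frequency vector in the variational formula is constrained by the large-$n$ geometry of the i.i.d.\ block environment $\Omega$, and in the coarse-graining limit these constraints self-average, so that the admissible frequencies become deterministic and depend only on $p$. The key dichotomy is whether the oil blocks percolate in the oriented sense compatible with the diagonal crossings. In the supercritical regime there is an infinite oil cluster, so the path has two genuinely competing options: travel almost entirely through the interior of the oil cluster (a delocalized strategy, governed by $\psi_{\mathrm{oil/oil}}$), or run along the outer boundary of that cluster harvesting oil/water interface (a localized strategy). Optimizing the two-scale problem between these candidates yields a single critical curve $\beta\mapsto\beta_c(\alpha)$, characterized as the locus where the two candidate suprema coincide; monotonicity and convexity of the $\psi_{\mathrm{type}}$ then give it the shape of Fig.~\ref{fig-phdiagsup}. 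In the subcritical regime all oil clusters are finite, so a directed path of length $n$ is forced to spend a positive fraction of its steps in water; this removes the pure-oil option and splits the remaining strategies more finely according to whether the path (a) ignores droplets and stays delocalized in water, (b) delocalizes but detours through droplets without hugging their boundaries, (c) localizes along the boundaries of the droplets it meets, or (d) additionally localizes along every available interface. Writing the four corresponding candidate values as explicit functions of the $\psi_{\mathrm{type}}(\alpha,\beta)$ and the percolation-determined visiting frequencies, I would show the regions where each is the maximizer partition the quarter-plane into $\cD_1,\cD_2,\cL_1,\cL_2$, separated by three curves, two of which terminate at points where a third comparison becomes simultaneously tight --- the tricritical points of Fig.~\ref{fig-phdiagsub}.

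I expect the main obstacle to be the reduction of the genuinely two-scale variational formula to this finite menu of strategies: one must prove that the supremum is attained (asymptotically) at frequency vectors supported on the handful of extremal block-pair types, and that the polymer-scale fractions take the boundary values $0$ or $1$ except on the critical curves, ruling out geometrically exotic intermediate strategies doing better. This is where the percolation input must be married to the analytic properties of the $\psi_{\mathrm{type}}$, and where the strict ordering of the block free energies away from the critical curves --- together with the precise coincidences that create the tricritical points --- forms the quantitative heart of the argument. The softer parts (existence of the curves, their convexity, continuity, and the limiting behaviour as $\alpha\to\infty$) then follow from convex-analytic manipulations of the variational formula along the lines of den Hollander and Whittington~\cite{dHoWh06} and den Hollander and P\'etr\'elis~\cite{dHoPe09a,dHoPe09b,dHoPe10}.
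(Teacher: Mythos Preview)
The paper itself does not prove Theorem~\ref{thm:phases}: it is stated in ``qualitative language'' as a summary of results established in the cited references~\cite{dHoWh06,dHoPe09a,dHoPe09b,dHoPe10}, and no argument is given here beyond the illustrative Figures~\ref{fig-phdiagsup}--\ref{fig-pathsub}. Your sketch is therefore not being compared against a proof in this paper but against the strategy of those external works, and at that level of resolution it is essentially correct: the analysis does proceed by reducing the two-scale variational formula of Theorem~\ref{thm:fecopolemul} to a comparison of finitely many candidate strategies, with the percolation dichotomy for the oil blocks determining which coarse-grained frequency vectors are admissible, and the single-block free energies $\psi_{\mathrm{type}}$ determining which candidate dominates.

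Two cautions on details. First, your labelling of the four subcritical phases as four distinct ``menus'' (ignore droplets / detour without hugging / localize on some boundaries / localize on all) is a plausible heuristic but does not match the actual distinctions drawn in~\cite{dHoPe09b,dHoPe10}: the split between $\cD_1,\cD_2$ and between $\cL_1,\cL_2$ is governed by finer comparisons of how the path negotiates the \emph{corners} of the finite clusters (entry/exit behaviour at block boundaries), not simply by whether boundaries are hugged. Second, the reduction step you flag as the main obstacle --- showing the supremum is attained on extremal frequency vectors with polymer-scale fractions in $\{0,1\}$ off the critical curves --- is indeed the technical core, but in the cited papers it is not handled by a single convexity/ordering argument; rather, each regime requires a separate and fairly delicate analysis of the block free energies (including explicit computation in some cases), and the subcritical phase diagram in particular took three papers to complete. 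Your outline is sound as a roadmap, but the quantitative work hidden in ``strict ordering of the block free energies'' is substantial and model-specific rather than soft.
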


As shown in Figs.~\ref{fig-phdiagsup}--\ref{fig-pathsub}, the copolymer-emulsion 
model shows a remarkably rich phase behavior and associated path behavior. In
the supercritical regime the phase diagram shows one critical curve separating 
two phases. There is a \emph{delocalized} phase $\cD$ where the copolymer lies 
entirely inside the infinite oil cluster, and a \emph{localized phase} $\cL$ 
where part of the copolymer lies near the boundary of the infinite oil cluster. 
In the subcritical regime the phase diagram shows three critical curves separating 
four phases meeting at two tricritical points. There are two delocalized phases
$\cD_1$, $\cD_2$ and two localized phases $\cL_1$, $\cL_2$. For each pair, the 
distinction comes from the way in which the copolymer behaves near the boundary 
of the finite oil clusters.

The corner restriction is unphysical, but makes the model mathematically tractable. 
In den Hollander and P\'etr\'elis~\cite{dHoPepr} this restriction is removed, but
the resulting variational formula for the free energy is more complex. The 
coarse-graining limit is an important simplification: mesoscopic disorder is easier 
to deal with than microscopic disorder. An example of a model with microscopic 
disorder in space-time will be the topic of Section~\ref{S6}.


\subsection{Open problems}
\label{S5.6}

Here are some challenges:
\begin{itemize}
\item
For the copolymer model in Sections~\ref{S5.1}--\ref{S5.4}, prove that throughout 
the interior of the delocalized phase the path intersects the interface only finitely 
often under the average quenched path measure. 
\item
Determine whether the phase transition is second order or higher order.
\item
Compute the critical slope $K_c$ of the Brownian copolymer.
\item
For the copolymer/emulsion model in Section~\ref{S5.5}, determine the fine details of 
the phase diagrams in Figs.~\ref{fig-phdiagsup} and \ref{fig-phdiagsub}, and of
the path properties in Figs.~\ref{fig-pathsup} and \ref{fig-pathsub}.
\end{itemize}


\section{A polymer in a random potential}
\label{S6}

This section takes a look at a $(1+d)$-dimensional directed polymer in a random 
potential: the polymer and the potential live on $\N\times\Z^d$, where $\N$ is 
time and $\Z^d$, $d \geq 1$, is space (see Fig.~\ref{fig-polpot}). In Section~\ref{S6.1} 
we define the model. In Sections~\ref{S6.2}--\ref{S6.4} we study the two phases that 
occur: the \emph{weak disorder phase}, in which the polymer largely ignores the disorder 
and behaves diffusively, and the \emph{strong disorder phase}, in which the polymer 
hunts for favorable spots in the disorder and behaves superdiffusively. In 
Section~\ref{S6.5} we derive bounds on the critical temperature separating the 
two phases. Section~\ref{S6.6} lists a few open problems.

\begin{figure}[htbp]
\includegraphics[width=.40\hsize]{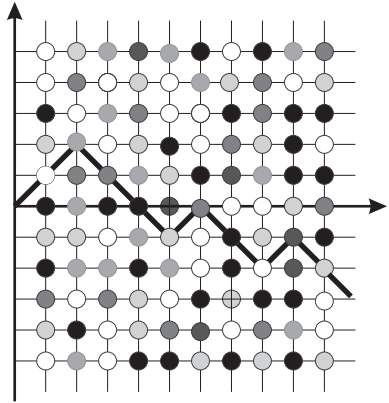}
\caption{\small A directed polymer in a random potential. Different shades of white, 
grey and black represent different values of the potential.}
\label{fig-polpot}
\end{figure}


\subsection{Model}
\label{S6.1}

The set of paths is
\[
\begin{aligned}
\cW_n &= \big\{w=(i,w_i)_{i=0}^n\colon\\
&\qquad w_0=0,\,\|w_{i+1}-w_i\| = 1\,\,\forall\,0 \leq i < n\big\}.
\end{aligned}
\]
The random environment
\[
\omega = \{\omega(i,x)\colon\,i\in\N,\,x\in\Z^d\}
\]
consists of an i.i.d.\ field of $\R$-valued non-degenerate random variables with 
moment generating function
\[
M(\beta) = \mE\big(\erom^{\beta\omega(1,0)}\big)<\infty 
\qquad \forall\,\beta \in [0,\infty),
\]
where $\mP$ denotes the law of $\omega$. The Hamiltonian is
\[
H_n^{\beta,\omega}(w) = -\beta \sum_{i=1}^n \omega(i,w_i), \qquad w\in\cW_n,
\]
where $\beta$ plays the role of the \emph{disorder strength}. The 
associated quenched path measure is
\[
P_n^{\beta,\omega}(w) = \frac{1}{Z_n^{\beta,\omega}}\,
\erom^{-H_n^{\beta,\omega}(w)}\,P_n(w),
\qquad w \in\cW_n,
\]
where $P_n$ is the projection onto $\cW_n$ of the law $P$ of directed $\SRW$ on 
$\Z^d$.

We may think of the model as a version of the ``copolymer in emulsion'' described 
in Section~\ref{S5.5} where the disorder is \emph{microscopic} rather than 
\emph{mesoscopic}. There are deep relations with several other models in probability 
theory and statistical physics, including growth and wave-front-propagation models 
and first-passage percolation. Indeed, for $\beta=\infty$ the polymer follows the 
path along which the sum of the disorder is largest. This case corresponds to 
\emph{oriented first-passage percolation}, of which some aspects are discussed in 
the lectures by Garban and Steif~\cite{GaSt11}. For $\beta<\infty$ the model is 
sometimes referred to as \emph{oriented first-passage percolation at positive 
temperature}.

The key object in the analysis of the model is the following quantity:
\[
Y_n^{\beta,\omega} = \frac{Z_n^{\beta,\omega}}{\mE(Z_n^{\beta,\omega})}, 
\qquad n\in\N_0.
\]
This is the \emph{ratio of the quenched and the annealed partition sum}.
The point is that
\[
(Y_n^{\beta,\omega})_{n\in\N_0}
\] 
is a \emph{martingale} w.r.t.\ the natural filtration generated by $\omega$,
i.e., $\cF=(\cF_n)_{n\in\N_0}$ with $\cF_n=\sigma(\omega(i,x)\colon\,0\leq i
\leq n,\,x\in\Z^d)$. Indeed, this is seen by writing
\[
Y_n^{\beta,\omega} = E\left(\prod_{i=1}^n 
\left[\frac{\erom^{\beta\omega(i,S_i)}}{M(\beta)}\right]\right),
\qquad Y_0^{\beta,\omega}=1, 
\]
from which it is easily deduced that $\mE(Y_n^{\beta,\omega}|\cF_{n-1})
= Y_{n-1}^{\beta,\omega}$. Note that $\mE(Y_n^{\beta,\omega})=1$ and 
$Y_n^{\beta,\omega}>0$ for all $n\in\N_0$.


\subsection{A dichotomy: weak and strong disorder}
\label{S6.2}

Since $Y_n^{\beta,\omega}\geq 0$, it follows from the martingale convergence theorem
that
\[
Y^{\beta,\omega} = \lim_{n\to\infty} Y_n^{\beta,\omega} 
\quad \mbox{exists } \omega\text{-a.s.} 
\]
Moreover, since the event $\{\omega\colon\,Y^{\beta,\omega}>0\}$ is 
measurable w.r.t.\ the tail sigma-algebra of $\omega$, it follows from the 
Kolmogorov zero-one law that the following \emph{dichotomy} holds:
\[
\begin{array}{lll}
&\mathrm{(\WD)}\colon\, &\quad\mP(Y^{\beta,\omega}>0)=1,\\
&\mathrm{(\SD)}\colon\, &\quad\mP(Y^{\beta,\omega}=0)=1.
\end{array}
\]
In what follows it will turn out that $\mathrm{(WD)}$ characterizes 
\emph{weak disorder}, for which the behavior of the polymer is diffusive
in the $\Z^d$-direction, while $\mathrm{(SD)}$ characterizes \emph{strong 
disorder}, for which the behavior is (expected to be) superdiffusive 
(see Fig.~\ref{fig-WDSD}). Note that the nomenclature is appropriate: 
in phase $\mathrm{(WD)}$ the quenched and the annealed partition 
sum remain comparable in the limit as $n\to\infty$, indicating a 
weak role for the disorder, while in phase $\mathrm{(SD)}$ the annealed 
partition sum grows faster than the quenched partition sum, indicating 
a strong role for the disorder.

\begin{figure}[htbp]
\vspace{.5cm}
\includegraphics[scale = 0.3]{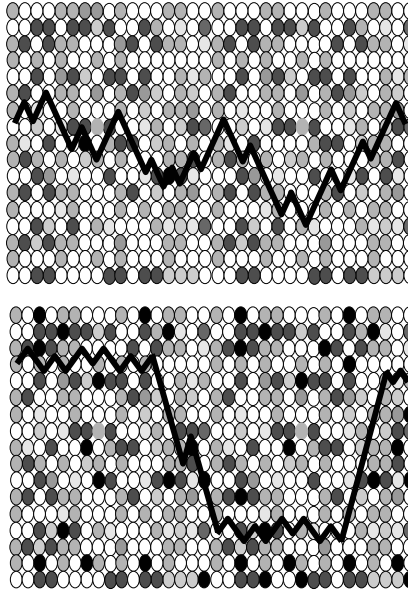}
\caption{\small Typical path behavior in the two phases.}
\label{fig-WDSD}
\end{figure}


\subsection{Separation of the two phases}
\label{S6.3}

\begin{theorem}
\label{thm:phsep}
{\rm [Comets and Yoshida~\cite{CoYo06}]}
For any choice of the disorder distribution, $\beta\mapsto\mE
(\sqrt{Y^{\beta,\omega}})$ is non-increasing on $[0,\infty)$. 
Consequently, there exists a $\beta_c \in [0,\infty]$ such 
that (see Fig.~{\rm \ref{fig-phsep}})
\[
\begin{aligned}
\beta \in [0,\beta_c) \quad &\longrightarrow \quad \mathrm{(WD)},\\ 
\beta \in (\beta_c,\infty) \quad &\longrightarrow \quad \mathrm{(SD)}.
\end{aligned}
\]
\end{theorem}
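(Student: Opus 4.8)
The plan is to reduce the theorem to one substantive statement — that $\beta \mapsto \mE(\sqrt{Y^{\beta,\omega}})$ is non-increasing on $[0,\infty)$ — and then read off $\beta_c$ from it. First I would record two elementary reductions. Since $(Y_n^{\beta,\omega})_{n\in\N_0}$ is a non-negative martingale, Jensen's inequality and concavity of the square root give $\mE(\sqrt{Y_{n+1}^{\beta,\omega}}\mid\cF_n) \le \sqrt{\mE(Y_{n+1}^{\beta,\omega}\mid\cF_n)} = \sqrt{Y_n^{\beta,\omega}}$, so $n\mapsto\mE(\sqrt{Y_n^{\beta,\omega}})$ is non-increasing; and $\sqrt{Y_n^{\beta,\omega}}\to\sqrt{Y^{\beta,\omega}}$ in $L^1$, since the convergence holds $\mP$-a.s.\ and the family $\{\sqrt{Y_n^{\beta,\omega}}\}_n$ is uniformly integrable (it is bounded in $L^2$, its second moment being $\mE(Y_n^{\beta,\omega})=1$). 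Hence $\mE(\sqrt{Y^{\beta,\omega}}) = \inf_n \mE(\sqrt{Y_n^{\beta,\omega}})$, and since an infimum of non-increasing functions is non-increasing it suffices to prove that for each fixed $n$ the map $\beta\mapsto\mE(\sqrt{Y_n^{\beta,\omega}})$ is non-increasing.

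For fixed $n$, write $Y_n^{\beta,\omega} = \sum_{w\in\cW_n} P_n(w)\prod_{i=1}^n U_\beta(i,w_i)$ with $U_\beta(i,x)=\erom^{\beta\omega(i,x)}/M(\beta)$, a function of finitely many i.i.d.\ non-negative random variables, each of mean one. Two observations do the work. (i) As a function of any one charge $U_\beta(j,y)$, with the others frozen, $\sqrt{Y_n^{\beta,\omega}}$ has the form $u\mapsto\sqrt{A+Bu}$ with $A,B\ge 0$, hence is concave. (ii) Along the exponential family, $\beta\mapsto U_\beta(i,x)$ is non-decreasing in the (mean-preserving) convex order: for smooth convex $\varphi$,
\[
\frac{\drom}{\drom\beta}\,\mE\big(\varphi(U_\beta)\big)
= \mE\big(\varphi'(U_\beta)\,(\omega-(\log M)'(\beta))\,U_\beta\big)
= \mathrm{Cov}_{\mathbb{P}_\beta}\big(\varphi'(U_\beta),\,\omega\big)\ge 0,
\]
where $\mathbb{P}_\beta(\drom x)\propto\erom^{\beta x}\,\mP(\drom x)$, and the covariance is non-negative because $\varphi'(U_\beta)$ and $\omega$ are non-decreasing functions of the single variable $\omega$ (Chebyshev's association inequality); the means $\mE(U_\beta)=1$ do not move. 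Now fix $\beta_1\le\beta_2$ and replace the charges $U_{\beta_1}(i,x)$ by $U_{\beta_2}(i,x)$ one site at a time. At each replacement, conditioning on the other (still independent) charges, (i) says we are taking the expectation of a concave function of a single variable, and (ii) says that variable has undergone a mean-preserving convex increase, so the expectation cannot increase. After all replacements, $\mE(\sqrt{Y_n^{\beta_2,\omega}})\le\mE(\sqrt{Y_n^{\beta_1,\omega}})$; letting $n\to\infty$ yields the monotonicity of $\beta\mapsto\mE(\sqrt{Y^{\beta,\omega}})$.

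To finish, I would extract $\beta_c$ from monotonicity. By the zero-one dichotomy already established in the text, either $Y^{\beta,\omega}>0$ $\mP$-a.s.\ (phase $\mathrm{(\WD)}$), in which case $\mE(\sqrt{Y^{\beta,\omega}})>0$, or $Y^{\beta,\omega}=0$ $\mP$-a.s.\ (phase $\mathrm{(\SD)}$), in which case $\mE(\sqrt{Y^{\beta,\omega}})=0$; thus $\mathrm{(\WD)}$ holds iff $\mE(\sqrt{Y^{\beta,\omega}})>0$. Because $Y_n^{0,\omega}\equiv 1$ we have $\mE(\sqrt{Y^{0,\omega}})=1>0$, so
\[
\beta_c := \sup\{\beta\in[0,\infty)\colon\,\mE(\sqrt{Y^{\beta,\omega}})>0\}\in[0,\infty]
\]
is well defined, and monotonicity forces $\mE(\sqrt{Y^{\beta,\omega}})>0$ for $\beta<\beta_c$ and $\mE(\sqrt{Y^{\beta,\omega}})=0$ for $\beta>\beta_c$, i.e.\ $\mathrm{(\WD)}$ on $[0,\beta_c)$ and $\mathrm{(\SD)}$ on $(\beta_c,\infty)$. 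The only genuinely substantive step is the fixed-$n$ comparison; its two ingredients — concavity of $\sqrt{Y_n^{\beta,\omega}}$ in each charge and the convex-order monotonicity of the exponential tilt — are the heart of the matter, and I would expect the only delicate points to be the coordinate-by-coordinate bookkeeping and the routine approximation passing from smooth to general convex $\varphi$ in step (ii).
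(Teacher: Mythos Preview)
The paper does not prove this theorem; it merely states the result and attributes it to Comets and Yoshida~\cite{CoYo06}. Your argument is correct and is essentially the Comets--Yoshida strategy: the two substantive ingredients --- that $u\mapsto\sqrt{A+Bu}$ is concave, and that the normalized exponential tilt $\beta\mapsto \erom^{\beta\omega}/M(\beta)$ is non-decreasing in the convex order (which you derive from Chebyshev's association inequality under the tilted law) --- are exactly what drives the original proof. Your presentation via a site-by-site replacement (Lindeberg-type interpolation) is a clean repackaging; Comets and Yoshida instead differentiate $\beta\mapsto\mE\big((Y_n^{\beta,\omega})^\theta\big)$ directly for $\theta\in(0,1)$ and control the resulting covariance by an FKG-type argument, but the underlying mechanism is the same. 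The reductions you record (supermartingale property of $\sqrt{Y_n}$, $L^1$-convergence via $L^2$-boundedness, and the passage from $\mE(\sqrt{Y^{\beta,\omega}})>0$ to $\mathrm{(WD)}$ via the zero-one law) are all sound.
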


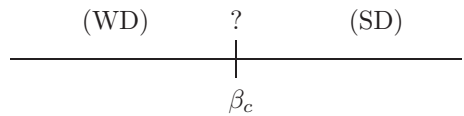
\begin{figure}[htbp]
\begin{center}
\setlength{\unitlength}{0.5cm}
\begin{picture}(12,3)(0,-1)
\put(0,0){\line(12,0){12}}
\put(6,-.5){\line(0,1){1}}
\put(5.8,-1.3){$\beta_c$}
\put(5.85,.8){?}  
\put(1.9,.8){$\mathrm{(WD)}$}
\put(9,.8){$\mathrm{(SD)}$}
\end{picture}
\end{center}
\caption{\small Separation of the two phases. It is not known which of
the two phases includes $\beta_c$.}
\label{fig-phsep}
\end{figure}

Since
\[
\begin{aligned}
f^\mathrm{que}(\beta) &= \lim_{n\to\infty} \frac{1}{n} \log Z_n^{\beta,\omega}
\qquad \omega\text{-a.s.},\\
f^\mathrm{ann}(\beta) &= \lim_{n\to\infty} \frac{1}{n} \log \mE(Z_n^{\beta,\omega}),
\end{aligned}
\]
it follows from the above theorem that
\[
f^\mathrm{que}(\beta) = f^\mathrm{ann}(\beta) \qquad \forall\,\beta \in [0,\beta_c],
\]
where the critical value $\beta=\beta_c$ can be added because free energies are continuous. 
It is expected that (see Fig.~\ref{fig-fepolpot})
\[
f^\mathrm{que}(\beta) < f^\mathrm{ann}(\beta) \qquad \forall\,
\beta \in (\beta_c,\infty), 
\]
so that for $\beta \in (\beta_c,\infty)$ the quenched and the annealed partition sum
have different exponential growth rates, but this remains open. Partial results have 
been obtained in Comets and Vargas~\cite{CoVa06}, Lacoin~\cite{La10}. 

\begin{figure}[htbp]
\begin{center}
\setlength{\unitlength}{0.35cm}
\begin{picture}(12,12)(0,-1.5)
\put(0,0){\line(12,0){12}}
\put(0,0){\line(0,8){8}}
{\thicklines
\qbezier(0,0)(5,0.5)(9,7.5)
\qbezier(4,1.6)(7,3)(9,5.5)
}
\qbezier[20](4,0)(4,1)(4,1.6)
\put(-.8,-1){$0$}
\put(12.5,-0.2){$\beta$}
\put(-0.2,8.5){$f$}
\put(10,6){$f^\mathrm{que}(\beta)$}
\put(10,8){$f^\mathrm{ann}(\beta)$}
\put(3.8,-1){$\beta_c$}
\put(0,0){\circle*{.4}}
\put(4,1.6){\circle*{.4}}
\end{picture}
\end{center}
\caption{\small Conjectured behavior of the quenched and the annealed free energy.}
\label{fig-fepolpot}
\end{figure}
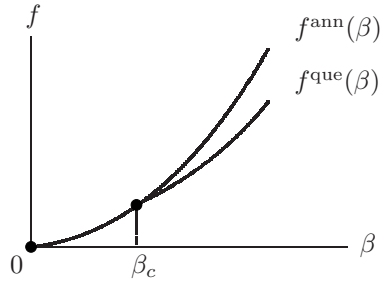


\subsection{Characterization of the two phases}
\label{S6.4}

Let 
\[
\pi_d = (P \otimes P')(\exists\,n\in\N\colon\,S_n=S'_n)
\]
denote the collision probability of two independent copies of $\SRW$. Note that 
$\pi_d=1$ in $d=1,2$ and $\pi_d<1$ in $d \geq 3$. For $\beta\in [0,\infty)$, 
define
\[
\begin{aligned}
\Delta_1(\beta) &= \log[M(2\beta)/M(\beta)^2],\\ 
\Delta_2(\beta) &= \beta[\log M(\beta)]'-\log M(\beta).
\end{aligned}
\]
Both $\beta\mapsto\Delta_1(\beta)$ and $\beta\mapsto\Delta_2(\beta)$ are strictly 
increasing on $[0,\infty)$, with $\Delta_1(0)=\Delta_2(0)=0$ and $\Delta_1(\beta)
>\Delta_2(\beta)$ for $\beta\in(0,\infty)$. 

Define
\[
\mathrm{max}_n^{\beta,\omega} = \max_{x\in\mZ^d} P_n^{\beta,\omega}(S_n=x), 
\qquad n\in\N.
\]
This quantity measures how localized the endpoint $S_n$ of the polymer is in the 
given potential $\omega$: if $\lim_{n\to\infty} \mathrm{max}_n^{\beta,\omega}=0$, 
then the path spreads out, while if $\limsup_{n\to\infty} \mathrm{max}_n^{\beta,\omega} 
>0$, then the path localizes (at least partially).
 
\begin{theorem}
\label{thm:polpotchar1}
{\rm [Imbrie and Spencer~\cite{ImSp88}, Bolthausen~\cite{Bo89}, Sinai~\cite{Si95}, 
Carmona and Hu~\cite{CaHu02}, Comets, Shiga and Yoshida~\cite{CoShYo03}]}
Suppose that 
\begin{itemize}
\item[$\mathrm{(I)}$]
\qquad $d \geq 3$, $\Delta_1(\beta) < \log(1/\pi_d)$.
\end{itemize} 
Then
\[
\lim_{n\to\infty} \frac{1}{n}\,E_n^{\beta,\omega}(\|S_n\|^2)=1 \qquad \omega\text{-a.s.}
\]
 and
\[\lim_{n\to\infty}\,\mathrm{max}_n^{\beta,\omega} = 0 \qquad \omega\text{-a.s.}
\]
\end{theorem}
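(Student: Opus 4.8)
The plan is to exploit the martingale $Y_n^{\beta,\omega}$ together with a second-moment computation. Condition (I), $\Delta_1(\beta) < \log(1/\pi_d)$, is precisely the threshold that makes $(Y_n^{\beta,\omega})_{n\in\N_0}$ bounded in $L^2(\mP)$: writing $Y_n^{\beta,\omega} = E\big(\prod_{i=1}^n \erom^{\beta\omega(i,S_i)}/M(\beta)\big)$ and expanding the square over two independent walks $S,S'$ driven by the same disorder, one gets
\[
\mE\big((Y_n^{\beta,\omega})^2\big)
= (P\otimes P')\Big(\prod_{i=1}^n \erom^{\Delta_1(\beta)\,1_{\{S_i=S'_i\}}}\Big)
\le \mE\Big(\erom^{\Delta_1(\beta)\,N_\infty}\Big),
\]
where $N_\infty = \sum_{i\ge1} 1_{\{S_i=S'_i\}}$ is the number of collisions, and $S-S'$ (up to a lazy time change) is a recurrent-or-transient walk with return probability $\pi_d<1$ for $d\ge3$; hence $N_\infty$ is geometric and the expectation is finite exactly when $\erom^{\Delta_1(\beta)}\pi_d<1$. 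Boundedness in $L^2$ forces $Y_n^{\beta,\omega}\to Y^{\beta,\omega}$ in $L^2$ with $\mE(Y^{\beta,\omega})=1$, so in particular $Y^{\beta,\omega}>0$ a.s.: we are in phase $\mathrm{(WD)}$.

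Next I would pass from $L^2$-boundedness to the diffusive statement $\lim_n \frac1n E_n^{\beta,\omega}(\|S_n\|^2)=1$. The idea is to show that the quenched measure $P_n^{\beta,\omega}$ is, in an integrated sense, a small perturbation of $P_n$. Concretely, for a fixed finite set of increments one controls the ratio of partition sums with and without a prescribed initial segment by a martingale of the same type, which again converges in $L^2$; then the quenched law of $(S_1,\dots,S_k)$ converges $\mP$-a.s. to the $\SRW$ law for every fixed $k$. Combined with a uniform-integrability/tightness bound on $\|S_n\|^2/n$ — obtained from an $L^1(\mP)$ estimate on $E_n^{\beta,\omega}(\|S_n\|^2)$ using $\mE(Z_n^{\beta,\omega}E_n^{\beta,\omega}(\cdots)) = M(\beta)^n E(\cdots)$ and the $L^2$ control of $Y_n$ — this yields the central-limit scaling of the endpoint and in particular the second-moment identity. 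This is the step carried out by Bolthausen and by Comets–Shiga–Yoshida; I would follow that route.

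Finally, for $\lim_n \mathrm{max}_n^{\beta,\omega}=0$ a.s., the clean argument is again via the second moment. One shows that the \emph{overlap} $\sum_{x} P_n^{\beta,\omega}(S_n=x)^2$ — equivalently, the probability that two independent polymers in the same environment share their endpoint — has vanishing $\mP$-expectation as $n\to\infty$: writing this as a ratio involving $(Y_n)^2$ and the collision structure of two replicas, the $L^2$-boundedness of $Y_n$ together with the transience of $S-S'$ gives $\mE\big(\sum_x P_n^{\beta,\omega}(S_n=x)^2\big)\to0$. Since $\mathrm{max}_n^{\beta,\omega}\le \big(\sum_x P_n^{\beta,\omega}(S_n=x)^2\big)^{1/2}$, a Borel–Cantelli argument along a subsequence (plus monotonicity-type control to fill the gaps) upgrades convergence in $\mP$-mean to almost-sure convergence. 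The main obstacle, and the part requiring the most care, is the passage from $L^2$-boundedness of the martingale to the \emph{almost-sure} diffusive scaling of $\|S_n\|^2/n$: the $L^2$ bound only controls disorder-averaged quantities, so one needs a genuine concentration/local-limit input for the quenched walk rather than a soft martingale argument — this is where the work of Imbrie–Spencer (via cluster expansions) and of the later probabilistic proofs is concentrated.
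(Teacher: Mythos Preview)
The paper does not actually prove this theorem; it only remarks that the proofs ``are based on a series of technical estimates for the martingale $(Y_n^{\beta,\omega})_{n\in\N_0}$'' and defers to the cited references. Your proposal is a correct outline of precisely that approach: the $L^2$-boundedness computation for $Y_n^{\beta,\omega}$ under condition~(I) is Bolthausen's argument verbatim, the passage to diffusive scaling via finite-dimensional convergence of the quenched law toward $\SRW$ is the route of \cite{Bo89,CoShYo03}, and the overlap bound for $\mathrm{max}_n^{\beta,\omega}\to 0$ is the mechanism in \cite{CoShYo03}. Your identification of the genuine difficulty---upgrading disorder-averaged statements to $\omega$-a.s.\ ones for the diffusive scaling and for the overlap---is accurate, and the sketch you give (subsequence Borel--Cantelli plus interpolation) is indeed how it is handled in the literature, though the ``monotonicity-type control'' is not literal monotonicity but rather a comparison of partition sums at nearby times via the martingale structure.
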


\begin{theorem}
\label{thm:polpotchar2}
{\rm [Carmona and Hu~\cite{CaHu02}, Comets, Shiga and Yoshida~\cite{CoShYo03}]}
Suppose that 
\begin{itemize}
\item[$\mathrm{(II)}$]
\qquad $d=1,2$, $\beta>0$ \quad or \quad $d\geq 3$, $\Delta_2(\beta) > \log(2d)$.
\end{itemize} 
Then there exists a $c=c(d,\beta)>0$ such that 
\[
\limsup_{n\to\infty}\,\mathrm{max}_n^{\beta,\omega} \geq c \qquad \omega\text{-a.s.}
\]
\end{theorem}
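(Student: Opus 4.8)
The plan is to read the localization off the nonnegative martingale $Y_n=Y_n^{\beta,\omega}=Z_n^{\beta,\omega}/M(\beta)^n$, by quantifying the entropy it loses per step and matching this loss with the endpoint concentration $\mathrm{max}_n^{\beta,\omega}$. Writing $Y_n(x)=E\big(\prod_{i=1}^n[\erom^{\beta\omega(i,S_i)}/M(\beta)]\,1_{\{S_n=x\}}\big)$ and $\hat\mu_n(x)=Y_n(x)/Y_n=P_n^{\beta,\omega}(S_n=x)$, one has the recursion $Y_{n+1}=\sum_{x\in\mZ^d}W_n(x)\,\eta_{n+1,x}$, with $W_n(x)=(2d)^{-1}\sum_{y\colon\|y-x\|=1}Y_n(y)$ ($\cF_n$-measurable, $\sum_xW_n(x)=Y_n$) and $\eta_{n+1,x}=\erom^{\beta\omega(n+1,x)}/M(\beta)$ i.i.d.\ over $x$, independent of $\cF_n$, with mean $1$ and variance $\sigma^2:=\erom^{\Delta_1(\beta)}-1$. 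Thus $(Y_n)_n$ is an $\cF$-martingale. Under hypothesis (II) it is known that we sit in phase $\mathrm{(SD)}$ (for $d\geq3$ this is built into $\Delta_2(\beta)>\log(2d)$, a first-moment condition; for $d=1,2$ it follows from recurrence of $\SRW$, cf.\ Carmona and Hu~\cite{CaHu02}), and, more precisely, throughout (II) one has $\tfrac1n\log Y_n\to f^\mathrm{que}(\beta)-f^\mathrm{ann}(\beta)=:-\lambda(\beta)$ with $\lambda(\beta)>0$ $\omega$-a.s.

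First I would form the Doob decomposition $\log Y_n=M_n+A_n$ on the full-measure event $\{Y_n>0\ \forall n\}$, with $M_n$ a martingale and $A_n$ predictable, $a_k:=A_k-A_{k-1}=\mE[\log(Y_k/Y_{k-1})\mid\cF_{k-1}]\leq0$ by Jensen. Writing $\hat W_{k-1}(x)=W_{k-1}(x)/Y_{k-1}=(2d)^{-1}\sum_{y\sim x}\hat\mu_{k-1}(y)$, so that $\sum_x\hat W_{k-1}(x)=1$, a second use of Jensen gives $-a_k\leq-\sum_x\hat W_{k-1}(x)\,\mE[\log\eta_{k,x}]=\log M(\beta)-\beta\mE(\omega)$, i.e.\ $|a_k|\leq C(\beta)$ for a finite constant. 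A uniform bound on $\mathrm{Var}(\log(Y_k/Y_{k-1})\mid\cF_{k-1})$ (of the same nature as the estimate below, plus a mild moment hypothesis on the left tail of $\omega$, or a truncation of the disorder) then gives $M_n/n\to0$ $\omega$-a.s.\ by the martingale strong law. Combined with $\tfrac1n\log Y_n\to-\lambda(\beta)$ this yields
\[
\frac1n\sum_{k=1}^n(-a_k)\ \longrightarrow\ \lambda(\beta)>0\qquad\omega\text{-a.s.}
\]

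The crux is to dominate the per-step entropy loss by the concentration,
\[
-a_k\ \leq\ \Phi\big(\mathrm{max}_{k-1}^{\beta,\omega}\big),
\]
for a continuous increasing $\Phi\colon[0,1]\to[0,\infty)$ with $\Phi(0)=0$ (one may take $\Phi$ linear). Set $X_k:=(Y_k/Y_{k-1})-1=\sum_x\hat W_{k-1}(x)(\eta_{k,x}-1)$; then $\mE[X_k\mid\cF_{k-1}]=0$ and $\mE[X_k^2\mid\cF_{k-1}]=\sigma^2\sum_x\hat W_{k-1}(x)^2\leq\sigma^2\max_x\hat W_{k-1}(x)\leq\sigma^2\,\mathrm{max}_{k-1}^{\beta,\omega}$, the last step because $\hat W_{k-1}(x)\leq\max_y\hat\mu_{k-1}(y)$. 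Split $-\log(1+X_k)$ over $\{X_k\geq-\tfrac12\}$ and $\{X_k<-\tfrac12\}$. On the first set one has $-\log(1+x)\leq-x+x^2$, so this part contributes at most $\mE[X_k^2\mid\cF_{k-1}]\leq\sigma^2\,\mathrm{max}_{k-1}^{\beta,\omega}$. On the second set the logarithmic singularity at $-1$ must be tamed: a Chernoff estimate for the weighted sum $\sum_x\hat W_{k-1}(x)\eta_{k,x}$ of i.i.d.\ positive variables gives $\mP(X_k<-\tfrac12\mid\cF_{k-1})\leq\exp(-c(\beta)/\mathrm{max}_{k-1}^{\beta,\omega})$, and a Cauchy--Schwarz step against a crude bound on $\mE[(\log(1+X_k))^2\mid\cF_{k-1}]$ makes this part negligible when $\mathrm{max}_{k-1}^{\beta,\omega}$ is small (and when it is not small one simply uses $-a_k\leq C(\beta)\leq\Phi(\mathrm{max}_{k-1}^{\beta,\omega})$). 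Plugging this into the displayed limit gives $\tfrac1n\sum_{k=1}^n\Phi(\mathrm{max}_{k-1}^{\beta,\omega})\geq\lambda(\beta)+o(1)$; since $\Phi(\cdot)$ is bounded on $[0,1]$, a Ces\`aro $\liminf$ at least $\lambda(\beta)$ forces $\limsup_k\Phi(\mathrm{max}_{k-1}^{\beta,\omega})\geq\lambda(\beta)$, hence $\limsup_k\mathrm{max}_k^{\beta,\omega}\geq\Phi^{-1}(\lambda(\beta))=:c(d,\beta)>0$, which is the assertion.

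I expect the main obstacle to be the estimate on the event $\{X_k<-\tfrac12\}$: one must show, uniformly in $\omega$ and in the (themselves small) weights $\hat W_{k-1}$, that the logarithmic singularity is overwhelmed by the exponential smallness of $\mP(X_k<-\tfrac12\mid\cF_{k-1})$, and one must keep analogous control on $\mathrm{Var}(\log(Y_k/Y_{k-1})\mid\cF_{k-1})$ in the martingale strong-law step; these are exactly the points where moment assumptions on the disorder (or a truncation argument) are genuinely needed. A second, softer input---quoted here rather than reproved---is that hypothesis (II) really places us in the strict regime $f^\mathrm{que}(\beta)<f^\mathrm{ann}(\beta)$ that drives the whole scheme.
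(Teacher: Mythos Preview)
The paper does not give a proof of this theorem; it only remarks that the proofs of Theorems~\ref{thm:polpotchar1}--\ref{thm:polpotchar2} ``are based on a series of technical estimates for the martingale $(Y_n^{\beta,\omega})_{n\in\N_0}$'' and defers to \cite{CaHu02,CoShYo03}. Your Doob-decomposition scheme for $\log Y_n$, with the predictable drift bounded by a function of the endpoint concentration and a Ces\`aro argument at the end, is exactly the skeleton of those references, and the estimates you sketch (second-moment bound on $X_k$, Chernoff bound on $\{X_k<-\tfrac12\}$, Cauchy--Schwarz to kill the logarithmic singularity) are the right ones.

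There is, however, a genuine gap at the step you label ``softer''. Your whole argument is driven by the strict inequality $\lambda(\beta)=f^{\mathrm{ann}}(\beta)-f^{\mathrm{que}}(\beta)>0$, which you claim holds ``throughout (II)'', attributing the case $d=1,2$ to ``recurrence of $\SRW$, cf.\ Carmona and Hu''. That attribution is wrong and the claim is not soft. Recurrence of $\SRW$ (equivalently $\pi_d=1$) gives $\mathrm{(SD)}$, i.e.\ $Y_n\to 0$ a.s., which is a much weaker statement than $\tfrac1n\log Y_n\to -\lambda(\beta)<0$. The paper makes this distinction explicit: right after the theorem it says that $f^{\mathrm{que}}(\beta)<f^{\mathrm{ann}}(\beta)$ for $\beta>\beta_c$ ``remains open'', with only partial results in \cite{CoVa06,La10}. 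For $d\ge 3$ under $\Delta_2(\beta)>\log(2d)$ the gap $\lambda(\beta)>0$ does follow from a short entropy argument (this is in \cite{CoShYo03}), so that half of (II) is fine. But for $d=1,2$ the strict free-energy gap is a substantially harder theorem, proved only later (Comets--Vargas for $d=1$, Lacoin for $d=1,2$); it is not a consequence of recurrence and is not in \cite{CaHu02}. If you are willing to import those later results your scheme goes through, but then you should cite them and drop the word ``softer''. If you want to stay within the original \cite{CaHu02,CoShYo03} framework for $d=1,2$, you need a different route that uses only $\mathrm{(SD)}$ (i.e.\ $\log Y_n\to-\infty$) together with two-sided bounds $c_1 I_{k-1}\le -a_k\le c_2 I_{k-1}$ and $\mathrm{Var}(\Delta M_k\mid\cF_{k-1})\le c_3 I_{k-1}$; from these one extracts $\liminf_n\tfrac1n\sum_{k<n}I_k>0$ without knowing the limit of $\tfrac1n\log Y_n$ in advance.
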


Theorems~\ref{thm:polpotchar1}--\ref{thm:polpotchar2} show that the polymer has qualitatively 
different behavior in the two regimes. In (I), the scaling is \emph{diffusive}, with the 
diffusion constant \emph{not renormalized} by the disorder. The reason why the diffusion 
constant is not renormalized is the directedness of the path: this causes the annealed model 
to be directed $\SRW$. In (II), there is certainly no scaling to Brownian motion, due to the 
presence of atoms: the endpoint of the polymer concentrates around one or more most favorable 
sites whose locations depend on $\omega$. These locations are expected to be at a distance 
much larger than $\sqrt{n}$, i.e., the scaling is predicted to be \emph{superdiffusive}. 
This has, however, only been proved in some special cases, in particular, for a one-dimensional 
model of a directed polymer in a Gaussian random environment (Petermann~\cite{Pe00}). 
Further results, also for related models, have been obtained in Piza~\cite{Pi97}, 
M\'ejane~\cite{Me04}, Carmona and Hu~\cite{CaHu04}, Bezerra, Tindel and Viens~\cite{BeTiVi08} 
and Lacoin~\cite{La11}. The latter reference contains a discussion of the physical conjectures 
and the mathematical results on this topic.

The proofs of Theorems~\ref{thm:polpotchar1}--\ref{thm:polpotchar2} are based on 
a series of technical estimates for the martingale $(Y_n^{\beta,\omega})_{n\in\N_0}$. 
These estimates also show that
\[
\mathrm{(I)} \longrightarrow \mathrm{(\WD)},
\qquad 
\mathrm{(II)} \longrightarrow \mathrm{(\SD)}.
\]
It has been conjectured that, throughout phase (SD), 
\[
E_n^{\beta,\omega}(\|S_n\|^2) \asymp n^{2\nu} \quad n\to\infty,\,\omega-a.s.
\]
($\asymp$ means modulo logarithmic factors), where the exponent $\nu$ is predicted 
not to depend on $\beta$ and to satisfy
\[
\nu=\tfrac23 \mbox{ for } d=1, \qquad \nu \in (\tfrac12,\tfrac23) \mbox{ for } d=2,
\]
signalling \emph{superdiffusive behavior}.


\subsection{Bounds on the critical temperature}
\label{S6.5}

Theorems~\ref{thm:polpotchar1}--\ref{thm:polpotchar2} show that $\beta_c=0$ for 
$d=1,2$ and $\beta_c\in(0,\infty]$ for $d\geq 3$ (because $\Delta_1(0)=0$ and 
$\pi_d<1$). However, there is a gap between regimes (I) and (II) in $d \geq 3$ 
(because $\pi_d>1/2d$ and $\Delta_1(\beta)>\Delta_2(\beta)$ for all $\beta>0$). 
Thus, the results do not cover the full parameter regime. In fact, all we know 
is that
\[
\beta_c \in [\beta_c^1,\beta_c^2].
\]
with (see Fig.~\ref{fig-polpotcritvals})
\[
\begin{aligned}
\beta_c^1 &= \sup\big\{\beta\in 
[0,\infty)\colon\,\Delta_1(\beta)<\log(1/\pi_d)\big\},\\
\beta_c^2 &= \inf\big\{\beta\in 
[0,\infty)\colon\,\Delta_2(\beta)>\log(2d)\big\}.
\end{aligned}
\]

\begin{figure}[htbp]
\vspace{-.5cm}
\begin{center}
\setlength{\unitlength}{0.6cm}
\begin{picture}(12,3)(0,-1.3)
\put(0,0){\line(12,0){12}}
\put(6,-.5){\line(0,1){1}}
\put(3,-.5){\line(0,1){1}}
\put(9,-.5){\line(0,1){1}}
\put(5.6,-1.3){$\beta_c$}
\put(2.6,-1.3){$\beta_c^1$}
\put(8.6,-1.3){$\beta_c^2$}
\end{picture}
\end{center}
\caption{\small For $d \geq 3$ three cases are possible depending on the law
$\mP$ of the disorder: (1) $0<\beta_c^1<\beta_c^2<\infty$; (2) $0<\beta_c^1<\beta_c^2
=\infty$; (3) $\beta_c^1=\beta_c^2=\infty$.}
\label{fig-polpotcritvals}
\end{figure}
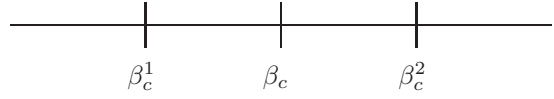

Various attempts have been made to sharpen the estimates on $\beta_c$:
fractional moment estimates on the martingale (Evans and Derrida~\cite{EvDe92}, 
Coyle~\cite{Co98}, Camanes and Carmona~\cite{CaCa09}); size-biasing of the 
martingale (Birkner~\cite{Bi04}). We describe the latter estimate, which 
involves a critical threshold $z^*$ associated with the collision local time 
of two independent $\SRW$s.

\begin{theorem}
{\rm [Birkner~\cite{Bi04}]} 
Let
\[
z^* = \sup\big\{z\geq 1\colon\,E\big(z^{V(S,S')}\big)<\infty\,\,S'-a.s.\big\},
\]
where
\[
V(S,S')=\sum_{n\in\N} 1_{\{S_n=S'_n\}}
\] 
is the collision local time of two independent $\SRW${\rm s}, 
and $E$ denotes expectation over $S$. Define
\[
\beta_c^* = \sup\big\{\beta\in[0,\infty)\colon\,M(2\beta)/M(\beta)^2<z^*\big\}.
\]
 Then
\[
\beta<\beta_c^* \quad \longrightarrow \quad {\rm (WD)}
\]
and, consequently, $\beta_c\geq\beta_c^*$.
\end{theorem}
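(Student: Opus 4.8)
The plan is to show that $\beta<\beta_c^*$ implies uniform integrability of the martingale $(Y_n^{\beta,\omega})_{n\in\N_0}$, which by the dichotomy of Section~\ref{S6.2} is precisely the statement $\mathrm{(\WD)}$; the inequality $\beta_c\ge\beta_c^*$ then follows from Theorem~\ref{thm:phsep}. Since $(Y_n^{\beta,\omega})$ is a nonnegative mean-one martingale it converges $\mP$-a.s.\ to $Y^{\beta,\omega}$ with $\mE(Y^{\beta,\omega})\le 1$ by Fatou, and $\mP(Y^{\beta,\omega}>0)\in\{0,1\}$ by Kolmogorov's zero--one law; so it suffices to prove $\mE(Y^{\beta,\omega})=1$. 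The naive second-moment estimate gives $\mE\big((Y_n^{\beta,\omega})^2\big)=(E\otimes E')\big(c^{\,V_n(S,S')}\big)$ with $c=\erom^{\Delta_1(\beta)}=M(2\beta)/M(\beta)^2$, where $E\otimes E'$ averages over two independent directed $\SRW$s $S,S'$ and $V_n(S,S')=\#\{1\le i\le n\colon S_i=S_i'\}$; this is bounded in $n$ only when $c<1/\pi_d$, the threshold already appearing in Theorem~\ref{thm:polpotchar1}. To reach the larger threshold $z^*$ one replaces the $L^2$ computation by a \emph{size-biasing} (``spine'') argument.

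First I would construct, on the space carrying $\omega$ together with an auxiliary directed $\SRW$ path $S=(S_i)_{i\in\N_0}$, the size-biased probability measure $\widehat{\mP}$: on $\sigma\big(\omega(i,x)\colon 0\le i\le n\big)\vee\sigma(S_0,\dots,S_n)$ it has density $\prod_{i=1}^n\erom^{\beta\omega(i,S_i)}/M(\beta)$ with respect to $\mP\otimes P$. These densities are consistent (their $\mP\otimes P$-mean is $1$ for all $n$ since $\mE(\erom^{\beta\omega(1,0)}/M(\beta))=1$), so Kolmogorov extension yields $\widehat{\mP}$ on the infinite product, with two defining features: the marginal law of the spine $S$ is again directed $\SRW$; and, conditionally on $S$, the field $\omega$ has independent coordinates, distributed as the tilted law $\mu_\beta(\drom x)=M(\beta)^{-1}\erom^{\beta x}\mu_0(\drom x)$ at the sites $\{(i,S_i)\colon i\ge 1\}$ and as $\mu_0$ elsewhere. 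Restricting $\widehat{\mP}$ to $\cF_n$ gives $\drom\widehat{\mP}|_{\cF_n}=Y_n^{\beta,\omega}\,\drom\mP|_{\cF_n}$, because averaging the spine density over $S$ reproduces $Y_n^{\beta,\omega}$.

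The core step is to compute $\widehat{\mE}\big(Y_n^{\beta,\omega}\,\big|\,S\big)$. Writing $Y_n^{\beta,\omega}=E\big(\prod_{i=1}^n\erom^{\beta\omega(i,S_i')}/M(\beta)\big)$ with $S'$ a fresh directed $\SRW$ independent of the spine, and integrating out the conditionally independent tilted field along $S'$, each factor equals $1$ when $S_i'\neq S_i$ and $c=M(2\beta)/M(\beta)^2$ when $S_i'=S_i$; hence
\[
\widehat{\mE}\big(Y_n^{\beta,\omega}\,\big|\,S\big)=E\big(c^{\,V_n(S,S')}\big)\ \xrightarrow[n\to\infty]{}\ E\big(c^{\,V(S,S')}\big),
\]
the expectation being over $S'$, where $V(S,S')=\sum_{n\in\N}1_{\{S_n=S_n'\}}$. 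For $\beta<\beta_c^*$ one has $c<z^*$ (the map $\beta\mapsto c=\erom^{\Delta_1(\beta)}$ is continuous and strictly increasing with $c=1$ at $\beta=0$), so by the very definition of $z^*$ the limit $E\big(c^{V(S,S')}\big)$ is finite for $P$-a.e.\ realization of the spine $S$. Conditionally on such an $S$, the process $(Y_n^{\beta,\omega})_n$ is a nonnegative submartingale (for $c\ge 1$; a supermartingale for $c<1$) bounded in $L^1\big(\widehat{\mP}(\cdot\mid S)\big)$, hence converges $\widehat{\mP}(\cdot\mid S)$-a.s.\ to a finite limit; integrating over $S$, $\widehat{\mP}\big(\limsup_n Y_n^{\beta,\omega}<\infty\big)=1$.

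It remains to convert non-explosion of $Y_n^{\beta,\omega}$ under $\widehat{\mP}$ into uniform integrability of $(Y_n^{\beta,\omega})$ under $\mP$. Since $Y_n^{\beta,\omega}=\drom\widehat{\mP}|_{\cF_n}/\drom\mP|_{\cF_n}$, the Lebesgue decomposition of $\widehat{\mP}|_{\cF_\infty}$ relative to $\mP|_{\cF_\infty}$ has absolutely continuous part with density $Y^{\beta,\omega}=\lim_nY_n^{\beta,\omega}$ and singular part supported on $\{\lim_nY_n^{\beta,\omega}=\infty\}$; the previous paragraph annihilates the singular part, whence $\widehat{\mP}|_{\cF_\infty}=Y^{\beta,\omega}\,\mP|_{\cF_\infty}$ and, taking total mass, $\mE(Y^{\beta,\omega})=1$. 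Then $\mP(Y^{\beta,\omega}>0)>0$, so $=1$ by the zero--one law, i.e.\ $\mathrm{(\WD)}$ holds for every $\beta<\beta_c^*$ and $\beta_c\ge\beta_c^*$. The main obstacles I anticipate are (i) making the spine construction and its two conditional-law properties fully rigorous, including the identification of $\widehat{\mP}$-almost-sure events, and (ii) the last measure-theoretic step — the Lyons--Pemantle--Peres-type passage from boundedness of the size-biased density to uniform integrability of the original martingale; a subsidiary point needing care is matching the ``$S'$-a.s.''\ quantifier in the definition of $z^*$ with the conditioning on the spine.
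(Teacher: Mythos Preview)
Your argument is correct and is the same size-biasing idea as the paper's, but packaged in the Lyons--Pemantle--Peres ``spine measure'' language rather than the paper's explicit coupling. Both arrive at the identical key computation $\widehat{\mE}(Y_n^{\beta,\omega}\mid\text{spine})=E'(c^{V_n(S,S')})$ with $c=M(2\beta)/M(\beta)^2$. The difference is in the closing step. The paper writes the size-biasing as the identity $\mE\big(Y_n f(Y_n)\big)=(\mE\otimes\hat\mE\otimes E')\big(f(\hat Y_n)\big)$ for bounded $f$, from which uniform integrability of $(Y_n)$ is \emph{equivalent} to tightness of $(\hat Y_n)$, and tightness is immediate from the $L^1$ bound via Markov's inequality---no submartingale step, no Lebesgue decomposition. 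Your route instead proves $\widehat\mP$-a.s.\ finiteness of $\lim_n Y_n$ (the submartingale claim is correct, since $c\ge 1$ always by Cauchy--Schwarz; the parenthetical supermartingale case $c<1$ never occurs for nondegenerate disorder) and then invokes the absolutely-continuous/singular dichotomy for the Radon--Nikod\'ym process. The paper's version is shorter; yours makes the probabilistic structure more explicit and ports directly to other spine settings. Your listed obstacles (i)--(ii) are routine, and the $S$-vs-$S'$ quantifier issue is resolved by the exchangeability of the two independent walks.
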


\begin{proof}
Abreviate
\[
e = \{e(i,x)\}_{i\in\N,x\in\Z^d}
\]
with 
\[
e(i,x) = \erom^{\beta\omega(i,x)}/M(\beta). 
\]
Consider a size-biased version of $e$, written 
\[
\hat{e} = \{\hat{e}(i,x)\}_{i\in\N,x\in\Z^d},
\]
that is i.i.d., is independent of $e$ and has law $\hat{\mP}$ given by
\[
\hat{\mP}(\hat{e}(1,0)\in\,\cdot\,) = \mE\big(e(1,0)\,1_{\{e(1,0)\in\,\cdot\,\}}\big).
\]
No normalization is needed because $\mE(e(1,0))=1$. 

Given $S'$, put
\[
\hat{e}_{S'} = \{\hat{e}_{S'}(i,x)\}_{i\in\N,x\in\Z^d},
\]
with
\[
\hat{e}_{S'}(i,x) = 1_{\{S'_i\neq x\}}\,e(i,x) + 1_{\{S'_i = x\}}\,\hat{e}(i,x),
\]
i.e., size-bias $e$ to $\hat{e}$ everywhere along $S'$, and define
\[
\hat{Y}_n^{e,\hat{e},S'} = E\left(\prod_{i=1}^n \hat{e}_{S'}(i,S_i)\right). 
\]
This is a \emph{size-biased version of the basic martingale}, which in the present 
notation reads
\[
Y_n^e = E\left(\prod_{i=1}^n e(i,S_i)\right). 
\]

The point of the size-biasing carried out above is that for any bounded function 
$f\colon\,[0,\infty)\to\R$, 
\[
\mE\big(Y_n^e\,f(Y_n^e)\big) 
= (\mE\otimes\hat{\mE}\otimes E')\Big(f\big(\hat{Y}_n^{e,\hat{e},S'}\big)\Big),
\]
where $\mE,\hat{\mE},E'$ denote expectation w.r.t.\ $e,\hat{e},S'$, respectively. 
Indeed, the latter follows from the computation
\[
\begin{aligned}
\mE\big(Y_n^e\,f(Y_n^e)\big) 
&= \mE\left[E'\left(\prod_{i=1}^n e(i,S'_i)\right)\,
f\left(E\left(\prod_{i=1}^n e(i,S_i)\right)\right)\right]\\
&= E'\left(\mE\left[\left(\prod_{i=1}^n e(i,S'_i)\right)\,
f\left(E\left(\prod_{i=1}^n e(i,S_i)\right)\right)\right]\right)\\
& =^{!} E'\left((\mE\otimes\hat{\mE})\left[
f\left(E\left(\prod_{i=1}^n \hat{e}_{S'}(i,S_i)\right)\right)\right]\right)\\
&= (\mE\otimes\hat{\mE}\otimes E')\Big(f\big(\hat{Y}_n^{e,\hat{e},S'}\big)\Big),
\end{aligned}
\]
where the third equality uses the definition of $\hat{e}_{S'}$. 

The above identity relates the two martingales, and implies that
\[
\begin{aligned}
&(Y_n^e)_{n\in\N_0} \mbox{ is uniformly integrable }\\ 
&\qquad \quad \Longleftrightarrow \quad (\hat{Y}_n^{e,\hat{e},S'})_{n\in\N_0}
\mbox{ is tight} \qquad (\ast) 
\end{aligned}
\]
(as can be seen by picking $f$ such that $\lim_{u\to\infty} f(u)=\infty$). However, 
an easy computation gives
\[
(\mE\otimes\hat{\mE})\Big(\hat{Y}_n^{e,\hat{e},S'}\Big) 
= E\left(z^{\sum_{i=1}^n 1_{\{S_i=S'_i\}}}\right)
= E\big(z^{V(S,S')}\big) 
\]
with $z = M(2\beta)/M(\beta)^2$, where the factor $2\beta$ arises because after
the size-biasing the intersection sites of $S$ and $S'$ are visited by both 
paths. Hence
\[
E(z^{V(S,S')})<\infty \quad S'\text{-a.s.}
\] 
is enough to ensure that the r.h.s.\ of $(\ast)$ holds. This completes the proof 
because the l.h.s.\ of $(\ast)$ is equivalent to (WD). Indeed, a.s.\ convergence 
plus uniform integrability imply convergence in mean, so that $\mE(Y_n^e)=1$
for all $n\in\N_0$ yields $\mE(Y^e)=1$.
\end{proof}

In Birkner, Greven and den Hollander~\cite{BiGrdHo11} it was proved that 
$z^*>\pi_d$ in $d \geq 5$, implying that $\beta_c^*>\beta_c^1$. It was 
conjectured that the same is true in $d=3,4$. Part of this conjecture was 
settled in Birkner and Sun~\cite{BiSu10,BiSupr} and Berger and 
Toninelli~\cite{BeTo10} (see Fig.~\ref{fig-bdscrittemppolpot}).

\begin{figure}[htbp]
\vspace{-0.5cm}
\begin{center}
\setlength{\unitlength}{0.6cm}
\begin{picture}(12,3)(0,-1.3)
\put(0,0){\line(12,0){12}}
\put(3,-.5){\line(0,1){1}}
\put(4.5,-.5){\line(0,1){1}}
\put(6,-.5){\line(0,1){1}}
\put(9,-.5){\line(0,1){1}}
\put(2.6,-1.3){$\beta_c^1$}
\put(4.1,-1.3){$\beta_c^*$}
\put(5.6,-1.3){$\beta_c$}
\put(8.6,-1.3){$\beta_c^2$}
\end{picture}
\end{center}
\caption{\small Bounds on the critical temperature.}
\label{fig-bdscrittemppolpot}
\end{figure}
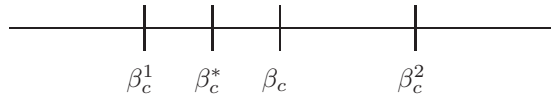


\subsection{Open problems}
\label{S6.6}

\begin{itemize}
\item
Show that in phase $\mathrm{(SD)}$ the polymer is concentrated inside a 
most favorable corridor and identify how this corridor depends on $\omega$. 
\item
Determine whether $\beta_c$ is part of $\mathrm{(WD)}$ or $\mathrm{(SD)}$.
\item
Derive a variational expression for $\beta_c$.
\item
Extend the analysis to undirected random walk. Important progress has been 
made in Ioffe and Velenik~\cite{IoVe10,IoVe11,IoVepra,IoVeprb}, 
Zygouras~\cite{Zy09,Zypr}, and references cited therein. See 
also Section~\ref{S2.7}.  
\end{itemize}


\appendix

\section{Tutorial 1}
\label{appA}

In this tutorial we describe two methods that can be used to prove the 
\emph{existence} of the quenched free energy associated with the random 
pinning model described in Section~\ref{S4}. Section~\ref{pinmod} recalls
the model, Sections~\ref{pinconv}--\ref{concentr} prove existence of
the quenched free energy when the endpoint of the polymer is constrained 
to lie in the interface, while Section~\ref{rem} shows how to remove 
this constraint afterwards. The method of proof is widely applicable, and 
is not specific to the random pinning model.  


\subsection{Random pinning of a polymer at an interface}
\label{pinmod}

\textit{Configurations of the polymer.} 
Let $n \in \N$ and consider a polymer made of $n$ monomers. The allowed configurations 
of this polymer are modeled by the $n$-step trajectories of a 1-dimensional random 
walk $S=(S_i)_{i\in\N_0}$. We focus on the case where $S_0=0$ and $(S_i-S_{i-1})_{i
\in\N}$ is an i.i.d.\ sequence of random variables satisfying
\[
P(S_1=1)=P(S_1=-1)=P(S_1=0)=\tfrac13,
\]
although the argument given below applies more generally. We denote by $\cW_n$ the set 
of all $n$-step trajectories of $S$.

\medskip\noindent
\textit{Disorder at interface.}
Let $\omega=(\omega_i)_{i\in\N}$ be an i.i.d.\ sequence of $\R$-valued random 
variables (which we take bounded for ease of exposition). For $i\in \N$ the interaction 
intensity between the $i$-th monomer and the interface takes the value $\omega_i$. Note 
that $\omega$ and $S$ are independent, and write $\P$ for the law of $\omega$. Pick $M>0$ 
such that $|\omega_1| \leq M$ $\P$-a.s.

\medskip\noindent 
\textit{Interaction polymer-interface.}
The flat interface that interacts with the polymer is located at height $0$, so 
that the polymer hits this interface every time $S$ comes back to $0$. Thus, with 
every $S\in \cW_n$ we associate the energy  
\[
H_n^{\beta,\omega}(S) = - \beta \sum_{i=0}^{n-1} \omega_i \,\ind_{\{S_i=0\}},
\]  
where $\beta\in (0,\infty)$ stands for the inverse temperature (and for ease of exposition
we take zero bias, i.e., we set $h=0$ in the Hamiltonian in Section~\ref{S4.1}). We think 
of $S$ as a random realization of the path of the polymer.

\medskip\noindent 
\textit{Partition function and free energy.} 
For fixed $n$, the quenched (= frozen disorder) partition function and free energy are 
defined as 
\[
Z_n^{\beta,\omega} = E\big(\erom^{-H_N^{\beta,\omega}(S)}\big)
\quad \text{and} \quad
f_n^{\beta,\omega}=\tfrac1n \log  Z_n^{\omega,\beta}.
\]


\subsection{Convergence of the free energy}
\label{pinconv}

Our goal is to prove the following theorem. 

\begin{theorem}
\label{theo}
For every $\beta\in \R$ there exists an $f(\beta)\in [0,\beta M]$ such that 
\[
\lim_{n\to \infty}  \E(f_n^{\beta,\omega}) = f(\beta)
\]
and 
\[
\lim_{n\to \infty} f_n^{\beta,\omega} = f(\beta) \quad  \P-a.e.\ \omega.
\]
\end{theorem}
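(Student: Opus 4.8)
I would prove the statement first for the \emph{constrained} partition function
\[
Z_n^{c,\beta,\omega}\;=\;E\big(\erom^{-H_n^{\beta,\omega}(S)}\,\ind_{\{S_n=0\}}\big),\qquad n\in\N,\qquad Z_0^{c,\beta,\omega}:=1,
\]
(the polymer is pinned to the interface at both endpoints), and then transfer the result to $Z_n^{\beta,\omega}$. The basic structural input is subadditivity: splitting a trajectory at an intermediate visit to $0$ and using the Markov property of $S$ there gives, for all $m,n\in\N$,
\[
Z_{m+n}^{c,\beta,\omega}\;\geq\;Z_m^{c,\beta,\omega}\;Z_n^{c,\beta,\theta^m\omega},
\]
with $\theta$ the left shift, so that $n\mapsto-\log Z_n^{c,\beta,\omega}$ is a subadditive process over the ergodic system $(\R^\N,\theta,\P)$. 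The elementary bounds $\erom^{-n\beta M}P(S_n=0)\leq Z_n^{c,\beta,\omega}\leq\erom^{n\beta M}$ (from $|\omega_i|\leq M$) together with $P(S_n=0)\geq 3^{-n}$ show that $\tfrac1n\log Z_n^{c,\beta,\omega}$ is uniformly bounded, hence integrable. One can now either invoke Kingman's subadditive ergodic theorem directly, which already yields $\P$-a.s.\ (and $L^1$) convergence of $\tfrac1n\log Z_n^{c,\beta,\omega}$ to a constant; or --- the route the tutorial develops --- first apply Fekete's lemma to the superadditive sequence $n\mapsto\E(\log Z_n^{c,\beta,\omega})$ to obtain the deterministic number
\[
f(\beta)\;:=\;\lim_{n\to\infty}\tfrac1n\,\E\big(\log Z_n^{c,\beta,\omega}\big)\;=\;\sup_{n\in\N}\tfrac1n\,\E\big(\log Z_n^{c,\beta,\omega}\big)\;\leq\;\beta M,
\]
and then upgrade convergence in mean to $\P$-a.s.\ convergence by a concentration estimate.

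\textbf{Concentration.} For fixed $n$, the function $(\omega_0,\dots,\omega_{n-1})\mapsto\log Z_n^{c,\beta,\omega}$ satisfies the bounded-difference inequality with constant $2\beta M$ in each coordinate: replacing $\omega_i$ by $\omega_i'$ multiplies $Z_n^{c,\beta,\omega}$ by a pointwise factor $\erom^{\beta(\omega_i'-\omega_i)\ind_{\{S_i=0\}}}\in[\erom^{-2\beta M},\erom^{2\beta M}]$, which can be pulled out of the expectation. McDiarmid's inequality then gives
\[
\P\big(\,\big|\log Z_n^{c,\beta,\omega}-\E(\log Z_n^{c,\beta,\omega})\big|\geq n\gep\,\big)\;\leq\;2\,\erom^{-n\gep^2/(2\beta^2M^2)}\qquad\forall\,\gep>0,
\]
which is summable in $n$, so Borel--Cantelli yields $f_n^{c,\beta,\omega}-\tfrac1n\E(\log Z_n^{c,\beta,\omega})\to 0$ $\P$-a.s. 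Combined with the previous step, $\lim_{n\to\infty}f_n^{c,\beta,\omega}=f(\beta)$ $\P$-a.s.\ and in mean; in particular the a.s.\ limit is the deterministic constant $f(\beta)$ (consistently with the Kolmogorov zero--one law, since $\lim_n f_n^{c,\beta,\omega}$ is unaffected by changing finitely many $\omega_i$).

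\textbf{Removing the constraint.} Decomposing $Z_n^{\beta,\omega}$ according to the last visit of $S$ to $0$ up to time $n$ and using the Markov property gives $Z_n^{c,\beta,\omega}\leq Z_n^{\beta,\omega}\leq (1+Cn)\max_{0\leq\ell\leq n}Z_\ell^{c,\beta,\omega}$ for a constant $C$ (the polynomial prefactor accounts for the choice of last visit and for the single, bounded, energy term $\omega_\ell$). Taking $\tfrac1n\log$ and using that $\tfrac1\ell\log Z_\ell^{c,\beta,\omega}\to f(\beta)\geq 0$ $\P$-a.s.\ --- the bound $f(\beta)\geq 0$ coming from $Z_n^{c,\beta,\omega}\geq\erom^{\beta\omega_0}\,P(S_i\neq0\ \forall\,1\leq i<n,\,S_n=0)$, whose right-hand side decays only polynomially in $n$ --- shows that the upper bound has the same exponential rate $f(\beta)$; hence $f_n^{\beta,\omega}\to f(\beta)$ $\P$-a.s. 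Since $f_n^{\beta,\omega}\in[-\beta M,\beta M]$ uniformly, dominated convergence gives $\E(f_n^{\beta,\omega})\to f(\beta)$ as well, and $f(\beta)\in[0,\beta M]$ follows from the two bounds just used.

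\textbf{Main obstacle.} None of the steps is deep; the scheme is the standard subadditivity-plus-concentration argument. The points that need some care are the bounded-difference estimate, which must be uniform in $n$ and in the walk realization (this is exactly where the linearity of the Hamiltonian in each $\omega_i$ is used), and the passage to $\max_{0\leq\ell\leq n}$ in the deconstraining bound without inflating the exponential growth rate --- precisely where one invokes $f(\beta)\geq 0$, itself a consequence of the sub-exponential decay of the return-time probabilities of $S$.
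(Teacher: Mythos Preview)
Your proof is correct and follows essentially the same architecture as the paper's Tutorial~1: work with the constrained partition function, exploit the superadditivity $\log Z_{m+n}^{c,\beta,\omega}\geq\log Z_m^{c,\beta,\omega}+\log Z_n^{c,\beta,\theta^m\omega}$ either via Kingman or via Fekete-plus-concentration, and then remove the endpoint constraint by a last-visit decomposition together with $f(\beta)\geq 0$. The only variation is that for the concentration step you use McDiarmid's bounded-difference inequality (legitimate here since $|\omega_i|\leq M$ gives a per-coordinate oscillation of $2\beta M$ for $\log Z_n^{c,\beta,\omega}$), whereas the paper shows that $\omega\mapsto f_n^{*,\beta,\omega}$ is convex and $(\beta/\sqrt{n})$-Lipschitz in the Euclidean norm and then invokes a Ledoux-type concentration inequality for convex Lipschitz functions; both routes yield a summable tail bound and hence Borel--Cantelli.
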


As indicated above, we will prove Theorem~\ref{theo} via two different methods. In 
Section~\ref{king} we will state Kingman's \emph{Subadditive Ergodic Theorem} and 
see how this can be applied to obtain Theorem~\ref{theo}. In Section~\ref{concentr} 
we will re-prove Theorem~\ref{theo} by using a concentration of measure argument. 
The latter method is more involved, but also more flexible than the former method. 
For technical reasons, we will first prove Theorem~\ref{theo} with the partition 
function restricted to those trajectories that hit the interface at their right 
extremity, i.e., 
\[
Z_n^{*,\beta,\omega} = E\big(\erom^{-H_n^{\beta,\omega}(S)}\  
\ind_{\{S_n=0\}}\big) \quad \text{and} \quad 
f_n^{*,\beta,\omega} = \tfrac1n \log Z_n^{*,\beta,\omega}.
\]
In Section~\ref{rem} we will see that the restriction on the endpoint has no effect 
on the value of the limiting free energy.


\subsection{Method 1: Kingman's theorem}
\label{king}

\begin{theorem}
\label{theoking}
{\rm [Kingman's Subadditive Ergodic Theorem; see Steele~\cite{St89}]}
Let $(\Omega,A,\mu)$ be a probability space, let $T$ be an ergodic measure-preserving 
transformation acting on $\Omega$, and let $(g_n)_{n\in\N}$ be a sequence of random 
variables in $L_1(\mu)$ that satisfy the subadditivity relation
\[
g_{m+n} \geq g_m+g_n (T^m), \qquad m,n\in\N.
\]
Then 
\[
\lim_{n\to \infty} \frac{g_n}{n} = \sup_{k\in\N} E_{\mu}\left(\frac{g_k}{k}\right)
\qquad \mu\text{-a.s.} 
\]
\end{theorem}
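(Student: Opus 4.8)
The plan is to reduce to the subadditive form and then run the two-sided squeeze that is standard for this theorem, following Steele~\cite{St89}. First I would set $h_n=-g_n$, so that $(h_n)_{n\in\N}$ is a sequence in $L_1(\mu)$ satisfying $h_{m+n}\le h_m+h_n(T^m)$, and it suffices to prove $\lim_{n\to\infty}h_n/n=\gamma$ $\mu$-a.s., with $\gamma:=\inf_{k\in\N}E_\mu(h_k)/k$, since then $\lim_{n\to\infty}g_n/n=-\gamma=\sup_{k\in\N}E_\mu(g_k)/k$. Because $T$ is measure-preserving, $E_\mu(h_n(T^m))=E_\mu(h_n)$, so $n\mapsto E_\mu(h_n)$ is a subadditive numerical sequence and Fekete's lemma gives $\gamma=\lim_{n\to\infty}E_\mu(h_n)/n\in[-\infty,\infty)$.

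Next I would introduce $\bar h=\limsup_{n\to\infty}h_n/n$ and $\underline h=\liminf_{n\to\infty}h_n/n$. Feeding $h_{n+1}\le h_1+h_n(T)$ into these limits shows $\bar h\le\bar h(T)$ and $\underline h\le\underline h(T)$ pointwise, and since $T$ is measure-preserving and ergodic both are $\mu$-a.s.\ equal to constants; the theorem then reduces to the two inequalities $\bar h\le\gamma$ and $\underline h\ge\gamma$ $\mu$-a.s. For the first (easy) one, fix $m$ and write $n=qm+r$ with $0\le r<m$; iterating subadditivity in blocks of length $m$ gives $h_n\le\sum_{j=0}^{q-1}h_m(T^{jm})+h_r(T^{qm})$. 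Birkhoff's ergodic theorem applied to $h_m$ under the (possibly non-ergodic) map $T^m$ makes $\frac1q\sum_{j=0}^{q-1}h_m(T^{jm})$ converge a.s.\ to $E_\mu(h_m\mid\cI_{T^m})$, while $h_r(T^{qm})/n\to0$ a.s.\ because any $L_1$ function $h$ satisfies $h(T^k)/k\to0$ a.s.\ (Borel--Cantelli, since $\sum_k\mu(|h|>\varepsilon k)<\infty$). Hence $\bar h\le\frac1m E_\mu(h_m\mid\cI_{T^m})$ a.s., so $E_\mu(\bar h)\le E_\mu(h_m)/m$ for all $m$, i.e.\ the constant $\bar h$ is $\le\gamma$. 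In particular, if $\gamma=-\infty$ this already forces $h_n/n\to-\infty$ and we are done, so $\gamma$ may henceforth be assumed finite.

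The hard part is the reverse inequality $\underline h\ge\gamma$, and here I would use the greedy stopping-time decomposition of Steele/Katznelson--Weiss. After replacing $h_n$ by $h_n-cn$ for a large constant $c$ (this preserves subadditivity and shifts $\gamma$), one may assume $\underline h+\varepsilon<0$ for a fixed $\varepsilon>0$. Define $\tau(\omega)=\min\{k\ge1:\,h_k(\omega)\le k(\underline h+\varepsilon)\}$, which is a.s.\ finite by definition of $\underline h$, pick $M$ large so that $\mu(A)$ is small for $A=\{\tau>M\}$, and cut $[0,n)$ greedily into blocks: at the current position $i$ take a block of length $\tau(T^i\omega)\le M$ if $T^i\omega\notin A$ and a block of length $1$ otherwise, stopping within distance $M$ of $n$. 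Subadditivity bounds $h_n$ by the sum of the good-block contributions (each $\le(\text{length})(\underline h+\varepsilon)$), the bad-block contributions (each $\le|h_1|(T^i)$, with the bad times $i$ forming a subset of $\{i<n:\,T^i\omega\in A\}$, of density $\le\mu(A)$ by Birkhoff), and an $O(M)$ boundary term; the arrangement $\underline h+\varepsilon<0$ is precisely what turns the good part into an effective upper bound. Taking $E_\mu$, dividing by $n$, letting $n\to\infty$, then $M\to\infty$ (so $\mu(A)\to0$ and $E_\mu(|h_1|1_A)\to0$), then $\varepsilon\to0$, yields $\gamma\le\underline h$. Undoing the shift by $c$, and handling the degenerate case $\underline h=-\infty$ by running the same argument on the truncations $h_n\vee(-Kn)$ (still subadditive and $L_1$) and letting $K\to\infty$, completes the proof. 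The one delicate step is this last decomposition: organizing the greedy cut so that the leftover bad positions genuinely contribute negligibly, and tracking the signs carefully after the shift by $c$.
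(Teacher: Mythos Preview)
The paper does not prove Theorem~\ref{theoking}: it is quoted (in superadditive form) with a reference to Steele~\cite{St89} and then applied as a black box in steps (1)--(2) of Appendix~\ref{appA} to derive Theorem~\ref{theo}. Your proposal is a correct and faithful rendering of Steele's argument --- the reduction to the subadditive side, the invariance of $\bar h$ and $\underline h$ via $h_{n+1}\le h_1+h_n\circ T$, the block/Birkhoff upper bound, and the greedy stopping-time lower bound with the sign arranged by the shift --- so there is nothing in the paper itself to compare it against.
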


\medskip\noindent
(1) Let $T$ be the left-shift on $\R^\N$. Prove that, for $m,n\in\N$ and $\omega\in \R^\N$,
\[
\label{subbad}
\log Z_{m+n}^{*,\beta,\omega} \geq \log Z_{m}^{*,\beta,\omega}
+ \log Z_{n}^{*,\beta,T^m(\omega)}.
\]

\medskip\noindent
(2) Apply Theorem~\ref{theoking} with $(\Omega,A,\mu)=(\R^\N,\text{Bor}(\R^{\N}),\P)$ 
and prove Theorem~\ref{theo} with the endpoint restriction.


\subsection{Method 2: Concentration of measure}
\label{concentr}

This method consists of first proving the first line in Theorem~\ref{theo}, i.e., 
the convergence of the \emph{average quenched} free energy, and then using a 
concentration of measure inequality to show that, with large probability, the 
quenched free energy is almost equal to its expectation, so that the second line 
in Theorem~\ref{theo} follows. See Giacomin and Toninelli~\cite{GiTo05} for
fine details.

\medskip\noindent
(1) Use \eqref{subbad} and prove that $(\E(\log Z_n^{*,\beta,\omega}))_{n\in\N}$ 
is a superadditive sequence, i.e., for $m,n\in\N$,
\[
\E(\log Z_{m+n}^{*,\beta,\omega}) \geq \E(\log Z_m^{*,\beta,\omega})
+ \E(\log Z_n^{*,\beta,\omega}).
\]

\medskip\noindent 
(2) Deduce that (see also the tutorial in Appendix A.1 of Bauerschmidt, Duminil-Copin, 
Goodman and Slade~\cite{BaDuGoSl11}) 
\[
\label{result}
\lim_{n\to \infty} \E(f_n^{*,\beta,\omega}) 
=\sup_{k\in\N}\,\E(f_k^{*,\beta,\omega}) 
= f(\beta) \in [0,\beta M].
\]

\medskip\noindent 
To proceed, we need the following inequality. 

\begin{theorem}
\label{thm:com}
{\rm [Concentration of measure; see Ledoux~\cite{Le01}]}
There exist $C_1,C_2$ $>0$ such that for all $n\in\N$, $K>0$, $\gep>0$ and $G_n\colon\,
\R^n \mapsto \R$ a $K$-Lipschitz (w.r.t.\ the Euclidean norm) convex function,
\[
\P\Big(\big|G_n(\omega_0,\dots,\omega_{n-1})
-\E\big(G_n(\omega_0,\dots,\omega_{n-1})\big)\big|>\gep\Big)
\leq C_1 \erom^{-\tfrac{C_2\gep^2}{K^2}}.
\]
\end{theorem}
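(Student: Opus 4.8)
The statement is Talagrand's convex-distance concentration inequality specialised to a product of bounded intervals, so the plan is to quote Talagrand's inequality for product spaces and extract from it the concentration of convex Lipschitz functions. First I would normalise the problem: since $\mu_0$ is bounded, say $|\omega_0|\le M$ $\P$-a.s., the affine map $u_i=(\omega_i+M)/(2M)\in[0,1]$ pushes the law of $(\omega_0,\dots,\omega_{n-1})$ forward to a product probability measure on the cube $[0,1]^n$, and carries a convex $K$-Lipschitz function $G_n$ to a convex, $(2MK)$-Lipschitz (Euclidean) function $\widetilde G_n$ on $[0,1]^n$ with $\widetilde G_n(u)=G_n(\omega)$. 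Hence it is enough to prove that there is a universal $c>0$ such that every convex, $L$-Lipschitz $F$ on $[0,1]^n$ satisfies $\P(|F-\E(F)|>\gep)\le 2\,\erom^{-c\gep^2/L^2}$; then $C_1,C_2$ in the theorem come out depending only on $M$ (e.g.\ $C_1=2$ and $C_2$ proportional to $1/M^2$).

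The single external input is Talagrand's inequality: on any product probability space and for any measurable $A$ with $\P(A)>0$,
\[
\P(A)\,\P\big(d_T(\cdot,A)\ge t\big)\le \erom^{-t^2/4},\qquad t\ge 0,
\]
where $d_T(x,A)=\sup_{|\alpha|_2\le1,\,\alpha\ge0}\inf_{y\in A}\sum_i\alpha_i\,\ind_{\{x_i\ne y_i\}}$ is the convex distance (Ledoux~\cite{Le01}). The step where convexity of $F$ enters is a deterministic lemma: if $A\subseteq[0,1]^n$ is convex and closed, then the Euclidean distance obeys $d_2(x,A)\le d_T(x,A)$ for all $x$. I would prove this by noting that, by minimax, $d_T(x,A)=\inf\{\,|v|_2:\ v\in\mathrm{conv}\{s^y:y\in A\}\,\}$ with $s^y_i=\ind_{\{x_i\ne y_i\}}$; writing such a $v=\sum_k\lambda_k s^{y^{(k)}}$ with $y^{(k)}\in A$, the barycentre $\bar y=\sum_k\lambda_k y^{(k)}$ lies in $A$ by convexity, and coordinatewise $|x_i-\bar y_i|\le\sum_k\lambda_k\ind_{\{x_i\ne y^{(k)}_i\}}=v_i$ because each coordinate lives in $[0,1]$, so $d_2(x,A)\le|x-\bar y|_2\le|v|_2$.

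To conclude, apply the lemma to the sublevel sets $A_a=\{F\le a\}$, which are convex and closed since $F$ is convex and continuous: by Lipschitzness, $F(x)-a\le L\,d_2(x,A_a)\le L\,d_T(x,A_a)$ for every $x$. Let $m$ be a median of $F$. Taking $a=m$ gives $\P(F\ge m+\gep)\le 2\,\erom^{-\gep^2/(4L^2)}$, since $\P(A_m)\ge\tfrac12$. For the lower tail take $A=A_{m-\gep}$: if $F(x)\ge m$ then $d_T(x,A)\ge(F(x)-(m-\gep))/L\ge\gep/L$, so $\tfrac12\le\P(F\ge m)\le\P(d_T(\cdot,A)\ge\gep/L)$, and Talagrand's inequality forces $\P(F\le m-\gep)=\P(A)\le 2\,\erom^{-\gep^2/(4L^2)}$. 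Adding the two bounds, $\P(|F-m|>\gep)\le 4\,\erom^{-\gep^2/(4L^2)}$; integrating this tail gives $|m-\E(F)|\le c'L$, and replacing $m$ by $\E(F)$ at the cost of enlarging the constant yields the asserted bound.

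The main obstacle is really just Talagrand's inequality itself; once it is available, everything above is elementary bookkeeping of constants. If one prefers a self-contained argument one would replace it by the \emph{entropy method}: prove a tensorised modified logarithmic Sobolev inequality for product measures on bounded intervals and run Herbst's argument on $\lambda\mapsto\log\E(\erom^{\lambda(F-\E(F))})$, where convexity of $F$ is used to dominate the one-sided oscillations $F(\cdot)-\inf_{\omega_i}F(\cdot)$ by the Lipschitz constant. Boundedness of $\mu_0$ is essential for both routes, as it is what keeps the per-coordinate oscillation under control; for unbounded but, say, Gaussian or log-concave $\mu_0$ one would instead invoke the corresponding Gaussian / Bakry--\'Emery concentration estimates.
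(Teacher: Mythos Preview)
The paper does not prove this theorem at all: in Tutorial~1 it is simply \emph{quoted} as a known concentration inequality, with a bare reference to Ledoux~\cite{Le01}, and then used as a black box in steps (3)--(5). So there is no ``paper's own proof'' to compare against.

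That said, your sketch is a correct and standard derivation---it is essentially the Talagrand route as presented in Ledoux's monograph: reduce to the unit cube via the boundedness assumption $|\omega_0|\le M$ (which Tutorial~1 explicitly imposes), invoke Talagrand's convex-distance inequality, use convexity of the sublevel sets $\{F\le a\}$ together with the comparison $d_2\le d_T$ on convex subsets of $[0,1]^n$, obtain two-sided concentration about the median, and then pass to the mean. The only points worth tightening in a full write-up are the minimax step (one needs Sion's theorem or the equivalent duality $d_T(x,A)=\inf_{\mu\in\cP(A)}\big\|\big(\mu(y_i\ne x_i)\big)_i\big\|_2$, which is standard) and the fact that one should work with $A_a$ intersected with the support, so that the barycentre $\bar y$ stays in $[0,1]^n$. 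Your remark that boundedness of $\mu_0$ is essential here is exactly right and matches the hypotheses of Tutorial~1.
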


\medskip\noindent 
(3) By H\"older's inequality, the function $\omega \in \R^n \mapsto f_n^{*,\beta,\omega} 
\in \R$ is convex. To prove that it is $(\beta/\sqrt{n})$-Lipschitz, pick $\omega,
\omega'\in\R^n$ and compute 
\[
\begin{aligned}
\left|\frac{\partial}{\partial t}\,f_n^{*,\beta,t\omega+(1-t)\omega'}\right| 
&= \frac{\beta}{n}\,\left|\sum_{i=0}^{n-1} P_n^{*,\beta,\omega}(S_i=0)\,(\omega_i-\omega'_i)\right|\\
&\leq \frac{\beta}{n}\,
\sqrt{\sum_{i=0}^{n-1} \big[P_n^{*,\beta,\omega}(S_i=0)\big]^2}\,
\sqrt{\sum_{i=0}^{n-1} (\omega_i-\omega'_i)^2}\\
&\leq \frac{\beta}{\sqrt{n}}\,\sqrt{\sum_{i=0}^{n-1} (\omega_i-\omega'_i)^2},
\end{aligned}
\]
where $P_n^{*,\beta,\omega}$ is the path measure with the endpoint restriction.

\medskip\noindent
(4) Apply Theorem~\ref{thm:com} to prove that, for $\gep>0$, 
\[
\sum_{n\in\N} \P\big(|f_n^{*,\beta,\omega}-\E(f_n^{*,\beta,\omega})|>\gep\big) < \infty.
\]

\medskip\noindent
(5) Combine (2) and (4) to show that, for $\P$-a.e.\ $\omega$, $f_n^{*,\beta,\omega}$ 
tends to $f(\beta)$ as $n\to \infty$, which proves Theorem~\ref{theo} with the endpoint 
restriction.


\subsection{Removal of the path restriction}
\label{rem}

The proof of Theorem~\ref{theo} will be completed once we show that restricting the 
partition function to $\{S_n=0\}$ does not alter the results. To that end, we denote 
by $\tau$ the first time at which the random walk $S$ hits the interface. 

\medskip\noindent
(6) Note that there exists a $C_3>0$ such that (see Spitzer~\cite{Sp76}, Section 1)
\[
P(\tau=n)=\frac{C_3}{n^{3/2}}\,[1+o(1)]\quad \text{and} \quad 
P(\tau>n) =\frac{2 C_3}{n^{1/2}}\,[1+o(1)].
\]

\medskip\noindent 
(7) Consider the last hit of the interface and show that 
\[
Z_n^{\beta,\omega} = \sum_{j=0}^n Z_j^{*,\beta,\omega}\,P( \tau>n-j).
\]

\medskip\noindent 
(8) Prove Theorem~\ref{theo} by combining (5), (6) and (7).


\section{Tutorial 2}
\label{appB}

The goal of this tutorial is to provide the combinatorial computation of the 
free energy for the directed polymer with self-attraction described in 
Sections~\ref{S2.4}--\ref{S2.5} leading to Theorem~\ref{thm:dircolpoltr}. 
This computation is taken from Brak, Guttmann and Whittington~\cite{BrGuWh92}. 
Section~\ref{moddirpolatt} recalls the model, Section~\ref{fedirpolatt} proves 
the existence of the free energy, while Section~\ref{compfedirpolatt} derives 
a formula for the free energy with the help of generating functions.


\subsection{Model of a directed polymer in a poor solvent}
\label{moddirpolatt}

We begin by recalling some of the notation used in Sections~\ref{S2.4}--\ref{S2.5}.

\medskip\noindent
\textit{Configurations of the polymer.}
For $n\in\N$, the configurations of the polymer are modelled by $n$-step 
$(1+1)$-dimensional directed self-avoiding paths $w=(w_i)_{i=0}^n$ 
that are allowed to move up, down and to the right, i.e., 
\begin{align*}
\cW_n=\{(w_i)_{i=0}^n \in (\N_0\times \Z)^{n+1}\colon & w_0=0,
w_1-w_0=\rightarrow,\\
& w_i-w_{i-1}=\{\uparrow, \downarrow, \rightarrow\}\  \forall
1 \leq i \leq n,\\
& w_i\neq w_j\   \forall\, 0 \leq i<j \leq n\}.
\end{align*}

\medskip\noindent
\textit{Self-touchings.}
The monomers constituting the polymer have an attractive interaction: an energetic 
reward is given for each \emph{self-touching}, i.e., for each pair $(w_i,w_j)$ with 
$i<j-1$ and $|w_i-w_j|=1$. Accordingly, with each $w\in \cW_n$ we associate the number 
of self-touchings
\[
J_n(w)= \sum_{0\leq i<j-1\leq n-1}  \,\ind_{\{|w_i-w_j|=1\}},
\]  
and the energy 
\[
H_n^{\gamma}(w)=-\gamma J_n(w),
\]  
where $\gamma\in\R$ is the interaction parameter.

\medskip\noindent 
\textit{Partition function, free energy and generating function.} 
For fixed $n$, the partition function and free energy are defined as 
\[
Z_n^\gamma=\sum_{w\in \cW_n} \erom^{-H_n^{\gamma}(w)}, \qquad
f_n(\gamma)=\tfrac1n \log Z_n^\gamma.
\]
For $n\in\N_0$ and $x \in [0,\infty)$, let
\[
Z_n(x)=\sum_{m\in\N_0} c_n(m)\, x^m, \quad 
c_n(m)=|\{w\in \cW_n\colon\, J_n(w)=m\}|. 
\]
Then $Z_n^{\gamma}=Z_n(\erom^{\gamma})$, and the generating function of $Z_n^\gamma$ 
can be written as
\[
\sum_{n\in\N_0} Z_n^\gamma y^n=G(\erom^\gamma,y)
\]
with
\[ 
G(x,y)=\sum_{n\in\N_0} \sum_{m\in\N_0} c_n(m) x^m y^n, \qquad x,y \in [0,\infty).
\]


\subsection{Existence of the free energy}
\label{fedirpolatt}

Existence comes in three steps.

\medskip\noindent
(1) Show that for $m,n\in\N_0$ and $x \in [0,\infty)$,
\[
Z_{m+n+1}(x) \geq Z_m(x)\,Z_n(x) \quad \text{and} 
\quad Z_n(x) \leq [3(1 \vee x)]^n.
\]

\medskip\noindent 
(2) Deduce that 
\[ 
\lim_{n\to\infty} \frac1n \log Z_n(x)= \sup_{k\in\N}\,
\frac1k \log Z_k(x) = \bar{f}(x) \in (0,\log 3+(0 \vee \log x)].
\]
Thus, $f(\gamma)=\bar{f}(\erom^\gamma)$, $\gamma\in\R$.

\medskip\noindent 
(3) For $x\in [0,\infty)$, let $y_c(x)$ be the radius of convergence of the generating function 
$G(x,y)$. Show that 
\begin{equation}
\label{radconv}
\bar{f}(x) = -\log y_c(x).
\end{equation}


\subsection{Computation of the free energy}
\label{compfedirpolatt}

To prove Theorem~\ref{thm:dircolpoltr}, we must compute $y_c(x)$, $x\in[0,\infty)$.
In what follows we derive the formula for $G(x,y)$ given in Lemma~\ref{lem:genfct}. 

\medskip\noindent
(1) For $n,r,s\in \N_0$, let 
\begin{align*}
&\cW_{n,r}\\
&=\{w\in \cW_n\colon\; \text{$w$ makes exactly $r$ vertical steps 
after the first step east}\},\\
&\cW_{n,r,s}\\
&=\{w\in \cW_{n,r}\colon\; \text{$w$ makes exactly $s$ vertical 
steps after the second step east}\},
\end{align*}
and note that $\cW_{n,r}=\emptyset$ if $n<1+r$ and $\cW_{n,r,s}=\emptyset$ if $n< 2+r+s$. 
Furthermore, for $r,s\in \N$, let 
\begin{align*}
&\cW_{n,r,s}^{\,\uparrow\downarrow}\\
&=\{w\in \cW_{n,r,s}\colon\; \text{the $r$ and $s$ vertical
steps are made in opposite directions}\},\\
&\cW_{n,r,s}^{\,\uparrow\uparrow}\\
&=\{w\in \cW_{n,r,s}\colon\; \text{the $r$ and $s$ vertical
steps are made in the same direction}\},
\end{align*}
so that, for $r\in \N$, $\cW_{n,r}$ can be partitioned as
\[
\cW_{n,r} = \bigcup_{s=0}^{n-r-2}\cW_{n,r,s} 
= \cW_{n,r,0} \cup \left[\bigcup_{s=1}^{n-r-2}
\big[\cW_{n,r,s}^{\,\uparrow\downarrow} 
\cup \cW_{n,r,s}^{\,\uparrow\uparrow}\big]\right].
\]
For $n,r,m\in \N_0$, let $c_{n,r}(m)$ be the number of $n$-step paths with $m$ self-touchings 
making exactly $r$ steps north or south immediately after the first step 
east, and put 
\[
g_r(x,y)=\sum_{n\in\N_0} \sum_{m\in\N_0} c_{n,r}(m)\, x^m\, y^n.
\]
Clearly, $G(x,y)=\sum_{r\in\N_0} g_r(x,y)$.

\medskip\noindent 
(2) Pick $r\in \N$ and use the first equality in the partitioning of $\cW_{n,r}$, together
with the fact that $c_{n,r}(m)=0$ when $n<r+1$, to prove that 
\[
g_r(x,y)=2 y^{r+1}+\sum_{s\in\N_0} \sum_{n=r+2+s}^\infty\ \sum_{w\in \cW_{n,r,s}}
x^{J_n(w)}\, y^n.
\]
For $w\in \cW_n$ and $0 \leq l < s \leq n$, let
\[
J_{l,s}(w)= \sum_{l\leq i<j-1\leq s-1}  \,\ind_{\{|w_i-w_j|=1\}},
\] 
which stands for the number of self-touchings made by $w$ between its $l$-th 
and $s$-th step. Clearly, $J_n(w)=J_{0,n}(w)$.

\medskip\noindent 
(3) Pick $r,s\in \N$ and $n\geq r+s+2$. Prove that  
\[
\begin{aligned}
&\quad w\in \cW_{n,r,s}^{\,\uparrow\uparrow}
\quad \longrightarrow \quad J_n(w)=J_{r+1,n}(w),\\
&\quad w\in \cW_{n,r,s}^{\,\uparrow\downarrow}
\quad \longrightarrow \quad J_n(w)=J_{r+1,n}(w)+\min\{r,s\}.
\end{aligned}
\]

\medskip\noindent 
(4) Use (2) and (3) to show that 
\begin{align}
\label{rec*}
g_r(x,y) &=y^{r+1}\left[2+\sum_{s=0}^r (1+x^s)\, 
g_s(x,y)+\sum_{s=r+1}^\infty (1+x^r)\, g_s(x,y)\right], \qquad  r\in \N.
\end{align}
In the same spirit show that
\begin{equation}
\label{rec2p}
g_0(x,y)=y+y\, G(x,y).
\end{equation}

\medskip\noindent 
(5) Abbreviate $g_r=g_r(x,y)$. Prove that 
\begin{equation}
\label{reca}
g_{r+1}-(1+x) y g_r-(1-x) x^r y^{r+2} g_r + x y^2 g_{r-1}=0 \qquad r\in\N.
\end{equation}
To do so, substitute the expressions obtained for $g_{r-1}$, $g_r$ and $g_{r+1}$ from 
\eqref{rec*} into \eqref{reca}, and isolate the terms containing $y^{2r+3}$. The latter 
leads to a rewrite of the left-hand side of \eqref{reca} as 
\begin{equation}
\label{recal}
x^r y^{2r+3} (x-1) \left[2+\sum_{s=0}^{r} (1+x^s) g_s+\sum_{s=r+1}^\infty (1+x^r) g_s\right]
+x^r y^{r+2} (1-x) g_r.
\end{equation}
Use \eqref{rec*} once more to conclude that \eqref{recal} equals zero.

\medskip\noindent 
(6) From \eqref{reca} we see that $(g_r)_{r\in \N_0}$ is determined by $g_0$ and $g_1$, 
while \eqref{rec2p} constitutes a consistency relation that must be met by the solution 
of \eqref{reca}. Thus, $(g_r)_{r\in \N_0}$ belongs to a two-dimensional vector space generated 
by any two linearly independent solutions. For this reason, we look for two particular solutions 
of \eqref{reca} by making an Ansatz. Set $q=xy$, and write $g_r$ in the form 
\begin{equation}
\label{rec1}
g_r=\lambda^r \sum_{l\in\N_0} p_l\,q^{lr}, 
\qquad r\in\N,\,p_0=1,
\end{equation}
where $\lambda=\lambda(y,q)$ and $p_l=p_l(\lambda,y,q)$, $l\in\N$, are to be determined. 
Substitute \eqref{rec1} into \eqref{reca} to obtain
\begin{equation}
\label{rec23}
\begin{aligned}
&\lambda^2 - \lambda (y+q) + yq\\
&+\sum_{l\in\N} q^{l(r-1)} 
\Big[\big(\lambda^2 q^{2l} - \lambda (y+q)q^l + yq\big)p_l
+ \big(\lambda (q-y)yq^l\big)\,p_{l-1}\Big] = 0.
\end{aligned}
\end{equation}
Conclude that \eqref{rec23} is satisfied when
\begin{equation}
\label{rec2}
p_l = \frac{\lambda (y-q) y q^l}{(\lambda q^l-y) (\lambda q^l-q)}\, p_{l-1},    
\qquad l\in\N,
\end{equation}
provided $\lambda$ solves the equation $\lambda^2-\lambda(y+q)+yq=0$, i.e., $\lambda\in
\{\lambda_1,\lambda_2\}=\{y,q\}$.

\medskip\noindent 
(7) Use (6) to show that $g_r=C_1 g_{r,1}+C_2 g_{r,2}$, $r\in \N$, where $C_1$ and 
$C_2$ are functions of $y,q$ and
\begin{equation}
\label{solutions}
g_{r,i} = g_{r,i}(x,y) = 
(\lambda_i)^r \bigg(1+ \sum_{k\in\N} \frac{(\lambda_i)^k 
(y-q)^k y^k q^{\tfrac12 k (k+1)}}{\prod_{l=1}^k (\lambda_i q^l-y) 
(\lambda_i q^l-q)}\ q^{kr}\bigg) \qquad i=1,2,\,r\in\N_0.
\end{equation}
Pick $x>1$ and $0<y<1$ such that $q=xy<1$, and let $r\to\infty$ in \eqref{solutions}. 
This gives 
\[
\lim_{r\to \infty} q^{-r} g_{r,1}(x,y)=0
\quad \text{and} \quad 
\lim_{r\to \infty} q^{-r} g_{r,2}(x,y)=1.
\] 
Next, an easy computation shows that $\lim_{r\to\infty} \frac{1}{r} \log |\cW_r| = 1+\sqrt{2}$,
with $|\cW_r|=g_r(1,1)$. Pick $x>1$ and $0<y<1$ such that $q=xy<1/(1+\sqrt{2})$, and let $r\to\infty$ 
in \eqref{rec*}. This gives 
\[
\lim_{r\to \infty} q^{-r} g_{r}(x,y)=0,
\]
from which it follows that $C_2=0$.

\medskip\noindent 
(8) It remains to determine $C_1$. To that end, note that, by construction, $(g_{r,1})_{r\in \N_0}$ 
satisfies \eqref{reca} for $r=0$ as well. Use  \eqref{rec2p} and \eqref{reca} to show that
\begin{equation}
\begin{aligned}
\label{rec}
&\tfrac12 C_1 g_{0,1} = g_0 =y+yG,\\
&C_1 g_{1,1} = g_1 = a+bG,
\end{aligned}
\end{equation}
with
\[
a=y^2(2+y-xy),\quad b=y^2 (1+x+y-xy).
\]
Eliminate $C_1$ and express $G$ in terms of $g_{0,1}$ and $g_{1,1}$, to obtain 
\[
G(x,y)=\frac{a H(x,y)-y^2}{b H(x,y)-y^2},
\]
where 
\[
H(x,y)=y\,\frac{g_{0,1}(x,y)}{g_{1,1}(x,y)}.
\]
This completes the proof of Lemma~\ref{lem:genfct} with $\bar{g}_0=g_{0,1}$ and
$\bar{g}_1=g_{1,1}$.

\medskip\noindent
(9) Brak, Guttmann and Whittington~\cite{BrGuWh92} show that the function $H(x,y)$ 
can be represented as a continued fraction. This representation allows for an analysis 
of the singularity structure of $G(x,y)$, in particular, for a computation of $y_c(x)$ 
(the radius of convergence of the power series $y \mapsto G(x,y)$) for fixed $x$. 
For instance, from \eqref{reca} it is easily deduced that
\[
G(1/y,y) = \sum_{r\in\N_0} g_r(1/y,y)
= -1 + \sqrt{ \frac{1-y}{1-3y-y^2-y^3} } \qquad (q=1)
\] 
and this has a singularity at $y_c$ solving the cubic equation $1-3y-y^2-y^3=0$.
Fig.~\ref{fig-dircolpolcr} gives the plot of $x \mapsto y_c(x)$ that comes out 
of the singularity analysis. As explained in Section~\ref{compfedirpolatt}, the 
free energy is $f(\gamma)=-\log y_c(\erom^\gamma)$.


\section{Tutorial 3}
\label{appC}

The purpose of this tutorial is to take a closer look at the free energy of
the homogeneous pinning model described in Section~\ref{S3}. Section~\ref{hompinmod} 
recalls the model, Section~\ref{hompinfe} computes the free energy, while 
Section~\ref{hompinregfe} identifies the order of the phase transition.  


\subsection{The model}
\label{hompinmod}

Let $(S_n)_{n\in\N_0}$ be a random walk on $\Z$, i.e., $S_0 = 0$ and 
$S_i-S_{i-1}$, $i\in\N$, are i.i.d. Let $P$ denote the law of $S$. Introducing 
the first return time to zero $\tau = \inf\{n\in\N\colon\, S_n = 0\}$, we 
denote by $R(\cdot)$ its distribution:
\[
R(n) = P(\tau = n) = P\big(S_i \neq 0\,\,\forall\,1 \leq i \leq n-1,\,S_n = 0\big),
\qquad n \in \N.
\]
We require that $\sum_{n \in \N} R(n) = 1$, i.e., the random walk is \emph{recurrent}, 
and we assume the following \emph{tail asymptotics} for $R(\cdot)$ as $n\to\infty$:
\[
R(n) = \frac{c}{n^{1+a}}\,[1+o(1)], \qquad c>0,\,a \in (0,1) \cup (1,\infty).
\]
The exclusion of $a=1$ is for simplicity (to avoid logarithmic corrections in later 
statements). The constant $c$ could be replaced by a \emph{slowly varying function} 
at the expense of more technicalities, which however we avoid. We recall that, for a 
nearest-neighbor symmetric random walk, i.e., when $P(S_1 = 1) = P(S_1 = -1) 
= p$ and $P(S_1 = 0) = 1 - 2p$ with $p \in (0,\frac12)$, the above tail asymptotics 
holds with $\alpha = \frac 12$.

The set of allowed polymer configurations is $\mathcal W_n = \{w=(i,w_i)_{i=0}^n
\colon\,w_0=0,\,w_i\in\Z\,\,\forall\,0\leq i\leq n \}$, on which we define the 
Hamiltonian $H_n^\zeta(w) = - \zeta L_n(w)$, where $\zeta \in \R$ and
\[
L_n(w) = \sum_{i=1}^n \ind_{\{w_i = 0\}}, \qquad w\in\cW_n,
\]
is the so-called \emph{local time} of the polymer at the interface (which has height 
zero). We denote by $P_n$ the projection of $P$ onto $\mathcal W_n$, i.e., $P_n(w) 
= P(S_i = w_i\,\,\forall\,1\leq i\leq n)$ for $w \in \mathcal W_n$. This is the 
\emph{a priori} law for the non-interacting polymer. We define our polymer model 
as the law $P_n^\zeta$ on $\mathcal W_n$ given by
\[
P_n^\zeta(w) = \frac{1}{Z_n^\zeta}\,\erom^{-H_n^\zeta(w)}\,P_n(w), \qquad w\in\cW_n.
\]
The normalizing constant $Z_n^\zeta$, called the \emph{partition function}, is given 
by
\[
Z_n^\zeta = \sum_{w \in \mathcal W_n} \erom^{-H_n^\zeta(w)} \, P_n(w) 
= E_n \left(\erom^{- H_n^\zeta(w)} \right) =
E \left(\erom^{\zeta \sum_{i=1}^n \ind_{\{S_i = 0\}}} \right).
\]
The \emph{free energy} $f(\zeta)$ is defined as the limit
\[
f(\zeta) = \lim_{n\to\infty} \frac 1n \, \log Z_n^\zeta,
\]
which has been shown to exist in {\bf Tutorial 1}. From a technical viewpoint it is 
more convenient to consider the \emph{constrained partition sum} $Z_n^{*,\zeta}$ 
defined by
\[
Z_n^{*,\zeta} = \sum_{ {w \in \mathcal W_n} \atop {w_n = 0} } 
\erom^{-H_n^\zeta(w)} \, P_n(w) 
= E \left(\erom^{\zeta \sum_{i=1}^n \ind_{\{S_i = 0\}}} \, \ind_{\{S_n = 0\}} \right).
\]
As shown in {\bf Tutorial 1}, if we replace $Z_n^\zeta$ by $Z_n^{*,\zeta}$ in the definition 
of $f(\zeta)$, then this does not change the value of the limit. Therefore we may 
focus on $Z_n^{*,\zeta}$.


\subsection{Computation of the free energy}
\label{hompinfe}

We repeat in more detail the derivation of the formula for the free energy $f(\zeta)$
given in Section~\ref{S3}.

\begin{enumerate}
\item 
Prove that $Z_n^{*,\zeta} \geq \erom^\zeta \, P(\tau = n) =  \erom^\zeta \, R(n)$. Deduce 
that $f(\zeta) \geq 0$ for every $\zeta \in \R$.
\item 
Show that $Z_n^{*,\zeta} \leq 1$ for $\zeta \in (-\infty,0]$. Deduce that $f(\zeta) = 0$ 
for every $\zeta \in (-\infty,0]$.
\item 
Henceforth we focus on $\zeta \in [0,\infty)$. Define for $x \in [0,1]$ the generating 
function $\phi(x) = \sum_{n\in\N} R(n) \, x^n$. Observe that $x \mapsto \phi(x)$ is 
strictly increasing with $\phi(0) = 0$ and $\phi(1)=1$. Deduce that for every $\zeta 
\in [0,\infty)$ there is exactly one value $r = r(\zeta)$ that solves the equation 
$\phi(\erom^{-r}) = \erom^{-\zeta}$. Observe that $\tilde R_\zeta(n) = \erom^\zeta\,R(n)\, 
\erom^{-r(\zeta) n}$ defines a probability distribution on $\N$.
\item 
For $n \in \N$ and $1 \leq k \leq n$, denote by $\Theta_{n,k}$ the set consisting of 
$k+1$ points drawn from the interval $\{0,\ldots,n\}$, including $0$ and $n$. More 
explicitly, the elements of $\Theta_{n,k}$ are of the form $j = (j_0,j_1,\ldots,j_k)$ 
with $j_0 = 0$, $j_k = n$ and $j_{i-1} < j_i$ for all $1 \leq i \leq n$. By summing over 
the locations $i$ at which $S_i = 0$, prove that
\[
Z_n^{*,\zeta} = \sum_{k=1}^n \erom^{\zeta k} \sum_{j \in \Theta_{n,k}}
\prod_{i=1}^k R(j_i - j_{i-1}).
\]
Note that this equation can be rewritten as
\[
Z_n^{*,\zeta} = \erom^{r(\zeta)n} \, u_\zeta(n), \qquad
u_\zeta(n) = \sum_{k=1}^n \sum_{j \in \Theta_{n,k}}
\prod_{i=1}^k \tilde R_\zeta(j_i - j_{i-1}).
\]
\item 
For fixed $\zeta\in (0,\infty)$, we introduce a \emph{renewal process} 
$(\tau_n)_{n\in\N_0}$ with law $P_\zeta$, which is a random walk on $\N_0$ 
with positive increments, i.e., $\tau_0 = 0$ and $\tau_n - \tau_{n-1}$, $n\in\N$, 
are i.i.d.\ under $P_\zeta$ with law $P_\zeta(\tau_1 = n) = \tilde R_\zeta(n)$. 
Show that the following representation formula holds:
\[
u_\zeta(n) = \sum_{k=1}^n P_\zeta(\tau_k = n) =
P_\zeta \left( \bigcup_{k\in \N} \{\tau_k = n\} \right).
\]
In particular, $u_\zeta(n) \le 1$. We will use the following important result 
known as the \emph{renewal theorem}: 
\[
\lim_{n\to\infty} u_\zeta(n) = C \in (0,\infty).
\] 
Here $C=C(\zeta)=[\sum_{m\in\N} m \tilde R_\zeta(m)]^{-1} \in (0,\infty)$.
\item 
Conclude that $\lim_{n\to\infty} \frac 1n \log Z_n^{*,\zeta} = r(\zeta)$ for 
every $\zeta\in [0,\infty)$. This means that for $\zeta\in [0,\infty)$ the 
free energy $f(\zeta)$ coincides with $r(\zeta)$ and therefore satisfies the 
equation $\phi(\erom^{-f(\zeta)}) = \erom^{-\zeta}$.
\end{enumerate}

\noindent
Note that (4) and (5) give a sharp asymptotics of the constrained partition sum.
Also note that the argument only uses the renewal structure of the excursions
of the polymer away from the interface, and therefore can be extended to deal
with a priori random processes other than random walks.


\subsection{Order of the phase transition}
\label{hompinregfe}

From the relation $\phi(\erom^{-f(\zeta)}) = \erom^{-\zeta}$ we next derive some
interesting properties of the free energy.

\begin{enumerate}
\item 
Observe that for $x \in (0,1)$ the function $\phi(x) = \sum_{n\in\N} R(n) \, x^n$ 
is strictly increasing, with non-vanishing first derivative, and is real analytic.
Since $\phi(0) = 0$ and $\phi(1) = 1$, its inverse $\phi^{-1}$, defined from $(0,1)$ 
onto $(0,1)$, is real analytic too, by the \emph{Lagrange inversion theorem}. 
Deduce that the free energy $\zeta \mapsto f(\zeta)=-\log\phi^{-1}(\erom^{-\zeta})$ 
restricted to $\zeta \in (0,\infty)$ is real analytic. The same is trivially 
true for $\zeta \in (-\infty, 0)$, since $f(\zeta) = 0$.
\item 
Conclude that the free energy $\zeta \mapsto f(\zeta)$ is not analytic at 
$\zeta = 0$, by the \emph{identity theorem of analytic functions}. Observe 
that nevertheless the free energy is continuous at $\zeta = 0$.
\item 
Introduce the integrated tail probability $\overline R(n)=\sum_{k = n+1}^\infty R(k)$
for $n \in \N_0$. Deduce from our tail assumption on $R(\cdot)$ that $\overline R(n) 
= \frac{c}{a} n^{-a}[1+o(1)]$ as $n\to\infty$.
\item 
Use summation by parts to show that $1 - \phi(x) = (1-x) \sum_{n\in\N_0} \overline 
R(n) x^n$ for $x \in (0,1)$. 
\begin{proof}
\begin{align*}
1 - \phi(x) & = 1 - \sum_{n\in\N} R(n) x^n 
= 1 - \sum_{n\in\N} (\overline R(n-1) - \overline R(n)) x^n\\
& = \left( 1 + \sum_{n\in\N} \overline R(n) x^n \right) - \sum_{n\in\N}
\overline R(n-1) x^n = \sum_{n\in\N_0} 
\overline R(n) x^n - \sum_{n\in\N_0} \overline R(n) x^{n+1} \\
& = (1-x) \sum_{n\in\N_0} \overline R(n) x^n.
\end{align*}
\end{proof}
\item 
Put $\psi(x) = \sum_{n\in\N_0} \overline R(n) x^n$, so that $1-\phi(x) = (1-x) 
\psi(x)$. We first focus on $a \in (1,\infty)$. Show that in that case $\psi(1) 
= E(\tau) = \sum_{n\in\N} n R(n) \in (0,\infty)$. Deduce from $\phi(\erom^{-f(\zeta)}) 
= \erom^{-\zeta}$ that, as $\zeta \downarrow 0$,
\[
f(\zeta) = \frac{1}{E(\tau)}\,\zeta\,[1+o(1)], 
\qquad a \in (1,\infty).
\]
\item 
We next focus on $a \in (0,1)$. Use a Riemann sum approximation to show that, as 
$r \downarrow 0$,
\[
\psi(\erom^{-r}) = \left(\frac{c\Gamma(1-a)}{a}\right)\,r^{a-1}\,[1+o(1)],
\]
where
\[
\Gamma(1-a) = \int_0^\infty \frac{\erom^{-t}}{t^a}\,\drom t 
\in (0,\infty).
\]
\begin{proof}
Note that, for $a \in (0,1)$, $\psi(\erom^{-r}) \uparrow \infty$ as $r \downarrow 0$, 
because $\overline R(n) = \frac{c}{a} n^{-a}\,[1+o(1)]$. Therefore, for any fixed 
$n_0 \in \N$, we can safely neglect the first $n_0$ terms in the sum defining 
$\psi(\cdot)$, because they give a finite contribution as $r \downarrow 0$. 
This gives
\begin{align*}
&\psi(\erom^{-r}) \sim \sum_{n=n_0}^\infty \overline R(n) \erom^{-n r}
\sim \frac{c}{a} \sum_{n=n_0}^\infty \frac{\erom^{-n r}}{n^a}
= \frac{c}{a} \, r^{a - 1}\, \sum_{n=n_0}^\infty 
r\,\frac{\erom^{-n r}}{(nr)^a}\\
&\sim \frac{c}{a} \, r^{a - 1}\, 
\left( \int_0^\infty \frac{\erom^{-t}}{t^a} \, \drom t \right),
\end{align*}
where $\sim$ refers to $n_0\to\infty$. 
\end{proof}
\item 
Deduce from $\phi(\erom^{-f(\zeta)}) = \erom^{-\zeta}$ that, as $\zeta \downarrow 0$,
\[
f(\zeta) = \left(\frac{a}{c\Gamma(1-a)}\right)^{1/a}
\,\zeta^{1/a}\,[1+o(1)], \qquad a \in (0,1).
\]
\end{enumerate}

\noindent
Note that the smaller $a$ is, the more regular is the free energy for 
$\zeta\downarrow 0$, i.e., the higher is the order of the phase 
transition at $\zeta=0$. For $a \in (1,\infty)$ the derivative of 
the free energy is discontinuous at $\zeta=0$, which corresponds to
a first-order phase transition.


\section{Tutorial 4}
\label{appD}

The purpose of this (long) tutorial is to provide further detail on the 
variational approach to the random pinning model described in Section~\ref{S4}.
Section~\ref{ranpinmod} recalls the model, Section~\ref{LDPback} provides
the necessary background on large deviation theory, Section~\ref{wordlet}
explains the large deviation principles for the empirical process of random
words cut out from a random letter sequence according to a renewal process, 
while Section~\ref{ranpinempproc} shows how the latter are applied to the 
random pinning model to derive a variational formula for the critical curve. 


\subsection{The model}
\label{ranpinmod}

Let $S = (S_n)_{n\in\N_0}$, be a Markov chain on a countable space $\Upsilon$ 
that contains a marked point $*$. Let $P$ denote the law of $S$, and assume 
that $S_0 = *$. We introduce the first return time to $*$, namely, $\tau= 
\inf\{n \in\N\colon\,S_n = *\}$, and we denote by $R(\cdot)$ its distribution:
\[
R(n) = P(\tau = n) = P(S_i \neq * \,\,\forall\, 1 \leq i \leq n-1,\ S_n = *),
\qquad n \in \N.
\]
We require that $\sum_{n \in \N} R(n) = 1$, i.e., the Markov chain is \emph{recurrent}, 
and assume the following \emph{logarithmic tail asymptotics} as $n\to\infty$:
\[
\lim_{n\to\infty} \frac{\log R(n)}{\log n} = -(1+a) \,, \qquad 
\text{with $a \in [0,\infty)$} \,.
\]
For a nearest-neighbor and symmetric random walk on $\Z$, i.e., 
\[
P(S_1 = 1) = P(S_1 = -1) = p, \qquad P(S_1 = 0) = 1 - 2p, \qquad p \in (0,\frac 12),
\] 
this asympotics holds with $a = \frac 12$.

The set of allowed polymer configurations is $\cW_n = \{w = (i, w_i)_{i=0}^n\colon\,
w_0 = *,\, w_i \in \Upsilon\,\,\forall\, 0 < i \leq n \}$ on which we define the 
Hamiltonian
\[
H_n^{\beta,h,\omega}(w) = -\sum_{i=0}^n (\beta \omega_i - h) \ind_{\{w_i = *\}},
\]
where $\beta, h \geq 0$ are two parameters that tune the interaction strength
and $\omega = (\omega_i)_{i\in\N_0}$ is the \emph{random environment}, a typical 
realization of a sequence of i.i.d.\ $\R$-valued random variables with marginal 
law $\mu_0$. The law of the full sequence $\omega$ is therefore $\P = 
\mu_0^{\otimes \N_0}$. We assume that $M(\beta) = \E(\erom^{\beta \omega_0})<\infty$ 
for all $\beta \in \R$, and w.l.o.g.\ we assume that $\E(\omega_0)=0$ and 
$E(\omega_0^2)=1$.

We denote by $P_n$ the projection onto $\cW_n$ of the law of $S$, i.e.,
$P_n(w) = P(S_i = w_i\,\,\forall\,0\leq i \le n)$ for $w \in \cW_n$.
This is the \emph{a priori} law for the non-interacting polymer. We 
define our polymer model as the law $P_n^{\beta,h,\omega}$ on $\cW_n$ 
given by
\[
P_n^{\beta,h,\omega}(w) = \frac{1}{Z_n^{\beta,h,\omega}} 
\,\erom^{-H_n^{\beta,h,\omega}(w)}\,P_n(w).
\]
The normalizing constant $Z_n^{\beta,h,\omega}$ is the \emph{partition sum}
and is given by
\[
\begin{aligned}
Z_n^{\beta,h,\omega} &= \sum_{w \in \cW_n}
\erom^{-H_n^{\beta,h,\omega}(w)} \, P_n(w)\\ 
&= E_n \left( \erom^{-H_n^{\beta,h,\omega}(w)} \right) 
= E \left( \erom^{\sum_{i=0}^n (\beta \omega_i - h) \ind_{\{w_i = *\}}} \right).D
\end{aligned}
\]
The \emph{quenched free energy} $f^{\rm{que}}(\beta,h)$ is defined as the 
limit
\[
f^{\rm{que}}(\beta,h) = \lim_{n\to\infty} 
\frac 1n \log Z_n^{\beta,h,\omega} 
\qquad \text{$\P$-a.s.\ and in $L^1(\P)$},
\]
which has been shown in Tutorial 1 to exist and to be non-random. It can be 
easily shown that $f^{\rm{que}}(\beta, h) \geq 0$, which motivates the 
introduction of a \emph{localized phase} $\cL$ and a \emph{delocalized phase}
$\cD$ defined by
\[
\mathcal L = \{(\beta, h)\colon\,f^{\rm{que}}(\beta, h)>0\}, \qquad
\mathcal D = \{(\beta, h)\colon\,f^{\rm{que}}(\beta, h) = 0\}.
\]
It follows from the convexity and the monotonicity of the free energy
that these phases are separated by a \emph{quenched critical curve} 
\[
\beta \mapsto h_c^{\rm{que}}(\beta) = \inf\{h \in \R\colon\, f^{\rm{que}}
(\beta,h) = 0\}.
\] 
In the remainder of this tutorial we develop insight
into the \emph{variational formula} for $h_c^{\rm{que}}$ that was put
forward in Section~\ref{S5}.

Note that $f^{\rm{que}}(\beta, h) = \lim_{n\to\infty} \frac 1n 
\E(\log Z_n(\beta, h,\omega))$. Interchanging the expectation $\E$ 
and the logarithm, we obtain the \emph{annealed free energy}:
\[
f^{\rm{ann}}(\beta,h) = \lim_{n\to\infty} \, \frac 1n \log 
\E\big( Z_n^{\beta,h,\omega}\big) 
= \lim_{n\to\infty} \, \frac 1n \log 
E \big(\erom^{(\log M(\beta) - h)  \sum_{i=0}^n \ind_{\{S_i = *\}}} \big),
\]
which is nothing but the free energy $f(\zeta)$ of a homogeneous pinning
model with $\zeta = \log M(\beta) - h$. Recall from Tutorial 3 that 
$f(\zeta)>0$ for $\zeta>0$ and $f(\zeta) = 0$ for $\zeta \leq 0$. Introducing 
the \emph{annealed critical curve}
\[
h_c^{\rm{ann}}(\beta)= \inf\{h \in \R\colon\, f^{\rm{ann}}(\beta,h) = 0\},
\] 
we find that $h_c^{\rm{ann}}(\beta) = \log M(\beta)$. Jensen's inequality yields 
$f^{\rm{que}}(\beta, h) \leq f^{\rm{ann}}(\beta, h)$, so that $h_c^{\rm{que}}
(\beta) \leq h_c^{\rm{ann}}(\beta)$. The disorder is said to be \emph{irrelevant} 
if $h_c^{\rm{que}}(\beta) = h_c^{\rm{ann}}(\beta)$ and \emph{relevant} if 
$h_c^{\rm{que}}(\beta) < h_c^{\rm{ann}}(\beta)$.


\subsection{Some background on large deviation theory}
\label{LDPback}

Before we proceed with our analysis of the copolymer model we make an intermezzo,
namely, we give a brief summary of some basic large deviation results. For more 
details, see the monographs by Dembo and Zeitouni~\cite{DeZe98} and den 
Hollander~\cite{dHo00}.


\subsubsection{Relative entropy}

Let $\nu,\rho$ be two probabilities on a measurable space $(\Gamma,\mathcal G)$, 
i.e., $\nu,\rho \in \mathcal M_1(\Gamma)$, the space of probability measures on 
$\Gamma$. For $\nu \ll \rho$ (i.e., $\nu$ is absolutely continuous with respect to 
$\rho$), we denote by $\frac{\mathrm d \nu}{\mathrm d \rho}$ the corresponding 
Radon-Nikod\'ym derivative and we define the \emph{relative entropy} $h(\nu|\rho)$ 
of $\nu$ with respect to $\rho$ by the formula
\[
h(\nu | \rho) 
= \int_\Gamma \log \left( \frac{\mathrm d \nu}{\mathrm d \rho} \right) 
\,\mathrm d \nu
= \int_\Gamma \left( \frac{\mathrm d \nu}{\mathrm d \rho} \right)
\log \left( \frac{\mathrm d \nu}{\mathrm d \rho} \right) \, 
\mathrm d \rho.
\]
For $\nu \not\ll \rho$, we simply put $h(\nu | \rho) = \infty$. Note that the 
function $g(x) = x\log x$ with $g(0)= 0$ is convex (hence continuous) and is
bounded from below on $[0,\infty)$, so that the integral defining $h(\nu | \rho)$ 
is well-defined in $\R \cup \{\infty\}$.

\begin{itemize}
\item
Use Jensen's inequality to show that $h(\nu | \rho) \geq 0$ for all $\nu, \rho$, 
with $h(\nu | \rho) = 0$ if and only if $\nu = \rho$.
\end{itemize}

For fixed $\rho$, the function $\nu \mapsto h(\nu|\rho)$ is convex on 
$\mathcal M_1(\Gamma)$. Note that if $\Gamma$ is a finite set, say 
$\Gamma = \{1, \ldots, r\}$ with $r \in \N$, then we can write
\[
h(\nu | \rho) 
= \sum_{i=1}^r \nu_i \, \log \left(\frac{\nu_i}{\rho_i}\right).
\]


\subsubsection{Sanov's Theorem in a finite space}

Let $Y = (Y_n)_{n\in\N}$ be an i.i.d.\ sequence of random variables taking 
values in a finite set, which we identify with $\Gamma = \{1, \ldots, r\}$ 
with $r \in \N$. Let $\rho = \{\rho_i\}_{i=1}^r$ with $\rho_i = P(Y_1 = i)>0$ 
be the marginal law of this random sequence. Note that $\rho \in \mathcal 
M_1(\Gamma)$. For $n \in \N$ we define the \emph{empirical measure}
\[
L_n = \frac 1n \, \sum_{k=1}^n \delta_{Y_k},
\]
where $\delta_x$ denotes the Dirac mass at $x$. Note that $L_n$ is a 
\emph{random element} of $\mathcal M_1(\Gamma)$, i.e., a random variable 
taking values in $\mathcal M_1(\Gamma)$, which describes the relative 
frequency of the ``letters'' appearing in the sequence $Y_1,\ldots,Y_n$.

The space $\mathcal M_1(\Gamma)$ can be identified with the simplex
$\{x \in (\R^+)^r\colon\,\sum_{i=1}^r x_i = 1\} \subset (\R^+)^r$,
and hence $\mathcal M_1(\Gamma)$ can be equipped with the standard 
Euclidean topology, and we can talk about convergence in $\mathcal M_1(\Gamma)$
(which is nothing but convergence of every component). With this 
identification we have $L_n = \{L_n(i)\}_{i=1}^r$, where $L_n(i)$ 
is the relative frequency of the symbol $i$ in the sequence $Y_1, \ldots, Y_n$, 
i.e., $L_n(i) = \frac{1}{n} \sum_{k=1}^n \ind_{\{Y_k = i\}}$.

\begin{itemize}
\item 
Show that the strong law of large numbers yields the a.s.\ convergence
$\lim_{n\to\infty} L_n = \rho$, where the limit is in $\mathcal M_1(\Gamma)$.
\end{itemize}

The purpose of large deviation theory is to quantify the probability that $L_n$
differs from its limit $\rho$: given a $\nu \in \mathcal M_1(\Gamma)$ different
from $\mu$, what is the probability that $L_n$ is close to $\nu$? Take for 
simplicity $\nu = \{\nu_i\}_{i=1}^r$ of the form $\nu_i = \frac{k_i}{n}$ 
with $k_i \in \N$ and $\sum_{i=1}^r k_i = n$. (Note that this is the family of 
laws that can be attained by $L_n$.)
\begin{itemize}
\item 
Prove that $P(L_n = \nu) = n! \prod_{i=1}^r \frac{{\rho_i}^{k_i}}{k_i !}$.
\item
Use Stirling's formula $n! = n^n \erom^{-n + o(n)}$ to deduce that $P(L_n = \nu) 
= \erom^{-n h(\nu | \rho) + o(n)}$, where $h(\nu|\rho)$ is the relative entropy 
defined above.
\end{itemize}

In this sense, the relative entropy $h(\nu | \rho)$ gives the rate of exponential 
decay for the probability that $L_n$ is close to $\nu$ instead of $\rho$. More generally, 
it can be shown that if $O$ and $C$ are, respectively, an open and a closed subset of 
$\mathcal M_1(\Gamma)$, then, with the notation $I(\nu) = h(\nu|\rho)$, the following 
relations hold:
\begin{equation}
\label{eq:ld}
\begin{aligned}
&\liminf_{n\to\infty} \,\frac 1n \, \log P(L_n \in O) \,\ge\, -\inf_{\nu \in O} I(\nu),\\
&\limsup_{n\to\infty} \,\frac 1n \, \log P(L_n \in C) \,\le\, -\inf_{\nu \in C} I(\nu).
\end{aligned}
\end{equation}
Whenever the above inequalities hold, we say that the sequence of random variables 
$(L_n)_{n\in\N}$ satisfies the \emph{large deviation principle} (LDP) with rate $n$
and with rate function $I(\cdot)$.


\subsubsection{Sanov's theorem in a Polish space}

In the previous section we have worked under the assumption that the space 
$\Gamma$ is finite. However, everything can be generalized to the case when 
$\Gamma$ is \emph{Polish} (a complete separable metric space) equipped with 
the Borel $\sigma$-field. Let $Y = (Y_n)_{n\in\N}$ be an i.i.d.\ sequence of 
random variables taking values in $\Gamma$ and denote by $\rho \in \mathcal 
M_1(\Gamma)$ the law of $Y_1$. We equip the space $\mathcal M_1(\Gamma)$ of 
probability measures on $\Gamma$ with the topology of weak convergence (i.e., 
$\nu_n \to \nu$ in $\mathcal M_1(\Gamma)$ if and only if $\int f \mathrm d 
\nu_n \to \int f \mathrm d \nu$ for every bounded and continuous $f\colon\,
\Gamma \to \R$). This topology turns $\mathcal M_1(\Gamma)$ into a Polish space 
too, which we equip with the corresponding Borel $\sigma$-field. We can therefore 
speak of convergence in $\mathcal M_1(\Gamma)$ as well as random elements of 
$\mathcal M_1(\Gamma)$ (random variables taking values in $\mathcal M_1(\Gamma)$).

In particular, the empirical measure $L_n$ introduced above is well defined 
in this generalized setting as a random element of $\mathcal M_1(\Gamma)$.
With the help of the ergodic theorem it is possible to show that, in analogy 
with the case of finite $\Gamma$, $\lim_{n\to\infty} L_n = \rho$ a.s.\ in 
$\mathcal M_1(\Gamma)$. Also, the large deviation inequalities mentioned above 
continue to hold, again with $I(\nu) = h(\nu|\rho)$ as defined earlier. The 
formal tool to prove this is the \emph{projective limit LDP} of Dawson and 
G\"artner~\cite{DaGa87}.


\subsubsection{Process level large deviations}

One can take a step further and consider an extended empirical measure,
keeping track of ``words'' instead of single ``letters''. More precisely,
let again $Y = (Y_n)_{n\in\N}$ be an i.i.d.\ sequence of random variables
taking values in a Polish space $\Gamma$ and denote by $\rho \in \mathcal 
M_1(\Gamma)$ the law of $Y_1$. For $\ell \in \N$ fixed, one can consider 
the empirical distribution of $\ell$ consecutive variables (``words consisting
of $\ell$ letters``) appearing in the sequence $Y_1, \ldots, Y_n$:
\[
L_n^\ell = \frac{1}{n} \sum_{i=1}^n 
\delta_{(Y_i, Y_{i+1}, \ldots, Y_{i+\ell-1})},
\]
where we use for convenience periodic boundary conditions: $Y_{n+i} = Y_i$
for $i=1, \ldots, \ell -1 $. Note that $L_n^\ell$ is a random element
of the space $\mathcal M_1(\Gamma^\ell)$ of probability measures on $\Gamma^\ell$.
One can show that $\lim_{n\to\infty} L_n^\ell = \rho^{\otimes \ell}$ a.s.\
and one can obtain the large deviations of $L_n^\ell$ with an explicit 
rate function (not pursued here).

One can even go beyond and consider the empirical measure associated with 
``words of arbitrary length''. To do so, it is convenient to denote by 
$(Y_1, \ldots, Y_n)^{\rm{per}}$ the \emph{infinite} sequence obtained 
by repeating periodically $(Y_1, \ldots, Y_n)$, i.e., 
\[
((Y_1, \ldots, Y_n)^{\rm{per}})_{mn + j} = Y_j \mbox{ for } m \in \N_0 
\mbox{ and } j \in \{1, \ldots, n\}.
\] 
Note that $(Y_1, \ldots, Y_n)^{\rm{per}}$ takes values in $\Gamma^\N$. Denoting 
by $\theta$ the left shift on $\Gamma^\N$, i.e., $(\theta x)_i = x_{i+1}$ for 
$x = (x_i)_{i\in\N}$, we can therefore introduce the \emph{empirical process}
\[
R_n = \frac{1}{n} \sum_{i=0}^{n-1} 
\delta_{\theta^i (Y_1, \ldots, Y_n)^{\rm{per}}},
\]
which is by definition a random element of the space $\mathcal M_1^{\rm{inv}}
(\Gamma^\N)$ of shift-invariant probability measures on the Polish space 
$\Gamma^\N$, which is equipped with the product topology and the product 
$\sigma$-field.

Again, one can show that $\lim_{n\to\infty} R_n = \rho^{\otimes \N}$ a.s.\
on $\mathcal M_1^{\rm{inv}}(\Gamma^\N)$. Furthermore, $(R_n)_{n\in\N}$ 
satisfies an LDP, namely, for every open set $O$ and closed set $C$ in $\mathcal 
M_1^{\rm{inv}}(\Gamma^\N)$:
\[
\begin{aligned}
&\liminf_{n\to\infty} \, \frac 1n \, 
\log P(R_n \in O) \geq -\inf_{\nu \in O} I(\nu),\\
&\limsup_{n\to\infty} \, \frac 1n \, 
\log P(R_n \in C) \leq -\inf_{\nu \in C} I(\nu),
\end{aligned}
\]
where the rate function $I(\nu) = H(\nu | \rho^{\otimes\N})$ is the so-called 
\emph{specific relative entropy}:
\[
H(\nu | \rho^{\otimes\N}) 
= \lim_{n\to\infty} \, \frac{1}{n} h(\pi_n \nu | \rho^{\otimes n}),
\]
where $h(\,\cdot\,|\,\cdot\,)$ is the relative entropy defined earlier and 
$\pi_n$ denotes the projection from $\Gamma^\N$ to $\Gamma^n$ onto the first 
$n$ components. The limit can be shown to be non-decreasing: in particular, 
$H(\nu | \rho) = 0$ if and only if $\pi_n \nu = \rho^{\otimes n}$ for every 
$n\in\N$, i.e., $\nu = \rho^{\otimes \N}$.


\subsection{Random words cut out from a random letter sequence}
\label{wordlet}

Let us apply the large deviation theory sketched in the previous section to 
study the sequence of random words cut out from a random letter sequence 
according to an independent renewal process. Our ``alphabet'' will be $\R$, 
while $\widetilde\R= \bigcup_{n\in\N} \R^k$ will be the set of finite words 
drawn from $\R$, which can be metrized to become a Polish space.

We recall from \ref{ranpinmod} that $\omega = (\omega_i)_{i\in\N_0}$ with law $\P$ 
is an i.i.d.\ sequence of $\R$-valued random variables with marginal distribution 
$\mu_0$, and $S = (S_n)_{n\in\N_0}$ with law $P$ is a recurrent Markov chain on the 
countable space $\Upsilon$ containing a marked point~$*$. The sequences 
$\omega$ and $S$ are independent. From the sequence of letters $\omega$ we 
cut out a sequence of words $Y = (Y_i)_{i\in\N}$ using the successive excursions 
of $S$ out of $*$. More precisely, we let $T_k$ denote the epoch of the $k$-th 
return of $S$ to $*$:
\[
T_0 = 0, \qquad T_{k+1} = \inf\{m>T_k\colon\,S_m = *\},
\]
and we set $Y_i = (\omega_{T_{i-1}},\omega_{T_{i-1} + 1},\ldots,\omega_{T_{i}-1})$.
Note that $Y = (Y_i)_{i\in\N} \in \widetilde\R^\N$.

We next define the empirical process associated with $Y$:
\[
R_n = \frac{1}{n} \sum_{i=0}^{n-1}
\delta_{\tilde\theta^i (Y_1, \ldots, Y_n)^{\rm{per}}},
\]
where we denote by $\tilde \theta$ the shift acting on $\widetilde \R$. By definition, 
$R_n$ is a random element of the space $\mathcal M_1^{\rm{inv}}(\widetilde \R^\N)$ of 
shift-invariant probabilities on $\widetilde \R^\N$.

We may look at $Y$ and $R_n$ in at least two ways: either under the law $P^*=\P 
\otimes P$ (= annealed) or under the law $P$ (= quenched). We start with the 
\emph{annealed viewpoint}.

\begin{itemize}
\item 
Show that under $P^*$ the sequence $Y$ is i.i.d.\ with marginal law $q_0$ given by
\[
q_0(\mathrm d x_1, \ldots, \mathrm d x_n) \,=\, R(n) \,
\mu_0(\mathrm d x_1) \times \cdots \times \mu_0(\mathrm d x_n).
\]
\item 
Conclude from \ref{LDPback} that under $P^*$ the sequence $(R_n)_{n\in\N}$ 
satisfies an LDP on $\mathcal M_1^{\rm{inv}}(\Gamma^\N)$ with rate function 
$I^{\rm{ann}}(Q) = H(Q | \mu_0^{\otimes\N})$, the specific relative entropy of
$Q$ w.r.t.\ $\P=\mu_0^{\otimes\N}$.
\end{itemize}
In words, the probability under $P^*$ that the first $n$ words cuts out 
of $\omega$ by $S$, periodically extended to an infinite sequence, have an 
empirical distribution that is close to a law $Q \in \mathcal M_1^{\rm{inv}}
(\Gamma^\N)$ decays exponentially in $n$ with rate $I^{\rm{ann}}(Q)$: 
\[
P^*(R_n \approx Q) = \exp[-n I^{\rm{ann}}(Q) + o(n)].
\] 
We note that $I^{\rm{ann}}(Q) \geq 0$ and $I^{\rm{ann}}(Q) = 0$ if and only if 
$Q = \mu_0^{\otimes \N}$.

We next consider the \emph{quenched viewpoint}, i.e., we fix $\omega$ and we write 
$R_n^\omega$ instead of $R_n$. It is intuitively clear that, when the average 
is over $S$ only, it is more difficult to observe a large deviation. Therefore, 
if under $P$ the sequence $(R_n^\omega)_{n\in\N}$ satisfies an LDP on 
$\mathcal M_1^{\rm{inv}}(\Gamma^\N)$ with rate function $I^{\rm{que}}$,
i.e., if $P(R_n^\omega \approx Q) = \exp[-n I^{\rm{que}}(Q) + o(n)]$, then
we should have $I^{\rm{que}}(Q) \geq I^{\rm{ann}}(Q)$. Indeed, this is the 
case: the difference between $I^{\rm{que}}(Q)$ and $I^{\rm{ann}}(Q)$ can 
in fact be explicitly quantified. For details we refer to Birkner, Greven
and den Hollander~\cite{BiGrdHo10}.


\subsection{The empirical process of words and the pinning model}
\label{ranpinempproc}

We are finally ready to explore the link between the process of random words 
$Y$ described in the previous section and our random pinning model. Define
for $z \in [0,1]$ the generating function
\[
G(z) = \sum_{n\in\N} z^n\,Z_n^{*,\beta,h,\omega},
\]
where $Z_n^{*,\beta,h,\omega}$ denotes the \emph{constrained} partition sum
\[
Z_n^{*,\beta, h,\omega} =
E\left(\erom^{\sum_{i=0}^{n-1} (\beta\omega_i - h) \ind_{\{S_i = *\}}}
\,\ind_{\{S_n = *\}} \right).
\]
We recall that $Z_n^{*,\beta,h,\omega}$ yields the same free energy
as the original partition function $Z_n^{\beta,h,\omega}$, i.e.,
\[
f^{\rm{que}}(\beta, h) = \lim_{n\to\infty} \frac 1n\,
\log Z_n^{*,\beta,h, \omega}
\qquad \text{$\P$-a.s.\ and in $L^1(\mathbb{P})$}.
\]

\begin{itemize}
\item 
Prove that the radius of convergence $\overline z$ of $G(z)$ equals 
$\erom^{-f^{\rm{que}}(\beta,h)}$.
\item 
In analogy with Tutorial 3, show that
\[
z^n \, Z_n^{*,\beta,h,\omega} =
\sum_{N \in \N} \sum_{0 = k_0 < k_1 < \cdots < k_N = n} \prod_{i=1}^N
z^{k_i - k_{i-1}}\, R(k_i - k_{i-1})\,\erom^{\beta \omega_{k_{i-1}} - h}.
\]
\item 
Deduce that $G(z) = \sum_{N\in\N} F_N^{\beta,h,\omega}(z)$, where
\begin{align*}
F_N^{\beta,h,\omega}(z) &= \sum_{0 = k_0 < k_1 < \cdots < k_N < \infty}
\, \prod_{i=1}^N z^{k_i - k_{i-1}}\,R(k_i - k_{i-1})\,
\erom^{\beta \omega_{k_{i-1}} - h}\\
&= E \left(\prod_{i=1}^N z^{T_i - T_{i-1}}\,\erom^{\beta \omega_{T_{i-1}} - h}\right)\\
&= \erom^{N[S_N^{\beta,\omega}(z)-h]}
\end{align*}
with
\[
S_N^{\beta,\omega}(z) = \frac1N\,\log E\left(\exp\left[
\sum_{i=1}^N (T_i - T_{i-1})\log z + \beta\omega_{T_{i-1}}\right]\right).
\]
\end{itemize}

Given an infinite ``sentence'' $y = (y_k)_{k\in\N} \in \widetilde\R^\N$, we denote 
by $y_1 \in \widetilde\R$ its first ``word''. For a ``word'' $x \in \widetilde\R$, 
we denote by $\ell(x)$ the length of $x$ and by $c(x)$ the first letter of $x$.

\begin{itemize}
\item 
Recalling that $Y_i = (\omega_{T_{i-1}}, \omega_{T_{i-1} + 1}, \ldots,\omega_{T_{i}-1})$, 
with the $T_i$'s the hitting times of the interface $*$, prove that
\[
\begin{aligned}
m(R_N^\omega) &=
\int_{\widetilde\R^\N} \ell(y_1) \, R_N^\omega(\mathrm d y)
= \frac{1}{N} \sum_{i=1}^N \ell(Y_i) 
= \frac{1}{N} \sum_{i=1}^N (T_i - T_{i-1}),\\
\Phi(R_N^{\omega}) &= 
\int_{\widetilde\R^\N} c(y_1) \, R_N^\omega(\mathrm d y)
= \frac{1}{N} \sum_{i=1}^N c(Y_i) 
= \frac{1}{N} \sum_{i=1}^N \omega_{T_{i-1}}.
\end{aligned}
\]
Hence
\[
S_N^{\beta,\omega}(z) = \frac1N\,\log  
E\left(\exp\Big[N \big[m(R_N^\omega)\log z + \beta \Phi(R_N^\omega)\big] 
\Big]\right).
\]
\end{itemize}
This shows that $S_N^{\beta,\omega}(z)$ is the expectation of an exponential 
function of $R_N^\omega$. It is therefore clear that the properties of the 
generating function $G(z)$, in particular, its radius of convergence $\overline z$ 
(and hence the quenched free energy) can be deduced from the large deviation 
properties of $R_N^\omega$. Let us therefore set
\[
S^{\rm{que}}(\beta,z) = \limsup_{N\to\infty} S_N^{\beta,\omega}(z) 
\]
and $S^{\rm{que}}(\beta, 1-) = \lim_{z \uparrow 1} S^{\rm{que}}(\beta, z)$.

\begin{itemize}
\item 
Prove that if $h > S^{\rm{que}}(\beta, z)$ then $G(z) < \infty$,
while if $h < S^{\rm{que}}(\beta, z)$ then $G(z) = \infty$.
\item 
Deduce that if $S^{\rm{que}}(\beta, 1-) < h$ then $f^{\rm{que}}(\beta, h) = 0$,
while if $S^{\rm{que}}(\beta, 1-) > h$ then $f^{\rm{que}}(\beta, h) > 0$.
Therefore $h_c^{\rm{que}}(\beta) = S^{\rm{que}}(\beta, 1-)$.
\end{itemize}

Finally, with the help of Varadhan's lemma in large deviation theory it can be 
shown that
\[
h_c^{\rm{que}}(\beta) = S^{\rm{que}}(\beta, 1-) 
= \sup_{Q \in \mathcal M_1^{\rm{inv}}(\widetilde\R^\N)}
\big[ \beta \Phi(Q) - I^{\rm{que}}(Q) \big].
\]
This gives an explicit variational characterization of the quenched critical curve.
An analogous characterization holds for the annealed critical curve too. For details
see Cheliotis and den Hollander~\cite{ChdHo10}.


\section{Tutorial 5}
\label{appE}

In this tutorial we return to the copolymer model treated in 
Sections~\ref{S5.1}--\ref{S5.4} and prove Theorem~\ref{thm:hclb} (lower bound 
on the critical curve) and Theorem~\ref{thm:ordphtr} (order of the phase 
transition is at least two). Section~\ref{copolE1} recalls the model, 
Section~\ref{copollb} proves Theorem~\ref{thm:hclb}, while Section~\ref{copolordphtr} 
proves Theorem~\ref{thm:ordphtr}. 


\subsection{The model}
\label{copolE1}

We begin by recalling some of the notation used in Sections~\ref{S5.1}--\ref{S5.4}.

\medskip\noindent
\textit{Configurations of the copolymer.} 
For $n \in \N$ the allowed configurations of the copolymer are modelled by the 
$n$-step paths of a $(1+1)$-dimensional simple random walk $S=(S_i)_{i\in\N_0}$,
i.e., $S_0=0$ and $(S_i-S_{i-1})_{i\in\N}$ is an i.i.d.\ sequence of Bernoulli 
trials with
\[
P(S_1=+1)=P(S_1=-1)=\tfrac12,
\]
where we write $P$ for the law of $S$. The set of $n$-step paths is denoted by 
$\cW_n$.

\medskip\noindent
\textit{Disorder: randomness of the monomer types.}
The monomers in the copolymer are either hydrophilic or hydrophobic. Their 
order of appearance is encoded by an i.i.d.\ sequence $\omega=(\omega_i)_{i\in\N}$ 
of Bernouilli trials with
\[
\P(\omega_1=+1)=\P(\omega_1=-1)=\tfrac12,
\]
where we write $\P$ for the law of $\omega$, and we assume that $\omega$ and 
$S$ are independent.

\medskip\noindent 
\textit{Interaction polymer-interface.} 
The medium is made up of oil and water separated by a flat interface located at height 
$0$, oil being above the interface and water below. The copolymer gets an energetic 
reward for each monomer it puts in its preferred solvent. Thus, $S\in\cW_n$ has 
energy 
\[
H_n^{\beta,h,\omega}(S) = - \beta \sum_{i=1}^n (\omega_i + h) (\Delta_i - 1),
\qquad S \in \cW_n,
\]  
where $\Delta_i = \mathrm{sign}(S_{i-1},S_i)$ and $\beta\in (0,\infty)$ stands for 
the inverse temperature. The presence of the $-1$ in this Hamiltonian is for later 
convenience and has no effect on the polymer measure. Indeed, by the law of large 
numbers for $\omega$, we have $\beta\sum_{i=1}^n (\omega_i+h) = \beta h n + o(n)$. 
The term $\beta h n$ can be moved to the normalizing partition sum, while the 
term $o(n)$ does not affect the free energy in the limit as $n\to\infty$.  

\medskip\noindent 
\textit{Partition function and free energy.} 
For fixed $n$, the quenched (= frozen disorder) partition sum and finite-volume 
free energy are defined as 
\[
Z_n^{\beta,h,\omega} = E\big(\erom^{-H_n^{\beta,h,\omega}(S)}\big),
\qquad 
g_n^\omega(\beta,h)=\tfrac1n \log Z_n^{\beta,h,\omega}.
\]
Recall that the localized phase $\cL$ and the delocalized phase $\cD$ are defined 
by
\[
\cL = \{(\beta,h)\colon\,g^\mathrm{que}(\beta,h)>0\}, 
\qquad \cD = \{(\beta,h)\colon\,g^\mathrm{que}(\beta,h)=0\},
\]
where $g^\mathrm{que}(\beta,h)=\lim_{n\to \infty} g_n^\omega(\beta,h)$ $\omega$-a.s.


\subsection{Lower bound on the critical curve}
\label{copollb}

Fix $l\in 2\N$. For $j \in\{1,\dots,n/l\}$ (for simplicity we pretend that 
$n/l$ is integer), let 
\[
I_j=\{(j-1)l+1,\dots,jl\}, \qquad \Omega_j=\sum_{i\in I_j}\omega_i.
\] 
Fix $\delta \in (0,1]$, and define
\[
i_0^{\omega}=0, \qquad  
i_{j+1}^\omega=\inf\{k\geq i_j^\omega +2\colon\,
\Omega_k\leq -\delta l\}, \quad j\in\N.
\]
These are the stretches of length $l$ where the empirical average of the disorder is 
$\leq -\delta$, trimmed so that no two stretches occur next to each other, which 
guarantees that $\tau_j^\omega=i_{j+1}^\omega-i_j^\omega-1$, $j\in\N$, are $\geq 1$. 
(The copolymer gets a substantial reward when it moves below the interface during 
these stretches.) Let  
\[
t_n^\omega=\sup\{j\in\N_0\colon\,i_j^\omega \leq n/l\}.
\] 
In the estimate below we will need the subset of paths defined by
(see Fig.~\ref{fig-BG1})
\[
\cW_n^\omega=\big\{S\colon\,S_i<0\,\,\forall\,i\in \cup_{j=1}^{t_n^\omega} 
I_{i_j^\omega}\backslash \partial I_{i_j^\omega}\big\}
\cap \big\{S\colon\,S_i>0\,\,\forall\, i\in \{0,\dots,n\}
\setminus \cup_{j=1}^{t_n^\omega} I_{i_j^\omega} \big\}.
\]

\begin{figure}[htbp]
\vspace{1cm}
\begin{center}
\setlength{\unitlength}{0.25cm}
\begin{picture}(15,5)(13,-3)
\put(19,0){\circle*{.35}}
\put(22,0){\circle*{.35}}
{\qbezier(39,0.4)(39,0)(39,-0.4)}
\put(38.5,-2){$n$}
{\qbezier(0,0)(5,0)(15,0)}
{\thicklines
\qbezier[440](0,0)(13,0)(24.5,0)
}
{\thicklines
\qbezier[14](24.5,0)(27,0)(31,0)
}
{\thicklines
\qbezier[340](31,0)(32,0)(41,0)
}
\put(10,0){\circle*{.35}}
\put(0,0){\circle*{.35}}
\put(13,0){\circle*{.35}}
\put(33,0){\circle*{.35}}
\put(36,0){\circle*{.35}}
\put(4,-4.5){$\tau_1^\omega l$}
\put(15.3,-4.5){$\tau_2^\omega l$}
\put(11,4.3){$I_{i_1^\omega}$}
\put(20,4.3){$I_{i_2^\omega}$}
\put(34,4.3){$I_{i_{t_n^\omega}}$}
{\thicklines
{\qbezier[6](0,0)(0,-1.5)(0,-3)}
}
{\thicklines
{\qbezier[6](10,0)(10,-1.5)(10,-3)}
}
{\thicklines
{\qbezier[6](13,0)(13,-1.5)(13,-3)}
}
{\thicklines
{\qbezier[6](19,0)(19,-1.5)(19,-3)}
}
\put(0.2,-3){\vector(1,0){9.7}}
\put(9.7,-3){\vector(-1,0){9.8}}
\put(13.2,-3){\vector(1,0){5.8}}
\put(18.7,-3){\vector(-1,0){5.6}}
{{\thicklines{
{\qbezier[166](0,0)(5,6)(10,0)}}}}
{{\thicklines{
{\qbezier[166](10,0)(11.5,-3)(13,0)}}}}
{{\thicklines{
{\qbezier[166](13,0)(16,4)(19,0)}}}}
{{\thicklines{
{\qbezier[166](10,0)(11.5,-3)(13,0)}}}}
{{\thicklines{
{\qbezier[166](19,0)(20.5,-3)(22,0)}}}}
{{\thicklines{
{\qbezier[166](33,0)(34.5,-3)(36,0)}}}}
{{\thicklines{
{\qbezier[160](22,0)(23,1.5)(24,2)}}}}
{{\thicklines{
{\qbezier[6](24,2)(25,2.5)(25.5,2.7)}}}}
{{\thicklines{
{\qbezier[160](36,0)(37,1.5)(39,2)}}}}
{{\thicklines{
{\qbezier[160](31,3)(32.5,2)(33,0)}}}}
{{\thicklines{
{\qbezier[12](29,3.5)(30.5,3.3)(31,3)}}}}
{\thicklines
{\qbezier[8](10,0)(10,1.5)(10,3)}}
{\thicklines
{\qbezier[8](13,0)(13,1.5)(13,3)}}
{\thicklines
{\qbezier[8](19,0)(19,1.5)(19,3)}}
{\thicklines
{\qbezier[8](22,0)(22,1.5)(22,3)}}
{\thicklines
{\qbezier[8](33,0)(33,1.5)(33,3)}}
{\thicklines
{\qbezier[8](36,0)(36,1.5)(36,3)}}
\put(10,3){\vector(1,0){2.9}}
\put(13,3){\vector(-1,0){2.9}}
\put(19,3){\vector(1,0){2.9}}
\put(22,3){\vector(-1,0){2.9}}
\put(33,3){\vector(1,0){2.9}}
\put(36,3){\vector(-1,0){2.9}}
\end{picture}
\end{center}
\vspace{0.5cm}
\caption{\small A path in the set $\cW_n^\omega$.}
\label{fig-BG1}
\end{figure}

\medskip\noindent 
(1) Let 
\[
\begin{aligned}
R(n) &= P(S_i>0\,\,\forall\,0<i<n,\,S_n=0),\\
\bar{R}(n) &= P(S_i>0\,\,\forall\, 0< i\leq n).
\end{aligned}
\]
Insert the indicator of the set $\cW_n^\omega$ into the definition of
the partition sum, to estimate
\[
\log Z_n^{\beta,h,\omega} \geq \sum_{j=1}^{t_n^\omega} \log R(\tau_j^\omega l)\,
+\,t_n^\omega\,[\log R(l)+2\beta(\delta-h)l] + \log \bar{R}(n-i_{t_n^\omega} l).
\]

\medskip\noindent 
(2) Note that there exists a $C>0$ such that $R(n)\geq C/n^{3/2}$ for 
$n\in\N$. Use this to deduce from (1) that 
\[
\log Z_n^{\beta,h,\omega} 
\geq t_n^\omega\big[\log C-\tfrac32\,\log(\tfrac{n}{t_n^\omega }-l)\big] 
+ t_n^\omega\,\big[\log C - \tfrac32\log l + 2\beta(\delta-h)l\big]
+ O(\log n),
\]
where the first term arises after we apply Jensen's inequality:
\[
\frac{1}{t_n^\omega} \sum_{j=1}^{t_n^\omega} \log \tau_j^\omega
\leq \log\left(\frac{1}{t_n^\omega} \sum_{j=1}^{t_n^\omega} \tau_j^\omega\right).
\]

\medskip\noindent 
(3) Abbreviate 
\[
q_{l,\delta} = \P\Big(\Omega_1 \leq -\delta l\Big).
\]
Use the ergodic theorem to prove that 
\[ 
\lim_{n\to \infty} \frac{t_n^\omega}{n}=\frac1l\, 
\frac{q_{l,\delta}}{1+q_{l,\delta}} = p_{l,\delta} \qquad \omega\text{-a.s.}.
\]
(Note that $k \in \cup_{j\in\N_0} i_j^\omega$ if and only if $\Omega_k \leq -\delta l$ 
and $k-1 \notin \cup_{j\in\N_0} i_j^\omega$.) Since $\sum_{j=1}^{t_n^\omega} \tau_j^\omega l 
\leq n-t_n^\omega l$, it follows that
\[
\limsup_{n\to\infty} \frac{\sum_{j=1}^{t_n^\omega} \tau_i^\omega l}{t_n^\omega} 
\leq \lim_{n\to\infty} \frac{n-t_n^\omega l}{t_n^\omega} = p_{l,\delta}^{-1}-l
\qquad \omega\text{-a.s.}
\]
Conclude from (2) that
\[
\liminf_{n\to \infty} \frac1n \log Z_n^{\beta,h,\omega} \geq p_{l,\delta} 
[-\tfrac32\, \log(p_{l,\delta}^{-1}-l) +  2 \beta (\delta-h) l+O(\log l)]
\qquad \omega\text{-a.s.}
\]
This inequality is valid for all $l\in 2\N$. 

\medskip\noindent
(4) Show, with the help of Cram\'er's theorem of large deviation theory applied 
to $\omega$, that 
\[
\lim_{l \to \infty} \tfrac1l \log q_{l,\delta}
= - \sup_{\lambda>0} \big[\lambda \delta -\log M(-\lambda)\big] 
= - \Sigma(\delta),
\]
where $M(\lambda)=\E(\erom^{\lambda \omega_1})$, the supremum may be trivially
restricted to $\lambda>0$, and the right-hand side is the Legendre transform 
of the cumulant generating function $\lambda \mapsto \log M(-\lambda)$. Use the 
last display and the relation $p_{l,\delta}^{-1}-l=l/q_{l,\delta}$ to show that 
\[
\lim_{l \to \infty} \tfrac1l \log (p_{l,\delta}^{-1}-l) 
= \Sigma(\delta).
\]

\medskip\noindent 
(5) So far $\delta \in (0,1]$ is arbitrary. Now combine (3) and (4), optimize 
over $\delta$, and use that 
\[
\tfrac34 \log M(\tfrac43 \beta) 
= \sup_{\delta \in (0,1]} \big[-\tfrac34 \Sigma(\delta)+\beta \delta\big]
= \tfrac34 \sup_{\delta \in (0,1]} \big[\tfrac43\beta\delta-\Sigma(\delta)\big], 
\]
which is the (inverse) Legendre transform of the rate function in Cram\'er's
theorem, to conclude that $g^\mathrm{que}(\beta,h)>0$ as soon as 
\[
\tfrac34 \log M(\tfrac43 \beta)-\beta h>0.
\]
This completes the proof because $M(\tfrac43\beta) = \cosh(\tfrac43\beta)$.


\subsection{Order of the phase transition}
\label{copolordphtr}

In the proof below we pretend that $\omega$ is an i.i.d.\ sequence of
standard normal random variables, rather than Bernoulli random variables.
At the end of the proof we will see how to adapt the argument.

Define the set of trajectories 
\[
\widetilde{\cW}_n^\omega
=\big\{S\colon\,S_i= 0\,\,\forall j\in \cup_{j=1}^{t_n^\omega} \partial I_{i_j^\omega}\big\}
\cap \big\{S\colon\,S_i>0\,\, \forall i\in \{0,\dots,n\} \setminus 
\cup_{j=1}^{t_n^\omega} I_{i_j^\omega} \big\}.
\]

\begin{figure}[htbp]
\begin{center}
\setlength{\unitlength}{0.25cm}
\begin{picture}(15,5)(12,0)
\put(19,0){\circle*{.35}}
\put(22,0){\circle*{.35}}
{\qbezier(39,0.4)(39,0)(39,-0.4)}
\put(38.5,-2){$n$}
{\qbezier(0,0)(5,0)(15,0)}
{\thicklines
\qbezier[440](0,0)(13,0)(24.5,0)}
{\thicklines
\qbezier[14](24.5,0)(27,0)(31,0)}
{\thicklines
\qbezier[340](31,0)(32,0)(41,0)}
\put(10,0){\circle*{.35}}
\put(0,0){\circle*{.35}}
\put(13,0){\circle*{.35}}
\put(33,0){\circle*{.35}}
\put(36,0){\circle*{.35}}
\put(4,-4.5){$\tau_1^\omega l$}
\put(15.3,-4.5){$\tau_2^\omega l$}
\put(11,4.3){$I_{i_1^\omega}$}
\put(20,4.3){$I_{i_2^\omega}$}
\put(34,4.3){$I_{i_{t_n}^\omega}$}
{\thicklines
{\qbezier[6](0,0)(0,-1.5)(0,-3)}}
{\thicklines
{\qbezier[6](10,0)(10,-1.5)(10,-3)}}
{\thicklines
{\qbezier[6](13,0)(13,-1.5)(13,-3)}
}
{\thicklines
{\qbezier[6](19,0)(19,-1.5)(19,-3)}
}
\put(0.2,-3){\vector(1,0){9.7}}
\put(9.7,-3){\vector(-1,0){9.8}}
\put(13.2,-3){\vector(1,0){5.8}}
\put(18.7,-3){\vector(-1,0){5.6}}
{{\thicklines{
{\qbezier[166](0,0)(5,6)(10,0)}}}}
{{\thicklines{
{\qbezier[166](10,0)(10.25,-1)(10.5,0)}}}}
{{\thicklines{
{\qbezier[166](10.5,0)(10.75,2)(11,0)}}}}
{{\thicklines{
{\qbezier[166](11,0)(11.25,-2)(11.5,0)}}}}
{{\thicklines{
{\qbezier[166](11.5,0)(11.75,1)(12,0)}}}}
{{\thicklines{
{\qbezier[166](13,0)(16,4)(19,0)}}}}
{{\thicklines{
{\qbezier[166](12,0)(12.5,-2)(13,0)}}}}
{{\thicklines{
{\qbezier[166](19,0)(19.25,-1)(19.5,0)}}}}
{{\thicklines{
{\qbezier[166](19.5,0)(19.75,2)(20,0)}}}}
{{\thicklines{
{\qbezier[166](20,0)(20.25,-2)(20.5,0)}}}}
{{\thicklines{
{\qbezier[166](20.5,0)(20.85,2)(21.25,0)}}}}
{{\thicklines{
{\qbezier[166](21.25,0)(21.65,-1.5)(22,0)}}}}
{{\thicklines{
{\qbezier[166](33,0)(33.25,-2)(33.5,0)}}}}
{{\thicklines{
{\qbezier[166](33.5,0)(33.85,1)(34.25,0)}}}}
{{\thicklines{
{\qbezier[166](34.25,0)(34.5,-2)(34.75,0)}}}}
{{\thicklines{
{\qbezier[166](34.75,0)(35.15,2)(35.5,0)}}}}
{{\thicklines{
{\qbezier[166](35.5,0)(35.75,-1)(36,0)}}}}
{{\thicklines{
{\qbezier[160](22,0)(23,1.5)(24,2)}}}}
{{\thicklines{
{\qbezier[6](24,2)(25,2.5)(25.5,2.7)}}}}
{{\thicklines{
{\qbezier[160](36,0)(37.5,2.5)(39,4)}}}}
{{\thicklines{
{\qbezier[160](31,3)(32.5,2)(33,0)}}}}
{{\thicklines{
{\qbezier[12](29,3.5)(30.5,3.3)(31,3)}}}}
{\thicklines
{\qbezier[8](10,0)(10,1.5)(10,3)}}
{\thicklines
{\qbezier[8](13,0)(13,1.5)(13,3)}}
{\thicklines
{\qbezier[8](19,0)(19,1.5)(19,3)}}
{\thicklines
{\qbezier[8](22,0)(22,1.5)(22,3)}}
{\thicklines
{\qbezier[8](33,0)(33,1.5)(33,3)}}
{\thicklines
{\qbezier[8](36,0)(36,1.5)(36,3)}}
\put(10,3){\vector(1,0){2.9}}
\put(13,3){\vector(-1,0){2.9}}
\put(19,3){\vector(1,0){2.9}}
\put(22,3){\vector(-1,0){2.9}}
\put(33,3){\vector(1,0){2.9}}
\put(36,3){\vector(-1,0){2.9}}
\end{picture}
\end{center}
\vspace{1cm}
\caption{\small A path in the set $\widetilde{\cW}_n^\omega$.}
\label{fig-GT1}
\end{figure}
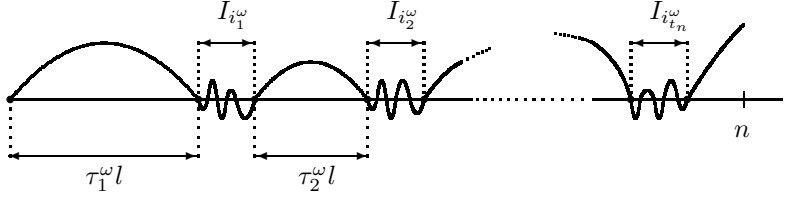


\medskip\noindent
(6) Similarly as in (1), insert the indicator of the set $\widetilde{\cW}_n^\omega$ 
into the definition of the partition function to estimate 
\[
\log Z_n^{\beta,h_c,\omega} \geq \sum_{j=1}^{t_n^\omega} \log R(\tau_j^\omega l)
+ \sum_{j=1}^{t_n^\omega} \log Z_{l}^{\beta,h_c,\,\theta^{i_j^\omega\, l}(\omega)}
+ \log \bar{R}\big(n-(i^\omega_{\,t_n^\omega}+1) l\big),
\]
where $\theta^l(\omega)=(\omega_{i+l})_{i\in\N}$.

\medskip\noindent 
(7) Take the expectation over $\P$ on both sides of (6), divide by $n$ and use (3), 
to obtain 
\[
\begin{aligned}
g^\mathrm{que}(\beta,h_c) 
&\geq  p_{l,\delta} \big[-\tfrac32\,\log(p_{l,\delta}^{-1}-l)+O(\log l)\big]\\ 
&\qquad + \liminf_{n\to \infty} \tfrac1n\, 
\E\left(\sum_{j=1}^{t_n^\omega} 
\log Z_{l}^{\beta,h_c,\,\theta^{i_j^\omega\, l}(\omega)}\right).
\end{aligned}
\]

\medskip\noindent
(8) Use a martingale property to prove that 
\[
\frac1n \E\left(\sum_{j=1}^{t_n^\omega} 
\log Z_{l}^{\beta,h_c,\,\theta^{i_j^\omega\, l}(\omega)}\right)
= E\left(\frac{t_n^\omega}{n}\right)\,\E\left(\log Z_{l}^{\beta, h_c, \omega} 
\mid \Omega_1\leq -\delta l\right),
\]
which gives
\[
g^\mathrm{que}(\beta,h_c)\geq  p_{l,\delta} \left[-\tfrac32\, 
\log(p_{l,\delta}^{-1}-l) + O(\log l)
+ \E\left(\log Z_{l}^{\beta, h_c, \omega} \mid \Omega_1\leq -\delta l\right)\right].
\]

\medskip\noindent 
(9) Deduce from (8) and (3) that 
\[
-\tfrac{3}{2}\Sigma(\delta)
+\tfrac{1}{l}\,\E\left(\log Z_{l}^{\beta,h_c,\omega} \mid \Omega_1 \leq -\delta l\right)
+o(1) \leq 0, \quad  \delta>0,\,l\in 2\N,\, l\to\infty.
\]
For large $l$, considering $l$ i.i.d.\ Gaussian random variables with mean $0$ and 
variance $1$ conditioned to have sum $\leq -\delta l$ is equivalent to considering 
$l$ i.i.d.\ Gaussian random variables with mean $-\delta$ and variance $1$. Therefore
we can replace $\E(\log Z_l^{\beta,h_c,\omega} \mid \Omega_1\leq -\delta l)$ by 
$\E(\log Z_l^{\beta,h_c-\delta,\omega})+o(1)$ and so, after we let $l\to\infty$, the 
inequality in the last display yields
\[
g^\mathrm{que}(\beta,h_c-\delta) \leq \tfrac{3}{2}\Sigma_0(\delta).
\]
Combine the lower bound on $g^\mathrm{que}(\beta,h_c)$ with the upper bound on 
$g^\mathrm{que}(\beta,h_c-\delta)$, and use that $\Sigma_0(\delta)=\tfrac12\delta^2
[1+o(1)]$ as $\delta\downarrow 0$, to obtain that
\[
g^\mathrm{que}(\beta,h_c-\delta)-g(\beta,h_c) \leq \tfrac14\delta^2 \quad \mbox{ for } 
\delta \mbox{ small enough}.
\]
This completes the proof for standard Gaussian disorder.

\medskip\noindent 
(10) It is easy to extend the proof to binary disorder. All that is needed is to show 
that the Gaussian approximation in (9) carries through.


\end{document}